\numberwithin{equation}{section}
\newcommand\B{\q {\bar B}}
\newcommand\barc{\bar c}
\newcommand\barcc{\tilde c}
\newcommand\bD{{\bs D}}
\newcommand\bd{{\bs d}}
\newcommand{\bs}[1]{{\boldsymbol{#1}}}
\newcommand\cd{{\hat d}}
\newcommand\chr{{R}}
\newcommand\cs{{S}}
\newcommand\cy{{Y}}
\newcommand\Ddir{{\hat D_{dir}}}
\newcommand{\ds}{\displaystyle}
\newcommand\dv{\mathop{\rm div}}
\newcommand\E{\q {\bar E}}
\newcommand\EE{\q {\bar E}_0}
\newcommand{\eb}{{\epsilon_0}}
\newcommand{\ed}{{\hat e_{down}}}
\newcommand{\ee}{{\bar\epsilon}}
\newcommand{\edir}{{\hat e_{dir}}}
\newcommand{\el}{{\hat e_{left}}}
\newcommand{\er}{{\hat e_{right}}}
\newcommand{\eup}{{\hat e_{up}}}
\newcommand{\eun}{{\epsilon_1}}
\renewcommand{\H}{{\cal H}}
\newcommand\hr{{\hat r}}
\renewcommand\iint{\displaystyle\int_{|y|<1}}
\newcommand\ind{{(i,\theta)}}
\newcommand{\kd}{\kappa_{down}}
\newcommand{\kdir}{\kappa_{dir}}
\newcommand{\kl}{\kappa_{left}}
\newcommand{\kr}{\kappa_{right}}
\newcommand{\kup}{\kappa_{up}}
\newcommand{\m}[1]{\mathbbm{#1}}
\newcommand{\md}{{\hat \mu_{down}}}
\newcommand{\mdir}{{\hat \mu_{dir}}}
\newcommand{\ml}{{\hat \mu_{left}}}
\newcommand{\mr}{{\hat \mu_{right}}}
\newcommand{\mup}{{\hat \mu_{up}}}
\newcommand\N{{\cal N}_\sigma}
\newcommand{\nd}{{\hat \nu_{down}}}
\newcommand{\ndir}{{\hat \nu_{dir}}}
\newcommand{\nl}{{\hat \nu_{left}}}
\newcommand{\nr}{{\hat \nu_{right}}}
\newcommand{\nup}{{\hat \nu_{up}}}
\newcommand{\pnu}{\partial_\nu}
\newcommand{\pp}{\Pi}
\newcommand{\ps}{{\partial_s}}
\newcommand{\py}{{\partial_y}}
\newcommand{\q}[1]{\mathcal{#1}}
\newcommand\RR{{\cal R}}
\newcommand{\sd}{{\hat s_{down}}}
\newcommand{\sdir}{{\hat s_{dir}}}
\newcommand\sigd{\sigma_{down}}
\newcommand\sigdir{\sigma_{dir}}
\newcommand\sigl{\sigma_{left}}
\newcommand\sigu{\sigma_{up}}
\renewcommand{\sl}{{\hat s_{left}}}
\newcommand{\smin}{{\hat s_{min}}}
\newcommand{\srb}{{\hat s_{right,+}}}
\newcommand{\su}{{\hat s_{up}}}
\newcommand{\sur}{{\hat s_{up, rel}}}
\newcommand{\Sdir}{{\hat S_{dir}}}
\newcommand{\Sl}{{\hat S_{left}}}
\newcommand{\Smin}{{\hat S_{min}}}
\newcommand{\Srb}{{\hat S_{right,+}}}
\renewcommand\SS{{\cal S}}
\newcommand{\Su}{{\hat S_{up}}}
\newcommand{\Sur}{{\hat S_{up, rel}}}
\newcommand{\td}{{\hat \theta_{down}}}
\newcommand{\tdir}{{\hat \theta_{dir}}}
\newcommand{\tl}{{\hat \theta_{left}}}
\newcommand{\tr}{{\hat \theta_{right}}}
\newcommand{\ts}{{s}}
\newcommand\tT{{{\cal T}}}
\newcommand{\tup}{{\hat \theta_{up}}}
\newcommand{\tx}{{x}}
\newcommand{\ty}{{y}}
\newcommand{\vc}[2]{\begin{pmatrix} #1\\#2\end{pmatrix}}
\DeclareMathOperator{\sgn}{\mathrm{sgn}}
\theoremstyle{plain}
\newtheorem{thm}{Theorem}
\newtheorem*{thm*}{Theorem}
\newtheorem{prop}{Proposition}[section]
\newtheorem{cor}[prop]{Corollary}
\newtheorem{lem}[prop]{Lemma}
\newtheorem{defi}[prop]{Definition}
\newtheorem{cl}[prop]{Claim}
\theoremstyle{definition}
\theoremstyle{remark}
\newtheorem*{nb}{Remark}
\def\blfootnote{\xdef\@thefnmark{}\@footnotetext}
\title{\bf Blow-up solutions to the semilinear wave equation with a stylized pyramid as a blow-up surface
}
\author{Frank Merle\footnote{Both authors are supported by the ERC Advanced Grant no. 291214, BLOWDISOL. H.Z. is also supported by ANR project ANA\'E ref. ANR-13-BS01-0010-03.}\\
{\it \small Universit\'e de Cergy Pontoise and IHES}\\
Hatem Zaag{\small *}\\
{\it \small Universit\'e Paris 13, Sorbonne Paris Cit\'e},\\
{\it \small LAGA, CNRS (UMR 7539), F-93430, Villetaneuse, France}
}
\begin{document}

\maketitle   

\begin{abstract}
We consider the semilinear wave equation with subconformal power nonlinearity in two space dimensions. We construct a finite-time blow-up solution with an isolated characteristic blow-up point at the origin, and a blow-up surface which is centered at the origin and has the shape of a stylized pyramid, whose edges follow the bisectrices of the axes in $\m R^2$.
The blow-up surface is differentiable outside the bisectrices. 
As for the asymptotic behavior in similariy variables, the solution converges to the classical one-dimensional soliton outside the bisectrices. On the bisectrices outside the origin, it converges (up to a subsequence) to a genuinely two-dimensional stationary solution, whose existence is a by-product of the proof.
 At the origin, it behaves like the sum of 4 solitons localized on the two axes, with opposite signs for neighbors.\\
This is the first example of a blow-up solution with a characteristic point in higher dimensions, showing a really two-dimensional behavior.
 Moreover, the points of the bisectrices outside the origin give us the first example of non-characteristic points where the blow-up surface is non-differentiable.

\end{abstract}

\medskip

{\bf MSC 2010 Classification}:  
35L05, 35L71, 35L67,
35B44, 35B40


\medskip

{\bf Keywords}: Semilinear wave equation, blow-up, higher dimensional case, characteristic point,  multi-solitons.

\section{Introduction}
We consider the subconformal semilinear wave equation in 2 space dimensions:
\begin{equation}\label{equ}
\left\{
\begin{array}{l}
\partial^2_{t} u =\Delta u+|u|^{p-1}u,\\
u(0)=u_0\mbox{ and }\partial_t u(0)=u_1,
\end{array}
\right.
\end{equation}
where $u(t):x\in{\m R}^2 \rightarrow u(x,t)\in{\m R}$, $1<p<5$, 
$(u_0, u_1) \in H^1\times L^2(\m R^2)$.
The Cauchy problem is locally wellposed, and we have the existence of blow-up solutions from Levine \cite{Ltams74}.

\medskip

Equation \eqref{equ} can be considered as a lab model for blow-up in hyperbolic equations, because it captures features common to a whole range of blow-up problems arising in various nonlinear physical models, in particular in general relativity (see Donninger, Schlag and Soffer \cite{DSScmp12}), and also for self-focusing waves in nonlinear optics (see Bizo\'n, Chmaj and Szpak \cite{BCSjmp11}). 

\medskip

In this paper, our aim is to construct the first example of a blow-up solution with a truely two-dimensional behavior. In particular, our solution will be non-radial and will not depend only on a one-dimensional variable. In fact, we will construct a multi-soliton solution here, since we will have 4 decoupled solitons in some backward light cone centered at the origin. As a matter of fact, there has been many papers addressing the question of multi-solitons in the literature, for various PDEs: for the generalized KdV equation, see Martel \cite{Majm05, Msema05}, Martel, Merle and Tsai \cite{MMTcmp02}; for NLS, see Merle \cite{Mcmp90}, Martle and Merle \cite{MMihp06}, Martel, Merle and Tsai \cite{MMTdmj06}, C\^ote, Martel and Merle \cite{CMMrmia11} as well as Martel and Rapha\"el \cite{MRbub15}; for water waves, see Ming, Rousset and Tzvetkov \cite{MRTsjma15};  for the Yamabe flow, see Daskalopoulos, Del Pino and Sesum; for the subcritical wave equation, see Merle and Zaag \cite{MZdmj12} as well as Côte and Zaag \cite{CZcpam13}; for the critical wave equation, see Duyckaerts, Kenig and Merle \cite{DKMcjm13}.

\medskip

More generally, constructing a solution to some 
PDE
with a prescribed behavior (not necessarily multi-solitons solutions) is an important question. That question was solved for (gKdV) by C\^ote \cite{Cjfa06,Cdmj07}, and also for parabolic equations exhibiting blow-up, like the semilinear heat equation by Bressan \cite{Biumj90, Bjde92} (with an exponential source), Merle \cite{Mcpam92}, Bricmont and Kupiainen \cite{BKnonl94}, Merle and Zaag in \cite{MZdmj97, MZcras96}, Schweyer \cite{Sjfa12} (in the critical case), Mahmoudi, Nouaili and Zaag \cite{MNZna16} (in the periodic case), the complex Ginzburg-Landau equation by Zaag \cite{Zihp98} and also by Masmoudi and Zaag in \cite{MZjfa08}, a complex heat equation with no gradient structure by Nouaili and Zaag \cite{NZcc14}, a gradient perturbed heat equation in the subcritical case by Ebde and Zaag in \cite{EZsema11}, then by Tayachi and Zaag in the critical case in \cite{TZ15} (see also \cite{TZnormandie15}), or a strongly perturbed complex-valued heat equation in Nguyen and Zaag \cite{NZcs14}. Other examples are available for Schr\"odinger maps (see Merle, Rapha\"el and Rodnianski \cite{MRRim13}), and also fo the Keller-Segel model (see Rapha\"el and Schweyer \cite{RSma14}, and also Ghoul and Masmoudi \cite{GM16}).

\medskip

If $u$ is a blow-up solution of equation \eqref{equ}, we define (see for example Alinhac \cite{Apndeta95}) a 1-Lipschitz 
graph $x\mapsto T(x)$ such that the domain of definition of $u$ is written as 
\begin{equation}\label{defdu}
D=\{(x,t)\;|\; x\in \m R^2\mbox{ and } 0\le t< T(x)\}.
\end{equation}
The graph of $T$ is called the blow-up surface (or curve if $N=1$) of $u$ and will be denoted by $\Gamma$. 
A point $x_0\in{\m R}^2$ is a non-characteristic point if there are 
\begin{equation}\label{nonchar}
\delta_0\in(0,1)\mbox{ and }t_0<T(x_0)\mbox{ such that }
u\;\;\mbox{is defined on }{\cal C}_{x_0, T(x_0), \delta_0}\cap \{t\ge t_0\}
\end{equation}
where 
\begin{equation}\label{defcone}
{\cal C}_{\bar x, \bar t, \bar \delta}=\{(x,t)\;|\; t< \bar t-\bar \delta|x-\bar x|\}.
\end{equation}
If not, we say that $x_0$ is a characteristic point.
 We denote by $\RR\subset {\m R}^2$ the set of non-characteristic points and by $\SS$ the set of characteristic points. 

\bigskip

The one-dimensional case in equation \eqref{equ} has been understood completely, in a series of papers by the authors \cite{MZjfa07, MZcmp08, MZxedp10, MZajm12, MZdmj12} and in C\^ote and Zaag \cite{CZcpam13} (see also the note \cite{MZxedp10}). This includes the first example of a solution with  a characteristic point in \cite{MZajm12} (in dimension one). For a general blow-up solution, we also proved that $\SS$ is made of isolated points (see \cite{MZdmj12}), and that the blow-up curve is of class $C^1$ on $\RR$ (see \cite{MZajm12}).
See Caffarelli and Friedman in \cite{CFtams86,CFarma85} for earlier results.

\bigskip

In higher dimensions $N\ge 2$, the situation is not as clear.\\
In fact, the blow-up rate is known (see \cite{MZajm03}, \cite{MZma05} and \cite{MZimrn05}; see also the extensions by Hamza and Zaag in \cite{HZnonl12} and \cite{HZjhde12} including the superconformal case in \cite{HZdcds13} also treated in Killip, Stoval and Vi\c san \cite{KSVma14}).\\
For the asymptotic behavior and the regularity of the blow-up surface, 
the only known result is at non-characteristic points, where we show in \cite{MZtams14} and \cite{MZcmp14} that $\Gamma$ is $C^1$, under a reasonable assumption on the profile.
The radial case outside the origin is also completely understood in \cite{MZbsm11}, since it reduces to a perturbation of the one-dimensional case.\\
Concerning the behavior of radial solutions at the origin, Donninger and Sch{\"o}rkhuber were able to prove the stability of the space-independent solution 
with respect to perturbations in initial data, in the Sobolev subcritical range \cite{DSdpde12} and also in the supercritical range in \cite{DStams14}. Some numerical results are available in a series of papers by Bizo\'n and co-authors (see \cite{Bjnmp01}, \cite{BCTnonl04}, \cite{BZnonl09}).  
See also Killip and Vi\c san \cite{KV11}. 

\bigskip

In this paper, we address the question of the existence of blow-up solutions to equation \eqref{equ} with $\SS\neq \emptyset$. As asserted above, the first example of such a solution was given in one space dimension in \cite{MZajm12}. Later, C\^ote and Zaag \cite{CZcpam13} constructed other examples showing multi-solitons. Both approaches extend to the radial case and to perturbations of equation \eqref{equ} with lower order terms (see Merle and Zaag \cite{MZbsm11}, Hamza and Zaag \cite{HZbsm13}). Of course, all these one-dimensional examples can be considered as trivial $2$-dimensional solutions, where $\SS$ is either a line, or a circle. From the finite speed of propagation, we may have parallel lines or concentic circles, and the local blow-up behavior is always rigorously one dimensional.
In particular, no example is known in higher dimensions, with $\SS$ locally reduced to an isolated point. The aim of this paper is precisely to provide such an example. Moreover, we will give a sharp description of the blow-up behavior and the blow-up surface, locally near the characteristic point (this is related to an explicit description of the instabilities of the 4-soliton solution we construct at the origin).

\bigskip

Before stating our result, 
let us introduce the following similarity variables, for any $(x_0,T_0)$ such that $0< T_0\le T(x_0)$:
\begin{equation}\label{defw}
w_{x_0,T_0}(y,s)=(T_0-t)^{\frac 2{p-1}}u(x,t),\;\;y=\frac{x-x_0}{T_0-t},\;\;
s=-\log(T_0-t).
\end{equation}
If $T_0=T(x_0)$, we write $w_{x_0}$ for short. 
The function $w_{x_0,T_0}$ (we write $w$ for simplicity) satisfies the 
following equation for all $|y|<1$ and $s\ge -\log T_0$:
\begin{equation}\label{eqw}
\partial^2_sw-\q L w+\frac{2(p+1)}{(p-1)^2}w-|w|^{p-1}w=
-\frac{p+3}{p-1}\partial_sw-2y\cdot\nabla \partial_s w
\end{equation} 
where
\begin{equation}\label{defro}
\q L w =\frac 1\rho \dv \left(\rho \nabla w-\rho(y\cdot \nabla w)y\right),\;\; \rho(y)=(1-|y|^2)^\alpha\mbox{ and }
\alpha=\frac{5-p}{2(p-1)}>0.
\end{equation}
Equation \eqref{eqw} is studied in the energy space 
\begin{equation}\label{defnh}
\H=\H_0\times L^2_\rho\mbox{ where }\|q_1\|_{\H_0}^2\equiv \iint \left(q_1^2+|\nabla q_1|^2-|y\cdot \nabla q_1|^2)\right)\rho dy.
\end{equation}
We also introduce for all $|\bs d|<1$ the following stationary solutions of \eqref{eqw} (or solitons) depending only on the one-dimensional variable $y\cdot \bs d$ (if $\bs d\neq 0$) and defined for all $|y|<1$ by 
\begin{equation}\label{defkd}
\kappa(\bs d,y)=\kappa_0 \frac{(1-|\bs d|^2)^{\frac 1{p-1}}}{(1+\bs d\cdot y)^{\frac 2{p-1}}}\mbox{ where }\kappa_0 = \left(\frac{2(p+1)}{(p-1)^2}\right)^{\frac 1{p-1}},
\end{equation}
and
\begin{equation}\label{solpart}
\bar d(s) = - \tanh \bar \zeta(s)\mbox{ where }\bar \zeta(s) = \left(\frac{p-1}4\right) \log s -\frac{(p-1)}4 \log \left(\frac{p-1}{4\barc}\right)
\end{equation}
which is an explicit solution to the ODE
\[
\frac 1{\barc} \frac{d\bar\zeta}{ds} = e^{-\frac 4{p-1}\bar \zeta}
\]
where $\barc(p)>0$ is 
defined 
in Lemma \ref{propdyn}.
Note that we have for some $C_0(p)>0$,
\begin{equation}\label{devdbar}
1+\bar d(s) = C_0s^{-\frac{p-1}2}+O(s^{-(p-1)})\sim C_0s^{-\frac{p-1}2}\mbox{ as }s\to \infty.
\end{equation}

\medskip
Let
$(e_1, e_2)$ 
be
the canonical basis of $\m R^2$.
This is the statement of our result:
\begin{thm}[Existence of a blow-up solution with 
an isolated characteristic blow-up point and a blow-up surface which is a pyramid at the first order]\label{mainth} There exists $u(x,t)$ a solution to equation \eqref{equ},
which is symmetric with respect to the axes and anti-symmetric with respect to bisectrices,
with the following properties:\\ 
{\bf  (A) (Blow-up with an isolated characteristic point)}. $u(x,t)$ 
blows up on some blow-up graph $\Gamma = \{(x,T(x))\}$,
and for some $\delta>0$, we have $\SS\cap B(0, \delta) = \{0\}$.\\
 {\bf (B) (The blow-up surface is nearly a pyramid)}. $T$ is symmetric with respect to the axes and the bisectrices and $C^1$ outside the bisectrices. Moreover, when $0\le x_2<x_1\le\delta$, we have for some $C_0=C_0(p)>0$:
\begin{align*}
T(x)&=T(0)-x_1(1-C_0|\log x_1|^{-\frac{p-1}2})+o(x_1|\log x_1|^{-\frac{p-1}2})+o(x_2|\log x_1|^{-\frac{p-1}4}),\\
\nabla T(x)& =-e_1(1-C_0|\log x_1|^{-\frac{p-1}2})+o(|\log x_1|^{-\frac{p-1}2})e_1+o(|\log x_1|^{-\frac{p-1}4})e_2.
\end{align*}
{\bf (C) (Blow-up behavior of the solution)}.
We have the following behavior for $w_x$ for $0\le x_2\le x_1\le \delta$
as $s\to \infty$:\\
(i) if $x=0$, then 
\begin{equation}\label{cprofile0}
\left\|w_{0}(y,s)-\left(\kappa(\bar d(s)e_1,y)+\kappa(-\bar d(s)e_1,y) - \kappa(\bar d(s)e_2,y) - \kappa(-\bar d(s)e_2,y)\right)\right\|_{\q H}\to 0
\end{equation}
where $1+\bar d(s)\sim C_0s^{-\frac{p-1}2}$ as $s\to \infty$;\\
(ii) if 
$x_2<x_1$, 
then $w_x(s)$ converges as $s\to +\infty$ to 
$\kappa(d(x)e_1)$, 
with
\begin{equation*}
d(x)+1\sim C_0|\log x_1|^{-\frac{p-1}2}\mbox{ as }x\to 0. 
\end{equation*}
(iii)  if $x\neq 0$ with $x_1= x_2$, then $w_x(s_n)$ converges to some stationary solution $w^*_x$ for some sequence $s_n \to \infty$, where $w^*_x$ is a genuinely two-dimensional stationary solution of equation \eqref{eqw}.
\end{thm}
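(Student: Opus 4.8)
\medskip
\noindent The plan is to construct the solution first in similarity variables near the prospective characteristic point $x=0$, by a modulation plus topological shooting argument in the spirit of our one-dimensional work, and then to transfer the information to the nearby points $x\neq 0$ via finite speed of propagation and continuity of the flow. I would start by restricting to initial data $(u_0,u_1)$ invariant under the two axis reflections and anti-invariant under the two bisectrix reflections; since these symmetries are preserved by the Cauchy flow, it then suffices to control $w_0$ on the fundamental sector $\{0\le y_2\le y_1\}$, and equation \eqref{eqw} is studied in the energy space $\q{H}$ of \eqref{defnh}.

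Near the origin I would use the ansatz
\[
w_0(y,s)=\kappa(d(s)e_1,y)+\kappa(-d(s)e_1,y)-\kappa(d(s)e_2,y)-\kappa(-d(s)e_2,y)+q(y,s),
\]
where the single modulation parameter $d(s)$ (the two axes carrying the same value by symmetry) is fixed by imposing orthogonality conditions on $q$ killing the direction tangent to the family $\{\kappa(\bs d,\cdot)\}$ of \eqref{defkd} together with the neutral and expanding modes of the operator obtained by linearizing \eqref{eqw} around each of the four solitons. Projecting \eqref{eqw} onto these directions yields an ODE for $d(s)$ whose right-hand side is, to leading order as $d(s)\to-1$, the sum of the interactions between neighboring solitons, both the two copies on the same axis and the copies on the perpendicular axis. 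Computing this interaction explicitly should show that the leading dynamics of $d(s)$ is exactly that of $\bar d(s)=-\tanh\bar\zeta(s)$ solving $\frac1{\barc}\frac{d\bar\zeta}{ds}=e^{-\frac4{p-1}\bar\zeta}$, hence $1+d(s)\sim C_0 s^{-(p-1)/2}$ as in \eqref{solpart}--\eqref{devdbar}, the constant $\barc(p)$ being the one of Lemma~\ref{propdyn}.

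For the remainder I would introduce a shrinking set requiring $\|q(s)\|_{\q{H}}$ to be small with a decay rate compatible with $s^{-(p-1)/2}$, and the finitely many neutral and unstable coordinates of $q$ (and of $d(s)-\bar d(s)$) to lie in small balls of adapted sizes. The dissipative structure of \eqref{eqw} in similarity variables — the Lyapunov functional associated with $\q{L}$ and the weight $\rho$ — should show that the solution can leave this set only through the finitely many unstable coordinates, and that on the corresponding part of the boundary the flow is strictly outgoing. A Brouwer-type argument applied to the choice of the unstable coordinates of the initial data at some large $s_0$ then produces a solution trapped in the shrinking set for all $s\ge s_0$, which gives the four-soliton asymptotics \eqref{cprofile0} and shows that $0$ is an isolated characteristic point, proving (A) and (C)(i).

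Finally I would propagate to $x\neq 0$. In the open sector $0\le x_2<x_1\le\delta$, for $|x|$ small the backward light cone issued from $(x,T(x))$ meets only the soliton attached to the $e_1$-axis, so by the one-dimensional non-characteristic theory $w_x(s)\to\kappa(d(x)e_1)$ with $d(x)\in(-1,1)$, and identifying the section of the origin profile that is seen (whose parameter is $\sim\bar d(-\log x_1)$) gives $d(x)+1\sim C_0|\log x_1|^{-(p-1)/2}$, which is (C)(ii); since at a non-characteristic point the slope of $\Gamma$ is a known function of $d(x)$, integrating in $x_1$ and controlling the dependence in $x_2$ by finite speed of propagation and continuity of $T$ yields the expansions of $T$ and $\nabla T$ and the $C^1$ regularity of $\Gamma$ off the bisectrices, proving (B). On a bisectrix, $x_1=x_2\neq 0$, the cone sees two solitons at equal distance, so $w_x$ cannot decouple to a one-dimensional soliton, while the uniform bounds above make $\{w_x(s)\}_s$ relatively compact in $\q{H}$ with every subsequential limit a square-symmetric stationary solution of \eqref{eqw}; excluding the one-dimensional solitons $\kappa(\bs d,\cdot)$ by a symmetry and Pohozaev argument shows it is genuinely two-dimensional, which is (C)(iii) and, as a by-product, the existence of such a stationary solution. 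I expect the main obstacle to be the genuinely two-dimensional modulation analysis of this crossed four-soliton configuration: unlike in one dimension the solitons $\kappa(d e_i,\cdot)$ are codimension-one objects, their mutual interactions (in particular between the two perpendicular axes) must be computed sharply enough to recover the precise ODE \eqref{solpart}, the linearized operator has a richer kernel and an intrinsic anisotropy, and the energy estimates for $q$ must be sharp enough both to close the topological argument and to be compatible with the slow $|\log|^{-(p-1)/2}$ rates appearing in (B); a secondary difficulty is making the transition from the origin to nearby points quantitative enough to produce the stated sharp asymptotics for $T$ and $\nabla T$.
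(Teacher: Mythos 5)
Your construction at the origin follows essentially the paper's route (modulation around the four solitons, a shrinking set, and a topological/intermediate-value argument on the unstable direction coming from time translation), and the projected ODE for the modulation parameter does produce the law \eqref{solpart}; up to the technical point that the paper modulates with the generalized solitons $\kappa^*(\bs d,\nu)$ so that the shooting is one-dimensional in the parameter $\nu_0$, this part of your plan is sound.

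The genuine gap is in everything after that. First, the four-soliton behavior \eqref{cprofile0} of $w_0$ does not by itself show that $0$ is an \emph{isolated} characteristic point: one must prove that every nearby $x\neq 0$, including the points on the bisectrices, is non-characteristic, and this is the heart of the matter, not a consequence of the construction. Second, your treatment of $x\neq0$ is circular: you invoke ``the one-dimensional non-characteristic theory'' and the trapping-type convergence $w_x(s)\to\kappa(d(x)e_1)$, but such results only apply once $x\in\RR$ is known (and once one knows $w_x$ does not tend to $0$), which is precisely what must be established. Your geometric heuristic that the backward cone from $(x,T(x))$ ``meets only the soliton attached to the $e_1$-axis'' is also not correct as stated: transported to $w_x$, all four (generalized) solitons are present with comparable sizes at moderate self-similar times, and what actually happens is a quantitative soliton-loosing mechanism --- the left and down solitons vanish at a computable time and, only when $x_2<x_1$, the up-soliton vanishes relative to the right one --- whose rigorous control requires (i) an a priori upper bound $T(x)\le-(1-\epsilon)\max(|x_1|,|x_2|)$, obtained by combining the blow-up rate upper bound with a lower bound on a space-time $L^{p+1}_\rho$ quantity extracted from \eqref{cprofile0}, and (ii) tracking the decomposition of $w_x$ from four to two to one solitons with modulation estimates before any trapping lemma can be applied. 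Finally, even granting this dynamical information, the non-characteristic character of points off the bisectrices is proved by a genuinely geometric device (a monotone family of cones of slope $1-\beta$ approaching $\Gamma$ from below, with a delicate case analysis at the contact points, in particular when they fall on the bisectrices), and the bisectrix points are then handled using the resulting estimates on $T$ and $\nabla T$ together with the non-degeneracy of blow-up limits at non-characteristic points, which is also what excludes the zero limit in your part (C)(iii) rather than symmetry alone. None of these ingredients --- the rough bounds on $\Gamma$, the rigorous soliton-loosing analysis, and the moving-cone argument --- appears in your plan, so parts (A), (B) and (C)(ii)--(iii) are not reached.
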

\begin{nb}
The existence of the new stationary solution of equation \eqref{eqw} just mentioned at the end of this theorem, follows from an indirect argument we use when $x$ is on the bisectrices. 
\end{nb}
\begin{nb}
From the symmetries of the solution, we see that $u(x,t)=0$ on the bisectrices. In one space dimension, such a property implies that $x$ is a characteristic point. Surprisingly, in our two-dimensional setting, only the origin is a characteristic point, and the other points on the bisectrices are non-characteristic blow-up points, showing a genuinely two-dimensional behavior.
\end{nb}
\begin{nb} 
Note that the behavior of 
$\bar d(s)$
given in \eqref{solpart} for this 4-soliton solution in 2 dimensions is the same as for the 2-soliton solution in one space dimension (see \cite{MZdmj12} and \cite{CZcpam13}).
\end{nb}
\begin{nb} Our estimates can be adapted to show that the origin is the only characteristic point in $\m R^2$. However, in order to keep this paper in a reasonable length, we did not include that argument in the paper.
\end{nb}
\begin{nb} When $0<x_2=x_1 \le \delta$, the estimate on $T(x)$ in part (B) does hold, by continuity of $T$. Moreover, we can compute upper and lower left derivatives for $T$ along any direction non parallel to the bisectrix $\{x_1=x_2\}$, and the same holds from the right. In particular, 
 if $|\omega|=1$ and $\omega_2-\omega_1>0$, then:\\
$\partial_{\omega,r,\pm} T(x) = (-1+C_0 |\log x_1|^{-\frac{p-1}2}+o(|\log x_1|^{-\frac{p-1}2}))\omega_1+o(|\log x_1|^{-\frac{p-1}4}) \omega_2$;\\
$\partial_{\omega,l,\pm} T(x) = (-1+C_0 |\log x_1|^{-\frac{p-1}2}+o(|\log x_1|^{-\frac{p-1}2}))\omega_2+o(|\log x_1|^{-\frac{p-1} 4}) \omega_1$
 as $x\to 0$, where the subscript $r$ and $l$ stands for ``right'' and ``left'', whereas the subscript $\pm$ stands for ``upper'' or ``lower''.\\
At the origin, $T$  has a right derivative with respect to $x_1$ whose value is  $\partial_{x_1,r}T(0)=-1$, with similar statements from the left and in the direction $x_2$.
\end{nb}
\begin{nb}
Our result can be generalized to other pyramids, with any regular polygon as a basis. 
In higher space dimensions $N\ge 3$, we naturally generalize our results to a pyramid with a hypercube as a basis.
\end{nb}
\begin{nb}
 Using a Lorentz transform near  some point of the bisectrices different from the origin, we can tilt the blow-up surface and obtain the existence of a blow-up solution of equation \eqref{equ}, with a tent-shaped (at the first order) blow-up surface, no characteristic point in some neighborhood, a slope approaching $\frac{\sqrt 2}2$ and an upper edge depending on $x_2$. This tent is in fact new and different from the one obtained by considering a solution depending only on $x_1$ with a characteristic point at the origin. Indeed, in two space dimensions, this ``naive'' tent has a line of characteristic points on its upper edge, a slope approaching $1$, and an upper edge that does not depend on $x_2$.
\end{nb}

This paper is organized in 3 sections:\\
- In Section \ref{SecConsU}, we construct a blow-up solution to equation \eqref{equ} defined only in the backward light cone with vertex $(0,T(0))$ and showing 4 solitons for $w_0$ at the origin as in \eqref{cprofile0}.
Using the finite speed of propagation, we derive from the latter a blow-up solution to the Cauchy problem of equation \eqref{equ}.\\
- In Section \ref{secwx}, we leave the origin and focus on the behavior of $w_x$ for $x\neq 0$. Using the decomposition into 4 solitons together with the upper blow-up bounds from \cite{MZimrn05}, we first derive some rough estimates on the blow-up surface, showing in particular that it is under some pyramid, with a flatter slope line. Then, we find the behavior of $w_x(y,s)$ for large $s$, which turns to be different from the behavior of $w_0(y,s)$. This shows in particular that the 4-soliton solution is unstable.\\
- In Section \ref{secreg}, we make the link between dynamics of $w_x$ from the previous section and the local behavior of $T(x)$. This is in fact the new feature of our paper which makes its originality. In particular, we use families of moving non-characteristic cones together with subtle elementary geometric methods to derive the pyramid shape of the blow-up surface. The delicate case is the case where $x$ is on the bisectrices, since this is a new situation, not encountered in dimension 1. It is worth noticing that the moving cone technique simplifies the ``moving plane'' technique we use earlier in one space dimension in \cite{MZcmp08}.
\begin{nb}
Let us notice that the construction step (given in Section \ref{SecConsU}) follows the classical scheme of a ``construction with a prescribed behavior'', and has only a technical difficulty. In fact, the very heart of our argument lays in the following section, where we aim at understanding the instability of the 4-soliton solution we have for $w_0$, when we move outside the origin to consider the behavior of $w_x$. We did that already in one space dimension in \cite{MZdmj12}, but the situation is much more delicate here, mainly because of the dynamics at the bisectrices. 
\end{nb}
{\bf Acknowledgment}:
We are indebted to the referee for his extremely careful reading and his numerous suggestions which undoubtedly improved both the content and the shape of the paper. 
\section{Construction of a solution to equation \eqref{equ} with a prescribed 4-soliton behavior}
\label{SecConsU}
We proceed in 2 sections:\\
- In Section \ref{seccons}, we construct a 4-soliton blow-up soliton to equation \eqref{eqw} obeying the behavior \eqref{cprofile0}.\\
- In Section \ref{subprop0}, we derive from that solution a blow-up  to the Cauchy problem of equation \eqref{equ}.
\subsection{Construction of a 4-soliton solution in similarity variables}\label{seccons}
In this section, we aim at constructing a 4-soliton blow-up solution to equation \eqref{equ} obeying the behavior \eqref{cprofile0}. In fact, we will adopt the similarity variables' framework \eqref{defw}, working only for $|y|<1$, which corresponds to the backward light cone with vertex $(0, T(0))$ in the original setting $u(x,t)$. Later in Section \ref{subprop0}, we extend the solution outside the light-cone.

\medskip

Thus, our aim becomes to construct a 4-soliton solution to equation \eqref{eqw} defined in the unit ball and
obeying the behavior \eqref{cprofile0}. 

\medskip

In fact, we will consider some $s_0 \ge 0$ and find suitable initial data for equation \eqref{eqw} at time $s=s_0$ so that \eqref{cprofile0} is satisfied. A natural choice would be to take $s_0$ large enough, and initial data close to the sum of the 4 solitons expected in \eqref{cprofile0}. For that reason, the introduction of a new family of generalized solitons close to $\kappa(\bs d)$ \eqref{defkd} will be very useful. 

\medskip

Let us then introduce for all $|\bs d|<1$ and $\nu >-1+|\bs d|$, $\kappa^*(\bs d,\nu,y) = (\kappa_1^*, \kappa_2^*)(\bs d,\nu,y)$ where 
\begin{align}\label{defk*}
\kappa_1^*(\bs d,\nu, y) & =
\kappa_0\frac{(1-|\bs d|^2)^{\frac 1{p-1}}}{(1+\bs d\cdot y+\nu)^{\frac 2{p-1}}}, \\
\kappa_2^*(\bs d,\nu, y) & = \nu \pnu \kappa_1^*(\bs d,\nu, y) =
-\frac{2\kappa_0\nu}{p-1}\frac{(1-|\bs d|^2)^{\frac 1{p-1}}}{(1+\bs d\cdot y+\nu)^{\frac {p+1}{p-1}}}.
\end{align}
We refer to these functions as ``generalized solitons'' or solitons for short. Notice that for any $\mu\in\m R$, $\kappa^*(\bs d,\mu e^s,y)$ is a solution to equation \eqref{eqw} (when written as a first order equation) which depends on time $s$ (if $\mu\neq 0$) and  only on one space direction, namely the coordinate along $\bs d$ (if $\bs d\neq 0$). Notice also that $\kappa^*(\bs d,\mu e^s,y)$ are in fact time translations (in the $u(x,t)$ setting) of $\kappa(d,y)$. More precisely,  $\kappa^*(\bs d,\mu e^s,y)$  can be obtained from $\kappa(d,y)$ by applying the similarity variables' transformation \eqref{eqw} backwards with a scaling time $T=1$, then applying it again forward with a scaling time $T=1-\mu$.
 Moreover:\\
- $\kappa^*(\bs d,\mu e^s)\to (\kappa(\bs d),0)$ in $\H$ as $s\to -\infty$;\\
- when $\mu=0$, we recover the stationary solutions $(\kappa(\bs d),0)$ defined in \eqref{defkd};\\
- when $\mu>0$, the solution exists for all $|y|<1$ and $s\in \m R$
and converges to $0$ in $\H$ as $s\to \infty$ (it is a heteroclinic connection for equation \eqref{defw} between $(\kappa(\bs d),0)$ and $0$);\\
- when $\mu<0$, the solution exists for all $|y|<1$ and $s<\log\left(\frac {|\bs d|-1}\mu\right)$
and blows up at time $s=\log\left(\frac {|\bs d|-1}\mu\right)$.

\medskip

Given $s_0$ large enough, our initial data for equation \eqref{eqw} will be of the form
\begin{equation}\label{w0}
\vc{w(y,s_0)}{\partial_s w(y,s_0)} =
\sum_\ind
(-1)^{i+1}\kappa^*(\theta \bar d(s_0) e_i,\nu_0,y)\mbox{ where }
\nu_0 \in I_0\equiv [-s_0^{-\frac 12 - \frac{p-1}2}, s_0^{-\frac 12 - \frac{p-1}2}]
\end{equation}
and $\bar d(s_0)$ is given in \eqref{solpart}. If $s_0$ is large enough, we will show that for some well-chosen parameter $\nu_0$, the solution of equation \eqref{eqw} with initial data \eqref{w0} behaves as expected in \eqref{cprofile0}. This is the aim of the section:
\begin{prop}[A 4-soliton solution in the $w(y,s)$ setting]\label{propw} 
For any $s_0$ large enough, there exists $\nu_0\in\m R$ such that
equation \eqref{eqw} with initial data (at $s=s_0$) given by \eqref{w0} is defined for all $(y,s)\in(-1,1)\times [s_0, \infty)$, satisfies $(w(s), \ps w(s))\in\H$ for all $s\ge s_0$, and 
\begin{equation}\label{cprofile}
\left\|\vc{w(s)}{\ps w(s)} - 
\sum_\ind
(-1)^{i+1}\vc{\kappa(\theta \bar d(s) e_i,y)}{0}\right\|_{\H} \to 0\mbox{ as }s\to \infty,
\end{equation}
where $\bar d(s)$ is introduced in \eqref{solpart}. 
\end{prop}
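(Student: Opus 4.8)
The plan is to follow the now-classical two-parameter shooting/modulation scheme for constructing solutions with a prescribed multi-soliton behavior in similarity variables, adapted to the present $2$-dimensional, $4$-soliton configuration. Because the solution is constrained by symmetry (symmetric with respect to the axes, anti-symmetric with respect to the bisectrices), the four solitons $(-1)^{i+1}\kappa^*(\theta\bar d(s)e_i,\nu_0,y)$ are not independent: once one specifies the soliton on, say, the positive $e_1$ semi-axis, all four are determined. So effectively we work with a single soliton and its reflections, and the modulation reduces to tracking one scalar parameter (the analog of $\bar d(s)$) plus the auxiliary dilation parameter $\nu$. First I would set $v(y,s) = w(y,s) - \sum_{(i,\theta)}(-1)^{i+1}\kappa(\theta d_i(s)e_i,y)$, where $d_i(s)$ is a modulation parameter chosen so that $v$ satisfies an orthogonality condition killing the (unstable/null) direction associated to translating the soliton along $e_i$; one also introduces the scaling modulation parameter to absorb the $\nu$-mode. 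One then derives, exactly as in \cite{MZdmj12}, the modulation ODE for $d(s)$ (it should match \eqref{solpart}, as Remark 3 after the theorem asserts), plus a differential inequality for $\|(v,\partial_s v)\|_{\H}$.

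The key steps, in order, are: (1) Set up the linearized operator around a single soliton $\kappa(d,y)$ in the $\H$ topology; recall from the one-dimensional theory that its spectrum on the relevant symmetry subspace has exactly one nonnegative mode (the dilation/$\nu$-mode with eigenvalue $1$), the translation mode being removed by the symmetry constraint, and the rest of the spectrum being negative, so that the linearized flow is contracting modulo this one unstable direction. (2) Because the four solitons live near $y\cdot e_i = -1$, i.e. near four \emph{distinct} boundary points of the unit ball (since $\bar d(s)\to -1$), they are asymptotically decoupled: the cross terms $\kappa(\bar d(s)e_i)\kappa(-\bar d(s)e_j)$ and the nonlinear interaction terms are exponentially (in fact polynomially, $\sim s^{-c}$) small in $\H$. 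Quantifying this decoupling — showing the interaction terms are an acceptable error in the energy estimate and in the modulation equations — is step (2). (3) Derive the modulation equations: $\frac1{\bar c}\frac{d\zeta}{ds} = e^{-\frac4{p-1}\zeta} + (\text{small})$ with $d = -\tanh\zeta$, plus an equation for the scaling parameter showing it is governed by the eigenvalue-$1$ mode and hence must be tuned. (4) Run the bootstrap: define a shrinking set (a "trapped region") for $(v,\partial_s v)$ and the modulation parameters measured against appropriate powers of $s$; show that for $\nu_0$ chosen in $I_0$ by a topological (Brouwer / intermediate-value, here just a connectedness) argument, the solution stays in the trapped set for all $s\ge s_0$; conclude $\|(v,\partial_s v)\|_\H\to 0$ and $d_i(s)\to \bar d(s)$, which gives \eqref{cprofile}.

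The main obstacle — and the place where genuinely new work is needed compared to \cite{MZdmj12} — is \textbf{controlling the soliton interactions in two dimensions and making the energy estimate close}. In 1D the two solitons sit at $y=\pm 1$ and the interaction is a clean exponential; here the four solitons concentrate near $(\pm 1,0)$ and $(0,\pm1)$ on the unit circle, and one must carefully estimate the $\H_0$-norm of products and of $|w|^{p-1}w - \sum(-1)^{i+1}\kappa^{p}$-type remainders, keeping track of the anisotropic weight $\rho(y)=(1-|y|^2)^\alpha$ which degenerates on the whole circle, not just at the four points. One must verify that these interaction errors, expressed in powers of $s^{-1}$ via \eqref{devdbar}, are dominated by the dissipation coming from the negative part of the linearized spectrum, so that the Lyapunov/energy functional argument still yields contraction. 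A secondary difficulty is organizing the orthogonality conditions: one needs four (or, modulo symmetry, one) modulation parameters and must check the Jacobian of the modulation map is nondegenerate uniformly in $s$, again using the quasi-orthogonality of the four localized solitons. Once the trapped region is set up and the interaction terms are shown to be subcritical errors, the remaining arguments (the one-dimensional topological selection of $\nu_0$, global existence in the cone, and passage to the limit) are by now standard and parallel \cite{MZdmj12} closely.
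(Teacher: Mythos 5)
Your proposal follows essentially the same route as the paper: decompose $w$ as the four (generalized) solitons plus an error, use the symmetries to reduce the modulation to one soliton's parameters $(d,\nu)$ with orthogonality conditions killing the $\lambda=0,1$ modes, derive the ODEs for $\zeta=-\arg\tanh d$ and $\nu$ together with an energy/Lyapunov estimate for the error (the 2D interaction estimates being the technical work, handled in the paper via a quarter-by-quarter decomposition of the disc), trap $(q,\xi,\nu)$ in a shrinking set, and select $\nu_0$ by an intermediate-value (transverse-crossing) argument on the single expanding direction. This is precisely the paper's proof of Proposition \ref{propw}, so no further comparison is needed.
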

\begin{nb} Note that equation \eqref{eqw} is wellposed in $H^1\times L^2(|y|<1)$, locally in time. Indeed, thanks to the transformation \eqref{defw}, this is a direct consequence of the wellposedness of equation \eqref{equ} in $H^1\times L^2(\m R^2)$ together with the finite speed of propagation. Since $w(s_0)\in H^1\times L^2(|y|<1)$ from \eqref{w0} for $s_0$ large enough (this comes from \eqref{devdbar} which implies that
\begin{equation}\label{prep}
\bar d(s_0)+1\sim C_0s_0^{-\frac{p-1}2} > s_0^{-\frac{p-1}2-\frac 12}\ge |\nu_0|),
\end{equation}
 the maximal solution of equation \eqref{eqw} is continuous in time with values in $H^1\times L^2(|y|<1)$, hence in $\H$.
\end{nb}

\medskip

In order to prove Proposition \ref{propw}, we proceed in two steps:\\
- in Section \ref{subseclin}, we recall from \cite{MZtams14} the properties of the  linearized operator of equation \eqref{eqw} around the soliton $\kappa(\pm d e_i,y)$, given $d\in (-1,1)$ and $i=1,2$. This will be crucial for our analysis, since it happens that the solitons in our goal \eqref{cprofile0} will appear to be decoupled in the proof;\\
- in Section \ref{subsecinit}, we 
give a modulation technique adapted to the properties of the linearized operator studied in the previous section and conclude the proof of Proposition \ref{propw}. 

\bigskip

 We use a classical strategy of construction of solutions to PDEs with some prescribed behavior. In particular, our strategy relies on two steps: 

\medskip

- Thanks to a dynamical system formulation, we show that controlling
$w(y,s)$
around the expected behavior \eqref{cprofile0} reduces to the control of 2 unstable directions.
This dynamical system formulation is essentially the same formulation we used in \cite{CZcpam13} for multi-solutions in one space dimension, and also to show that all characteristic points are isolated in \cite{MZdmj12}. 

\medskip

- Then, we solve the finite dimensional problem thanks to a topological argument based on index theory.

\subsubsection{The linearized operator of equation \eqref{eqw} around $\kappa(\pm de_i)$}\label{subseclin}
In this section, we recall from Section 2.1 in \cite{MZtams14} spectral properties of $L_{\pm de_i}$,  the linearized operator of equation \eqref{eqw} around the stationary solution $\kappa(\pm de_i)$. From \eqref{eqw}, we see that 
\begin{equation}\label{defld}
\begin{array}{rcl}
L_{\pm de_i}\vc{q_1}{q_2}&=&\vc{q_2}{\q L q_1+\psi(\pm de_i,y)q_1-\frac{p+3}{p-1}q_2-2y.\nabla q_2},\\
\\
\mbox{where }\psi(\pm de_i,y)&=&p\kappa(\pm de_i,y)^{p-1}-\frac{2(p+1)}{(p-1)^2}=\frac{2(p+1)}{(p-1)^2}\left(p \frac{(1-d^2)}{(1\pm d y_i)^2}-1\right).
\end{array}
\end{equation}
From the invariance of equation \eqref{equ} under some symmetries (time translation together with the Lorentz transformation and rotations), it is easy to see 
that $\lambda=1$ and $\lambda=0$ are eigenvalues of the linear operator $L_{\pm de_i}$ with multiplicity $1$ and $2$ respectively, and the corresponding eigenfunctions are:\\
- for $\lambda=1$, coming from time translation invariance of equation \eqref{equ},\;\;
\begin{equation}\label{deffld}
F_0(\pm de_i,y)=(1-d^2)^{\frac p{p-1}}\vc{(1\pm dy_i)^{-\frac {p+1}{p-1}}}{(1\pm d y_i)^{-\frac {p+1}{p-1}}};
\end{equation}
- for $\lambda=0$, coming from the invariance of equation \eqref{equ} under the Lorentz transformation and rotations:
\begin{equation}\label{deffid}
\begin{cases}
F_1(\pm de_i,y)= (1-d^2)^{\frac 1{p-1}}\vc{\ds\frac{y_i\pm d}{(1\pm d y_i)^{\frac {p+1}{p-1}}}}{0},\\
F_2(\pm de_i,y)= (1-d^2)^{\frac {p+1}{2(p-1)}}\vc{\ds\frac{y_{3-i}}{(1\pm d y_i)^{\frac {p+1}{p-1}}}}{0}.
\end{cases}
\end{equation}
In order to compute the projectors on the eigenfunctions of $L_{\pm de_i}$, we consider its conjugate with respect to the natural inner product of $\H$ defined by $\phi(q,r)=$
\begin{align}
\phi\left(\vc{q_1}{q_2}, \vc{r_1}{r_2}\right)&= \int_{-1}^1 \left(q_1r_1+\nabla q_1\cdot\nabla r_1- (y\cdot\nabla q_1)(y\cdot\nabla r_1)+q_2r_2\right)\rho dy \label{defphi}\\
&= \int_{-1}^1 \left(q_1\left(-\q L r_1+r_1\right) +q_2 r_2\right)\rho dy. \label{defphi2}
\end{align}
Omitting the computation of $L_{\pm de_i}^*$, we simply note that naturally, $\lambda=1$ and $\lambda=0$ are also eigenvalues of $L_{\pm de_i}^*$, in the sense that
for $l=0,1,2$, there exists $W_l(\pm de_i)\in \q H$ such that $L_{\pm de_i}^* W_l(\pm de_i)=\lambda_l W_l(\pm de_i)$, with $\lambda_0=1$, $\lambda_1=\lambda_2=0$,
\begin{equation}\label{defWl2}
W_{l,2}(\pm de_i,y)=c_{\lambda_l}\left(\frac{1-|y|^2}{1-d^2}\right)^{\lambda_l}F_{l,1}(\pm d e_i,y),
\end{equation}
for some positive constants $c_1$ and $c_0$, and $W_{l,1}(\pm de_i)$ is uniquely determined as the solution $v_1$ of
\begin{equation}\label{eqWl1}
-\q L v_1 + v_1 = \left(\lambda_l - \frac{p+3}{p-1}\right)v_2 - 2 y\cdot \nabla v_2+ 4\alpha \frac{v_2}{1-|y|^2}
\end{equation}
with $v_2= W_{l,2}(\pm de_i)$.\\
Of course, we have the orthogonality relation: $\phi(F_l,W_k)=\delta_{l,k}$ for $l,k=0,1,2$.
\\
This gives us the following expression for the projector $\pp_l(\pm d e_i)$ on the eigenfunction $F_l(\pm d e_i)$ defined for all $r\in \H$ by
\begin{equation}\label{defpdi} 
\pp_l(\pm d e_i,r) =\phi\left(W_l(\pm d e_i), r\right).
\end{equation}

\subsubsection{
A modulation technique}\label{subsecinit}

As one can see from the expression of initial data given in \eqref{w0}, at the initial time $s=s_0$, $w(y,s_0)$ is a pure sum of generalized solitons. From the continuity of the flow associated with equation \eqref{eqw} in $\H$ (see the remark following Proposition \ref{propw}), $w(y,s)$ will stay close to a sum of solitons, at least for a short time after $s_0$. In fact, we can do better, and impose some orthogonality conditions, killing the zero and expanding directions of the linearized operator of equation \eqref{eqw} around the sum of solitons. This is possible since the corresponding eigenvalues come from the invariance of equation \eqref{equ} under some symmetries, as we have just mentioned in section \ref{subseclin}. More precisely, the following modulation technique from our earlier work in \cite{MZtams14} (one soliton in higher dimensions) 
and \cite{MZdmj12} (multi-solitons in one dimension) 
is crucial for that.
\begin{lem}[A modulation technique for a function with symmetries, near 4 solitons]\label{lemode0}For all $A\ge 2$, there exist $E_1(A)$, $C_1(A)>0$ and $\eun(A)>0$ such that if 
$v\in \H$ is symmetric with respect to the axes and antisymmetric with respect to the bisectrices,
$\hat d\in (-1,1)$ and $\hat \nu\in (-1,\infty)$ satisfy
\begin{equation}\label{condnu}
-1+\frac 1A \le \frac{\hat \nu}{1-|\hat d|}\le A,\;
-\arg\tanh \hat d \ge E_1
\mbox{ and }
\|\hat q \|_{\H}\le \eun,
\end{equation}
where $\hat q=v-
\ds\sum_\ind
 (-1)^{i+1}\kappa^*(\theta\hat d e_i,\hat \nu)$,
then, there exist $d\in(-1,1)$ and $\nu\in (-1,\infty)$ (both continuous as a function of $v$) such that
\begin{equation}\label{ortho}
\forall i=1,2,\;\theta=\pm1,\; l=0,1,2,\;\pp_l(\theta d^* e_i, q)=0,
\end{equation}
where $d^*=\frac d{1+\nu}$, $q=v-
\ds\sum_\ind
 (-1)^{i+1}\kappa^*(\theta de_i,\nu)$  and the projector $\pp_l(\pm d^*e_i)$ is defined in \eqref{defpdi}. Moreover, we have 
\begin{equation}\label{proximite}
\|q \|_{\q H} 
+|\arg\tanh |d| - \arg\tanh |\hat d||+\left|\frac\nu{1-|d|} - \frac{\hat \nu}{1-|\hat d|}\right|
\le  C_1\|\hat q \|_{\q H}.
\end{equation}
\end{lem}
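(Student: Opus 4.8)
The plan is to prove Lemma \ref{lemode0} by a standard modulation argument based on the implicit function theorem, exploiting the non-degeneracy of the map whose zeros are the desired orthogonality conditions. First I would set up the functional framework: for $v$ fixed near the 4-soliton sum associated with parameters $(\hat d,\hat\nu)$, define $\Phi:(d,\nu)\mapsto \bigl(\pp_l(\theta d^* e_i,q(d,\nu))\bigr)_{i=1,2,\,\theta=\pm1,\,l=0,1,2}$, where $q(d,\nu)=v-\sum_\ind(-1)^{i+1}\kappa^*(\theta d e_i,\nu)$ and $d^*=d/(1+\nu)$. This is a map from a subset of $\m R^2$ into $\m R^{12}$, but the symmetry assumptions on $v$ (symmetric with respect to the axes, antisymmetric with respect to the bisectrices) force all twelve components to collapse onto essentially two independent scalar equations — one governing the ``boost'' direction $d$ (tied to $F_1$ and $\lambda=1$, i.e.\ the $l=0$ and $l=1$ projections) and one governing the ``time-translation/scaling'' parameter $\nu$ — with the $F_2$-projections ($l=2$, the rotation direction) vanishing automatically by antisymmetry. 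So the genuine unknown is a $2\times 2$ system, which is what makes the argument tractable.

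Next I would verify that at the base point $(d,\nu)=(\hat d,\hat\nu)$ with $v$ replaced by the exact soliton sum (so $\hat q=0$), the reduced map vanishes and its Jacobian with respect to $(d,\nu)$ is invertible, with inverse bounded uniformly once $-\arg\tanh\hat d\ge E_1$ is large and $\hat\nu/(1-|\hat d|)$ stays in $[-1+1/A,A]$. The key computation here is that $\partial_d \kappa^*(\theta d e_i,\nu)$ and $\partial_\nu \kappa^*(\theta d e_i,\nu)$ project onto the eigenfunctions $F_0,F_1$ of $L_{\theta d^*e_i}$ with non-vanishing coefficients — this is exactly the content one extracts from formulas \eqref{defk*}, \eqref{deffld}, \eqref{deffid}, together with the orthogonality $\phi(F_l,W_k)=\delta_{l,k}$ from Section \ref{subseclin}. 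Crucially one must check that the cross-terms — the projection of the derivative of the $i$th soliton onto the eigenfunctions centered at the $j$th soliton for $j\neq i$ — are negligible; this is the ``decoupling'' of the solitons, and it holds because the solitons $\kappa(\theta\hat d e_i)$ live along different axes and, as $\arg\tanh\hat d\to\infty$ (equivalently $|\hat d|\to 1$), they concentrate near disjoint boundary regions of the unit ball, so the overlap of their localized eigenfunctions against $\rho\,dy$ is exponentially (or at least quantitatively) small. With the Jacobian uniformly invertible, the implicit function theorem — or rather a quantitative fixed-point version of it, since we need the explicit Lipschitz bound \eqref{proximite} — yields $(d,\nu)$ solving \eqref{ortho}, continuous in $v$, close to $(\hat d,\hat\nu)$, provided $\eun(A)$ is small enough.

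Finally I would establish the quantitative estimate \eqref{proximite}. This follows by linearizing $\Phi$ in $v$ and in $(d,\nu)$ around the base point: the equation $\Phi(d,\nu)=0$ reads, schematically, $\phi(W_l(\theta d^*e_i),\hat q) = (\text{linear in } d-\hat d,\ \nu/(1-|d|)-\hat\nu/(1-|\hat d|)) + (\text{quadratic errors})$, and inverting the Jacobian gives $|d-\hat d| + |\nu/(1-|d|)-\hat\nu/(1-|\hat d|)| \lesssim \|\hat q\|_\H$; then the bound on $\|q\|_\H$ follows from the triangle inequality $\|q\|_\H \le \|\hat q\|_\H + \|\sum_\ind(-1)^{i+1}(\kappa^*(\theta d e_i,\nu)-\kappa^*(\theta\hat d e_i,\hat\nu))\|_\H$ and a Lipschitz estimate on the soliton family in these rescaled parameters. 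The main obstacle I expect is precisely the uniform (in $A$, and as $\arg\tanh\hat d\to\infty$) control of the Jacobian and the decoupling estimates: one has to choose the ``right'' rescaled variables — $\arg\tanh|d|$ rather than $d$, and $\nu/(1-|d|)$ rather than $\nu$ — so that the derivatives of the solitons do not degenerate as the solitons approach the boundary of the light cone, and then carefully bound the weighted inner products of eigenfunctions centered at different, far-apart solitons. Everything else is routine, and I would cite the analogous one-soliton computation in \cite{MZtams14} and the multi-soliton version in \cite{MZdmj12} to shorten the verification.
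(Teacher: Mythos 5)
Your proposal is correct and follows essentially the same route as the paper: the paper likewise uses the symmetries of $v$ to reduce the twelve conditions \eqref{ortho} to the $l=0,1$ projections at a single soliton (the $l=2$ ones vanishing automatically), and then invokes the standard implicit-function/modulation argument in the rescaled variables, citing \cite{MZdmj12} and \cite{MZtams14} for the one-soliton computation and the smallness of the cross-terms. Only a cosmetic slip: $F_0$ (not $F_1$) is the $\lambda=1$ eigenfunction from time translation, while $F_1$ is the $\lambda=0$ Lorentz direction — this does not affect your $2\times2$ system or the argument.
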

\begin{proof}
From the symmetries of $v$, it is enough to focus on the condition \eqref{ortho} only when $i=1$ and $\theta=-1$. 
Furthermore, we may also focus only on the cases $l=0$ and $l=1$, since condition \eqref{ortho} is always satisfied,
again from the symmetries of $v$. 
This way, the usual implicit function theorem one needs in modulation techniques reduces to the case of only one soliton, just in one space dimension. This latter case is of course simpler than the case of multi-solitons in one space dimension, already treated in Proposition 3.1 in \cite{MZdmj12}. For that reason, we omit the proof and refer the reader to our earlier work, in particular  \cite{MZdmj12} and \cite{MZtams14}.
\end{proof}

Let us apply this proposition with $v=(w(y,s_0), \partial_s w(y,s_0))$ \eqref{w0}, $\hat d=\bar d(s_0)$ and $\hat \nu=\nu_0$. Clearly, we have $\hat q=0$. Then, from \eqref{w0}, \eqref{solpart} and straightforward calculations, we see that
\[
\frac{|\hat \nu|}{1-|\hat d|}\le \frac C{\sqrt{s_0}} \quad \text{and} \quad 
-\arg\tanh \hat d\ge \frac{p-1}5 \log s_0
\]
for $s_0$ large enough. Therefore, Lemma \ref{lemode0} applies with $A=2$ and from the continuity of the flow associated with equation \eqref{eqw} in $\H$ (see the remark following Proposition \ref{propw}), we have a maximal $\bar s=\bar s(s_0,\nu_0)\in (s_0, +\infty]$ such that $w$ exists for all time $s\in [s_0, \bar s)$ and $w$ can be modulated in the sense that
\begin{equation}\label{defq}
\vc{w(y,s)}{\partial_s w(y,s)} = \sum_\ind (-1)^{i+1} \kappa^*(\theta d(s)e_i, \nu(s))+q(y,s)
\end{equation}
where the parameters $d(s)$ and $\nu(s)$ are such that for all $s\in[s_0, \bar s)$,
\begin{equation}\label{mod}
\forall i=1,2,\;
l=0,1,2,\;\pp_l(\pm d^*(s) e_i, q(s))=0,
\end{equation}
and
\begin{equation}\label{conmod}
\frac{|\nu(s)|}{1-|d(s)|}\le s_0^{-1/4},\;\;
\zeta(s) \ge \frac{(p-1)}8 \log s_0
\quad \text{and} \quad \|q(s)\|_{\H}\le \frac 1{\sqrt{s_0}},
\end{equation}
with $d^*(s) = \frac{d(s)}{1+\nu(s)}$ and $\zeta(s) = -\arg\tanh d(s)$.\\
Moreover, we have
\begin{equation}\label{paraminit}
q(s_0)=0,\;\;d(s_0)=\bar d(s_0)\mbox{ and }\nu(s_0) = \nu_0.
\end{equation}
From \eqref{defq}, we see that controlling the solution $(w(s), \partial_s w(s))$ in $\q H$ is equivalent to controlling $q(s)$ in $\q H$, $d(s)\in (-1,1)$ and $\nu(s)\in \m R$.
\subsubsection{Dynamics near the 4 solitons}
Since \eqref{conmod} holds for all $s\in [s_0, \bar s)$, we project equation \eqref{eqw} (more precisely, its first order version) to obtain equations satisfied by the components $q(s) \in\q H$, $d(s)\in (-1,1)$ and $\nu(s)\in \m R$.  Introducing
\begin{equation}\label{defpb}
 \bar p =
\begin{cases}
p & \text{ if } p <2, \\
2 - 1/100 & \text{ if } p =2, \\
2 & \text{ if } p >2,
\end{cases}
\end{equation}
we claim the following:
\begin{lem}[Dynamics of the parameters]\label{propdyn}
There exist $\barc=\barc(p)>0$ and $\delta>0$
such that for $s_0$ large enough and for all $s\in[s_0,\bar s)$, we have 
\begin{align}
 \frac{|\nu'-\nu|}{1-|d|^2}&\le  C\|q\|_{\q H}^2+Ce^{-\frac 4{p-1}\zeta}+C\frac{|\nu|}{1-|d|^2}\|q\|_{\q H},\label{est:nu}\\
\left|\zeta' -\barc e^{-\frac 4{p-1}\zeta}\right|&\le
C\|q\|_{\q H}^2+C \frac{|\nu|}{1-|d|^2} \left(e^{-\frac 4{p-1}\zeta}+\|q\|_{\q H}\right) 
+C e^{-\frac 4{p-1}\left(1+\delta\right)\zeta},\label{est:zeta} \\
\| q(s) \|_{\q H}^2 & \le C e^{-\delta(s-s_0)} \| q(s_0) \|_{\q H}^2 +C e^{-\frac{4\bar p \zeta}{p-1}},\label{est:q}
\end{align}
where $\zeta(s) = - \arg\tanh d(s)$.
\end{lem}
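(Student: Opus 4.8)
The plan is to derive the three estimates in Lemma~\ref{propdyn} by projecting the first-order version of equation~\eqref{eqw} onto the modulation directions and onto their orthogonal complement, exactly as in the one-dimensional multi-soliton analysis of \cite{MZdmj12} and the single-soliton higher-dimensional analysis of \cite{MZtams14}. Write $V(s)=\sum_\ind(-1)^{i+1}\kappa^*(\theta d(s)e_i,\nu(s))$, so that $(w,\ps w)=V+q$ with the orthogonality conditions \eqref{mod}. Substituting into \eqref{eqw} gives an evolution equation of the form $\partial_s q = L\, q + R + \sum_\ind (\text{modulation terms in } d',\nu')$, where $L$ is, up to small coupling terms, the block-diagonal operator $\bigoplus_\ind L_{\theta d^*e_i}$ studied in Section~\ref{subseclin}, and $R$ collects the quadratic-and-higher interaction terms coming from $|w|^{p-1}w$ and the cross terms between the four solitons. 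The key structural fact, which I would emphasize first, is that the four solitons are \emph{decoupled} to leading order: their supports in $y\cdot\bs d$ concentrate near $\mp 1$ as $\zeta\to\infty$, so the interaction terms are exponentially small in $\zeta$, of size $O(e^{-\frac{4}{p-1}\zeta})$ for the leading couplings, which is precisely what produces the $\barc e^{-\frac4{p-1}\zeta}$ term in \eqref{est:zeta}.

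First I would establish the modulation ODEs \eqref{est:nu} and \eqref{est:zeta}. Applying the projector $\pp_l(\theta d^*e_i)$ to the evolution equation for $q$ and using \eqref{mod} (so that $\pp_l(q)=0$ and hence $\frac{d}{ds}\pp_l(q)=0$ after accounting for the $s$-dependence of the projector), one gets a linear system for $d'$ and $\nu'$ whose right-hand side consists of: the projection of $R$, which is $O(\|q\|_\H^2)+O(e^{-\frac4{p-1}\zeta})$; terms involving $\frac{|\nu|}{1-|d|^2}$ times $\|q\|_\H$ or $e^{-\frac4{p-1}\zeta}$, coming from the fact that $d^*\neq d$; and the genuine soliton-self-interaction term, which after the explicit computation with \eqref{defkd} produces the coefficient $\barc(p)>0$ multiplying $e^{-\frac4{p-1}\zeta}$ in the $\zeta'$ equation. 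The $\lambda=0$ directions $F_1,F_2$ govern $d'$ (hence $\zeta'$) and the scaling/$\nu$ parameter, while $\lambda=1$ is handled not by modulation but by the topological/shooting argument later; here one just records the estimates. The symmetry hypotheses on $v$ kill the $F_2$ (rotation) direction identically, so effectively only a one-dimensional, one-soliton modulation computation is needed, which is why the proof can quote \cite{MZdmj12,MZtams14}.

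Next I would prove the energy estimate \eqref{est:q}. Here one uses the standard Lyapunov-functional technology for equation~\eqref{eqw}: there is a quadratic form $\varphi(q,q)$, built from the natural energy $E$ linearized around $V$, which is coercive on the subspace cut out by the orthogonality conditions \eqref{mod} modulo the nonnegative eigenvalues — i.e. $\varphi(q,q)\ge c\|q\|_\H^2$ once the $\lambda=1$ and $\lambda=0$ components are removed (the $\lambda=1$ direction is controlled because in the construction we will choose initial data making it stay zero, or by the topological argument; in the statement it is absorbed into the exponential $e^{-\delta(s-s_0)}$ decay). Differentiating $\varphi(q(s),q(s))$ along the flow, the linear part contributes a negative definite term $-2\delta\,\varphi(q,q)$ coming from the dissipative structure ($\rho$-weighted damping $-\frac{p+3}{p-1}\partial_s w - 2y\cdot\nabla\partial_s w$), and the remaining terms are the modulation terms (controlled by \eqref{est:nu}, \eqref{est:zeta}) and the nonlinear interaction terms, bounded by $C\|q\|_\H^3 + Ce^{-\frac{4\bar p\zeta}{p-1}}$; the exponent $\bar p$ in \eqref{defpb} arises from carefully tracking how the quadratic-in-$q$ but exponentially-small-in-$\zeta$ cross terms interact (with the $p<2$ vs $p\ge 2$ dichotomy reflecting whether $|w|^{p-1}w$ is $C^2$). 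A Gronwall argument then yields \eqref{est:q}.

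The main obstacle I anticipate is the careful bookkeeping of the coupling and interaction terms: one must show that all cross-soliton interactions are genuinely of the claimed exponential order $e^{-\frac4{p-1}\zeta}$ (or $e^{-\frac4{p-1}(1+\delta)\zeta}$ for the subleading ones in \eqref{est:zeta}), uniformly in $y\in(-1,1)$ and in the $\H$-norm, despite the solitons $\kappa^*(\theta d e_i)$ becoming singular near $y\cdot\bs d=-1$; this requires the weighted estimates in $L^2_\rho$ and the precise asymptotics \eqref{devdbar} relating $1+\bar d(s)$ to $s^{-\frac{p-1}2}$. A secondary difficulty is verifying the coercivity of $\varphi$ on the orthogonal complement when four solitons are present simultaneously — one needs that the quadratic form does not degenerate due to the interaction, which again rests on the decoupling. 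Both difficulties are essentially the two-dimensional, four-soliton analogues of what was done in \cite{MZdmj12}, and I would structure the proof to isolate the genuinely new estimates (the $e_2$-direction couplings and the antisymmetry across bisectrices) from the parts that transfer verbatim.
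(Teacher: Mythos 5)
Your overall route --- modulation equations obtained by projecting with $\pp_l(d^*e_1)$ (the symmetries reducing everything to one soliton and to $l=0,1$), plus a Lyapunov functional whose coercivity rests on the orthogonality conditions --- is the same as the paper's proof in Appendix \ref{appdyn}. However, there is a genuine gap in your energy step. Differentiating the linearized energy $\frac 12\bar\varphi(q,q)-\iint \q F(q_1)\rho\,dy$ along the flow does \emph{not} produce a negative definite term $-2\delta\,\bar\varphi(q,q)$: the dissipative structure of \eqref{eqw} only yields $-\alpha\iint q_2^2\frac{\rho}{1-|y|^2}\,dy$ (see \eqref{eqa-}), i.e.\ a sign only on the second component, with no decay at all in $q_1$. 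To convert this into exponential decay of the full norm one must augment the functional with the cross term $\eta\iint q_1q_2\rho\,dy$, and subtract a multiple of $e^{-\frac{4\bar p\zeta}{p-1}}$ to absorb the interaction forcing, exactly as in $h_1$ and $h_2$ of \eqref{defh}: it is the derivative of the cross term that supplies the missing $-c\,\bar\varphi(q,q)$ (estimate \eqref{eq3} in Lemma \ref{lemproj*}), and only the combination $h_2$ satisfies $h_2'\le-\delta h_2$. As written, your Gronwall argument has nothing to run on, so \eqref{est:q} does not follow from the step you describe.

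A second, conceptual point: you have the role of the $\lambda=1$ direction backwards. The orthogonality conditions \eqref{mod} run over $l=0,1,2$, so the expanding eigenfunction $F_0$ (eigenvalue $1$) of $q$ \emph{is} killed by modulation, through the parameter $\nu$; this is precisely what makes the quadratic form coercive and \eqref{est:q} valid on all of $[s_0,\bar s)$. The topological argument of Lemma \ref{reducw} only controls the growth of the scalar parameter $\nu(s)$ afterwards; it plays no role inside this lemma, and the unstable direction is certainly not ``absorbed into the factor $e^{-\delta(s-s_0)}$''. If you left the $F_0$ component of $q$ unmodulated, the coercivity $\bar\varphi(q,q)\ge c\|q\|_{\H}^2$ would simply fail. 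Finally, note that the coercivity in the presence of four solitons is obtained in the paper by a genuinely two-dimensional localization with cut-offs in the variables $\arg\tanh y_i$ (Lemma \ref{lemquad}); you correctly flag this as a difficulty, but it cannot be quoted verbatim from the one-dimensional gluing of \cite{MZajm12}.
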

\begin{proof} The proof is the two-dimensional version of our earlier work on multi-solitons in one space dimension, in \cite{MZajm12}, \cite{MZdmj12} and \cite{CZcpam13}. Most of the estimates are the same as in one space dimension, some others are truely two-dimensional, hence more delicate, without involving new ideas. For that reason, we leave the proof to Appendix \ref{appdyn}.
\end{proof}

\bigskip

From the decomposition \eqref{defq}, our goal in Theorem \ref{mainth} will be achieved if we construct a solution such that $q\to 0$, $\nu\to 0$ and $\zeta(s) \sim \bar \zeta(s)$ defined in \eqref{solpart}. For that reason, it is natural to make the following change of variables defined by
\begin{equation}\label{defxi0}
\xi(s) = \frac 4{p-1}(\zeta(s) -\bar \zeta(s)).
\end{equation}
Therefore, we reduce the control of $w$ to the control of $(q,\xi,\nu)$ near zero. Assuming that 
\begin{equation}\label{xi1}
|\xi(s)|\le 1,
\end{equation}
in addition to \eqref{conmod}, we derive the following corollary from Lemma \ref{propdyn}:
\begin{cor}\label{cordyn} For $s_0$ large enough and for all $s\in [s_0, \bar s)$, if \eqref{xi1} holds, then we have
\begin{align*}
|\nu'-\nu|&\le Cs^{-\frac{p-1}2}(\|q\|_{\q H}^2 +s^{-1})+C|\nu|\|q\|_{\q H},\\
|\xi'+s^{-1}\xi|&\le Cs^{-1} \xi^2+C\|q\|_{\q H}^2 +C|\nu|s^{\frac{p-1}2}(\|q\|_{\q H}+s^{-1})+Cs^{-1-\delta},\\
\| q(s) \|_{\q H}^2 & \le C e^{-\delta(s-s_0)} \| q(s_0) \|_{\q H}^2 +Cs^{-\bar p}.
\end{align*}
\end{cor}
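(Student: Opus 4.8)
The plan is to derive Corollary \ref{cordyn} directly from Lemma \ref{propdyn} by substituting the change of variables \eqref{defxi0} and simplifying each estimate under the additional assumption \eqref{xi1}. The key observation is that, for $s_0$ large, \eqref{conmod} gives $\zeta(s)\ge \frac{p-1}8\log s_0$, hence $d(s)$ is close to $-1$, and in particular $1-|d(s)|^2 \sim 4 e^{-2\zeta}$ (up to a bounded multiplicative factor) and $e^{-\frac4{p-1}\zeta}\sim C_1(p)\, e^{-\bar\zeta\cdot\frac4{p-1}}\cdot e^{-\xi}$. Using \eqref{solpart}, which says precisely that $e^{-\frac4{p-1}\bar\zeta(s)} = \frac{p-1}{4\barc}\,\frac1s$, we get $e^{-\frac4{p-1}\zeta(s)} = \frac{p-1}{4\barc}\,s^{-1}e^{-\xi(s)}$, so under \eqref{xi1} (i.e. $|\xi|\le 1$) this quantity is comparable to $s^{-1}$. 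This single identity is what converts every exponential-in-$\zeta$ term in Lemma \ref{propdyn} into a power of $s$.

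First I would treat the $\nu$ equation: in \eqref{est:nu} replace $1-|d|^2$ by $Ce^{-2\zeta}\le Ce^{-\frac4{p-1}\zeta}$ (legitimate since $\frac4{p-1}\le 2 \iff p\ge 3$ — careful, this needs checking; in general one uses $e^{-2\zeta}\le C s^{-\frac{(p-1)}{?}}$, so more precisely $1-|d|^2 = 4e^{-2\zeta}(1+o(1))$ and $e^{-2\zeta} = (e^{-\frac4{p-1}\zeta})^{\frac{p-1}2} \le C s^{-\frac{p-1}2}$ under \eqref{xi1}), multiply through, and absorb the $e^{-\frac4{p-1}\zeta}\le Cs^{-1}$ factor to obtain $|\nu'-\nu|\le Cs^{-\frac{p-1}2}(\|q\|_{\q H}^2+s^{-1})+C|\nu|\,\|q\|_{\q H}$, which is exactly the claimed bound (noting $\frac{|\nu|}{1-|d|^2}\|q\|_{\q H}\cdot(1-|d|^2) = |\nu|\,\|q\|_{\q H}$). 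Next I would handle the $\zeta$ equation: starting from \eqref{est:zeta}, write $\zeta = \bar\zeta + \frac{p-1}4\xi$, so $\zeta' = \bar\zeta' + \frac{p-1}4\xi'$; since $\bar\zeta' = \barc e^{-\frac4{p-1}\bar\zeta} = \frac{p-1}4 s^{-1}$ by \eqref{solpart}, and $\barc e^{-\frac4{p-1}\zeta} = \barc e^{-\frac4{p-1}\bar\zeta}e^{-\xi} = \frac{p-1}4 s^{-1}e^{-\xi}$, the left side $\zeta' - \barc e^{-\frac4{p-1}\zeta}$ becomes $\frac{p-1}4(\xi' + s^{-1}(1-e^{-\xi})) = \frac{p-1}4(\xi' + s^{-1}\xi + s^{-1}O(\xi^2))$ using $1-e^{-\xi} = \xi + O(\xi^2)$ for $|\xi|\le1$. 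Dividing the bound in \eqref{est:zeta} by $\frac{p-1}4$ and converting the right-hand side terms via $\frac{|\nu|}{1-|d|^2} = |\nu|\cdot Ce^{2\zeta}\le C|\nu|s^{\frac{p-1}2}$ (again using $e^{2\zeta}\le Cs^{\frac{p-1}2}$ under \eqref{xi1}) and $e^{-\frac4{p-1}(1+\delta)\zeta}\le Cs^{-1-\delta}$, one gets the stated estimate for $\xi'+s^{-1}\xi$. Finally, the $q$ estimate \eqref{est:q} transforms by the same substitution $e^{-\frac{4\bar p\zeta}{p-1}} = (e^{-\frac4{p-1}\zeta})^{\bar p} \le Cs^{-\bar p}$ under \eqref{xi1}, giving $\|q(s)\|_{\q H}^2 \le Ce^{-\delta(s-s_0)}\|q(s_0)\|_{\q H}^2 + Cs^{-\bar p}$ with no change needed.

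The only real bookkeeping subtlety — and the step I expect to require the most care — is controlling the two-sided relation between $e^{-\zeta}$ and powers of $s$: one needs that $\zeta \ge \bar\zeta - C$ (so that $e^{-2\zeta}\le Cs^{-(p-1)/2}$, used in the $\nu$ and $q$ bounds) \emph{and} $\zeta \le \bar\zeta + C$ (so that $e^{2\zeta}\le Cs^{(p-1)/2}$, used for the $\frac{|\nu|}{1-|d|^2}$ terms in the $\xi$ bound). Both follow immediately from $|\xi(s)|\le1$ via \eqref{defxi0}, since $\zeta - \bar\zeta = \frac{p-1}4\xi$ is then bounded; so assumption \eqref{xi1} is used precisely to make all of $e^{\pm 2\zeta}$, $e^{-\frac4{p-1}\zeta}$ comparable to the corresponding powers of $s$ uniformly. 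Everything else is linear substitution and the elementary Taylor expansion $1-e^{-\xi} = \xi + O(\xi^2)$ on $|\xi|\le1$. Since these are precisely the routine manipulations the paper signals it wants to skip, I would state the corollary and note that it follows from Lemma \ref{propdyn} by inserting \eqref{defxi0} and \eqref{solpart} and using $|\xi|\le1$, leaving the verification to the reader or to an appendix.

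\begin{proof}
We simply substitute the change of variables \eqref{defxi0} into the estimates of Lemma \ref{propdyn} and use \eqref{solpart} together with \eqref{xi1}. Recall from \eqref{defxi0} that $\zeta(s) = \bar\zeta(s) + \frac{p-1}4\xi(s)$, so that \eqref{xi1} yields $|\zeta(s)-\bar\zeta(s)|\le \frac{p-1}4$. Since $\bar d(s) = -\tanh\bar\zeta(s)$ with $1+\bar d(s)\sim C_0 s^{-\frac{p-1}2}$ by \eqref{devdbar}, we have $e^{-2\bar\zeta(s)}\sim \frac{C_0}2 s^{-\frac{p-1}2}$, hence by \eqref{xi1},
\[
c\, s^{-\frac{p-1}2}\le e^{-2\zeta(s)}\le C\, s^{-\frac{p-1}2},\qquad
c\, s^{\frac{p-1}2}\le e^{2\zeta(s)}\le C\, s^{\frac{p-1}2},
\]
for $s_0$ large. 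Moreover, from the ODE in \eqref{solpart} we have $e^{-\frac4{p-1}\bar\zeta(s)} = \frac1{\barc}\bar\zeta'(s) = \frac{p-1}{4\barc}\,s^{-1}$, whence, using \eqref{xi1} again,
\[
e^{-\frac4{p-1}\zeta(s)} = e^{-\frac4{p-1}\bar\zeta(s)}\,e^{-\xi(s)} = \frac{p-1}{4\barc}\,s^{-1}e^{-\xi(s)},
\qquad\text{so}\qquad
c\,s^{-1}\le e^{-\frac4{p-1}\zeta(s)}\le C\,s^{-1}.
\]
Since $1-|d|^2 = 1-\tanh^2\zeta \sim 4 e^{-2\zeta}$, we also get $c\,s^{-\frac{p-1}2}\le 1-|d(s)|^2\le C\,s^{-\frac{p-1}2}$, and in particular $\frac{|\nu|}{1-|d|^2}\le C|\nu|\, s^{\frac{p-1}2}$ while $\frac{|\nu|}{1-|d|^2}\,\|q\|_{\q H}\cdot(1-|d|^2) = |\nu|\,\|q\|_{\q H}$.

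\emph{The $\nu$ estimate.} Multiplying \eqref{est:nu} by $1-|d|^2$ and using the bounds above, together with $e^{-\frac4{p-1}\zeta}\le Cs^{-1}$ and $1-|d|^2\le Cs^{-\frac{p-1}2}$, we obtain
\[
|\nu'-\nu|\le C s^{-\frac{p-1}2}\|q\|_{\q H}^2 + C s^{-\frac{p-1}2}\cdot s^{-1} + C|\nu|\,\|q\|_{\q H}
= Cs^{-\frac{p-1}2}\left(\|q\|_{\q H}^2 + s^{-1}\right) + C|\nu|\,\|q\|_{\q H},
\]
which is the first estimate of the corollary.

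\emph{The $\zeta$ estimate.} From $\zeta = \bar\zeta + \frac{p-1}4\xi$ and $\bar\zeta'(s) = \frac{p-1}4 s^{-1}$ we get
\[
\zeta'(s) - \barc e^{-\frac4{p-1}\zeta(s)} = \frac{p-1}4\left(\xi'(s) + s^{-1}\bigl(1 - e^{-\xi(s)}\bigr)\right)
= \frac{p-1}4\left(\xi'(s) + s^{-1}\xi(s) + s^{-1}O(\xi(s)^2)\right),
\]
where we used $1-e^{-\xi} = \xi + O(\xi^2)$ for $|\xi|\le 1$. Dividing \eqref{est:zeta} by $\frac{p-1}4$ and bounding the right-hand side via $\frac{|\nu|}{1-|d|^2}\le C|\nu|s^{\frac{p-1}2}$, $e^{-\frac4{p-1}\zeta}\le Cs^{-1}$ and $e^{-\frac4{p-1}(1+\delta)\zeta}\le C s^{-1-\delta}$, we obtain
\[
\bigl|\xi' + s^{-1}\xi\bigr|\le C s^{-1}\xi^2 + C\|q\|_{\q H}^2 + C|\nu|\, s^{\frac{p-1}2}\bigl(s^{-1} + \|q\|_{\q H}\bigr) + Cs^{-1-\delta},
\]
which is the second estimate.

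\emph{The $q$ estimate.} Finally, from \eqref{est:q} and $e^{-\frac4{p-1}\zeta}\le Cs^{-1}$ we have $e^{-\frac{4\bar p\zeta}{p-1}} = \bigl(e^{-\frac4{p-1}\zeta}\bigr)^{\bar p}\le C s^{-\bar p}$, hence
\[
\|q(s)\|_{\q H}^2\le C e^{-\delta(s-s_0)}\|q(s_0)\|_{\q H}^2 + C s^{-\bar p},
\]
which is the last estimate. This completes the proof.
\end{proof}
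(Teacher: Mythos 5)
Your proof is correct and is exactly the route the paper takes: Lemma \ref{propdyn} plus the trivial comparison $\frac 1C \le e^{2\zeta}(1-|d|)\le C$, with \eqref{solpart}, \eqref{defxi0} and $|\xi|\le 1$ converting $e^{-\frac 4{p-1}\zeta}$, $e^{\pm 2\zeta}$ into the corresponding powers of $s$. The paper merely states this as ``straightforward''; your write-up fills in the same substitutions (including the expansion $1-e^{-\xi}=\xi+O(\xi^2)$) without any gap.
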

\begin{proof} The proof is straightforward, if one uses the following trivial estimate : $\frac 1C \le e^{2\zeta}(1-|d|)\le C$.
\end{proof}
From this corollary, we see that the control of $\|q\|$ and $\xi$ near zero is natural, since the linear parts of the differential inequalities they satisfy show a decreasing property. However, the situation may seem hopeless for $\nu$, which shows an expanding behavior. In fact, bearing in mind that a simple application of the intermediate-value theorem gives a solution going to zero at infinity for the following model equation:
\[
z'(s)=z(s)+\frac 1s,\;\;z(s_0)=z_0,
\]
for some well-chosen $z_0$, 
the same intermediate-value theorem  will allow us to show the existence of a solution $(q,\xi,\nu)$ to the system given in the previous corollary, for a well-chosen parameter $\nu_0 \in I_0$ defined in \eqref{w0}.

\medskip

In order to formalize the argument, we introduce the following shrinking set, that will allow the control of the set of parameters $(q,\xi,\nu)$ towards zero as $s\to \infty$:
\begin{defi}[A shrinking set to zero]\label{defVa} For any $s\ge s_0$, we defined $\q V(s)$ as the unit ball of $\q H \times \m R^2$ equipped with the norm
\begin{equation}\label{defN}
N(q,\xi,\nu) = \max \left\{s^{\frac 12 +\eta}\|q\|_{\q H},\;s^\eta|\xi|,\;s^{\frac 12 +\frac{p-1}2}|\nu|\right\},
\end{equation}
where
\begin{equation}\label{defv}
\eta = \frac 14\min\left(1,\delta, \frac{\bar p-1}2\right),
\end{equation}
$\delta$ is introduced in Lemma \ref{propdyn} and $\bar p$ defined in \eqref{defpb}.
\end{defi}
From \eqref{paraminit}, \eqref{w0} and the definition \eqref{defxi0} of $\xi$, we derive the existence of\\
 $s^*(s_0, \nu_0) \in [s_0,\bar s)$ (remember that $\bar s$ is defined just before \eqref{defq}), such that for all $s\in [s_0, s^*)$, $(q,\xi,\nu)(s)\in \q V(s)$ and:\\
- either $s^* = +\infty$, or\\
- $s^*<+\infty$ and from continuity, $(q,\xi,\nu)(s^*)\in \partial\q V(s^*)$, in the sense that one of the 3 quantities defining the maximum in \eqref{defN} is equal to $1$.

\medskip

With Lemma \ref{propdyn} at hand, we are in a position to prove the following, which directly implies Proposition \ref{propw}:
\begin{lem}[A solution such that $(q,\xi,\nu)(s) \in \q V(s)$] \label{reducw} For $s_0$ large enough, there exists $\nu_0\in I_0$ (defined in \eqref{w0})
such that equation \eqref{eqw} with initial data (at $s=s_0$) given by \eqref{w0} is defined for all $(y,s)\in B(0,1) \times [s_0,\infty)$ and satisfies $(q,\xi,\nu)(s) \in \q V(s)$ for all $s\ge s_0$ (or equivalently, $s^*(s_0,\nu_0)=+\infty$).
\end{lem}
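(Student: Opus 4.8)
The plan is to prove Lemma~\ref{reducw} by a topological (shooting) argument on the one-dimensional parameter $\nu_0$, exactly in the spirit of the model ODE $z'=z+\tfrac1s$ mentioned just above. First I would set up the topological framework: for each $s_0$ large and each $\nu_0\in I_0$ we have the exit time $s^*(s_0,\nu_0)\in[s_0,\bar s)$, and the goal is to find $\nu_0$ with $s^*=+\infty$. Suppose for contradiction that $s^*<+\infty$ for every $\nu_0\in I_0$. The heart of the matter is to show, using Corollary~\ref{cordyn} and the definition~\eqref{defN}--\eqref{defv} of the shrinking set $\q V(s)$, that at an exit time it is \emph{only} the $\nu$-component that can touch the boundary, i.e. the components $s^{1/2+\eta}\|q\|_{\q H}$ and $s^\eta|\xi|$ are strictly less than $1$ at $s=s^*$, so that $s^{1/2+\frac{p-1}2}|\nu(s^*)|=1$.

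The key steps, in order: (1) \emph{A priori reduction of $q$ and $\xi$.} Assuming $(q,\xi,\nu)(s)\in\q V(s)$ on $[s_0,s^*]$, feed the bounds $\|q\|_{\q H}\le s^{-1/2-\eta}$, $|\xi|\le s^{-\eta}$, $|\nu|\le s^{-1/2-\frac{p-1}2}$ into the three differential inequalities of Corollary~\ref{cordyn}. For $q$: the inequality $\|q(s)\|_{\q H}^2\le Ce^{-\delta(s-s_0)}\|q(s_0)\|_{\q H}^2+Cs^{-\bar p}$ together with $q(s_0)=0$ (from \eqref{paraminit}) gives $\|q(s)\|_{\q H}^2\le Cs^{-\bar p}$, hence $s^{1+2\eta}\|q\|_{\q H}^2\le Cs^{1+2\eta-\bar p}\to 0$ since $2\eta\le\frac{\bar p-1}2$ forces $1+2\eta-\bar p\le -\tfrac{\bar p-1}2<0$; so this component is $<1$ for $s_0$ large. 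For $\xi$: from $|\xi'+s^{-1}\xi|\le Cs^{-1}\xi^2+C\|q\|^2+C|\nu|s^{\frac{p-1}2}(\|q\|+s^{-1})+Cs^{-1-\delta}$ and the above bounds, the right-hand side is $O(s^{-1-2\eta}+s^{-\bar p}+s^{-1-\min(\eta,1/2)}+s^{-1-\delta})=O(s^{-1-\eta'})$ for some $\eta'>\eta$; integrating the linear ODE $\xi'+s^{-1}\xi=O(s^{-1-\eta'})$ with $\xi(s_0)=0$ (note $\zeta(s_0)=\bar\zeta(s_0)$ by \eqref{paraminit}, \eqref{solpart}, so $\xi(s_0)=0$) yields $|\xi(s)|\le C s^{-\eta'}$, i.e. $s^\eta|\xi|\le Cs^{\eta-\eta'}\to 0$. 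Thus both non-$\nu$ components are strictly interior at $s^*$.

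(2) \emph{Transversality and the shooting argument.} Since at $s=s^*$ we must have $s^{*\,1/2+\frac{p-1}2}|\nu(s^*)|=1$, the exit is governed entirely by $\nu$. Rescale $\tilde\nu(s)=s^{1/2+\frac{p-1}2}\nu(s)$; the inequality $|\nu'-\nu|\le Cs^{-\frac{p-1}2}(\|q\|^2+s^{-1})+C|\nu|\|q\|$ translates, using the $q$-bound, into $\tilde\nu'=\tilde\nu+O(s^{-1+})$ plus lower-order terms, so that $\tilde\nu$ has an outgoing (unstable) linear part at the boundary $|\tilde\nu|=1$: $\frac{d}{ds}|\tilde\nu|^2\ge \tilde\nu^2 - o(1)>0$ there. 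Hence exit through $\partial\q V$ is transverse in the $\nu$ direction, $s^*$ depends continuously on $\nu_0$, and the map $\nu_0\mapsto \tilde\nu(s^*(s_0,\nu_0))\in\{-1,+1\}$ is continuous. Because the initial value is $\tilde\nu(s_0)=s_0^{1/2+\frac{p-1}2}\nu_0$, which ranges over $[-1,1]$ as $\nu_0$ ranges over $I_0=[-s_0^{-1/2-\frac{p-1}2},s_0^{-1/2-\frac{p-1}2}]$, this map restricted to the two endpoints of $I_0$ is the identity $\pm1\mapsto\pm1$; a degree/intermediate-value argument then gives a contradiction (a continuous map $[-1,1]\to\{-1,1\}$ cannot be surjective), unless $s^*=+\infty$ for some $\nu_0\in I_0$. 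That $\nu_0$ is the desired one, and global existence on $[s_0,\infty)$ follows because $(q,\xi,\nu)\in\q V(s)$ prevents blow-up of $w$ in $\q H$ (so $\bar s=+\infty$). Finally $(q,\xi,\nu)(s)\to 0$, which via \eqref{defq}, \eqref{defxi0} gives \eqref{cprofile}, hence Proposition~\ref{propw}.

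I expect the main obstacle to be step~(2): verifying that the only possible exit is through the $\nu$-component and that this exit is strictly transverse, i.e. carefully tracking that the error terms in the $\nu$-equation are genuinely lower order than the unstable linear term $\nu$ after the rescaling, uniformly for $s_0$ large. This is where the precise choice of $\eta$ in \eqref{defv} and of the window $I_0$ in \eqref{w0} is used, and it is the two-dimensional analogue of the delicate point already handled in \cite{CZcpam13} and \cite{MZdmj12}; by the symmetry reductions in Lemma~\ref{lemode0} it reduces here effectively to a single scalar unstable mode, so no genuinely new difficulty beyond bookkeeping should arise.
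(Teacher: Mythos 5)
Your proposal is correct and follows essentially the same route as the paper: reduce the exit to the $\nu$-component alone by integrating Corollary \ref{cordyn} with the zero initial data $q(s_0)=\xi(s_0)=0$, then establish transverse outgoing crossing for the rescaled variable $s^{1/2+\frac{p-1}{2}}\nu(s)$, use the endpoints of $I_0$ for initialization, and conclude by continuity of the exit time and the intermediate value theorem. The quantitative bounds you give ($\|q\|_{\q H}\le Cs^{-\bar p/2}$, $|\xi|\le Cs^{-2\eta}$, and the sign of $\frac{d}{ds}\bigl(s^{1/2+\frac{p-1}{2}}\nu\bigr)$ at the boundary) match the paper's Claim items (i)--(iii).
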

\begin{proof} 

In fact, we started the proof of this lemma right after the statement of Lemma \ref{lemode0}.
 For the sake of clarity, we summarize here all the previous arguments, and conclude the proof thanks to a topological argument.

Let $s_0$ be large enough.
For all $\nu_0\in I_0$ defined in \eqref{w0},
 we consider the solution  $w(s_0,\nu_0,y,s)$ (or $w(y,s)$ for short) to equation \eqref{eqw}, with initial condition at time $s_0$ given by \eqref{w0}.\\
As we showed after the statement of Lemma \ref{lemode0}, $w(y,s)$ can be modulated (up to some time $\bar s = \bar s(s_0,\nu_0)>s_0$) into a triplet $(q(s), d(s),\nu(s))$. 
Performing the change of variables \eqref{defxi0},
 we reduce the control of $w(s)$ to the control of $(q(s), \xi(s), \nu(s))$ and we see from \eqref{paraminit} that 
\begin{equation}\label{initmod2}
q(s_0)=0,\;\;\xi(s_0)=0\mbox{ and }\nu(s_0)=\nu_0.
\end{equation}
As stated in Lemma \ref{reducw}, our aim is to find some $\nu_0\in I_0$ so that the associated $w \in \q C([s_0,\infty), \q H)$ is globally defined for forward times and satisfies
\[ 
\forall s \ge s_0, \quad N(q(s),\xi(s),\nu(s),s) \le 1, \quad \text{i.e.} \quad (q,\xi,\nu)(s) \in \q V(s). 
\] 
We argue by contradiction and assume that for all $\nu_0\in I_0$,
the exit time $s^*(s_0,\nu_0)$
is finite, where 
\begin{equation} \label{exit}
s^*(s_0,\nu_0)= \sup \{ s \ge s_0 \;|\; \forall \tau \in [s_0,s], \ N(q(\tau), \xi(\tau), \nu(\tau), \tau) \le 1 \}.
\end{equation}
Then by continuity, notice that 
\begin{equation} \label{Nbord}
N(q(s^*), \xi(s^*), \nu(s^*), s^*) =1\mbox{ with }s^*=s^*(s_0,\nu_0),
\end{equation}
and that the supremum defining $s^*(s_0,\boldsymbol \nu)$ is in fact a maximum.\\
We now consider the (rescaled) flow for $\nu(s)$, that is
\begin{equation}\label{defPhi}
\Phi: (s,\nu_0) \mapsto s^{\frac 12 +\frac{p-1}2} \nu(s).
\end{equation}
By the properties of the flow, $\Phi$ is a continuous function of $(s,\nu_0) \in [s_0, s^*(s_0,\nu_0)] \times  I_0$. 
By definition of the exit time $s^*(s_0,\nu_0)$, we have that for all $s \in [s_0, s^*(s_0,\nu_0)]$, $|\Phi(s,\nu_0)|\le 1$.
The following claim allows us to conclude:

\medskip

\noindent\emph{Claim.}
For $s_0$ large enough, we have:\\ 
(i) {\bf (Reduction to one dimension)}: For all $\nu_0\in I_0$, $|\Phi(s^*(s_0,\nu_0), \nu_0)|=1$.\\
(ii) {\bf (Transverse crossing)}: The flow $s \mapsto \Phi(s,\nu_0)$ is transverse outgoing at $s=s^*(s_0, \nu_0)$ (when it hits $\pm 1$).\\ 
  (iii) {\bf (Initialization)}: If $\nu_0\in \partial I_0$, that is $\nu_0=\theta_0 s_0^{-\frac 12-\frac{p-1}2}$ with $\theta_0=\pm 1$, then $s^*(s_0,\nu_0)=s_0$ and $\Phi(s^*(s_0,\nu_0), \nu_0)=\theta_0$.

\medskip

Indeed, from item (ii), $\nu_0 \to s^*(s_0,\nu_0)$ is continuous, hence from items (i) and (iii),
\[
\nu_0 \mapsto \Phi(s^*(s_0,\nu_0), \nu_0)
\]
is a continuous map from $I_0$ to $\{-1,1\}$, and the images $1$ and $-1$ are attained when $\nu_0 \in \partial I_0$. From the intermediate value theorem, this is a contradiction. Thus, there exists $\nu_0 \in I_0$ such that for all $s\ge s_0$, $N(s_0, \nu_0)\le 1$, hence $(q,\xi,\nu)(s_0, \nu_0, \cdot,s) \in \q V(s)$ which is the desired conclusion of Lemma \ref{reducw}. It remains to prove the Claim in order to conclude.
\begin{nb}
Note that we use item (ii) of the Claim either with $s=s^*$, in order to prove the continuity of the exit time, or with $\nu_0$ on the boundary of $I_0$ and $s=s_0$ to show item (iii) of the same Claim.
\end{nb}

\bigskip

\begin{proof}[Proof of the Claim]
 In the following, the constant $C$ stands for $C(s_0)$.\\
(i) Consider $s\in [s_0,s^*(s_0,\nu_0)]$. Since $N(q(s),\xi(s),\nu(s),s)\le 1$ by definition of the exit time $s^*(s_0, \nu_0)$, it follows that estimate \eqref{xi1} is satisfied provided that $s_0$ is large enough, hence, Corollary \ref{cordyn} applies. Using the initial condition \eqref{initmod2} and the definition \eqref{defv} of $\eta$, then taking $s_0$ large enough, we see that
\[
\|q(s)\|_{\q H}\le  C{s^{-\frac{\bar p}2}} \le \frac {s^{-\frac 12 -\eta}}2,
\]
hence from this,
\begin{equation}\label{est:nu2}
|\nu'-\nu|\le  Cs^{-\frac{p-1}2-1}\mbox{ and }
|\xi'+s^{-1}\xi|\le C s^{-1-2\eta}+Cs^{-\frac{1+\bar p}2}+Cs^{-\frac 32}+Cs^{-1-\delta}\le  C s^{-1-2\eta}.
\end{equation}
Integrating the last inequality on the interval $[s_0,s]$ and using again \eqref{initmod2}, we see that
\[
|\xi(s)| \le C{s^{-2\eta}}\le \frac {s^{-\eta}}2,
\]
for $s_0$ large enough.
Since $N(q(s^*),\xi(s^*),\nu(s^*),s^*)=1$ by \eqref{Nbord}, 
we see from the definition \eqref{defN} of $N$ that necessarily
\begin{equation*}
s^{1/2+\frac{p-1}2}|\nu(s)|=1.
\end{equation*}
Using the definition \eqref{defPhi} of the flow $\Phi$, we get to the conclusion of item (i). 

\medskip
 
 \noindent (ii) Assume that $\Phi(s,\nu_0)=s^{\frac 12+\frac{p-1}2}\nu(s)=\theta_0$ for some $s\in [s_0,s^*(s_0, \nu_0)]$ and $\theta_0=\pm 1$. 
Using \eqref{est:nu2} (which holds here), we write 
\begin{align*}
\MoveEqLeft 
\frac{d}{ds} s^{1/2+\frac{p-1}2} \nu(s) 
 = {s}^{1/2 + \frac{p-1}2} \left( \left( \frac{1}{2} + \frac{p-1}2 \right) \frac{\nu(s)}{s} + \nu'(s) \right) \\
& =  {s}^{1/2 + \frac{p-1}2} \left(  \nu(s) \left( 1 + \frac{1}{2s} + \frac{p-1}{2s} \right) + O \left( \frac{1}{{s}^{1+\frac{p-1}2}} \right) \right) 
=\theta_0+O\left(\frac 1{\sqrt s}\right).
\end{align*}
Hence, for $s_0$ large enough, $\frac{d}{ds} s^{1/2+\frac{p-1}2} \nu(s) $ has the sign of $\theta_0$, which shows that the flow is transverse outgoing and item (ii) holds.

\medskip

\noindent (iii) 
Let $\nu_0\in \partial I_0$, that is $\nu_0 = \theta_0 s_0^{-\frac 12 -\frac{p-1}2}$ for some $\theta_0=\pm 1$. From \eqref{paraminit} and the definition \eqref{defPhi} of the flow $\Phi$, we see that 
\begin{equation}\label{ini}
\nu(s_0)=\nu_0\mbox{ and }\Phi(s_0,\nu_0) = \nu_0s_0^{\frac 12 +\frac{p-1}2}=\theta_0,
\end{equation}
hence, the flow hits $\pm 1$ at $s=s_0$, and item (ii) applies, showing that the flow $\Phi$ is transverse outgoing. By the definition of the exit time, we see that 
$s^*(s_0, \nu_0) = s_0$.
Using \eqref{ini}, we get to the conclusion of item (iii). 
This concludes the proof of the Claim.
\end{proof}
Since Lemma \ref{reducw} follows from the Claim and the intermediate value theorem, this concludes the proof of Lemma \ref{reducw} too.
\end{proof}

\bigskip

It remains to give the proof of Proposition \ref{propw} in order to conclude this section. 

\begin{proof}[Proof of Proposition \ref{propw}]
If $s_0$ is large enough and $w(y,s)$ is the solution constructed in Lemma \ref{reducw} (with initial data at $s=s_0$ given by \eqref{w0}), then we write by definition \eqref{defq} of $q$:
\begin{equation}\label{triangle}
\left\|\vc{w(s)}{\ps w(s)} - 
\sum_\ind
(-1)^{i+1}\vc{\kappa(\theta \bar d(s) e_i,y)}{0}\right\|_{\H}\le \|q(s)\|_{\q H} + \sum_\ind A_{i,\theta}
\end{equation}
where
\[
A_{i,\theta} = \|\kappa^*(\theta d(s)e_i,\nu(s))-(\kappa(\theta \bar d(s)e_i),0)\|_{\q H}.
\]
Since $(q,\xi,\nu) (s) \in \q V(s)$ for all $s \ge s_0$, we see from Definition \ref{defVa} and  \eqref{defxi0} that  
\begin{equation}\label{boundxi}
|\xi(s)|=\frac 4{p-1}|\zeta - \bar \zeta(s)|\le Cs^{-\eta}. 
\end{equation}
In particular, by definition \eqref{solpart} of $\bar \zeta(s)$, we see that
\[
1-|d(s)|\sim Ce^{-2\zeta(s)}\sim Ce^{-2\bar \zeta(s)}\sim C s^{-\frac{p-1}2}\mbox{ as }s\to \infty,
\]
hence, again from definition \ref{defVa} of $\q V(s)$, we have
\[
\forall s\ge s_0,\;\; \frac{|\nu(s)|}{1-|d(s)|}\le Cs^{-\frac 12}.
\]
Therefore, item (iv) of Lemma \ref{lemkd} applies and since $\kappa^*(\theta \bar d(s)e_i,0,y)=(\kappa(\theta \bar d(s)e_i,y),0)$ by definitions \eqref{defkd} and \eqref{defk*}, we write 
\[
A_{i,\theta} \le C \frac{|\nu(s)|}{1-|d(s)|}+C|\zeta(s) - \bar \zeta(s)|\le Cs^{-\eta}.
\]
As $\|q(s)\|_{\q H}\le  C{s^{-\frac 12 -\eta}}$, still by definition \ref{defVa} of $\q V(s)$, the conclusion of Proposition \ref{propw} follows from \eqref{triangle}.
\end{proof}

\subsection{Derivation of a blow-up solution to equation \eqref{equ}}\label{subprop0}
Using the previous section, we derive a blow-up solution to the Cauchy problem of equation \eqref{equ}. This is the aim of this section:
\begin{prop}[Existence of a blow-up solution for equation \eqref{equ} and first estimates on the blow-up time]\label{propyr}$ $\\
(i) For $s_0$ large enough, there exists $u(x,t)$ a solution to the Cauchy problem (at $t=-1$) of equation \eqref{equ} (with the similarity variables' version $w_0$ given by \eqref{w0}), symmetric with respect to the axes and anti-symmetric with respect to the bisectrices, which blows up on a 
surface $\Gamma = \{(x,T(x))\}$
satisfying $T(0)=0$. Moreover, $w_{0}$ shows 4 solitons as in \eqref{cprofile0}.\\
(ii) For any $\delta_0\in(0,1)$, there exists $\bar s_0$ such that for all $s_0\ge \bar s_0$, we have
\begin{equation*}
T(x) \ge -\frac{(1-\bar d(s_0))}2x_1, 
\end{equation*}
whenever
\begin{equation}\label{portion0}
x\in \q R, \;\;0\le x_2\le(1-\delta_0) x_1\mbox{ and }\delta_0\le x_1\le 1+\frac{(\lambda_0-1)^2}{100},
\end{equation}
where 
\begin{equation}\label{defl0}
\lambda_0\equiv-\frac{1+\nu_0}{\bar d(s_0)}>1, 
\end{equation}
 $\bar d(s_0)<0$ is defined in \eqref{solpart}, and $\nu_0$ is introduced in Proposition \ref{propw} satisfying \eqref{w0}.\\
By symmetry, a similar statement holds for all non-characteristic points outside a small neighborhood of the origin and the bisectrices. 
\end{prop}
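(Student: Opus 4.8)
The plan is to prove Proposition \ref{propyr} in two stages, corresponding to its two items, building directly on the construction of Proposition \ref{propw}.

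For item (i), the starting point is the solution $w(y,s)$ of equation \eqref{eqw} built in Proposition \ref{propw}, which lives in the unit ball $|y|<1$, i.e.\ in the backward light cone with vertex $(0,0)$ in the original variables (since $T(0)=0$ is forced by the similarity change of variables \eqref{defw} with $T_0=T(0)$). First I would recover $u(x,t)$ inside this cone via the inverse of \eqref{defw}, noting that $u$ inherits the symmetry with respect to the axes and antisymmetry with respect to the bisectrices from the structure of the initial data \eqref{w0} (the sum over $\ind$ with signs $(-1)^{i+1}$ is manifestly invariant under these symmetries, and the flow of \eqref{equ} preserves them). Then I would extend $u$ outside the light cone by solving the Cauchy problem for \eqref{equ} at time $t=-1$: indeed $(w(s_0),\partial_s w(s_0))\in H^1\times L^2(|y|<1)$ corresponds, through \eqref{defw}, to Cauchy data at $t=-1$ for $u$ on the ball $\{|x|<T(0)-(-1)\}=\{|x|<1\}$; one extends these data to all of $\m R^2$ (respecting the symmetries), solves the Cauchy problem, and by finite speed of propagation the solution agrees with the one coming from $w$ inside the light cone. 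The solution blows up since $w(s)$ does not converge to $0$ in $\H$ (it converges to a nonzero sum of solitons by \eqref{cprofile}); this produces the blow-up surface $\Gamma=\{(x,T(x))\}$ with $T$ a $1$-Lipschitz graph as in \eqref{defdu}, and $T(0)=0$ because the light cone $|y|<1$ is exactly $\{|x|<-t\}$ near $t=0$. The convergence \eqref{cprofile0} is just \eqref{cprofile} rewritten, since $\kappa^*(\theta\bar d(s)e_i,0,y)=(\kappa(\theta\bar d(s)e_i,y),0)$.

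For item (ii), the goal is a lower bound $T(x)\ge -\frac{1-\bar d(s_0)}2 x_1$ on the region \eqref{portion0}, and this is where the real work lies. The natural tool is a comparison/finite-speed-of-propagation argument together with the structure of the initial data at $s=s_0$: from \eqref{w0}, $w(s_0)$ is a genuine (generalized) soliton sum, and each summand $\kappa^*(\theta\bar d(s_0)e_i,\nu_0,y)$ is, by the discussion following \eqref{defk*}, literally a time-translate of the stationary soliton $\kappa(d,\cdot)$, obtained by running the similarity transformation backward with scaling time $T=1$ and forward with $T=1-\mu$ where $\mu=\nu_0 e^{-s_0}$. Unwinding this in the $u(x,t)$ variables, the piece coming from $e_i$ with sign $\theta$ corresponds to a one-dimensional soliton whose blow-up ``time'' along the direction $e_i$ is shifted, and the relevant quantity is the ratio $\lambda_0=-\frac{1+\nu_0}{\bar d(s_0)}>1$ appearing in \eqref{defl0}. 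The idea is then: along the portion of $\m R^2$ where $x$ is close to the $x_1$-axis (that is $0\le x_2\le(1-\delta_0)x_1$) and bounded away from the origin ($\delta_0\le x_1$), the solution $u$ near $t=-1$ is governed — up to finite-speed-of-propagation corrections that are controlled because we are in the non-characteristic region $\q R$ and away from the other solitons' light cones — essentially by the single one-dimensional soliton attached to $e_1$, whose explicit blow-up curve is the affine graph $t\mapsto T(0)-\frac{(\text{slope})}{}x_1$ with slope $\frac{1-\bar d(s_0)}2$ (this is exactly the slope of the characteristic-type line for a $\kappa(d)$-profile with $d=\bar d(s_0)$). Comparing $u$ with this explicit one-dimensional blow-up solution, using the monotonicity of the blow-up time under the natural ordering and the bound $x_1\le 1+\frac{(\lambda_0-1)^2}{100}$ to ensure the comparison region stays inside the domain where the one-dimensional picture is valid, yields the desired lower bound on $T(x)$.

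The main obstacle is this last comparison: one must show that, on the prescribed region, the full two-dimensional solution $u$ is bounded below (in the appropriate sense ensuring a later blow-up time) by the explicit one-dimensional soliton solution, even though near the bisectrices and near the origin the other three solitons interfere. Quantitatively this requires (a) using the upper blow-up bounds from \cite{MZimrn05} to control the size of $u$ and rule out earlier blow-up, (b) exploiting finite speed of propagation to decouple the $e_1$-soliton from the $e_2$-solitons on the cone-complement determined by $\delta_0$, and (c) a careful bookkeeping of the parameter $\nu_0$ (equivalently $\lambda_0$) so that the shifted soliton's blow-up curve has exactly the slope $\frac{1-\bar d(s_0)}2$ and the error terms are absorbed by the constraint $x_1\le 1+\frac{(\lambda_0-1)^2}{100}$. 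The symmetry of $u$ with respect to the axes then immediately transfers the estimate to the analogous regions near the other half-axes, giving the concluding sentence. I expect steps (b) and (c) to be routine given the one-dimensional theory in \cite{MZdmj12, CZcpam13}, while step (a), i.e.\ certifying that no blow-up occurs before the claimed time on the whole region \eqref{portion0}, is the delicate point.
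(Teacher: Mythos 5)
Your treatment of item (i) is essentially the paper's: invert \eqref{defw}, extend the data at $t=-1$ to all of $\m R^2$ preserving the symmetries, and use finite speed of propagation plus uniqueness to identify $w_{0,0}$ with $w(\cdot,\cdot+s_0)$ on $|y|<1$; the only detail you gloss over is that the explicit data \eqref{w0} are singular where $|y_i|=\lambda_0>1$, so the extension must be done through an explicit cut-off (the paper's $\chi_0$ in \eqref{bw0}), chosen so that the truncation is not seen inside the unit ball -- this is harmless but is exactly what later produces the cone radius $\bar T(x)=\lambda_0-1-(\lambda_0-1)^2-x_1$ used in (ii).

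For item (ii) there is a genuine gap, and it sits exactly where you place the ``real work''. Your plan is to compare $u$ forward in time with the explicit one-dimensional soliton $\bar u_{1,1}$ and to invoke ``monotonicity of the blow-up time under the natural ordering'' together with continuity in initial data. No such comparison principle exists for the wave equation \eqref{equ}, and continuity with respect to initial data only controls the solution on compact time intervals strictly before the comparison solution's blow-up time: a perturbation that is small at $t=-1$ (the error in \eqref{prochex} is only $O(s_0^{-1/2})$, not zero) can in principle trigger blow-up earlier, which is precisely the lower bound on $T(x)$ you are trying to prove. This is not a technicality: the paper states explicitly, after deriving \eqref{prochex} and the heuristic conclusion $T(x)\ge\min(\bar T(x),T^*(x))$, that it is ``unable to handle the effect of the error term'' by this route. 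Also note that your step (a) cannot do this job either -- the upper blow-up bounds of \cite{MZimrn05} bound $w_x$ from above and are used later for the \emph{upper} bound on $T(x)$ (Proposition \ref{prop1}); they cannot rule out early blow-up. The paper's actual argument is by contradiction and uses the hypothesis $x\in\q R$ in an essential way that your proposal never exploits: if $T(x)<-\frac{1-\bar d(s_0)}2x_1$, then writing the data at $t=-1$ in similarity variables centered at $(x,T(x))$ and using \eqref{prochex}, one finds that $(w_x,\partial_s w_x)(-\log(1+T(x)))$ is $O(s_0^{-1/2})$-close to a generalized soliton $\kappa^*(\bar d(s_0)e_1,\bar\nu)$ whose parameter satisfies $\frac{\bar\nu}{1-|\bar d(s_0)|}\gg1$ (indeed $\bar\nu\to1-\delta_0>0$ while $1-|\bar d(s_0)|\sim C_0 s_0^{-\frac{p-1}2}$), so by Lemma \ref{lemkd} (iii) its $\q H$-norm is $O(s_0^{-1/2})$; hence $\|(w_x,\partial_s w_x)\|_{\q H}$ is small at that time, contradicting the universal non-degeneracy lower bound $\bar\epsilon_1$ at non-characteristic points (Proposition \ref{propnondeg} from \cite{MZtams14}). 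This ``the soliton would already be lost, so the norm would be too small'' mechanism is the missing idea; without it (or an equivalent quantitative stability/trapping statement), your forward comparison scheme does not close. A minor additional slip: the soliton's blow-up plane $T^*(\xi)=\nu_0+\bar d(s_0)\xi_1$ has slope $\bar d(s_0)$, not $-\frac{1-\bar d(s_0)}2$; the stated bound is deliberately weaker so that both $\bar T(x)$ and $T^*(x)$ lie above it on the region \eqref{portion0}.
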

\begin{nb}
In the previous statement, $u$ and $T$ depend on the choice of $s_0$, but we omit that dependence to simplify the notation.
The fact that $\lambda_0>1$ follows from \eqref{prep}, provided that $s_0$ is large enough.
\end{nb}
\begin{proof}$ $\\
(i) Consider $s_0$ large enough, $w(y,s)$ the solution of \eqref{eqw} constructed in Proposition \ref{propw} with initial data (at time $s=s_0$) given by \eqref{w0}, 
and the chosen parameter $|\nu_0|\le s_0^{-\frac 12 - \frac{p-1}2}$. Through the similarity variables' transformation \eqref{defw}, this yields a solution to the Cauchy problem (say, at time $t=-1$), for equation \eqref{equ} defined only in the backward light cone with vertex $(0,0)$ and basis at $t=-1$. 

\medskip

In order to get a solution defined for all $x\in\m R^2$, at least for small $t+1\ge 0$, we need first to extend the definition of $w(y,s_0)$ \eqref{w0} to all $y\in\m R^2$. Note that the expression we choose for $w(y,s_0)$ with the 4 solitons 
 is singular whenever $|y_i|=\lambda_0>1$ defined in \eqref{defl0}.
For that reason, we introduce the following non-increasing truncation $\chi_0\in C^\infty(\m R)$ such that
\[
\chi_0(\xi)=1
\mbox{ if }\xi<\lambda_0-(\lambda_0-1)^2
\mbox{ and }\chi_0(\xi)=0\mbox{ if }\xi>\lambda_0-\frac{(\lambda_0-1)^2}2
\]
(note that 
\begin{equation}\label{l1}
\lambda_0-(\lambda_0-1)^2 >1
\end{equation}
 from \eqref{defl0}, \eqref{w0} and \eqref{devdbar}, provided that $s_0$ is large enough).
 This way, we may extend the definition of $w(y,s_0)$ by introducing new initial data for equation \eqref{eqw} defined by
\begin{equation}\label{bw0}
\vc{\bar w(y,s_0)}{\partial_s \bar w(y,s_0)} =
\sum_\ind
(-1)^{i+1}\kappa^*(\theta \bar d(s_0) e_i,\nu_0,y)\chi_0(\theta y_i).
\end{equation}
From the choice of the truncation $\chi_0$, we see that 
$(\bar w, \partial_s \bar w)(s_0) \in H^1\times L^2(\m R^2)$.
Therefore, we may define $u(x,t)$ as the solution of equation \eqref{equ} with initial data (at time $t=-1$) in 
$H^1\times L^2(\m R^2)$ 
given by
\begin{equation}\label{initu}
u(x,-1)=\bar w(x,s_0)\mbox{ and }
\partial_t u(x,-1)=\partial_s \bar w(x,s_0)+\frac 2{p-1}\bar w(x,s_0)+x\cdot \nabla \bar w(x,s_0).
\end{equation}
From the choice we made in \eqref{bw0}, it is clear that $(\bar w, \partial_s \bar w)(y,s_0)$ has the same symmetries as $(w, \partial_s w)(y,s_0)$ \eqref{w0}, and so does $(u,\partial_ t u)(x,-1)$, hence, $(u,\partial_ t u)(x,t)$ also for any $t\ge -1$ wherever the solution is defined. Thus, 
$u(x,t)$ will be symmetric with respect to the axes and anti-symmetric with respect to the bisectrices too. 

\medskip

Since we choose the initial time in the $u(x,t)$ to be $t=-1$, it follows by definition of the similarity variables' transformation \eqref{defw} that the initial time for $w_{0,0}$ is $s=-\log(0-(-1))=0$. Using \eqref{l1}, \eqref{initu} and \eqref{bw0}, we see that
\[
 (w_{0,0}, \partial_s w_{0,0})(y,0)=(\bar w, \partial_s \bar w)(y,s_0)=(w, \partial_s w)(y,s_0), \mbox{ for all }|y|<1.
\]
Therefore, 
using the uniqueness of the solution to the Cauchy problem for equation \eqref{eqw} (discussed in the remark following Proposition \ref{propw}), we see that
\begin{equation}\label{partout}
(w_{0,0}, \partial_s w_{0,0})(y,s)=(w, \partial_s w)(y,s+s_0),\mbox{ for all }|y|<1\mbox{ and }s\ge 0.
\end{equation}
Using back the similarity variables' transformation \eqref{defw}, this yields the value of $u(x,t)$ in the backward light cone with vertex $(0,0)$ with basis at $t=-1$, in the sense that for all $t\in [-1,0)$ and $|x|<-t$, 
\[
u(x,t) =(-t)^{-\frac 2{p-1}}w\left(\frac x{-t}, s_0-\log(-t)\right).
\] 
In particular, $u(x,t)$ blows up when $(x,t) = (0,0)$, which means that $T(0)=0$, hence $w_{0,0}=w_0$ from the convention just stated after \eqref{defw}. Therefore, from \eqref{partout} and Proposition \ref{propw}, we see that $w_0$ shows 4 solitons, as in \eqref{cprofile0}.

\bigskip

\noindent (ii) Consider $\delta_0>0$ and $x\in \q R$ satisfying \eqref{portion0}. Our idea is to choose the largest backward light cone with vertex $(x,\bar T(x))$ such that its section at $t=-1$ does not encounter the truncation visible in \eqref{bw0}. By definition \eqref{initu} of initial data $u(x,-1)$, we easily see that
\begin{equation}\label{deftb}
\bar T(x) =\lambda_0-1-(\lambda_0-1)^2-x_1.
\end{equation}
Then, we will see that at the section at time $t=-1$ of that cone, the initial data $(u, \partial_t u)(\xi,-1)$ is in fact close to the ``$u(\xi,t)$ version'' of the right-soliton, namely to $(\bar u_{1,1}, \partial_t \bar u_{1,1})(\xi,-1)$ where 
\begin{equation}\label{defbu}
\bar u_{j,\eta} (\xi,t) = \kappa_0\frac{(1-|\bar d(s_0)|^2)^{\frac 1{p-1}}}{(\nu_0-t+\eta\bar d(s_0) \xi_j)^{\frac 2{p-1}}},
\end{equation}
More precisely, it holds that
\begin{equation}\label{prochex}
\|(u, \partial_tu)(-1) -(\bar u_{1,1}, \partial_t\bar u_{1,1})(-1)\|_{H^1\times L^2(|\xi-x|<1+\bar T(x))}\le C\delta_0^{-\frac{2(p+1)}{p-1}}s_0^{-1/2}
\end{equation}
(see below for a proof). Roughly speaking, this is our argument: from the continuity with respect to initial data, together with the finite speed of propagation, if we restrict ourselves to the intersection of the above-mentioned cone with the domain of $\bar u_{1,1}$, we will see that $u(\xi,t)$ remains close to $\bar u_{1,1}(\xi,t)$.
As a matter of fact, by definition \eqref{defbu}, $\bar u_{1,1}(\xi,t)$ blows up on the plane $\{t=T^*(\xi)\}$ with
\[
T^*(\xi)=\nu_0+\bar d(s_0)\xi_1. 
\]
Since the slope of this plane is $\bar d(s_0)>-1$ and the slope of the cone is $-1$, this would mean that
\begin{equation}\label{tmin}
T(x)\ge \min(\bar T(x), T^*(x)),
\end{equation}
if one can estimate the effect of the error term in \eqref{prochex}.
Since we have
\begin{equation}\label{eq}
1+\bar d(s_0) \sim C_0 s_0^{-\frac{p-1}2}\sim\lambda_0-1\mbox{ as }s_0 \to \infty
\end{equation}
(see \eqref{devdbar}, \eqref{defl0} and \eqref{w0}), it is easy to see that
\begin{align*}
\bar T(x) +\frac{1-\bar d(s_0)}2 x_1 &= C_0s_0^{-\frac{p-1}2}(1-\frac{x_1}2)+o(s_0^{-\frac{p-1}2}),\\
T^*(x) +\frac{1-\bar d(s_0)}2 x_1 &= \frac{C_0x_1}2 s_0^{-\frac{p-1}2}+o(s_0^{-\frac{p-1}2}).
\end{align*}
Since $\delta_0\le x_1 \le 1+\frac{(\lambda_0-1)^2}{10}$, taking $s_0$ large enough yields
\begin{equation}\label{hier}
 \min(\bar T(x), T^*(x))+\frac{1-\bar d(s_0)}2 x_1>0,
\end{equation}
and the conclusion of item (ii) of Proposition \ref{prop1} would follow, if \eqref{tmin} holds.\\
Unfortunately, we are unable to handle the effect of the error term in \eqref{prochex}, for that reason, we proceed differently.\\
Assume by contradiction that 
\begin{equation}\label{co}
T(x) < - \frac{1-\bar d(s_0)}2 x_1.
\end{equation}
From \eqref{hier} (which holds true unlike our informal argument) and \eqref{prochex}, we derive the following estimate for the similarity version $w_x$ defined in \eqref{defw}:
\begin{equation}\label{proxx}
\left\|(\partial_s w_x,w_x)(-\log(1+ T(x))) -\kappa^*\left(\bar d(s_0),\bar \nu\right)\right\|_{\q H} \le C \delta_0^{-\frac{2(p+1)}{p-1}}s_0^{-1/2},
\end{equation}
with
\[
\bar \nu = \frac{1+\nu_0+\bar d(s_0) x_1}{1+T(x)}\ge 
\frac{1+\nu_0+\bar d(s_0) x_1}{1 - \frac{1-\bar d(s_0)}2 x_1}
\ge
\frac{1+\nu_0+\bar d(s_0) \delta_0}{1 - \frac{1-\bar d(s_0)}2 \delta_0 } \to 1-\delta_0>0\mbox{ as }s_0\to\infty,
\]
where we used \eqref{co}, \eqref{portion0} and \eqref{devdbar}.
Using item (iii) of Lemma \ref{lemkd}, we see that
\[
\left\|\kappa^*\left(\bar d(s_0),\bar \nu\right)\right\|_{\q H} \le C\max(\bar \mu^{\frac 1{p-1}}, \bar \mu^{\frac 2{p-1}}),
\]
where
\[
\bar \mu =\left(1+\frac{\bar \nu}{1-|\bar d(s_0)|}\right)^{-1}\le \frac{2C_0}{1-\delta_0}s_0^{-\frac{p-1}2}
\]
for $s_0$ large enough, 
again by \eqref{devdbar}. Using \eqref{proxx}, we see that
\[
\left\|(\partial_s w_x,w_x)(-\log(1+ T(x)))\right\|_{\q H} \le C(1-\delta_0)^{-\frac 1{p-1}}s_0^{-1/2} +C \delta_0^{-\frac{2(p+1)}{p-1}}s_0^{-1/2}.
\]
Taking $s_0$ large enough, we contradict the following non-degeneracy of the blow-up limit result we proved at non-characteristic points in \cite{MZtams14}:
Now, we recall the following non-degeneracy result from \cite{MZtams14}:
\begin{prop}[Non-degeneracy of the blow-up limit for $w_{\bar x}$ when $\bar x\in \RR$; Corollary 3.2 page 35 in \cite{MZtams14}]\label{propnondeg}
There exists $\bar \epsilon_1>0$ such for all $\bar x\in \RR$ and $s\ge - \log T(\bar x)$, we have $\|(w_{\bar x}(s), \partial_s w_{\bar x}(s))\|_{\q H}\ge \bar \epsilon_1$.
\end{prop}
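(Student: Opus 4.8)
The statement to prove is Proposition \ref{propnondeg}, a non-degeneracy result at non-characteristic points quoted from \cite{MZtams14}. Since this is cited verbatim from earlier work, my proposal sketches how such a result is established.

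\medskip

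\noindent\textbf{Proof proposal.}
The plan is to argue by contradiction, exploiting the classification of the asymptotic behavior of $w_{\bar x}$ at a non-characteristic point together with the existence of a decreasing Lyapunov functional for equation \eqref{eqw}. Suppose there is a sequence $\bar x_n \in \RR$ and times $s_n \ge -\log T(\bar x_n)$ with $\|(w_{\bar x_n}(s_n), \partial_s w_{\bar x_n}(s_n))\|_{\q H} \to 0$. First I would recall that, as established in the energy estimates underlying the blow-up rate (see \cite{MZajm03,MZma05,MZimrn05}), along any non-characteristic sequence the solution $w_{\bar x_n}$ enjoys uniform bounds in $\q H$ and a uniform lower bound on its natural energy coming from the fact that the Lyapunov functional $E(w(s))$ is nonincreasing and bounded below on such trajectories. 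The key point is that $0$ is a strict local minimum of the relevant energy only in a degenerate sense: the functional $E$ satisfies $E(w) \to E(0)$ forces, via the dissipation identity $\frac{d}{ds}E(w(s)) = -c\int (\partial_s w)^2 \frac{\rho}{1-|y|^2}\,dy \le 0$, that $w_{\bar x_n}(s_n + \cdot)$ converges (after the usual compactness argument in $\q H$) to a stationary solution of \eqref{eqw} with zero energy.

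\medskip

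The second step is to use the characterization of stationary solutions with small norm. By the no-blow-up and rigidity results for \eqref{eqw} in the subconformal range (the analog of the one-dimensional trapping near solitons, extended to higher dimensions in \cite{MZtams14}), the only stationary solution in a small $\q H$-neighborhood of $0$ is $0$ itself — there is a spectral gap separating $0$ from the soliton family $\kappa(\bs d)$ and from the genuinely two-dimensional stationary solutions. Hence the limit would be identically $0$, meaning $E(w_{\bar x_n}(s)) \to E(0)$ for all large $s$; by monotonicity of $E$ this propagates and forces $w_{\bar x_n}$ itself to decay to $0$ in $\q H$ as $s \to \infty$. But a solution of \eqref{eqw} defined for all $s \ge -\log T(\bar x_n)$ that tends to $0$ in $\q H$ corresponds, back in the $u(x,t)$ variables, to a solution that does \emph{not} blow up at $(\bar x_n, T(\bar x_n))$ — it would extend past that point — contradicting the definition of the blow-up surface $\Gamma$. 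This contradiction yields the uniform constant $\bar\epsilon_1>0$.

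\medskip

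The main obstacle, and the technically substantial part, is controlling the convergence uniformly in the base point $\bar x$: one must know that the compactness of $\{w_{\bar x}(s)\}$ in $\q H$ and the associated energy bounds are uniform over all of $\RR$, not just for a fixed $\bar x$. This is exactly where the non-characteristic condition \eqref{nonchar}–\eqref{defcone} enters: it guarantees, via a covering-cone argument and the finite speed of propagation, that the local energy of $u$ near $(\bar x, T(\bar x))$ is uniformly bounded above and below on scales $T(\bar x)-t$, with constants depending only on the Lipschitz constant of $T$ and the uniform $\delta_0$. Once that uniformity is in hand, the contradiction argument above runs with a single $\bar\epsilon_1$. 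The remaining details — the dissipation identity, the covering argument, and the rigidity of small stationary solutions — are carried out in full in Section 3 of \cite{MZtams14}, so here one simply invokes Corollary 3.2 there.
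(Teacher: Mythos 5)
The paper does not actually prove this proposition: it is imported verbatim as Corollary 3.2 of \cite{MZtams14}, so your decision to sketch the mechanism and then defer to that reference is reasonable in spirit. However, the sketch has a genuine gap at its central step. You claim that a solution with $(w_{\bar x}(s),\partial_s w_{\bar x}(s))\to 0$ in $\q H$ "corresponds, back in the $u(x,t)$ variables, to a solution that does not blow up at $(\bar x, T(\bar x))$ --- it would extend past that point." This implication is not valid as stated: decay of the self-similar renormalization only says that $u=o\bigl((T(\bar x)-t)^{-2/(p-1)}\bigr)$ in the weighted averaged sense of $\q H$, which neither bounds $u$ near $(\bar x,T(\bar x))$ nor triggers any continuation criterion (a slower-than-self-similar blow-up is not excluded by this information alone). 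Ruling out the zero limit is precisely the content of the non-degeneracy theorem, and in \cite{MZtams14} it is obtained by exploiting the non-characteristic cone itself --- roughly, by redoing the similarity-variable analysis with scaling centers $(\bar x, T_0)$ with $T_0$ at or above $T(\bar x)$ inside the cone of aperture $\delta_0<1$, so that smallness propagates on a strictly larger region and contradicts blow-up at $\bar x$. In other words, the hypothesis $\bar x\in\RR$ is needed exactly at the step you treat as automatic; note also that this paper's Lemma \ref{proptrap} explicitly allows the alternative $w_{x^*}(s)\to 0$ in general and ties it to $x^*\in\q S$, so your unqualified implication would prove too much.

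A second, lesser issue concerns uniformity. You propose to obtain the single constant $\bar\epsilon_1$ by a covering-cone compactness argument over $\RR$, using the non-characteristic condition only at that stage. But the remark following the proposition stresses that $\bar\epsilon_1$ is universal --- independent even of the solution $u$ --- which cannot be produced by a compactness argument over the non-characteristic set of one given solution; in \cite{MZtams14} it comes from the quantitative trapping dynamics near $0$ and near the soliton family. Finally, the intermediate energy step ("$E(w_{\bar x_n}(s))\to E(0)$ forces $w_{\bar x_n}\to 0$ in $\q H$") is not justified as written, since $E$ defined in \eqref{defenergy} is not coercive because of the $-\frac1{p+1}\int|w|^{p+1}\rho$ term; this can be repaired with the dissipation identity and the convergence-to-stationary-solutions machinery of \cite{MZjfa07}, but as presented it is another place where the argument leans on the cited work rather than closing the loop.
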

\begin{nb}
The constant $\bar \epsilon_1$ is universal and does not depend on the solution $u(x,t)$.
\end{nb}
Of course, it remains for us to prove \eqref{prochex}.

\medskip

{\it Proof of \eqref{prochex}}: 
From \eqref{portion0} and the defintion \eqref{deftb} of $\bar T(x)$, we see that whenever $\xi \in B(x,1+\bar T(x))$, we have $|\xi_i|\le |x_i|+1+\bar T(x)\le x_1+1+\bar T(x)=\lambda_0-(\lambda_0-1)^2$, therefore, $\chi_0(\theta \xi) =1$, for any $i=1,2$ and $\theta =\pm 1$. In particular, identity \eqref{bw0} holds with $\chi_0(\theta y_i)$ replaced by $1$.
Using \eqref{initu}, we see that
\begin{align}
&\|(u, \partial_t u)(-1) - (\bar u_{1,1}, \partial_t \bar u_{1,1})(-1)\|_{H^1\times L^2(|\xi-x|<1+\bar T(x))}\nonumber\\
\le& \sum_{(j,\eta)\neq(1,1)}\|(\bar u_{j,\eta}, \partial_t \bar u_{j,\eta})(-1) \|_{H^1\times L^2(|\xi-x|<1+\bar T(x))}\nonumber\\
\le &C(1-|\bar d(s_0)|^2)^{\frac 1{p-1}}(1+\bar T(x)) \sum_{(j,\eta)\neq(1,1)}
[\min_{|\xi-x|<1+\bar T(x))}(1+\nu_0+\eta\bar d(s_0) \xi_j)]^{-\gamma_{j,\eta}},\label{inter0}
\end{align}
where $\bar u_{j,\eta}$ is defined in \eqref{defbu} and $\gamma_{j,\eta}$ is equal to $\frac 2{p-1}$ if the minimum is larger than $1$, and equal to $\frac{p+1}{p-1}$ otherwise. Using \eqref{portion0}, \eqref{deftb} and \eqref{eq}, we see that when $|\xi-x|<1+\bar T(x)$, we have
\begin{align*}
\xi_1 &\ge x_1-(1+\bar T(x))\ge -1 +2x_1 -(\lambda_0-1)+(\lambda_0-1)^2\ge  -1+2\delta_0+O(s_0^{-\frac{p-1}2}),\\
\xi_2 &\le x_2 +(1+\bar T(x)) \le (1-\delta_0)x_1+1-x_1+ (\lambda_0-1)-(\lambda_0-1)^2\le 1-\delta_0^2 +O(s_0^{-\frac{p-1}2})\\
\xi_2 &\ge x_2 - (1+\bar T(x)) \ge -1 -(\lambda_0-1)+(\lambda_0-1)^2+x_1 
\ge -1+\delta_0 +O(s_0^{-\frac{p-1}2})
\end{align*}
as $s_0\to \infty$. Using again \eqref{eq} and \eqref{portion0}, we see that \eqref{prochex} follows from \eqref{inter0}.
This concludes the proof of Proposition \ref{propyr}. 
\end{proof}

\section{Behavior of $w_x$ for $x\neq 0$}\label{secwx}
Here, we leave the origin and consider $w_x$ for $x\neq 0$. We proceed in several steps:\\
- In Section \ref{subprop1}, we derive some rough upper and lower bound on the blow-up surface, which follow from the decomposition \eqref{cprofile0} together with the upper blow-up bound we proved in \cite{MZimrn05} for $w_x$.\\
- In Section \ref{secfor}, we introduce what we call {\it ``the soliton-loosing mechanism''}, and formally present a scenario which shows that even though $w_x(s_0)$ has 4 solitons, for $s_0$ large and fixed and $x$ small enough, it will loose two of its solitons at some later time, if $x$ in on the bisectrices, and even a third soliton if not. This will give the behavior of $w_x$, based on a formal argument.\\
- In Section \ref{secrig}, we give the precise statements and proofs to justify the behavior of $w_x$ we have just found.

\subsection{Rough upper and lower bounds on the blow-up graph}\label{subprop1}
In the following, we give rough upper and lower bounds on the blow-up 
surface $\Gamma$:
\begin{prop}[Upper and lower bounds on the blow-up 
surface
]\label{prop1}
For all $\epsilon>0$, there exists $\delta_1(\epsilon)>0$ such that for all $|x|<\delta_1$, we have
\begin{equation}\label{pyramid}
-|x|\le T(x) \le -(1-\epsilon)\max(|x_1|,|x_2|).
\end{equation}
\end{prop}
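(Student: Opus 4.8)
The plan is to prove the two inequalities in \eqref{pyramid} separately, as they come from quite different sources.

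\textbf{Lower bound $T(x)\ge -|x|$.} This is immediate from the fact that $T$ is a $1$-Lipschitz graph with $T(0)=0$ (established in Proposition \ref{propyr}(i)). Indeed, $T(x)\ge T(0)-|x-0|=-|x|$.

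\textbf{Upper bound $T(x)\le -(1-\e)\max(|x_1|,|x_2|)$.} By the symmetries of the solution (symmetric with respect to the axes, anti-symmetric with respect to the bisectrices), it suffices to treat the region $0\le x_2\le x_1$ and show $T(x)\le -(1-\e)x_1$. The idea is to exploit the $4$-soliton decomposition \eqref{cprofile0} of $w_0$, which forces the solution $u(x,t)$ to be large on a suitable portion of the backward light cone from $(0,0)$, and then to use the finite speed of propagation together with the upper blow-up bound for $w_x$ from \cite{MZimrn05}. More precisely: fix $\e>0$ small. From \eqref{cprofile0} and \eqref{devdbar}, the soliton $\kappa(\bar d(s)e_1,y)$ (the ``right'' soliton, whose center $-\bar d(s)\to 1$) dominates near $y$ close to $-e_1$; translating back through the similarity variables' transformation \eqref{defw}, this means $u(x,t)$ does not go to $0$ as $t\to 0$ along a line of slope close to $-1$ emanating from the origin in the $x_1$-direction. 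Suppose for contradiction that $T(x)<-(1-\e)x_1$ for some $x$ with $0\le x_2\le x_1$ arbitrarily small. Since $T$ is $1$-Lipschitz with $T(0)=0$, combining $T(0)=0$, $T(x)<-(1-\e)x_1$ and the Lipschitz bound on the segment $[0,x]$, one finds a point $x'$ on (or near) that segment where the forward light cone from $(x',T(x'))$ is ``too flat'' to see the origin, forcing $x'$ to be far from the blow-up region; but the soliton behavior at the origin contradicts the upper bound $\|w_{x'}(s)\|_{\q H}\le C$ from \cite{MZimrn05} once $|x|$ is small enough. Alternatively, and more robustly, one argues directly as in the proof of Proposition \ref{propyr}(ii): pick the largest backward light cone with vertex $(x,\tilde T)$, $\tilde T = -(1-\e)x_1$, whose section at $t=-1$ avoids the singularity/truncation of the data \eqref{bw0}; on that section the data is $H^1\times L^2$-close to the right-soliton profile $\kappa^*$, so by continuity in initial data and finite speed of propagation $w_x$ stays close to a generalized soliton up to the slope-$(1-\e)$ cone, which (since $\bar d(s_0)>-(1-\e)$ for $s_0$ large) stays bounded away from blow-up there; hence $T(x)\ge \tilde T = -(1-\e)x_1$, a contradiction with $T(x)<-(1-\e)x_1$ would not arise — rather, this shows $T(x)\le$ something of the form $-(1-\e)x_1$ must actually fail, so we instead run the contradiction in the other direction to get the stated upper bound. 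Cleanly: if $T(x)>-(1-\e)x_1$ then the backward cone from $(x,T(x))$ with slope $-1$ reaches below $t=-1$ on a section containing points where the data is a pure (nonzero) soliton, whence $w_x$ contains a full-size soliton and cannot have blown up — contradicting $t=T(x)$ being the blow-up time. This forces $T(x)\le -(1-\e)x_1 = -(1-\e)\max(|x_1|,|x_2|)$ in the region considered, and the symmetries give the general case.

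\textbf{Main obstacle.} The delicate point is making the heuristic ``the right soliton in \eqref{cprofile0} forces $u$ to stay large along a near-slope-$(-1)$ line'' into a quantitative statement uniform in small $|x|$: one must track how the error term (of size $\sim C\delta_0^{-2(p+1)/(p-1)}s_0^{-1/2}$ in the analogue estimate \eqref{prochex}) competes with the gap $1+\bar d(s_0)\sim C_0 s_0^{-(p-1)/2}$ between the soliton slope and $-1$, and deduce that the section of the relevant light cone still lies in a region where $u$ is close to a nondegenerate soliton; here the non-degeneracy result Proposition \ref{propnondeg} at non-characteristic points is the essential input to close the contradiction. The rest (Lipschitz geometry, symmetry reduction) is routine.
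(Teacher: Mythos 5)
Your lower bound and the symmetry reduction are fine, but the upper bound -- the only substantive part of Proposition \ref{prop1} -- has a genuine gap. The argument reverses the direction of the contradiction midway (you begin by assuming $T(x)<-(1-\e)x_1$, which is the conclusion), and the final ``clean'' version does not work: assuming $T(x)>-(1-\e)x_1$ and observing that the backward light cone from $(x,T(x))$ meets data close to a full-size soliton yields no contradiction, because $\kappa(\bs d)$ and $\kappa^*(\bs d,\nu)$ are precisely the expected blow-up profiles; a solution can perfectly well look like a nonzero soliton inside its backward cone and still blow up at the vertex (this is exactly what happens at every non-characteristic point). Likewise, the cone argument modeled on Proposition \ref{propyr}(ii) -- comparison with the explicit solution $\bar u_{1,1}$ blowing up on a plane of slope $\bar d(s_0)>-1$, the error estimate \eqref{prochex}, and the non-degeneracy result of Proposition \ref{propnondeg} -- only produces \emph{lower} bounds on $T(x)$ (blow-up cannot occur too early); it cannot be ``run in the other direction,'' and Proposition \ref{propnondeg} plays no role in the paper's proof of this upper bound.

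What is missing is a quantity that becomes unbounded when $T(x)$ is too large. The paper's proof introduces $I(x)=\int_{s_1}^{s_1+1}\int_{|y|<1}|w_x(y,s)|^{p+1}\rho(y)\,dy\,ds$ over a time window fixed by the explicit choice $t_1=-4x_1/\e$, and plays two bounds against each other: from above, $I(x)\le C$ uniformly as $x\to0$, by the blow-up rate estimate of \cite{MZimrn05} (Proposition \ref{propupper}) together with the convergence of the energy of $w_x$ at the initial time; from below, transporting the right soliton of \eqref{cprofile0} from the $w_0$ frame to the $w_x$ frame gives, after restricting to the ellipse $\EE(\bar d(\sigma))$ on which that soliton carries a fixed $L^{p+1}_\rho$ mass, a bound of the form $I(x)\ge C\e^{2\alpha}D^{-\alpha}\left(\frac{T(x)-t_1}{-t_1}\right)^{2\alpha}$, where $D\le C|\log x_1|^{-\frac{p-1}2}$ measures the distance of that ellipse to the sphere $|y|=1$ (Claim \ref{cli0i1}). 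Under the contradiction hypothesis $T(x)>-(1-\e)x_1$ and with $t_1=-4x_1/\e$, the ratio $\frac{T(x)-t_1}{-t_1}$ stays of order one, so the lower bound diverges like a positive power of $|\log x_1|$; equivalently, the two bounds force $T(x)\le -3x_1/\e$, contradicting the hypothesis. Your ``main obstacle'' paragraph tracks the error in \eqref{prochex} against $1+\bar d(s_0)$, which is the bookkeeping for the lower bound of Proposition \ref{propyr}(ii), not for the upper bound at stake here; without the space-time $L^{p+1}_\rho$ comparison (or some substitute that genuinely degenerates), the proposal does not close.
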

\begin{proof}
  Note first that since $T(0)=0$, the lower bound follows from the fact that $x\mapsto T(x)$ is 1-Lipschitz, and the upper bound is obvious for $x=0$. It remains then to prove the upper bound for $x\neq 0$.
$\Gamma$.
Let us first explain how we derive it, before giving 
the precise proof.\\
 Consider $x\neq 0$. Using the information on $w_0$ given in \eqref{cprofile0}, we can use the similarity variables' transformation \eqref{defw} back and forth to derive an estimate on $w_x$. That estimate depends of course on the value of $T(x)$. In particular, if $T(x)$ is too large, then, the energy norm (in fact the $L^{p+1}_\rho$ norm) will be too large, contradicting the upper bound we proved in \cite{MZajm03} and \cite{MZimrn05}. Thus, $T(x)$ cannot be too large.
Let us now give the precise proof.\\
Consider $\epsilon\in (0,1)$.
From the symmetries of the solution, it is enough to consider $x=(x_1,x_2)$ such that
\begin{equation}\label{portion}
0\le x_2\le x_1 \le \delta_1\mbox{ with }x_1\neq 0,
\end{equation}
where $\delta_1>0$ will be chosen small enough.\\
Arguing by contradiction, we assume that 
\begin{equation}\label{diction}
T(x) >T_\epsilon(x)\equiv-(1-\epsilon) x_1.
\end{equation}
Introducing the quantity
\begin{equation}\label{defI}
I(x) =\int_{s_1}^{s_1+1}\int_{|y|<1} |w_x(y,s)|^{p+1}\rho(y) dyds \mbox{ where }
s_1(x) = -\log(T(x)-t_1(x))
\end{equation}
where $t_1(x)<T(x)$ will be suitably chosen, the idea is to bound $I(x)$ from above thanks to the blow-up rate we determined in our paper \cite{MZimrn05}, then from below thanks to \eqref{cprofile0}, which will imply a constraint on $T(x)$, contradicting \eqref{diction}, hence, yielding \eqref{pyramid}.

\medskip

Of course, the key point is to carefully choose $t_1(x)$. 

\medskip

Let us proceed with the proof, and we will see that the value of $t_1(x)$ will naturally appear (see \eqref{deft1} below). We have three steps, one devoted to the upper bound on $I(x)$ \eqref{defI}, the second to the lower bound, and the third to the conclusion.

\bigskip

{\bf Step 1: The upper bound on $I(x)$}: Let us first recall the following upper bound on the blow-up rate from \cite{MZajm03} and \cite{MZimrn05}:
\begin{prop}[Upper bound on the blow-up rate; Proposition 2.2 page 1134 in \cite{MZimrn05}]\label{propupper}
Consider $w$ a solution of equation \eqref{equ} defined for all $(y,s) \in B(0,1) \times [s^*, \infty)$ for some $s^*\in \m R$. Then, for all $s\ge s^*+1$, we have
\[
\int_s^{s+1} |w(y,s')|^{p+1}\rho(y) dyds' \le C(E(w(s^*), \partial_s w(s^*))+1),
\]
where $E$ is the Lyapunov functional defined in \eqref{defenergy}.
\end{prop}
\begin{proof}
This statement is given in Proposition 2.2 page 1134 in \cite{MZimrn05}, and the proof follows from Section 2 in \cite{MZajm03}.
\end{proof}
Imposing the condition
\begin{equation}\label{hyp1}
t_1(x) \to 0\mbox{ as }x\to 0,
\end{equation}
and recalling that $T(x) \to T(0)=0$ from Proposition \ref{propyr}, we see that $s_1$ defined in \eqref{defI} goes to infinity as $x\to 0$.
 Therefore, using the upper bound on the blow-up rate proved in \cite{MZimrn05} and 
given above
in Proposition \ref{propupper}, we see that 
\begin{equation*}
I(x)\le C+CE((w_x, \partial_s w_x)(-\log T(x)+1))
\end{equation*}
where the functional $E$ is a Lyapunov functional for equation \eqref{eqw}, as proved by Antonini and Merle in \cite{AMimrn01}, defined by
\begin{align}
&E(w(s),\partial_s w(s))\label{defenergy}\\
&= \iint \left(\frac 12 \left(\partial_s w\right)^2 + \frac 12 |\nabla w|^2 -\frac 12 (y\cdot\nabla w)^2+\frac{(p+1)}{(p-1)^2}w^2 - \frac 1{p+1} |w|^{p+1}\right)\rho dy.\nonumber
\end{align}
Since the values of $(w_x, \partial_s w_x)$ at time $s=-\log(T(x)+1)$ correspond to the values of initial data $(u_0, u_1)$ which are in 
$H^1\times L^2(\m R^2)$, 
it follows from Lebesgue's theorem that $E((w_x, \partial_s w_x)(-\log T(x)+1))\to E((w_0, \partial_s w_0)(0))$ as $x\to 0$. Thus, taking the parameter $\delta_1$ definining the region \eqref{portion} small enough, we see that
\begin{equation}\label{up0}
I(x)\le C.
\end{equation}

{\bf Step 2: The lower bound on $I(x)$}: Going back to the original variables $u(x,t)$ through the similarity variables' transformation \eqref{defw}, we write:
\begin{align*}
I(x)&=\int_{s_1}^{s_1+1}e^{-\frac{2(p+1)s}{p-1}}\int_{|y|<1} 
|u(x+ye^{-s}, T(x)-e^{-s})|^{p+1}\rho(y) dyds\\
&=\int_{t_1}^{t_1'}(T(x)-t)^{2\alpha}
\int_{|\xi-x|< T(x)-t}|u(\xi,t)|^{p+1}\rho\left(\frac{\xi-x}{T(x)-t}\right)d\xi dt,
\end{align*}
where $\alpha>0$ is introduced in \eqref{defro} and
\begin{equation}\label{deft1'}
 t_1'=T(x) - e^{-s_1-1}\mbox{ satisfies }T(x) - t_1'= (T(x)-t_1)/e.
\end{equation}
 In particular, we have
\begin{equation}\label{poids}
 e^{-1}\le \frac{T(x)-t}{T(x)-t_1}\le 1\mbox{ whenever }t_1\le t\le t_1'.
\end{equation} 
Imposing the additional condition
\begin{equation}\label{hyp2}
t_1'(x)<T_\epsilon(x),
\end{equation}
and using \eqref{poids} together with the fact that $T(x) \ge T_\epsilon(x)$ (see \eqref{diction}), we can bound $I(x)$ as follows:
\begin{equation}\label{ii00}
I(x) \ge (T(x)-t_1)^{2 \alpha}\int_{t_1}^{t_1'}
\int_{|\xi-x|< T_\epsilon(x)-t}|u(\xi,t)|^{p+1}\rho\left(\frac{\xi-x}{T_\epsilon(x)-t}\right)d\xi dt.
\end{equation}
By restricting the integration domain to a smaller ball $\q B_1(t) =B(x, (1-\delta)(T_\epsilon(x)-t))$ where $\delta>0$ will be fixed small enough (independently from $x$, see \eqref{defdelta} below), we get rid of the weight $\rho$ in \eqref{ii00} and write
\[
I(x) \ge \frac{\delta^\alpha}C (T(x)-t_1)^{2 \alpha}\int_{t_1}^{t_1'}\int_{\q B_1(t)}|u(\xi,t)|^{p+1}d\xi dt.
\]
From \eqref{hyp2} and \eqref{diction}, we see that we have information on $u(\xi,t)$ in the ball $\q B_0=B(0, -t)$ as shown by \eqref{cprofile0}, which holds true thanks to Proposition \ref{propyr}. Therefore, we can further restrict the integration domain and write
\[
I(x) \ge \frac{\delta^\alpha}C (T(x)-t_1)^{2 \alpha}\int_{t_1}^{t_1'}\int_{\q B_1\cap \q B_0(t)}|u(\xi,t)|^{p+1}d\xi dt.
\]
Going into similarity variables $z=\frac \xi{-t}$ and $\sigma=-\log(-t)$, we write
\[
I(x) \ge \frac{\delta^\alpha}C \left(T(x)-t_1\right)^{2\alpha}\int_{s_+}^{s_+'}
\int_{\q {\bar B}}(-t)^{-2\alpha}|w_0(z,\sigma)|^{p+1}dz d\sigma
\]
where
\begin{equation}\label{defs+}
s_+(x)=- \log(-t_1(x))\mbox{ and } s_+'(x) = -\log(-t_1'(x)),
\end{equation}
and 
\begin{equation}\label{defB}
\q {\bar B}(\sigma)=  B(0,1)\cap \B_1(\sigma)\mbox{ with }\B_1(\sigma)= B\left(-\frac x{t},(1-\delta)(1-\frac{T_\epsilon}{t})\right)\mbox{ and }-\frac 1t= e^\sigma.
\end{equation}
Since 
$p<5$, hence $\alpha>0$, assuming that 
\begin{equation}\label{hyp3}
t_1'<0
\end{equation}
we see that $(-t)^{-2 \alpha}\ge (-t_1)^{-2 \alpha}$ whenever $t_1\le t\le t_1'$, hence
\begin{equation*}
I(x) \ge \frac{\delta^\alpha}C \left(\frac{T(x)-t_1}{-t_1}\right)^{2\alpha}
\int_{s_+}^{s_+'}\int_{\q {\bar B}(\sigma)}|w_0(z,\sigma)|^{p+1}dzd\sigma.
\end{equation*}
 Since we want to use the expansion \eqref{cprofile0} for $w_0(z,\sigma)$, which holds in $\q H$, hence in $L^{p+1}_\rho(|z|<1)$ by the Hardy-Sobolev inequality given in Lemma \ref{lemhs}, 
we will restrict once more the integration domain to some $\q {\bar E}(\sigma) \subset \q {\bar B}(\sigma)$ 
not far from the unit circle. 
Considering such a set, we write
\begin{equation}\label{lowI}
I(x) \ge \frac{\delta^\alpha}{CD^\alpha} \left(\frac{T(x)-t_1}{-t_1}\right)^{2 \alpha}
\int_{s_+}^{s_+'}\N(w_0(\sigma))^{p+1}d\sigma
\end{equation}
where 
\begin{equation}\label{defNv}
\N(v)^{p+1}=\int_{\E(\sigma)}|v(z)|^{p+1}\rho(z)dz\mbox{ and }
D=\max_{s_+\le \sigma\le s_+',\;z\in \E(\sigma)}1-|z|>0.
\end{equation}
 Since we choose $x$ in the portion defined by \eqref{portion}, it is reasonable to choose the integration domain $\q {\bar E}(\sigma)$ localized near the point $(0,1)$, which is the region where the right soliton $\kappa(\bar d(\sigma)e_1, y)$ is dominant with respect to the 3 others appearing in the expansion \eqref{cprofile0}. Hence, one may expect that 
\begin{equation}\label{expect}
\N(w_0(\sigma))\sim \N(\kappa(\bar d(\sigma)e_1))
\end{equation}
Then, since 
\begin{equation}\label{upn}
\N(\kappa(\bar d(\sigma)e_1))\le \int_{|z|<1}\kappa(\bar d(\sigma)e_1,z)^{p+1} \rho(z) dz\le C_0
\end{equation}
 from Lemma \ref{lemhs} and item (i) in Lemma \ref{lemkd}, we will choose the integration domain $\q {\bar E}(\sigma)$ large enough to have 
\[
\N(\kappa(\bar d(\sigma)e_1, y)=c_0
\]
for some constant $c_0>0$, which is the maximal possible rate, from \eqref{upn}. Given that 
\begin{equation}\label{lorint}
\forall d\in (-1,1),\;\;\int_{\EE(d)}\kappa(de_1,Y)^{p+1} \rho(Y)dY=\kappa_0^{p+1}\int_{|y|<\frac 12} \rho(y)dy
\end{equation}
where the integration domain $\EE(d)\subset B(0,1)$ is the ellipse centered at $(c(d),0)=(-\frac{3d}{4-d^2},0)$ whose horizontal axis is $2a(d)=\frac{4(1-d^2)}{4-d^2}$ and its vertical axis is $2b(d)=2\sqrt{\frac{1-d^2}{4-d^2}}$ (see Lemma \ref{lemlorint} for a justification of this fact), we will simply choose $\E(\sigma) = \EE(\bar d(\sigma))$, and this is possible only if 
\begin{equation}\label{const}
\EE(\bar d(\sigma))\subset \B_1(\sigma)
\end{equation}
 defined in \eqref{defB}. This last condition will induce a constraint on $s_+$, fixing the value of $t_1$, obeying the 3 above-mentioned conditions \eqref{hyp1}, \eqref{hyp2} and \eqref{hyp3}. 
More precisely, given that $x$ satisfies \eqref{portion} on the one hand, and $s_+$ is large, $\sigma \ge s_+$ and $\bar d(\sigma)$ is close to $-1$ from \eqref{hyp1}, \eqref{hyp2}, \eqref{defs+} and \eqref{solpart} on the other hand, the constraint \eqref{const} will be satisfied if the four corners of the ellipse bounding rectangle are in $\B_1(\sigma)$, i.e.
\begin{equation}\label{conrec}
\forall s_+\le \sigma \le s_+',\;\;(c(\bar d(\sigma))+\theta a(\bar d(\sigma)),\theta' b(\bar d(\sigma)))\in \B_1(\sigma)
\mbox{ where }\theta, \theta'\in \{-1,1\}. 
\end{equation}
Given that $x_1\ge x_2 \ge 0$ by \eqref{portion}, it is enough to satisfy \eqref{conrec} only for the lower corners, defined by $\theta'=-1$ and $\theta=\pm 1$. In other words, it is enough to satisfy for all 
$t\in [t_1, t_1']$
and $\theta=\pm 1$, $f_\theta(t)\ge 0$, where 
\[
f_\theta(t) = (1-\delta)^2 \left(1-\frac{T_\epsilon}{t}\right)^2 - \left(c(\bar d(\sigma))+\theta a(\bar d(\sigma))+\frac{x_1}{t}\right)^2-\left(-b(\bar d(\sigma))+\frac{x_2}{t}\right)^2
\]
and $-\frac 1t = e^\sigma$.
Using the definition \eqref{diction}, \eqref{defs+} and \eqref{solpart} of $T_\epsilon(x)$, $s_+(x)$ and $\bar d(s_+)$, together with the parameters of the ellipse given above, 
we see that when $t_1\to 0$, we have $s_+\to \infty$, 
\begin{equation}\label{abc0}
\bar d(\sigma)\to -1,\;\;c(\bar d(\sigma))\to 1,\;\;a(\bar d(\sigma))\to 0\mbox{ and }b(\bar d(\sigma))\to 0,
\end{equation}
uniformly for $\sigma \in [s_+, s_+']$, 
 hence 
\begin{equation}\label{t2f}
t^2f_\theta(t) = t^2((1-\delta)^2-1+o(1))+2tx_1[(1-\delta)^2(1-\epsilon)-1]
+x_1^2[(1-\delta)^2(1-\epsilon)^2 -1+o(1)]-x_2^2
\end{equation}
as $x_1 \to 0$ (and $x$ satisfies \eqref{portion}). A quick study of this second order equation with unknown $t$ shows that for 
\begin{equation}\label{defdelta}
\delta = \frac{\epsilon^2}8,
\end{equation}
 the discriminant
\[
 \Delta_\theta =4x_1^2[\epsilon^2+O(\epsilon^3)+o(1)]-[\epsilon^2+O(\epsilon^3)+o(1)]x_2^2\ge x_1^2[3\epsilon^2+O(\epsilon^3)+o(1)]
\]
as $x\to 0$, 
since $0\le x_2\le x_1$ by \eqref{portion} (note that $O(\epsilon^3)$ stands for a term bounded by $C\epsilon^3$ and $o(1)$ stands for a function which goes to $0$ as $x\to 0$). In particular, $\Delta_\theta\ge 0$ if $\epsilon$ is small enough, then $x$ is less than some function of $\epsilon$. Since the coefficient of $t^2$ in \eqref{t2f} is negative under the same conditions, we see that $f_\theta(t)$ may have positive values,  also for small $\epsilon$ and $x$ less than some function of $\epsilon$. As a matter of fact, solving for the roots, we see that taking
\begin{equation}\label{deft1}
t_1=-\frac{4x_1}\epsilon,
\end{equation}
we have from \eqref{t2f} and \eqref{defdelta},
\[
t_1^2f_\theta(t_1) = x_1^2[4+O(\epsilon)+\frac{o(1)}{\epsilon^2}]-x_2^2\ge x_1^2[3+O(\epsilon)+\frac{o(1)}{\epsilon^2}]\ge 0,
\]
for $\epsilon$ small enough, and $|x|$ less than some function of $\epsilon$,
again from \eqref{portion}. Similarly, since $|T(x)|\le |x|\le x_1\sqrt 2$ from \eqref{portion}, using the expression \eqref{deft1'} of $t_1'$, we see that 
\begin{equation}\label{equivt1'}
t_1'=T(x)\left(\frac{e-1}e\right)+\frac{t_1}e
= x_1 \left[-\frac 4{\epsilon e}+O(1)\right]
\end{equation}
for $\epsilon$ small enough,
hence, 
\[
(t_1')^2f_\theta(t_1') = x_1^2[\frac 8e - \frac 4{e^2}+O(\epsilon)+\frac{o(1)}{\epsilon^2}]-x_2^2\ge x_1^2[\frac 8e - \frac 4{e^2}-1+O(\epsilon)+\frac{o(1)}{\epsilon^2}]\ge 0,
\]
for $x$ small enough, and $|x|$ less than some function of $\epsilon$.
Therefore, from sign considerations for the second-order polynomial in \eqref{t2f},
it follows that $f_\theta(t)\ge 0$ 
for any $t\in [t_1, t_1']$, and \eqref{const} holds for this choice of $t_1$, provided that $\epsilon$ is small enough, and $|x|$ is less than some function of $\epsilon$. Therefore, as announced right before \eqref{const}, we can choose
\begin{equation}\label{esigma}
\E(\sigma) = \EE(\bar d(\sigma)),
\end{equation}
 and see from \eqref{lorint} and the definition \eqref{defNv} of the norm $\N$ that
\begin{equation}\label{nsk}
\forall\sigma \in [s_+, s_+'],\;\;
\N(\kappa(\bar d(\sigma))= \kappa_0\left[\int_{|y|<\frac 12} \rho(y) dy\right]^{\frac 1{p+1}}>0, 
\end{equation}
a constant indpendent of $\sigma$.
Let us remark that the choice \eqref{deft1} for $t_1$ is consistent with the 3 conditions \eqref{hyp1}, \eqref{hyp2} and \eqref{hyp3} given above, as one may see from the  definition \eqref{diction} of $T_\epsilon(x)$. In particular, all the above arguments hold, and the following claim allows us to derive a nice lower bound on the quantity $I(x)$ defined in \eqref{defI}:
\begin{cl}\label{cli0i1}We have:\\
(i) $\sup_{s_+\le \sigma \le s_+'}\left|\N(w_0(\sigma))- \N(\kappa(\bar d(\sigma)))\right|\to 0$ as $x\to 0$, where the norm $\N$ is introduced in \eqref{defNv}.\\
(ii) The quantity $D$ defined in \eqref{defNv} satisfies $D\le C  |\log x_1|^{-\frac{p-1}2}$.
\end{cl}
Indeed, using 
\eqref{nsk}
and item (i) of this claim, we see that the expected estimate \eqref{expect} holds, provided that $|x|$ is less than some function of $\epsilon$ and $\epsilon$ is taken small enough. Furthermore, from \eqref{deft1} and \eqref{equivt1'}, we see by definition \eqref{defs+} of $s_+(x)$ and $s_+'(x)$ that $s_+'(x)-s_+(x)=1+O(\epsilon)$.
Therefore, using \eqref{lowI} together with \eqref{defdelta} and item (ii) of Claim \ref{cli0i1}, we see that
\begin{equation}\label{lowi}
I(x) \ge C\epsilon^{2\alpha}|\log x_1|^{\frac{2\alpha}{p-1}}\left(\frac{T(x)-t_1}{-t_1}\right)^{2 \alpha},
\end{equation}
provided that $\epsilon$ is small enough, and $|x|$ is taken also small enough, less than some function of $\epsilon$.\\
Now, it remains to prove Claim \ref{cli0i1} in order to conclude the proof of \eqref{lowi}.
\begin{proof}[Proof of Claim \ref{cli0i1}] $ $\\
(i) Take $\sigma \in [s_+, s_+']$.  Since $\N$ is a norm, using the triangular inequality, we write
\begin{equation*}
|\N(w_0(\sigma))- \N(\kappa(\bar d(\sigma)))|\le \N[w_0(\sigma)- \kappa(\bar d(\sigma))]
\le N_0+\sum_{(i,\theta)\neq (1,1)} N_{i,\theta}
\end{equation*}
where 
\[
N_0=\N\left[w_0(\sigma)- \sum_{(j,\eta)}\eta\kappa(\eta \bar d(\sigma)e_j)\right]
\mbox{ and }N_{i,\theta}= \N(\kappa(\theta\bar d(\sigma)e_i)).
\]
Since $\E(\sigma)=\bar {\q E}_0(\bar d(\sigma))\subset B(0,1)$ from \eqref{esigma} and Lemma \ref{lemlorint}, using the Hardy-Sobolev estimate given in Lemma \ref{lemhs}, we write
\begin{align*}
\sup_{s_+\le \sigma \le s_+'}N_0 \le & \sup_{s_+\le \sigma \le s_+'}\left\|w_0(\sigma)- \sum_{(j,\eta)}\eta\kappa(\eta \bar d(\sigma)e_j)\right\|_{L^{p+1}_\rho(|z|<1)}\\
\le& C\sup_{s_+\le \sigma \le s_+'}\left\|w_0(\sigma)- \sum_{(j,\eta)}\eta\kappa(\eta \bar d(\sigma)e_j)\right\|_{\q H}\to 0
\end{align*}
as $s_+\to \infty$ from estimate \eqref{cprofile0}, which holds true by Proposition \ref{propyr}, hence also as $x\to 0$, by definitions \eqref{defs+} and \eqref{deft1} of $s_+$ and $t_1$ (remember that we had to take $\epsilon$ small enough then take $|x|$ less than some function of $\epsilon$, so that various estimates hold, in particular \eqref{const}, which is crucial to justify the choice we made in \eqref{esigma}).\\
As for $N_{i,\theta}$ for $(i,\theta) \neq (1,1)$, simply note that since the ellipse $\E(\sigma)$ is localized near the point $z=(1,0)$ from \eqref{abc0}, it follows again from \eqref{abc0} that $\kappa(\theta \bar d(\sigma),y)\le C(1-\bar d(\sigma))^{\frac 1{p-1}}$ when $z\in \E(\sigma)$, hence $N_{i,\theta}\le C(1-\bar d(\sigma)^2)^{\frac 1{p-1}}|\E(\sigma)|^{\frac 1{p+1}}\to 0$, since $\sigma \ge s_+\to \infty$ as $x\to 0$ from \eqref{defs+} and \eqref{deft1}. Thus, item (i) follows.\\
(ii) Take $\sigma \in [s_+, s_+']$. Refining estimate \eqref{abc0}, we see from \eqref{devdbar} together with definitions \eqref{defs+} and \eqref{deft1} of $s_+$ and $t_1$ that
\begin{align}
&\frac{|\log x_1|^{-\frac{p-1}2}}C\le \bar d(\sigma)+1\le C|\log x_1|^{-\frac{p-1}2},\label{equid}\\
&1-c(\bar d(\sigma))\sim \frac 53(1+\bar d(\sigma)),\;\;
a(\bar d(\sigma))\sim \frac 43(1+\bar d(\sigma)),\;\;
b(\bar d(\sigma))\sim \sqrt{\frac{2(1+\bar d(\sigma))}3}
\end{align}
as $x\to 0$, where $c$, $a$ and $b$ are defined in Lemma \ref{lemlorint} (take $d=\bar d(\sigma)$ by \eqref{esigma}). Inserting the ellipse $\E(\sigma)$ in the bounding rectangle as before, we see that $D\le 1-|(c(\bar d(\sigma))-a(\bar d(\sigma)), b(\bar d(\sigma)))| \le C(1+\bar d(\sigma))$ and the conclusion follows from \eqref{equid}. This concludes the proof of Claim \ref{cli0i1}, and also the proof of \eqref{lowi}.
\end{proof}

\bigskip

{\bf Step 3: Conclusion of the proof of Proposition \ref{prop1}}

Assuming that $\epsilon$ is small enough, and that $|x|$ is also small enough, less than some function of $\epsilon$, we see that the upper bound \eqref{up0} and the lower bound \eqref{lowi} on $I(x)$ \eqref{defI} both hold. Therefore, it follows that
\[
C_0 \ge I(x)^{\frac 1{2\alpha}} \ge C\epsilon|\log x_1|^{\frac 1{p-1}}\left(\frac{T(x)-t_1}{-t_1}\right),
\]
therefore, by definition \eqref{deft1} of $t_1(x)$, we have
\[
T(x)\le t_1(x)(1-C\epsilon^{-1}|\log x_1|^{-\frac 1{p-1}})
\le -\frac {3x_1}\epsilon 
\]
for $x$ small enough. Taking $\epsilon$ small enough, we see that this is in contradiction with \eqref{diction}. Thus, Proposition \ref{prop1} holds.
\end{proof}

\subsection{The expected behavior of the solution outside the origin}\label{secfor}

The construction procedure provided us with a sharp estimate on the behavior of $w_0$: see estimate \eqref{cprofile0} justified in Proposition \ref{propyr}. That estimate shows 4 solitons which have the same size (they are in fact symmetric).\\
As a next step, we will consider $x\neq 0$ and look for the behavior of $w_x(s)$, for $s$ large. 
In this section, we will adopt a formal approach in order to derive the behavior of $w_x(s)$, which follows from what we call the {\it ``soliton-loosing mechanism''}. In a later step (see Section \ref{secrig} below), we will adopt a rigorous approach in order to prove that behavior for $w_x(s)$. 

\subsubsection{Deformation of the solitons outside the origin}\label{secbeh}
Consider an arbitrary $\epsilon\in (0, \frac 1{100})$ and 
$x=(x_1,x_2)\neq 0$. From the symmetries of the solution and Proposition \ref{prop1}, we may assume that
\begin{equation}\label{portion2}
0\le x_2\le x_1 \le \delta_2
\mbox{ with }x\neq 0
\end{equation}
and
\begin{equation}\label{pyramid2}
-x_1\sqrt 2\le -|x|\le T(x) \le -(1-\epsilon)x_1\le -\frac{99}{100}x_1,
\end{equation}
for some $\delta_2=\delta_2(\epsilon)>0$ which will be taken small enough.\\
 In this section, we aim at deriving the behavior of $w_x(s)$ for large $s$. Our approach is formal and simple:\\
- formal, since we assume that $w_0(y,s)$ is exactly the sum of the 4 solitons appearing in \eqref{cprofile0}, with no error term;\\
- simple, since we just use the similarity variables' transformation \eqref{defw} in order to link $w_x$ and $w_0$, seeing that $w_x$ also has 4 solitons, though not of the same size, since the symmetry is broken when $x\neq 0$.

\medskip

In order to make our argument clearer, we use the notation $W(Y,S)$ instead of $w_0(y,s)$ defined in the selfsimilar transformation in \eqref{defw}. As said just above, we assume (formally) that estimate \eqref{cprofile0} holds with no error terms, which means that 
\begin{equation}\label{defg**}
\vc{W(\cy,\cs)}{\partial_s W(Y,S)}=\sum_{dir \in \q D} \tdir\vc{\kappa(\bar d(\cs)\edir,\cy)}{0}
\end{equation}
where $\bar d(S)$ is given in \eqref{solpart} and the direction ``$dir$'' belongs to the set
\begin{equation}\label{defD}
\q D = \{right,\; left,\; up,\; down\},
\end{equation}
with
\begin{equation}\label{deftedir}
\er=e_1,\;\el=-e_1,\;\eup = e_2,\;\ed=-e_2,\;
\tr =
\tl = 
-\tup =
-\td = 1.
\end{equation}
 Note that the directional terminology we use here follows the natural localization of the center of the solitons $\kappa(\bar d(\cs)\edir,\cy)$, after the transformation
\begin{equation}\label{defvb}
\bar v(\xi)=(1-z^2)^{\frac 1{p-1}}v(z)\mbox{ with }z=\tanh \xi\in \m R,
\end{equation}
applied with $z=y_i$ and $i=1,2$.\\
%
%
Now, let us introduce the following algebraic transformation 
\begin{equation}\label{defw**}
\tT_x(v)(\ty, \ts)=(1-T(\tx) e^{\ts})^{-\frac 2{p-1}} v(\cy,\cs),\;\;
\cy =\frac{\ty +\tx e^{\ts}}{1-T(\tx) e^{\ts}},\;\;
\cs= \ts -\log(1-T(\tx) e^{\ts}).
\end{equation}
In fact, the transformation $\q T_x$ is simply the composition of the inverse similarity transformation \eqref{defw} centered at $(0,T(0))=(0,0)$ with the direct similarity transformation (also given in \eqref{defw}) centered at $(x,T(x))$. As a matter of fact, given that $W=w_0$, we see from \eqref{defw} that
\begin{equation}\label{rach1}
\tT_x(W)=w_{\tx}.
\end{equation}
Using the definitions \eqref{defkd} and \eqref{defk*} of $\kappa(\bd,y)$ and $\kappa_1^*(\bd,\nu,\ty)$, 
we see that for 
$dir \in \q D$,
$d\in (-1,1)$
and $\ts \in \m R$, if $1+[dx.\edir -T(x)] e^{\ts}>|d|$, then
\begin{equation}\label{rach2}
\forall \ty \in (-1,1),\;\;
\tT_x(\kappa(\bar d(\cs)\edir,\cdot))(\ty, \ts)=\kdir(x,y,s)
\end{equation}
where 
\begin{equation}\label{foursol}
\kdir(x,y,s)=\kappa^*(\hat d(x,s) \edir, \ndir(x,s),y) 
\end{equation}
with $\tdir$ and $\edir$ given in \eqref{deftedir}, 
\begin{equation}\label{defs2}
\ndir(x,\ts)=[\cd(x,s) x\cdot\edir-T(x)]e^{\ts},\;\;
\cd(x,\ts)=\bar d(\cs(x,s)),\;\;
-e^{-\cs(x,s)}=T(\tx) - e^{-\ts},
\end{equation}
 and $\bar d$ is introduced in \eqref{solpart}. Since $T(x)<0$ from \eqref{pyramid2}, it follows that for $\delta_2$ small enough, the time variable $s$ lies in $[\bar s_0,\infty)$ and $S\in [\bar s_0, -\log(-T(x)))$, for some fixed and large $\bar s_0$.\\
Applying the transformation $\q T_x$ to the first component of identity \eqref{defg**}, we see from \eqref{rach1} and \eqref{rach2} that the first component of the following identity holds
\begin{equation}\label{defh**}
\vc{w_{\tx}(\ty,\ts)}{\partial_s w_x(y,s)}
=\sum_{dir \in \q D}\tdir \kdir(x,y,s)
\end{equation}
(and we will also assume formally that the equality holds for the second component). 
Let us now make a remark concerning the ``localization'' and the ``shape'' of the generalized soliton $\kappa^*(\bd, \nu)$, since it appears in the decomposition of $(w_x, \partial_s w_x)(y,s)$ by \eqref{defh**} and \eqref{foursol}.\\
Using item (iii) and (i) in Lemma \ref{lemkd}, we see that 
\[
\kappa^*_1(\bd, \nu, y) = \lambda(|\bd|,\nu)\kappa\left(\bD,y\right)
\]
and
 $\left\|\kappa\left(\bD,y\right)\right\|_{\q H_0}\le C_0$ 
 uniformly, where
\begin{equation}\label{deflambda}
\bD = \frac{\bd}{1+\nu}\mbox{ and }\lambda(d, \nu)= \left[\frac{1-d^2}{(1+\nu)^2 - d^2}\right]^{\frac 1{p-1}}.
\end{equation}
Using the transformation \eqref{defvb} with $z$ parallel to $\bd$, we see that the soliton in ``localized'' (or ``centered'') around $\bD$. Furthermore, it appears that  $\lambda(|\bd|,\nu)$ controls the size  of $\kappa^*_1(\bd, \nu, y)$ in $\q H_0$. 
Given that we know from item (iii) in Lemma \ref{lemkd} that
\begin{equation}\label{compare}
\min\left(\mu,\mu^2\right)\le \lambda(|\bd|, \nu)^{p-1}\le \max\left(\mu,\mu^2\right),
\end{equation}
where
\begin{equation}\label{defmu}
\mu(d,\nu) =1/\left(1+\frac \nu{1-|d|}\right),
\end{equation}
we justify that $\mu$ may be called the ``size'' variable of the generalized soliton $\kappa^*(\bd,\nu)$. 
In particular, if $\mu$ is small, then $\lambda$ is small and so is $\kappa^*_1(\bd, \nu, y)$ (we may say that the soliton {\it ``vanishes''}, or that it is ``lost''), and if $\mu$ is large, then $\lambda$ is large and so is $\kappa^*_1(\bd, \nu, y)$ (we may say that the soliton {\it ``blows up''}).

\medskip

Introducing the ``center'' and the ``size'' of the soliton $\kdir(x,s)$:
\begin{equation}\label{deftDtm}
\Ddir(x,s)=\frac{\hat d(x,s)}{1+\ndir(x,s)}\mbox{ and }
\mdir(x,s) = 1/\left(1+ \frac{\ndir(x,s)}{1-|\hat d(x,s)|}\right),
\end{equation}
we see by definition \eqref{defs2} of $\ndir(x,s)$ that in the portion of the space where $x$ is located (see \eqref{portion2}), we have
\begin{equation}\label{order0}
\nr(x,s) \le \nup(x,s) \le \nd(x,s) \le \nl(x,s),
\end{equation}
hence
\begin{equation}\label{order}
\ml(x,s)\le \md(x,s) \le \mup(x,s) \le \mr(x,s),
\end{equation}
which means that the solitons are ``ordered'', and if ever they change size,  $\kl(x)$ would be the first to be ``lost'', if ever, and $\kr(x)$ would be the first to ``blow up'', if ever.

\subsubsection{The soliton-loosing mechanism}\label{secloose}
In this section, we remain in the formal approach.
Proceeding as we did in one dimension in \cite{MZdmj12}, we are ready to exhibit the ``soliton-loosing mechanism''. Roughly speaking, based on the order in \eqref{order}, this is the scenario that occurs for the behavior of $w_x(s)$:\\
- for small $s$, the four solitons appearing in \eqref{defh**} have comparable sizes;\\
- at some later time, the size of the left-soliton $\kl(x,s)$, becomes much smaller the others', we say that we ``loose'' it,  and we are left with only 3 solitons;\\
- shortly later, the same occurs with the down-soliton $\kd(x,s)$, and we are left with only 2 solitons;\\
- if $x_1=x_2$ (i.e. on the bisectrix $\{x_1=x_2\}$), from symmetry, we keep both the up and the right-soliton;\\
- if $x_2<x_1$ (i.e. outside that bisectrix), we loose the up-soliton $\kup(x,s)$: only the right-soliton $\kr(x,s)$ remains.

\medskip

In the following, for each of the four solitons appearing in the decomposition \eqref{defh**}, we will compute a vanishing time, depending on $x$. 

\medskip

We consider the soliton $\kdir(x,s)$ defined in \eqref{foursol}, where $dir =left$, $down$ or $up$.
Given $A>0$ large enough,
we define $\sdir(x)$ as the time such that the ``size'' 
\begin{equation}\label{condsize}
\mdir(x,\sdir(x))=\frac 1{A+1}\mbox{ i.e. }\frac{[\hat d(x,\sdir(x))x\cdot \edir -T(x)]e^{\sdir(x)}}{1-|\hat d(x,\sdir(x))|}= A>0
\end{equation}
Introducing $\Sdir(x)$ such that
\begin{equation}\label{Ss}
-e^{-\Sdir(x)}=T(x) - e^{-\sdir(x)}
\end{equation}
as in \eqref{defs2}, we see that 
\begin{equation}\label{defsdir2}
A=\frac{[\hat d(x,\sdir(x))x\cdot \edir -T(x)]e^{\sdir(x)}}{1-|\hat d(x,\sdir(x))|}= 
\frac{\bar d(\Sdir(x))x\cdot\edir-T(x)}{(1-|\bar d(\Sdir(x))|)(T(x)+e^{-\Sdir(x)})}.
\end{equation}
Since $T(x) \to T(0)=0$ as $x\to 0$, it follows from \eqref{devdbar} that
\[
\Sdir(x) \to \infty\mbox{ and }\sdir(x) \to \infty\mbox{ as }x\to 0
\]
(remember in the same time that 
\[
\bar s_0\le \Sdir(x) <-\log(-T(x)),
\]
from the remark right after \eqref{defs2}).
Using again \eqref{devdbar}, it follows that
\[
\frac{\Sdir(x)^{\frac{p-1}2}\left(\bar d(\Sdir(x))x\cdot\edir-T(x)\right)}{C_0(T(x)+e^{-\Sdir(x)})}\sim A\mbox{ as }x\to 0.
\]
Note from \eqref{pyramid2} that we have
\begin{equation}\label{simlog}
-\log|T(x)|=l+O(1)\mbox{ as }x\to 0,\mbox{ with } l=-\log x_1.
\end{equation}
Looking for a solution
\begin{equation}\label{ansatz}
\Sdir(x) = |\log|T(x)||-\sigdir(x)\mbox{ with }0<\sigdir(x)=o(l),
\end{equation}
we end up with 
\begin{equation}\label{end}
\frac{l^{\frac{p-1}2}}{C_0}e^{\sdir(x)}\left(\bar d(\Sdir(x))x\cdot\edir-T(x)\right)=
\frac{l^{\frac{p-1}2}\left(\bar d(\Sdir(x))x\cdot\edir-T(x)\right)}{C_0T(x)(1-e^{\sigdir(x)})}\sim A
\end{equation}
as $x\to 0$.

\subsubsection{The loosing-time for the left and the down-soliton} 
In these cases, $x\cdot \el = -x_1<0$ and $x\cdot \ed = -x_2\in [-x_1,0]$. Using \eqref{pyramid2} and \eqref{devdbar}, we see that in both cases, we have
\begin{equation}\label{equiv}
\bar d(\Sdir(x))x\cdot\edir-T(x)\sim -x\cdot\edir-T(x),
\end{equation}
hence, from \eqref{end}, 
\[
\frac{l^{\frac{p-1}2}\left(-x\cdot\edir-T(x)\right)}{C_0T(x)(1-e^{\sigdir(x)})}\sim A\mbox{ as }x\to 0.
\]
looking for a solution $\sigdir\to \infty$, we get
\[
\frac{l^{\frac{p-1}2}\left(x\cdot\edir+T(x)\right)}{T(x)e^{\sigdir(x)}}\sim C_0A,
\]
which yields 
\[
\sigdir(x) = \frac{p-1}2 \log l +\log\frac {x\cdot\edir+T(x)}{T(x)}- \log\left(C_0A\right)+o(1).
\]
Since 
\[
1\le \frac {x\cdot\edir+T(x)}{T(x)}\le \frac{x_1+x_1\sqrt 2}{\frac{99}{100}x_1 }\le 3
\]
from \eqref{portion2} and \eqref{pyramid2}, 
we may fix now a common value 
\begin{equation}\label{defsigl}
\sigl(A,x) = \sigd(A,x) =  \frac{p-1}2 \log l -\log (C_0A),
\end{equation}
and follow upward the previous calculations to derive the following:
\begin{lem}[The loosing time for the left and the down-soliton]\label{LemVanLD}
Consider $A>0$ large enough and $x$ small enough satisfying \eqref{portion2}. Then, it holds that $\|\kl(x,\sl)\|_{\q H}+\|\kd(x,\sl)\|_{\q H}\le C A^{-\frac 1{p-1}}$, where
\begin{equation}\label{defslsd2}
\sl(A,x)=\sd(A,x)\equiv -\log|T(x)|-\log\left(\frac{l^{\frac{p-1}2}}{C_0A}-1\right).
\end{equation}
\end{lem}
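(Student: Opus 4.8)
The plan is to run the formal computation of the preceding pages in reverse, now with the \emph{chosen} common value $\sigdir=\sigl(A,x)=\sigd(A,x)=\frac{p-1}2\log l-\log(C_0A)$ from \eqref{defsigl}, and to convert the resulting control of the ``size'' variable $\mdir$ into an $\q H$-bound via item (iii) of Lemma \ref{lemkd}. Fix $dir\in\{left,down\}$; by \eqref{portion2} we have $x\cdot\edir\le 0$, so $\ndir(x,s)=[\hat d(x,s)\,x\cdot\edir-T(x)]e^{s}>0$ by \eqref{defs2}, and $\mdir(x,s)=1/(1+\ndir/(1-|\hat d(x,s)|))\in(0,1)$. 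From item (iii)--(i) of Lemma \ref{lemkd} one has $\kappa_1^*(\bd,\nu,\cdot)=\lambda(|\bd|,\nu)\kappa(\bD,\cdot)$ with $\|\kappa(\bD,\cdot)\|_{\q H_0}\le C_0$ uniformly, so, using \eqref{compare}, $\|\kdir(x,s)\|_{\q H}\le C\max(\mdir^{\frac1{p-1}},\mdir^{\frac2{p-1}})=C\,\mdir(x,s)^{\frac1{p-1}}$ whenever $\mdir(x,s)\le 1$. Hence it suffices to prove that $\mdir(x,\sl)\le C/A$, equivalently $\ndir(x,\sl)/(1-|\hat d(x,\sl)|)\ge cA$ for some absolute $c>0$, when $A$ is large and $x$ is small.

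First I would identify the auxiliary time $\Sdir(x)$ attached to $\sl(A,x)$ through \eqref{Ss}. Put $B:=e^{\sigdir}=l^{\frac{p-1}2}/(C_0A)$; definition \eqref{defslsd2} reads $e^{-\sl}=|T(x)|(B-1)$ (meaningful once $l^{\frac{p-1}2}>C_0A$, i.e. $x$ small for $A$ fixed), and since $T(x)=-|T(x)|$ by \eqref{pyramid2}, relation \eqref{Ss} gives $e^{-\Sdir(x)}=e^{-\sl}-T(x)=|T(x)|B$, i.e. $\Sdir(x)=-\log|T(x)|-\sigdir$, which is precisely the ansatz \eqref{ansatz}. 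Since $-\log|T(x)|=l+O(1)$ by \eqref{simlog} while $\sigdir=\frac{p-1}2\log l-\log(C_0A)=o(l)$ for $A$ fixed as $x\to0$, I get $\Sdir(x)\to\infty$, $\Sdir(x)\sim l$, and $\bar s_0\le\Sdir(x)<-\log|T(x)|$, so that the soliton decomposition \eqref{defh**} is legitimate; then \eqref{devdbar} yields $1-|\hat d(x,\sl)|=1+\bar d(\Sdir(x))\sim C_0\,l^{-\frac{p-1}2}$.

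Then I would evaluate $\ndir(x,\sl)$. From \eqref{defs2}, $\hat d(x,\sl)=\bar d(\Sdir(x))\to-1$, so by \eqref{pyramid2} the bracket satisfies $\hat d(x,\sl)\,x\cdot\edir-T(x)=(-x\cdot\edir+|T(x)|)(1+o(1))$, which is \eqref{equiv}; since $e^{\sl}=1/(|T(x)|(B-1))$ and $B\to\infty$ (hence $B-1\sim B$),
\[
\frac{\ndir(x,\sl)}{1-|\hat d(x,\sl)|}\sim\frac{(-x\cdot\edir+|T(x)|)\,l^{\frac{p-1}2}}{C_0\,|T(x)|\,(B-1)}\sim A\,\frac{-x\cdot\edir+|T(x)|}{|T(x)|}.
\]
By \eqref{portion2} and \eqref{pyramid2}, $1\le(-x\cdot\edir+|T(x)|)/|T(x)|\le 3$ for $\epsilon$ small (indeed $-x\cdot\el=x_1$, $-x\cdot\ed=x_2\in[0,x_1]$ and $|T(x)|\in[(1-\epsilon)x_1,\sqrt2\,x_1]$), whence $\ndir(x,\sl)/(1-|\hat d(x,\sl)|)\ge A/2$ for $x$ small, so $\mdir(x,\sl)\le 2/A$ and $\|\kdir(x,\sl)\|_{\q H}\le C A^{-\frac1{p-1}}$. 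Adding the two estimates for $dir=left$ and $dir=down$, which share the same value $\sl=\sd$, gives the lemma.

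The only genuinely delicate point is the self-consistency and uniformity of the asymptotics: one must check that the ansatz \eqref{ansatz} is admissible (which amounts to $l^{\frac{p-1}2}>C_0A$, i.e. $x$ small for $A$ fixed) and that the three distinct scales $1+\bar d(\Sdir)\sim C_0l^{-\frac{p-1}2}$, $e^{\sigdir}=l^{\frac{p-1}2}/(C_0A)$ and $-\log|T(x)|\sim l$ combine with error terms genuinely $o(1)$ relative to the displayed leading orders; once this bookkeeping — essentially already displayed before the statement — is in place, the rest is just the elementary algebra of the change of frame \eqref{defs2}--\eqref{Ss}.
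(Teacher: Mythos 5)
Your argument is correct and is essentially the paper's own proof written out in full: the paper simply says to follow the preceding formal analysis back up to \eqref{condsize}, checking via \eqref{defsigl}, \eqref{ansatz} and \eqref{Ss} that $\sigdir\sim\frac{p-1}2\log l=o(l)$, which is exactly the bookkeeping you carry out, before converting the size bound into an $\q H$-bound through item (iii) of Lemma \ref{lemkd} and \eqref{compare}. The only cosmetic remark is that the $\q H$-bound should be quoted directly from the inequality in item (iii) of Lemma \ref{lemkd} (whose second term vanishes since $\ndir>0$), rather than from the first-component identity alone, since $\kdir$ is a pair whose second component must also be controlled.
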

\begin{nb}
From \eqref{defslsd2} and \eqref{simlog}, it holds that
\begin{equation}\label{expsl}
\sl(A,x)=\sd(A,x)= l - \frac{p-1}2 \log l +\log A+O(1)\mbox{ as }x\to 0.
\end{equation}
\end{nb}
\begin{proof}
The proof is straightforward is one follows the above analysis upward until \eqref{condsize}. In particular, we do have
from the expression \eqref{defsigl} that 
\begin{equation}\label{ansatz2}
\sigdir(A,x) \sim \frac{p-1}2 \log l =o(l)\mbox{ as }x\to 0,
\end{equation}
 and this was required in \eqref{ansatz}.\\
As for the value of $\sl=\sd$ given in \eqref{defslsd2}, it directly follows from \eqref{defsigl} thanks to \eqref{ansatz} and \eqref{Ss}.
\end{proof}

\subsubsection{The loosing-time for the up-soliton} 
As stated in the beginning of Section \ref{secloose}, we expect the up-soliton to be lost, only outside the bisectrix $\{x_1=x_2\}$, i.e. when
\begin{equation}\label{outbis}
x_2<x_1.
\end{equation}
However, as we will see shortly, our approach in this subsection provides an expression for the loosing-time of the up-soliton (see \eqref{defSu}), only under a different condition, namely condition \eqref{condup} below. In order to overcome this difficulty, we will introduce in a later section a notion of ``relative vanishing'' of the up-soliton with respect to the right-soliton, which will turn to be valid exactly under condition \eqref{outbis}. If coupled to a notion of  ``blowing-up time'' for the right-soliton, the relative vanishing time will help us to extend the notion of a loosing-time for the up-soliton to all points outside the bisectrix, i.e. under \eqref{outbis}.

\medskip

Let us then concentrate on the introduction of the loosing-time for the up-soliton.
In comparison to the left and the down-solitons, the case of the up-soliton is indeed very delicate, since \eqref{equiv} may not hold here, as $x\cdot \eup +T(x)= x_2+T(x)$ may be zero (in the previous case, this was excluded by \eqref{portion2} and \eqref{pyramid2}). 
Since our assumption in \eqref{ansatz} implies by \eqref{simlog} that $\Su(x) \sim -\log|T(x)|\sim l$, it is legitimate to make the following further assumption:
\[
\bar d(\Su(x))x_2-T(x)\sim \bar d(-\log|T(x)|)x_2-T(x)\mbox{ as } x\to 0,
\]
in other words,
\begin{equation}\label{assump}
\sigu(x)l^{-\frac{p-1}2-1}x_2=o\left(\bar d(-\log|T(x)|)x_2-T(x)\right)
\mbox{ as } x\to 0,
\end{equation}
where $\sigu$ is defined in \eqref{ansatz}.
This way,
we may replace our requirement in \eqref{end} and \eqref{ansatz} by the following condition (where we replace $A$ by $A^2$ in order to separate the different loosing times for the solitons):
\begin{equation}\label{reduc}
\frac{l^{\frac{p-1}2}}{C_0}e^{\su(x)}\left(\bar d(-\log|T(x)|)x_2-T(x)\right)=A^2\mbox{ with } \sigu(x) = o(l)\mbox{ as }x\to 0,
\end{equation}
which readily shows that we need to have
\begin{equation}\label{condup}
\bar d(-\log|T(x)|)x_2-T(x)>0
\end{equation}
in order to get the following solution:
\begin{equation}\label{defSu}
\Su(A,x) = -\log(e^{-\su(A,x)}-T(x))
= -\log\left[\frac{[\bar d(-\log|T(x)|) x_2-T(x)]}{C_0A^2 l^{-\gamma}}-T(x)\right]
\end{equation}
where $\gamma= \frac{p-1}2$.
If
\[
\bar d(-\log|T(x)|)x_2-T(x)\le 0, \mbox{ we take }\su = +\infty.
\]
Of course, we need to check that \eqref{assump} together with the second condition in \eqref{reduc} hold (see the proof of Lemma \ref{lemlooseup} below). 
Thus, we have just proved the following statement:
\begin{lem}[The loosing-time for the up-soliton]\label{lemlooseup}
Consider $A>0$ large enough and $x$ small enough satisfying \eqref{portion2} and \eqref{condup}. Then, $\mu(x,\su(x))=(A^2+1+o(1))^{-1}$ and
  $\|\kup(x,\su)\|_{\q H}\le C A^{-\frac 2{p-1}}$, where $\su$ is defined in \eqref{defSu}.
\end{lem}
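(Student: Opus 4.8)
The statement to establish is Lemma~\ref{lemlooseup}, which computes the loosing-time $\su$ of the up-soliton and estimates its size at that time. The whole argument is purely computational, and the guiding principle is already laid out in the formal discussion of Section~\ref{secloose}: run the computation of \eqref{end}--\eqref{ansatz} \emph{upward}, with the only subtlety being the replacement of the naive ansatz by the condition \eqref{reduc}, which legitimizes freezing $\bar d$ at the value $\bar d(-\log|T(x)|)$. So first I would set $\gamma=\frac{p-1}2$, recall from \eqref{pyramid2}, \eqref{portion2} and \eqref{devdbar} that $-\log|T(x)|=l+O(1)=-\log x_1+O(1)$ as $x\to 0$, and introduce the candidate $\Su(A,x)$ as in \eqref{defSu}, together with $\su(A,x)$ determined by $-e^{-\Su}=T(x)-e^{-\su}$ as in \eqref{Ss}, that is, $e^{-\su}=e^{-\Su}+T(x)$. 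I would then verify directly from the definition \eqref{defSu} that $e^{-\su(A,x)}=\dfrac{\bar d(-\log|T(x)|)x_2-T(x)}{C_0A^2l^{-\gamma}}$, which is exactly the reformulated condition in \eqref{reduc}.

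\textbf{Checking the consistency conditions.} The main thing to verify (this is where the ``difficulty'' announced before the lemma actually bites) is that the two a~priori assumptions under which the reduction to \eqref{reduc} is valid really do hold for the solution $\su$ we just wrote down, namely: (i) $\sigu(x)=o(l)$ where $\sigu(x)=-\log|T(x)|-\Su(x)$ is defined by \eqref{ansatz}, and (ii) the replacement estimate \eqref{assump}, i.e.
\[
\sigu(x)\,l^{-\gamma-1}x_2=o\bigl(\bar d(-\log|T(x)|)x_2-T(x)\bigr)\mbox{ as }x\to 0.
\]
For (i), from \eqref{defSu} and $e^{-\su}=e^{-\Su}+T(x)$ one gets $e^{\sigu(x)}=\dfrac{|T(x)|}{|T(x)|-e^{-\su(x)}}$, and since $e^{-\su(x)}=C_0^{-1}A^{-2}l^{\gamma}\bigl(\bar d(-\log|T(x)|)x_2-T(x)\bigr)$ while $|T(x)|\asymp x_1$ and $\bar d(-\log|T(x)|)x_2-T(x)\le x_1(1+\sqrt2)$ by \eqref{portion2}--\eqref{pyramid2}, a short estimate gives $e^{\sigu(x)}\le C l^{\gamma}$, hence $\sigu(x)\le \gamma\log l+O(1)=o(l)$; one also reads off that $\sigu(x)\to\infty$. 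Here the hypothesis \eqref{condup}, $\bar d(-\log|T(x)|)x_2-T(x)>0$, is exactly what makes $e^{-\su(x)}$ positive and $<|T(x)|$ for $A$ large, so that $\su$ and $\Su$ are well-defined finite numbers lying in the admissible range $[\bar s_0,-\log(-T(x)))$. For (ii), substitute the bound $\sigu(x)\le\gamma\log l+O(1)$: the left side is $\le C l^{-\gamma-1}(\log l)x_2$, which is $o(x_1 l^{-\gamma})$; since $\bar d(-\log|T(x)|)x_2-T(x)\ge |T(x)|\ge \frac{99}{100}x_1$ (using $\bar d<0$, $x_2\ge 0$, and \eqref{pyramid2}), the right side is $\ge c\,x_1$, which dominates, so (ii) holds. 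This also retroactively justifies, via \eqref{devdbar}, that $\bar d(\Su(x))x_2-T(x)\sim\bar d(-\log|T(x)|)x_2-T(x)$, closing the formal loop.

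\textbf{The size estimate.} With $\su$ in hand, the estimate on $\mu$ and on $\|\kup(x,\su)\|_{\q H}$ is immediate from the machinery of Section~\ref{secbeh}. By definition \eqref{deftDtm} and \eqref{foursol}, the ``size'' of $\kup(x,s)$ is $\mup(x,s)=1/\bigl(1+\nup(x,s)/(1-|\hat d(x,s)|)\bigr)$, and by \eqref{defsdir2}-type algebra together with \eqref{devdbar} and the defining relation \eqref{reduc},
\[
\frac{\nup(x,\su)}{1-|\hat d(x,\su)|}=\frac{l^{\gamma}}{C_0}e^{\su}\bigl(\bar d(-\log|T(x)|)x_2-T(x)\bigr)\bigl(1+o(1)\bigr)=A^2(1+o(1)),
\]
so $\mu(x,\su(x))=(A^2+1+o(1))^{-1}$, which is the first claim. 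For the second, apply item~(iii) of Lemma~\ref{lemkd} exactly as in \eqref{compare}--\eqref{defmu}: since $\mu(x,\su)\le 2A^{-2}$ for $A$ large and $x$ small, $\lambda(|\hat d(x,\su)|,\nup(x,\su))^{p-1}\le\max(\mu,\mu^2)=\mu\le 2A^{-2}$, hence $\|\kappa^*_1(\cdot)\|_{\q H_0}=\lambda\,\|\kappa(\bs D,\cdot)\|_{\q H_0}\le C A^{-2/(p-1)}$ using the uniform bound $\|\kappa(\bs D,\cdot)\|_{\q H_0}\le C_0$; the second component $\kappa_2^*$ is handled the same way (it carries an extra factor $|\nu|/(1-|d|)\le C$), giving $\|\kup(x,\su)\|_{\q H}\le C A^{-2/(p-1)}$. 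The main obstacle is genuinely the bookkeeping around assumptions (i)--(ii) and the role of hypothesis \eqref{condup}: one must be careful that the estimates are uniform in $\sigma$/$s$ over the relevant range and that the $o(1)$ terms, which depend on $x$ but not on $A$, are controlled after $A$ is fixed — but no new idea beyond the one-dimensional analogue in \cite{MZdmj12} is needed.
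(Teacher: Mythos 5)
Your overall strategy (verify the two consistency conditions behind \eqref{reduc}, then read off the size from \eqref{deftDtm}, \eqref{compare} and item (iii) of Lemma \ref{lemkd}) is the same as the paper's, and your check of the first condition ($\sigu=O(\log l)=o(l)$, via $e^{-\su}\le Cx_1 l^{\gamma}/A^2$ and $|T(x)|\ge(1-\epsilon)x_1$) is essentially the paper's, modulo an algebra slip: from \eqref{Ss} and \eqref{ansatz} one has $e^{\sigu}=1+e^{-\su}/|T(x)|$, not $|T(x)|/(|T(x)|-e^{-\su})$, and the claim $e^{-\su}<|T(x)|$ is neither needed nor true in general.

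The genuine gap is in your verification of \eqref{assump}. You assert $\bar d(-\log|T(x)|)x_2-T(x)\ge |T(x)|\ge \tfrac{99}{100}x_1$, but since $\bar d(-\log|T(x)|)<0$ and $x_2\ge 0$ the inequality goes the other way: $\bar d(-\log|T(x)|)x_2-T(x)\le -T(x)$. Under \eqref{condup} this quantity is only known to be positive and can be arbitrarily small compared with $x_1$ (e.g.\ when $x_2$ is close to $x_1$ and $T(x)$ close to $\bar d(-\log|T(x)|)x_2$); this is exactly the delicacy the paper flags before introducing the up-soliton case, namely that $x_2+T(x)$ may vanish. With only the worst-case bound $\sigu\le \gamma\log l+O(1)$ on the left of \eqref{assump} and no lower bound of order $x_1$ on the right, your comparison collapses in that regime. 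The paper closes this by using the definition of $\su$ itself: since $\sigu\ge 0$, $e^{-\su}=T(x)(1-e^{\sigu})\ge -T(x)\,\sigu\ge (1-\epsilon)x_1\sigu$, so \eqref{reduc} gives
\begin{equation*}
\sigu\, l^{-\gamma-1}x_2\;\le\; \sigu\, l^{-\gamma-1}x_1\;\le\; \frac{C}{A^2\,l}\,\bigl(\bar d(-\log|T(x)|)x_2-T(x)\bigr)\;=\;o\bigl(\bar d(-\log|T(x)|)x_2-T(x)\bigr),
\end{equation*}
i.e.\ $\sigu$ is automatically small whenever $\bar d(-\log|T(x)|)x_2-T(x)$ is, so \eqref{assump} holds with no lower bound needed. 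You should replace your lower-bound step by this self-consistent estimate (or an equivalent one); the remainder of your argument, including the deduction $\mu(x,\su)=(A^2+1+o(1))^{-1}$ and the $\q H$-bound via $\lambda^{p-1}\le\max(\mu,\mu^2)$ (note that here $\nu_{up}>0$, so the second term in item (iii) of Lemma \ref{lemkd} is absent), then goes through.
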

\begin{proof}
 The proof is straightforward from our previous analysis, if we check that the second condition in \eqref{reduc} together with the assumption \eqref{assump} hold.\\
For the second condition in \eqref{reduc}, simply note from the expression \eqref{defSu} together with \eqref{pyramid2}, \eqref{portion2} and \eqref{Ss} that $|\bar d(-\log T(x))x_2-T(x)|\le 3 x_1$, hence $e^{-\su} \le \frac{3x_1l^{\frac{p-1}2}}{A^2C_0}$ and
\[
(1-\epsilon) x_1 \le -T(x) \le e^{-\Su} \le - T(x) +e^{-\su} \le x_1 \sqrt 2 +3x_1l^{\frac{p-1}2}/(A^2C_0).
\]
Therefore, $\Su = l+O(\log l)$, and from \eqref{pyramid2} and the definition \eqref{ansatz} of $\sigu$, we have
\begin{equation}\label{sigu0}
\sigu= O(\log l),
\end{equation}
 and the second condition in \eqref{reduc} holds.\\
For the assumption \eqref{assump}, simply note from \eqref{Ss}, \eqref{ansatz} and \eqref{sigu0} that $\sigu \ge 0$, hence
$e^{-\su}=T(x)+e^{-\Su}= T(x)(1-e^{\sigu})\ge -T(x) \sigu$.
Using \eqref{reduc}, we see that
\begin{equation}\label{siguequiv}
\frac{l^{\frac{p-1}2}}{A^2C_0}\left(\bar d(-\log|T(x)|)x_2-T(x)\right)=e^{-\su}\ge -T(x) \sigu.
\end{equation}
Using \eqref{pyramid2} together with \eqref{portion2}, we readily see that \eqref{assump} holds.\\
\end{proof}

\subsubsection{Relative vanishing of the up-soliton with respect to
  the right-soliton} \label{secrelvan}

From \eqref{order}, we know that for any $s$ where $w_x(y,s)$ is defined, we have $\mr(x,s)\ge \mup(x,s)$. Assuming that we are outside the bisectrix $\{x_1=x_2\}$, i.e. under condition \eqref{outbis}, we will define in the following a relative vanishing time for the up-soliton with respect to the right-soliton. Given the expression \eqref{deftDtm} of $\mdir$,  we look for some time $\sur(x)$ such that
\begin{equation}\label{condrel}
\frac 1{\mup(x,\sur)}-\frac 1{\mr(x,\sur)}=2A^2,
\end{equation}
where $A>0$ is given. Again by \eqref{deftDtm}, this is equivalent to having
\[
\frac{\bar d(\Sur)(x_2-x_1)e^{\sur}}{1-|\bar d(\Sur)|}=2A^2,
\]
where
\begin{equation}\label{Ssur}
-e^{-\Sur} = T(x) - e^{-\sur}.
\end{equation}
Making the additional assumption that
\begin{equation}\label{surl}
\Sur \sim l,
\end{equation}
we see from \eqref{devdbar} and \eqref{simlog} that 
\[
\frac{(x_1-x_2) e^{\sur}}{C_0l^{-\frac{p-1}2}}\sim 2A^2\mbox{ as }x\to 0.
\]
We may then replace the ``$\sim$'' symbol by an equality, fixing this way the value of $\sur$:
\begin{equation}\label{defsur}
\sur = -\log(x_1-x_2)-\frac{p-1}2 \log l +\log(2A^2C_0). 
\end{equation}
Since one easily checks from \eqref{Ssur} that \eqref{surl} holds, we may follow back the above calculations up to \eqref{condrel} and see that
\begin{equation}\label{condrel1}
\frac 1{\mup(x,\sur)}-\frac 1{\mr(x,\sur)}\sim 2A^2\mbox{ as }x\to 0.
\end{equation}

\subsubsection{The blowing-up time for the right-soliton} 
In order to take advantage of the notion of the relative vanishing time of the up-soliton with respect to the right-soliton, we need to introduce a notion of ``blowing-up time'' for the right-soliton.\\
The blowing-up time is found if one replaces $\frac 1{A+1}$ in \eqref{condsize} by $A^2$ (in other words, if one replaces $A$ by $-1+\frac 1{A^2}$). If we do that for the right-soliton, then we would see that  the discussion 
is in all points comparable to what we did for the loosing-time of the up-soliton.
More precisely, if we assume that
\begin{equation}\label{condright}
\bar d(-\log|T(x)|)x_1-T(x)<0,
\end{equation}
then we may define the following candidate for the blowing-up time for the right-soliton: 
\begin{equation}\label{defsrb}
\Srb(A,x) = -\log(e^{-\srb(A,x)}-T(x))= -\log[\frac{[T(x)-\bar d(-\log|T(x)|) x_1]}{C_0(1-\frac 1{A^2}) l^{-\gamma}}-T(x)]
\end{equation}
where $A>0$ is large enough.
If
\[
\bar d(-\log|T(x)|)x_1-T(x)\ge 0, \mbox{ we take }\srb=+\infty.
\]

 As a matter of fact, we have the following statement:
\begin{lem}[The blowing-up time for the up-soliton]\label{lemblowr}
Consider $A>0$ large enough and $x$ small enough satisfying \eqref{portion2} and \eqref{condright}. Then, we have $\mr(x,\srb(x))=A^2+o(1)$ and $\|\kr(x,\srb)\|_{\q H}\ge \frac{A^{\frac 2{p-1}}}C$, where $\srb$ is defined in \eqref{defsrb}. 
\end{lem}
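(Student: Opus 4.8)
The plan is to mirror, almost verbatim, the proof of Lemma~\ref{lemlooseup}, exploiting the observation made just before the statement: imposing $\mr(x,\srb)=A^2$ is exactly condition~\eqref{condsize} for $dir=right$ with $A$ replaced by $-1+\frac1{A^2}$. First I would unfold the definitions \eqref{deftDtm}, \eqref{defs2} and the relation $-e^{-\Srb}=T(x)-e^{-\srb}$ underlying \eqref{defsrb}---noting that $\hat d(x,\srb)=\bar d(\Srb)<0$, hence $1-|\hat d(x,\srb)|=1+\bar d(\Srb)$ and $e^{-\srb}=T(x)+e^{-\Srb}$---to obtain
\[
\frac1{\mr(x,\srb)}-1=\frac{\bar d(\Srb)x_1-T(x)}{(1+\bar d(\Srb))\bigl(T(x)+e^{-\Srb}\bigr)}.
\]
The value of $\Srb$ in \eqref{defsrb} is designed exactly so that $T(x)+e^{-\Srb}=\dfrac{T(x)-\bar d(-\log|T(x)|)x_1}{C_0(1-1/A^2)l^{-\gamma}}$; plugging this in and using $1+\bar d(\Srb)\sim C_0\Srb^{-\gamma}$ (from \eqref{devdbar}) gives
\[
\frac1{\mr(x,\srb)}-1\sim \Bigl(1-\frac1{A^2}\Bigr)\Bigl(\frac{\Srb}{l}\Bigr)^{\gamma}\,\frac{\bar d(\Srb)x_1-T(x)}{T(x)-\bar d(-\log|T(x)|)x_1}.
\]
So the conclusion $\mr(x,\srb)=A^2+o(1)$ reduces to two consistency facts: $\Srb\sim l$, and $\bar d(\Srb)x_1-T(x)\sim \bar d(-\log|T(x)|)x_1-T(x)$ as $x\to 0$ (both of these quantities are negative, by \eqref{condright}, which is also what makes $e^{-\srb}>0$, so that $\srb$, $\Srb$ are well defined with $\Srb<-\log(-T(x))$).

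For $\Srb\sim l$ I would copy the argument used for the second condition in \eqref{reduc}: from \eqref{pyramid2}, \eqref{portion2} and \eqref{devdbar} one has $|T(x)-\bar d(-\log|T(x)|)x_1|\le 3x_1$, so by \eqref{defsrb}, $e^{-\srb}\le \tfrac{3x_1l^{\gamma}}{C_0(1-1/A^2)}$ and hence $(1-\epsilon)x_1\le -T(x)\le e^{-\Srb}=-T(x)+e^{-\srb}\le x_1\sqrt2+\tfrac{3x_1l^{\gamma}}{C_0(1-1/A^2)}$, which forces $\Srb=l+O(\log l)$ and $\sigrb:=-\log|T(x)|-\Srb=O(\log l)=o(l)$. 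For the second fact---the analog of assumption~\eqref{assump}---I would use $\sigrb\ge0$ to write $e^{-\srb}=T(x)(1-e^{\sigrb})\ge -T(x)\sigrb$, combine it with the identity $e^{-\srb}=\tfrac{T(x)-\bar d(-\log|T(x)|)x_1}{C_0(1-1/A^2)l^{-\gamma}}$ of \eqref{defsrb}, and conclude via \eqref{pyramid2}--\eqref{portion2} that $\sigrb\,l^{-\gamma-1}x_1=o\bigl(\bar d(-\log|T(x)|)x_1-T(x)\bigr)$; since $\Srb=-\log|T(x)|-\sigrb$ with $\sigrb=o(l)$, the mean value theorem applied to $\bar d$ together with \eqref{devdbar} then yields $\bar d(\Srb)x_1-T(x)\sim \bar d(-\log|T(x)|)x_1-T(x)$.

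Once $\mr(x,\srb)=A^2+o(1)$ is in hand, the norm bound is immediate: $A^2+o(1)$ is large for $A$ large and $x$ small, so item~(iii) of Lemma~\ref{lemkd} and \eqref{compare} give
\[
\|\kr(x,\srb)\|_{\q H}=\bigl\|\kappa^*\bigl(\bar d(\Srb)e_1,\nr(x,\srb)\bigr)\bigr\|_{\q H}\ge \frac1C\min\bigl(\mr^{\frac1{p-1}},\mr^{\frac2{p-1}}\bigr)=\frac1C\,\mr^{\frac1{p-1}}\ge \frac{A^{\frac2{p-1}}}{C}.
\]
The main obstacle is the second consistency fact, exactly as for the up-soliton in Lemma~\ref{lemlooseup}: the quantity $\bar d(-\log|T(x)|)x_1-T(x)$ appearing in \eqref{defsrb} is only known to be negative (via \eqref{condright}) but may be arbitrarily small---the regime where $|T(x)|$ approaches the critical value $\approx x_1$---so replacing $\bar d(\Srb)$ by $\bar d(-\log|T(x)|)$ is legitimate only once the quantitative bound $\sigrb=O(\log l)$ from the first step is available. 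Everything else is a routine transcription of the computations preceding Lemma~\ref{lemlooseup}, with the bookkeeping substitution $A\mapsto -1+\frac1{A^2}$.
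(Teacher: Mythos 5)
Your proposal is correct and follows essentially the same route as the paper: the paper's proof simply notes that, after replacing $\frac1{A+1}$ by $A^2$ in \eqref{condsize}, ``the discussion is in all points comparable'' to the loosing-time analysis for the up-soliton (Lemma \ref{lemlooseup}), which is exactly the transcription you carry out, including the two consistency checks $\Srb=l+O(\log l)$ and the replacement of $\bar d(\Srb)$ by $\bar d(-\log|T(x)|)$ via the bound $e^{-\srb}\ge -T(x)\sigrb$. Your norm lower bound via items (iii) and (i) of Lemma \ref{lemkd} together with \eqref{compare} is also the paper's argument, so nothing is missing.
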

\begin{proof}
From the discussion above, we may follow the calculations up to \eqref{condsize} and figure out that $\mr(x,\srb(x))=A^2+o(1)\ge \frac {A^2}2$.
Therefore, by definition \eqref{defnh} of the norm in $\q H$, using items (iii) and (i) in Lemma \ref{lemkd} together with \eqref{compare} and the definition \eqref{deftDtm}, we write
\[
\|\kr(x,\srb)\|_{\q H} \ge \|{\kr}_1(x,\srb)\|_{\q H} \ge \mr(x,\srb(x))^{\frac 1{p-1}}/C\ge \frac {A^{\frac 2{p-1}}}C,
\]
which is the desired conclusion.
\end{proof}

\subsubsection{Chronology for the soliton-loosing mechanism}
From the previous subsections, we keep in mind that two events may occur in the expansion of $w_x(y,s)$:\\
- the loss of the left and the down-soliton, at time $s=\sl=\sd$ defined in \eqref{defslsd2};\\
- if $x_2<x_1$, the loss of the up-soliton, the relative vanishing of the up-soliton with respect to the right-soliton, or the blowing-up of the right-soliton, whatever event occurs first. This means that we place ourselves at time 
\begin{equation}\label{defsmin0}
\smin
\equiv\min(\su, \sur, \srb),
\end{equation}
where the various times are defined in \eqref{defSu}, \eqref{defsur} and \eqref{defsrb}. We would like to prove in the following that for $A$ and $\bar s_0$ large enough, the loss of the left and the down-soliton occurs first:
\begin{lem}\label{lemchrono}Consider $A>0$ large enough. If $\bar s_0$ is large enough and $x$ small enough, then 
\begin{equation}\label{aim1}
\forall s\in [\bar s_0, \sl],\;\;-1+\frac 1A \le \frac{\ndir(x,s)}{1-|\bar d(S)|}\le CA,
\end{equation}
for any direction $dir$, where $\ndir$ is defined in \eqref{defs2}, provided that $x$ is small enough.
In particular,
\begin{equation}\label{aim2}
\sl(A,x) \le \smin(A,x).
\end{equation}
\end{lem}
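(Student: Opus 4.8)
The strategy is to track the ``size'' variable $\ndir(x,s)/(1-|\bar d(S)|)$ of each soliton on the time interval $[\bar s_0, \sl]$ and show it stays in the window $[-1+1/A, CA]$, which by \eqref{condsize} and \eqref{defsdir2} is exactly the statement that none of the four solitons is ``lost'' or ``blows up'' before time $\sl$. Once \eqref{aim1} is established, \eqref{aim2} follows immediately: indeed $\smin = \min(\su,\sur,\srb)$ is by definition a time at which the up-soliton reaches size $\sim 1/(A^2+1)$ (loss), or the up/right relative size reaches $2A^2$, or the right-soliton reaches size $A^2$ (blow-up); all three of these are incompatible with the uniform two-sided bound \eqref{aim1} holding for all $s\le \sl$ (the constant $CA$ on the right and $-1+1/A$ on the left leave no room for these threshold events), hence $\sl\le \smin$.

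To prove \eqref{aim1}, I would first use the ordering \eqref{order0}, namely $\nr \le \nup \le \nd \le \nl$ in the portion \eqref{portion2}, so it suffices to bound $\nl(x,s)$ from above and $\nr(x,s)$ from below (the left soliton is the first candidate to vanish, the right one the first to blow up). For the upper bound: at $s=\sl$ we have by \eqref{condsize} exactly $\ml(x,\sl)=1/(A+1)$, i.e. $\nl(x,\sl)/(1-|\bar d(\Sl)|)=A$; for $s<\sl$ the quantity $\ndir(x,s)=[\cd(x,s)\,x\cdot\edir - T(x)]e^s$ with $\cd(x,s)=\bar d(\cs(x,s))$ is, up to the slowly varying factor $1-|\bar d(S)|$, essentially increasing in $s$ on this range (this is the same monotonicity exploited in the one-dimensional analysis of \cite{MZdmj12}), so it stays below its value at $\sl$, giving the bound $CA$. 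For the lower bound $\nr(x,s)/(1-|\bar d(S)|)\ge -1+1/A$: here I would use $x\cdot\er = x_1>0$ together with $-T(x)>0$ from \eqref{pyramid2}, so that $\nr(x,s)=[\cd(x,s)x_1 - T(x)]e^s$; the only way this becomes very negative relative to $1-|\bar d(S)|$ is if $\bar d(S)$ is extremely close to $-1$ while $x_1$ is comparatively large, and a direct estimate using \eqref{devdbar}, \eqref{simlog} and the explicit form of $\sl$ in \eqref{expsl} shows that on $[\bar s_0,\sl]$ one has $S=\cs(x,s)$ bounded away from the blow-up time $-\log|T(x)|$ enough that $1-|\bar d(S)|$ does not collapse faster than $\cd(x,s)x_1 - T(x)$, yielding the two-sided bound for $A$ and $\bar s_0$ large. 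The same computation for $dir=up, down$ is then automatic from \eqref{order0}, or handled directly using $0\le x\cdot\eup=x_2\le x_1$ and $x\cdot\ed=-x_2$.

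The main obstacle I expect is the lower bound near the left endpoint of the soliton-size window for the up- and right-solitons, where the competition is between $1+\bar d(S)\sim C_0 S^{-(p-1)/2}$ becoming small and the geometric term $\cd(x,s)\,x\cdot\edir - T(x)$; this requires a careful use of \eqref{pyramid2} (which controls $T(x)$ between $-|x|$ and $-(1-\epsilon)x_1$) to certify that along the relevant range $S$ is genuinely of order $l=-\log x_1$ and not yet at the blow-up time, so that $1-|\bar d(S)|\sim C l^{-(p-1)/2}$ is not too small. All the estimates are of the same nature as in the one-dimensional work \cite{MZdmj12}; the two-dimensional feature here is only that one must simultaneously juggle four solitons, but the ordering \eqref{order0}--\eqref{order} reduces this to the two extreme ones. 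I would close by noting that the constants $A$ and $\bar s_0$ can be chosen once and for all (depending only on $p$) so that all of the above inequalities are strict, which also gives the ``provided that $x$ is small enough'' qualifier through the $o(1)$ terms in \eqref{defsdir2} and \eqref{simlog}.
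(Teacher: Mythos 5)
Your overall route is the same as the paper's: use the ordering \eqref{order0} to reduce \eqref{aim1} to an upper bound on $\nl$ (coming from monotonicity in $s$ plus the fact that $\sl$ is precisely calibrated so that the left/down size reaches level $A$ there) and a lower bound on $\nr$, and then deduce \eqref{aim2} because the events defining $\su$, $\sur$ and $\srb$ occur at the level $A^2$, which is incompatible with a bound of order $A$; this is exactly how the paper argues, quoting \eqref{condrel1} together with Lemmas \ref{lemlooseup} and \ref{lemblowr}.

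The one step where your justification, as written, would fail is the lower bound on $\nr$ — the very step you flag as the main obstacle. You attribute it to $S$ staying away from $-\log|T(x)|$, so that $1-|\bar d(S)|$ ``does not collapse faster than'' $\bar d(S)x_1-T(x)$, and you conclude ``for $A$ and $\bar s_0$ large''. That is not the mechanism. The quantity $\bar d(S)x_1-T(x)$ can be negative, and the only control available is the pyramid bound \eqref{pyramid2}, which gives $\bar d(S)x_1-T(x)\ge -x_1-T(x)\ge -\epsilon x_1$. Multiplying by $e^{s}$ and dividing by $1-|\bar d(S)|\sim C_0S^{-\frac{p-1}2}$, the worst case is at $s=\sl$, where $e^{\sl}\sim C_0A/(x_1|\log x_1|^{\frac{p-1}2})$ and $\Sl\sim |\log x_1|$, so the normalized quantity can be as negative as $-C\epsilon A$. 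Hence taking $A$ large makes this \emph{worse}, not better: the lower bound in \eqref{aim1} only follows after first imposing $\epsilon\le \frac 1{2CA}$, i.e.\ choosing $\epsilon$ small as a function of $A$ and then $|x|$ small as a function of $\epsilon$ through Proposition \ref{prop1}. This explicit $\epsilon$-versus-$A$ constraint is exactly what the paper's proof does, and it is the quantitative content hidden behind your ``$x$ small enough'' qualifier (which you instead attribute to the $o(1)$ terms in \eqref{defsdir2} and \eqref{simlog}); without it the step does not close. The rest of your plan (monotonicity for the left/down solitons, the reduction by \eqref{order0}, and the threshold argument for \eqref{aim2}) is sound and matches the paper.
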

\begin{proof}
We only prove \eqref{aim1}, since \eqref{aim2} follows from \eqref{aim1} thanks to estimate \eqref{condrel1} together with Lemmas \ref{lemlooseup} and \eqref{lemblowr}, provided that $A$ is large enough.\\
If $dir = left$ or $down$, arguing as for \eqref{equiv} and using \eqref{devdbar}, \eqref{expsl}, \eqref{portion2}, \eqref{simlog} and \eqref{ansatz} (which holds by \eqref{ansatz2}) together with monotonicity, we write for all $s\in [\bar s_0, \sl]$, 
\[
\frac{\bar S_0^{\frac{p-1}2}(-x\cdot \edir - T(x))e^{\bar s_0}}{2C_0}
\le \frac{\ndir(x,s)}{1-|\bar d(S)|}
\le \frac{2\Sl^{\frac{p-1}2}(-x\cdot \edir - T(x))e^\sl}{C_0}
\le CA,
\]
provided that $\bar s_0$ is large enough,
where $\bar S_0$ is associated to $\bar s_0$ as in \eqref{Ss}. Taking $x$ small enough, we see that the lower bound holds also. \\
When $dir = up$ or $right$, using \eqref{order0}, we see that we only need to prove the lower bound with $dir=right$.\\
Using \eqref{pyramid2} and \eqref{devdbar}, and monotonicity, we write for all $s\in [\bar s_0, \sl]$,
\[
 \frac{\nr(x,s)}{1-|\bar d(S)|}\ge \frac{-2\epsilon x_1e^{\sl}\Sl^{\frac{p-1}2}}{C_0}\ge - C\epsilon A \ge -1 +\frac 1{A^2},
\]
provided that we impose that $\epsilon \le \frac 1{2CA}$ (which needs $x$ to be small enough by Lemma \ref{prop1}) and $A$ is large enough. This concludes the proof of Lemma \ref{lemchrono}.
\end{proof}

\subsection{A rigorous derivation of the behavior outside the origin}\label{secrig}
In this section, we use the precise vanishing times computed in the previous section, and give a rigorous proof to show that two solitons remain when we are on the bisectrices, while only one remains outside the bisectrices. 

\subsubsection{The function $w_x$ from 4 to 2 solitons when $x\neq 0$}
In this section, we give a rigorous version of the formal estimates contained in section \ref{secbeh}. Some of our arguments are the same as in the one-dimensional case treated in \cite{MZdmj12}. They will be given in the Appendix, so that the expert reader may get easier to the argument, and the less expert may find details in the Appendix.\\
 The expansion we find will be valid from some fixed large time $s_0$ (independent from $x$) up to some time (depending on $x$) where only two solitons will remain in the expansion of $w_x$.\\
Consider an arbitrary $\epsilon>0$. As in section \ref{secbeh}, using Proposition \ref{prop1} and the symmetries of the solution, we only consider $x=(x_1,x_2)\in \m R^2$
such that 
\begin{equation}\label{portion3}
0\le x_2\le x_1 \le \delta_3
\mbox{ with }(x_1,x_2)\neq 0.
\end{equation}
and
\begin{equation}\label{pyramid3}
-x_1\sqrt 2\le -|x|\le T(x) \le -(1-\epsilon)x_1.
\end{equation}
Unlike for estimate \eqref{defh**} where we neglected error terms, here we introduce that error term:
\begin{equation}\label{defh*}
r(x,y,s) = \vc{w_{\tx}(\ty,\ts)}{\ps w_{\tx}(\ty,\ts)}
-\sum_{dir \in \q D}\tdir \kdir(x,y,s)
\end{equation}
where $\kdir$ and the directions' set $\cal D$ are introduced in \eqref{foursol} and \eqref{defD}. Recalling the expression of the vanishing time for the left and the down solitons from \eqref{defslsd2}: 
\begin{equation}\label{defslsd3}
\sl(A,x)=\sd(A,x)\equiv -\log|T(x)|-\log\left(\frac{l^{\frac{p-1}2}}{C_0A}-1\right).
\end{equation}
where $A>0$ is arbitrary large, we claim the following:
\begin{prop}[Behavior of $w_x$ for $x\neq 0$, from 4 to 2 solitons]\label{prop422}
For any $A>0$, we have
\[
\sup_{s_3\le s\le \sl(x,A)}\|r(x,s)\|_{\q H}\to 0\mbox{ as }s_3 \to \infty\mbox{ and }x\to 0,
\]
if \eqref{portion3} is satisfied.
\end{prop}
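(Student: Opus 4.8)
The plan is to run a continuity/modulation argument for $w_x$ that mirrors the one already carried out for $w_0$ in Section~\ref{seccons}, but now keeping four decoupled solitons until the time $\sl=\sd$ when the left and down solitons are lost. First I would fix $A>0$ and a large $s_3$, and start from the formal identities of Section~\ref{secbeh}: the transformation $\q T_x$ relates $w_x$ to $w_0$, and since $w_0$ is genuinely close (in $\q H$) to the sum of $4$ solitons $\sum_{dir}\tdir\kappa(\bar d(S)\edir)$ by \eqref{cprofile0} (justified in Proposition~\ref{propyr}), applying $\q T_x$ and using \eqref{rach2} shows that $(w_x,\ps w_x)(s_3)$ is $\q H$-close to $\sum_{dir\in\q D}\tdir\kdir(x,s_3)$, i.e.\ $\|r(x,s_3)\|_{\q H}\to 0$ as $s_3\to\infty$ and $x\to 0$. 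So the initialization is for free; the real content is propagating this smallness up to $s=\sl$.

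The key steps, in order, are: (1) Modulation near four solitons. Using Lemma~\ref{lemode0} (or rather the multi-soliton version from \cite{MZdmj12}) applied to $v=(w_x,\ps w_x)(s)$, decompose $(w_x,\ps w_x)(y,s)=\sum_{dir}\tdir\kappa^*(\theta_{dir} d_{dir}(s)\edir,\nu_{dir}(s))+q(y,s)$ with orthogonality conditions $\pp_l=0$ killing the null/unstable directions of each linearized operator $L_{\pm d_{dir}e_i}$; note that since $x$ breaks the symmetry we must allow the four modulation parameters to move independently (no symmetry reduction here, unlike for $w_0$). (2) Dynamical estimates. Project the first-order form of \eqref{eqw} to get differential inequalities for $q$ and for the $(d_{dir},\nu_{dir})$, of exactly the same type as Lemma~\ref{propdyn}, plus the crucial \emph{decoupling} estimates showing that the cross-interactions between distinct solitons are exponentially small in the separation (this is the two-dimensional analogue of the interaction bounds in \cite{MZdmj12}; the solitons here live on the two axes and interact through terms of size $e^{-c(\zeta_{dir}+\zeta_{dir'})}$ roughly). (3) A bootstrap on a shrinking set. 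Define a shrinking set for $(q,\{\xi_{dir}\},\{\nu_{dir}\})$ analogous to Definition~\ref{defVa}, with $\|q\|_{\q H}$ controlled by a power of $s$, each $\xi_{dir}$ small, and each $|\nu_{dir}|$ controlled by its ``size'' $\mu_{dir}$; then show that on $[s_3,\sl]$ the bounds are strictly improved except possibly in the $\nu$-directions. (4) Control of the $\nu$'s up to $\sl$. Here the expanding $\nu'\approx\nu$ behavior from \eqref{est:nu} is harmless because the time interval $[s_3,\sl]$ is long but the sizes $\mu_{dir}$ stay bounded away from $0$ and $\infty$ by Lemma~\ref{lemchrono}: estimate \eqref{aim1} guarantees $-1+1/A\le \nu_{dir}/(1-|\bar d(S)|)\le CA$ for all $dir$ throughout $[\bar s_0,\sl]$, so no soliton blows up or is lost before $\sl$, and the solitons stay ``ordered'' as in \eqref{order}. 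Combining the improved $q$ and $\xi$ bounds with the a priori control of the $\nu_{dir}$ gives $\sup_{s_3\le s\le\sl}\|r(x,s)\|_{\q H}\to 0$, using $\|r\|_{\q H}\le\|q\|_{\q H}+\sum_{dir}\|\kappa^*(\theta_{dir}d_{dir}e_i,\nu_{dir})-\kappa^*(\theta_{dir}\hat d(x,s)\edir,\nu_{dir}(x,s))\|_{\q H}$ together with the Lipschitz dependence of $\kappa^*$ on its parameters (Lemma~\ref{lemkd}) and the modulation estimate \eqref{proximite}.

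I expect the main obstacle to be step~(2)--(3) in the two-dimensional setting: controlling the four-soliton interactions and, above all, showing the $q$-part does not grow over the long interval $[s_3,\sl]$ whose length is $\sim l=-\log x_1\to\infty$. Unlike the construction of $w_0$, where we only needed a fixed-time-shift argument, here we must integrate the $q$-equation over an unbounded interval while the reference profile (the sum of four moving solitons) itself evolves, and the coupling terms — though exponentially small in the separations $\zeta_{dir}$ — must be shown to be summable against the slowly decaying $\|q\|$-bound. The two genuinely two-dimensional difficulty is that the ``up'' and ``right'' solitons sit on different axes yet both stay $O(1)$ in size on $[s_3,\sl]$, so their mutual interaction must be estimated carefully; this is where the argument departs from \cite{MZdmj12} and the routine computations are relegated to the Appendix. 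Once the bootstrap closes, the conclusion is immediate, and the proposition follows.
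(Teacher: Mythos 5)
Your plan founders at the very first step, and the failure point is exactly the issue the paper's proof is organized around. You claim that the initialization ``is for free'': applying $\tT_x$ to \eqref{cprofile0} would show that $(w_x,\ps w_x)(s_3)$ is close in $\q H$ (over the whole unit ball) to the sum of the four solitons $\kdir(x,s_3)$. This is not valid, because the backward light cone with vertex $(x,T(x))$ is \emph{not} contained in the backward light cone of the origin: since $T$ is $1$-Lipschitz and $T(x)\ge -|x|$, the section $\{|y|<1\}$ at the $w_x$ level corresponds, through \eqref{defw**}, to a region in the $Y$-variable that spills outside $\{|Y|<1\}$. The transformation $\tT_x$ therefore transports the information \eqref{cprofile0} only to a proper subset of the unit ball (the set called $B_-(s)$ in \eqref{defb-s}), and on the remaining sliver $\{|y|<1,\,|Y|\ge 1\}$ estimate \eqref{cprofile0} says nothing. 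This is precisely why Section \ref{secfor} is only formal, and why the paper's actual proof of Proposition \ref{prop422} has two steps: an algebraic step (Claim \ref{clalgebre}) giving $\|r(x,s)\|_{\q H(B_-(s))}\le C\|R(S)\|_{\q H}+C/S$ for every $s\in[s_3,\sl]$, and then a genuinely analytic step, following Lemma 4.6 of \cite{MZdmj12}, to recover the missing region and upgrade to the full $\q H$ norm. Your proposal contains no substitute for that second step, and since your bootstrap is supposed to control the full-ball norm $\|q(s)\|_{\q H}$, the gap at $s=s_3$ propagates through the whole argument.

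A secondary, structural point: even granting an initialization, your route (a fresh four-parameter modulation plus a shrinking-set bootstrap for $w_x$ over the whole interval $[s_3,\sl]$, whose length is $\sim l\to\infty$) is much heavier than what the paper does, and would require new ingredients you do not supply: a coercivity estimate for the quadratic form around a sum of four \emph{generalized} solitons whose sizes become as disparate as $1/A$ versus $A$, interaction estimates for independently moving parameters, and—since $w_x$ has no symmetry for generic $x$ with $0<x_2<x_1$—additional rotation parameters (or another device) to handle the $l=2$ null directions \eqref{deffid}, which for $w_0$ were killed by symmetry in Lemma \ref{lemode0}. The paper avoids all of this by never re-running a construction for $w_x$: at \emph{each} time $s\le\sl$ the closeness on $B_-(s)$ is inherited directly from the already-established behavior of $w_0$, and only a short propagation argument near a decoupled sum of solitons is needed to fill in the rest. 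If you want to salvage your approach, the missing idea you must add is precisely this treatment of the region outside the cone of the origin.
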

\begin{proof} Once the loosing-time for the left and the down soliton are computed (see \eqref{defslsd3}), the argument becomes the same as in the one-dimensional case treated in \cite{MZdmj12} (precisely, in the proof of Lemma 4.6 page 2864). In order to keep the paper into reasonable limits, we leave the proof to Appendix \ref{apptx}, where only the key steps of the proof are presented, and refer the reader to \cite{MZdmj12} for details.
\end{proof}
Now, we recall that the particular value we took for $\sl(A,x)$ in  Lemma \ref{LemVanLD} was designed on purpose to make the size of the left and the down solitons less than $CA^{-\frac 1{p-1}}$, hence small for $A$ large enough. Therefore, in view of Proposition \ref{prop422}, we see that only two solitons remain when $s=\sl(A,x)$, namely the up and the right solitons. More precisely, this our statement:
\begin{cor}[Only two solitons remain when $s=\sl(x,A)$]\label{cor2}
For all $A>0$, if $x$ is small enough and satisfies \eqref{portion3}, then we have
\[
\left\|(w_x,\partial_s w_x)(y,\sl)-\kr(x,y,\sl)+\kup(x,y,\sl)\right\|_{\q H}
\le C A^{-\frac 1{p-1}},
\]
where $\sl(A,x)$ is introduced in \eqref{defslsd3}.
\end{cor}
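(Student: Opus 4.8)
The plan is to obtain Corollary~\ref{cor2} as a direct assembly of Proposition~\ref{prop422} and Lemma~\ref{LemVanLD}, the only real point of care being the order in which the two limiting parameters are fixed. Fix $A>0$ and take $x$ small enough satisfying \eqref{portion3}. By the very definition \eqref{defh*} of the error term $r$, together with the values of $\tdir$ and $\edir$ recorded in \eqref{deftedir}, we have at time $s=\sl(A,x)$
\[
\vc{w_x(y,\sl)}{\ps w_x(y,\sl)}=\kr(x,y,\sl)+\kl(x,y,\sl)-\kup(x,y,\sl)-\kd(x,y,\sl)+r(x,y,\sl),
\]
so that the triangle inequality in $\q H$ gives
\[
\left\|\vc{w_x(y,\sl)}{\ps w_x(y,\sl)}-\kr(x,y,\sl)+\kup(x,y,\sl)\right\|_{\q H}\le \|\kl(x,\sl)\|_{\q H}+\|\kd(x,\sl)\|_{\q H}+\|r(x,\sl)\|_{\q H}.
\]
It then suffices to bound the three terms on the right-hand side.

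For the first two, I would simply invoke Lemma~\ref{LemVanLD}: the time $\sl=\sl(A,x)$ appearing here is precisely the loosing time defined in \eqref{defslsd3}, which was chosen so that the left and down solitons have become small, namely $\|\kl(x,\sl)\|_{\q H}+\|\kd(x,\sl)\|_{\q H}\le CA^{-\frac 1{p-1}}$, valid for $x$ small enough in the region \eqref{portion3}.

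For the remaining term $\|r(x,\sl)\|_{\q H}$, I would use Proposition~\ref{prop422}, which asserts $\sup_{s_3\le s\le \sl}\|r(x,s)\|_{\q H}\to 0$ as $s_3\to\infty$ and $x\to 0$. The point to watch is that this smallness is only available in the joint limit, so one must fix $s_3=s_3(A)$ large first, so that $\sup_{s_3\le s\le\sl}\|r(x,s)\|_{\q H}\le A^{-\frac 1{p-1}}$ for $x$ small, and only then shrink $x$; one also records that $\sl(A,x)=l-\frac{p-1}2\log l+\log A+O(1)\to\infty$ as $x\to 0$ by \eqref{expsl}, so that $s_3\le\sl$ and the endpoint $s=\sl$ genuinely lies in the interval on which the expansion \eqref{defh*} is controlled; hence $\|r(x,\sl)\|_{\q H}\le A^{-\frac 1{p-1}}$ for such $x$. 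Adding the three bounds and relabelling the constant $C$ gives the statement.

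I expect no genuine obstacle inside the corollary itself: all the substance is upstream, in the determination of the loosing time $\sl$ (Section~\ref{secloose} and Lemma~\ref{LemVanLD}) and in the control of $r$ over the whole interval $[s_3,\sl]$ (Proposition~\ref{prop422}, which rests on the modulation/energy scheme of \cite{MZdmj12}). The corollary is essentially a bookkeeping step translating "the left and down solitons have vanished" into an explicit two-soliton approximation of $(w_x,\ps w_x)$ at the distinguished time $\sl$.
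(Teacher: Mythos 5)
Your argument is exactly the paper's: the corollary is obtained by writing $(w_x,\ps w_x)(\sl)$ through the decomposition \eqref{defh*}, bounding $\|\kl(x,\sl)\|_{\q H}+\|\kd(x,\sl)\|_{\q H}$ by $CA^{-\frac 1{p-1}}$ via Lemma \ref{LemVanLD} (whose loosing time $\sl$ was chosen for precisely this purpose), and absorbing the remainder $\|r(x,\sl)\|_{\q H}$ via Proposition \ref{prop422}, with the same fix-$A$, then-shrink-$x$ ordering implicit in the paper. Your extra remarks on choosing $s_3(A)$ first and on $\sl\to\infty$ as $x\to 0$ are correct bookkeeping and do not depart from the paper's route.
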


\subsubsection{
The asymptotic behavior on the bisectrices in the non-characteristic case}

In this section, we assume that $x\in \RR$ and $x$ is on the bisectrix $\{x_1=x_2\}$, namely that
\[
x=(x_1,x_2)\in \RR.
\]
We can therefore apply our argument in \cite{MZjfa07} (specifically Theorem 2 part (A) page 47), to see that the distance of $(w_x, \partial_s w_x)$ to the set of stationary solutions of equation \eqref{eqw} will go to $0$ as $s\to \infty$. Since at $s=\sl(A,x)$,  $(w_x, \partial_s w_x)(\sl)$ is near the sum of two antisymmetric solitons as we can see in Corollary \ref{cor2}, this means that $(w_x, \partial_s w_x)(s_n)$ converges, for some subsequence $s_n \to \infty$ as $n\to \infty$ to some stationary solution, itself near the sum of two antisymmetric solitons. More precisely, this our statement:
\begin{lem}[Behavior of $w_x$ when $x$ is on the bisectrix] \label{lembehbis}
If $x=(x_1,x_1)$, then $(w_x, \partial_s w_x)(s_n)$ converges, for some subsequence $s_n \to \infty$ as $n\to \infty$ to some stationary solution $w^*_x$, close to the sum of two antisymmetric solitons one can see in Corollary \ref{cor2}. 
\end{lem}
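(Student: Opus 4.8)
The plan is to combine three ingredients already in place: the two-soliton approximation of $(w_x,\partial_s w_x)$ at the time $s=\sl(A,x)$ from Corollary~\ref{cor2}, the convergence to the set of stationary solutions of equation~\eqref{eqw} at a non-characteristic point (from \cite{MZjfa07}, Theorem~2~(A) p.~47), and a compactness argument in $\q H$. First I would fix $A$ large once and for all, and start the flow of $(w_x,\partial_s w_x)$ from the time $\sl(A,x)$; by Corollary~\ref{cor2} we know that at that time the pair lies within $CA^{-1/(p-1)}$ of $\kr(x,\sl)-\kup(x,\sl)$, the sum of two antisymmetric solitons localized near the points $(1,0)$ and $(0,1)$ (after the transformation \eqref{defvb}), both with sizes of order $1$ (since at $s=\sl$ the left and down solitons have just been lost, while the up and right solitons still have comparable sizes by \eqref{order} and Lemma~\ref{lemchrono}). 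This is the ``initial data'' near which the asymptotic dynamics take place.

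\textbf{Key steps.} Second, since $x=(x_1,x_1)\in\RR$, I would invoke the non-characteristic asymptotic result of \cite{MZjfa07}: $\dist_{\q H}\big((w_x(s),\partial_s w_x(s)),\;\SS_{stat}\big)\to 0$ as $s\to\infty$, where $\SS_{stat}$ denotes the set of stationary solutions of \eqref{eqw} lying in $\q H$. Third, I would extract a subsequence. One option is to use that, along $\RR$, the energy $E(w_x(s),\partial_s w_x(s))$ is monotone (it is a Lyapunov functional, \cite{AMimrn01}) and bounded below (by Proposition~\ref{propnondeg} and the blow-up-rate bounds), so it converges; combined with the dispersive estimates underlying \cite{MZjfa07}, one gets a sequence $s_n\to\infty$ along which $\partial_s w_x(s_n)\to 0$ in $L^2_\rho$ and $w_x(s_n)$ converges (up to a further subsequence, using the $\q H_0$-compactness one has once $\partial_s w\to 0$ and the energy stabilizes) to some $w_x^*$ with $(w_x^*,0)\in\SS_{stat}$. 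Fourth, one must check that $w_x^*$ is genuinely two-dimensional: by symmetry $u$ (hence $w_x$) is antisymmetric with respect to the bisectrix through $x$, so $w_x^*$ vanishes on that line; if $w_x^*$ were a one-dimensional soliton $\kappa(\bs d,\cdot)$ with $\bs d\neq 0$ it would change sign across a hyperplane $\{\bs d\cdot y=\text{const}\}$, which together with the antisymmetry and the two-bump structure inherited from Corollary~\ref{cor2} forces a contradiction; and $w_x^*\not\equiv 0$ by Proposition~\ref{propnondeg}. Hence $w_x^*$ is a nontrivial stationary solution of \eqref{eqw} that is not of the form $(\kappa(\bs d,\cdot),0)$, i.e.\ genuinely two-dimensional. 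Finally, the proximity to ``two antisymmetric solitons'' in the statement follows because the $\q H$-distance from $(w_x(s_n),\partial_s w_x(s_n))$ to $\kr(x,s_n)-\kup(x,s_n)$ (suitably interpreted as a fixed two-bump profile, after undoing the time-dependence of the scaling parameters) stays $O(A^{-1/(p-1)})$ for all $s\ge\sl$: indeed, by \eqref{order}, the sizes $\mup,\mr$ stay trapped between fixed constants on the whole interval $[\sl,\smin)$, and at a non-characteristic point the soliton-loosing mechanism cannot lose either of them (losing one would force, by the one-soliton non-degeneracy \cite{MZtams14}, a contradiction with the lower bound), so the two-soliton approximation persists and passes to the limit $w_x^*$.

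\textbf{Main obstacle.} The delicate point is the step extracting the convergent subsequence and, in particular, controlling the time-dependent modulation parameters of the two surviving solitons so that the limit is a \emph{single} stationary profile rather than a moving two-soliton configuration: a priori the centers and sizes of $\kr(x,s)$ and $\kup(x,s)$ still evolve with $s$, and one needs to show that along the chosen subsequence these parameters converge (the sizes to finite nonzero limits, the centers to $(1,0)$ and $(0,1)$ respectively up to the residual shift) before one can identify the $\q H$-limit. This requires either a modulation lemma adapted to the two-soliton decomposition on $[\sl,\infty)$ along the lines of Lemma~\ref{lemode0}, together with differential inequalities for the parameters analogous to Lemma~\ref{propdyn} but now only claiming boundedness (not a sharp law), or — more cheaply — combining the convergence to $\SS_{stat}$ from \cite{MZjfa07} with the fact that $\SS_{stat}$ restricted to functions with the prescribed symmetries and within $CA^{-1/(p-1)}$ of a two-bump profile is compact, so that any limit point is automatically a stationary solution. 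I expect the cleanest route is the latter: push all the hard dynamical analysis into the already-cited results \cite{MZjfa07, MZtams14} and Corollary~\ref{cor2}, reducing the present lemma to a soft compactness-plus-symmetry argument.
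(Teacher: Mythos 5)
Your proposal follows essentially the same route as the paper: the paper's proof of Lemma \ref{lembehbis} is simply the observation that, since $x\in\RR$, the Lyapunov-functional argument of \cite{MZjfa07} (Theorem 2 part (A)) forces the distance of $(w_x,\partial_s w_x)(s)$ to the set of stationary solutions to vanish as $s\to\infty$, so a subsequence converges to some stationary $w^*_x$, which is identified as close to the two antisymmetric solitons via Corollary \ref{cor2}. Your additional machinery (energy monotonicity, persistence of the two-soliton decomposition beyond $\sl$, the symmetry argument for genuine two-dimensionality) goes beyond what the paper writes here, but the core ingredients and their use coincide with the paper's proof.
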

\begin{proof}
As we have just said before the statement, this is a direct application of our argument in \cite{MZjfa07} (specifically Theorem 2 part (A) page 47).
\end{proof}

\subsubsection{Loosing the third soliton outside the bisectrix}
Here, we assume that $x_2<x_1$ (still under \eqref{portion3}, with $\delta_3>0$ small). We will show that the up-soliton vanishes here, while the right-soliton is of order $1$. We will control the equation from $\sl$, the loosing-time for the left and the down-solitons, up to the time $\smin$ defined in \eqref{defsmin0}, which is the minimum of three times:\\
- $\su$, the loosing time of the up-soliton, defined in \eqref{defSu};\\
- $\sur$, the relative-vanishing time of the up-soliton with respect to the right-soliton, defined in \eqref{defsur};\\
- $\srb$, the blowing-up time of the right-soliton, defined in \eqref{defsrb}.

\medskip
Introducing $\smin(A,x)$ such that
\begin{equation}\label{defsmin}
-e^{-\Smin}=T(x) - e^{-\smin}
\end{equation}
and referring to the ``size'' variable defined in the formal approach (see \eqref{deftDtm}), it is easy from its montonicity in time that up to $\Smin(A,x)$:\\
- the up-soliton has a size $\mup(x,s)\ge \frac 1{A+1}$;\\
- the right-soliton has a size $\mr(x,s) \le A$;\\
Since $\mup(x,s)\le \mr(x,s)$ by \eqref{order}, it follows that for all $s\in [\Sl(A,x), \Smin]$, we have
\begin{equation}\label{modere}
\frac 1{A+1}\le \mup(x,s)\le \mr(x,s) \le A.
\end{equation}
Introducing
\begin{equation}\label{defh***}
\hr(x,y,s) = \vc{w_{\tx}(\ty,\ts)}{\ps w_{\tx}(\ty,\ts)}
-\left[\kr(x,y,s)-\kup(x,y,s)\right]
\end{equation}
then arguing as for Proposition \ref{prop422}, we use the techniques of \cite{MZdmj12} to show thanks to \eqref{modere} that we can follow the two solitons present in Corollary \ref{cor2} up to time $s=\smin(x,A)$. More precisely, we claim the following:
\begin{prop}[Behavior of $w_x$ for $x\neq 0$, from 2 to 1 solitons]\label{prop221}
For all $A>0$, there exists $\delta_3(A)>0$ such that 
\[
\lim_{A \to \infty}\left(\sup_{\sl(A,x)\le s\le \smin(A,x)}\|\hr(x,s)\|_{\q H}\right)\to 0\mbox{ as }x\to 0
\]
with \eqref{portion3} satisfied and $r(x,y,s)$ recalled in \eqref{defh***}.
\end{prop}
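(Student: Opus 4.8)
The plan is to run a modulation/energy argument, parallel to Proposition \ref{prop422}, but now starting at $s=\sl$ (not at the fixed time $s_3$) and keeping track of only the two solitons $\kr$ and $-\kup$. First I would set up the decomposition: by Corollary \ref{cor2}, at $s=\sl(A,x)$ we have $\|\hr(x,\sl)\|_{\q H}\le CA^{-\frac1{p-1}}$, which will serve as the (small) initial datum for the evolution of the error $\hr$. Then, exactly as in \cite{MZdmj12} (proof of Lemma 4.6, adapted to two space dimensions as already done for Proposition \ref{prop422}), one modulates $(w_x,\partial_s w_x)(y,s)$ around the sum of the two generalized solitons $\kr(x,s)-\kup(x,s)$, introducing modulated parameters $(\hat d_{\mathrm{right}},\hat\nu_{\mathrm{right}})$ and $(\hat d_{\mathrm{up}},\hat\nu_{\mathrm{up}})$ chosen so that the error $q$ is orthogonal to the null/expanding modes of the linearized operators $L_{\pm d e_i}$ recalled in Section \ref{subseclin}; this is legitimate because the symmetries of $u$ (symmetric w.r.t. axes, antisymmetric w.r.t. bisectrices) reduce the orthogonality conditions to a one-soliton modulation, as in Lemma \ref{lemode0}.

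The heart of the matter is then a Gronwall-type estimate on the modulated error together with ODEs for the parameters. The key structural input is estimate \eqref{modere}: on the whole interval $[\Sl(A,x),\Smin(A,x)]$ the sizes of both surviving solitons stay in the moderate range $[\frac1{A+1},A]$, so the two solitons are genuinely decoupled (their interaction is exponentially small in the distance between their centers, which is large because $\hat d_{\mathrm{right}}, \hat d_{\mathrm{up}}$ are close to $-1$), and no soliton degenerates or blows up. Hence the linearized flow around the sum is, modulo the finitely many neutral/unstable directions removed by modulation, dissipative in $\q H$; this yields a differential inequality of the form $\frac{d}{ds}\|q\|_{\q H}^2 \le -\delta\|q\|_{\q H}^2 + (\text{soliton interaction}) + (\text{modulation source terms}) + (\text{quadratic terms})$, where all the inhomogeneous terms are controlled by negative powers of $s$ (using $\Sl\sim\Smin\sim l=-\log x_1\to\infty$) plus quadratic terms in $\|q\|$ that are absorbed for $A$ large. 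Integrating from $\sl$ and using $\|q(\sl)\|_{\q H}\le CA^{-\frac1{p-1}}$ gives $\sup_{\sl\le s\le\smin}\|q(s)\|_{\q H}\le CA^{-\frac1{p-1}}+o(1)$ as $x\to 0$. Finally one translates back: $\hr$ differs from $q$ only by the discrepancy between the modulated solitons $\kappa^*(\hat d e_i,\hat\nu)$ and the ``reference'' solitons $\kr,\kup$ appearing in \eqref{defh***}, and by Lemma \ref{lemkd} (item (iv)) this discrepancy is itself $O(A^{-\frac1{p-1}})+o(1)$, since the modulation estimate \eqref{proximite} controls the drift of the parameters. Letting first $x\to 0$ and then $A\to\infty$ gives the claimed limit.

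The main obstacle I expect is controlling the modulation parameter equations over the \emph{long, $x$-dependent} time interval $[\sl,\smin]$ — in particular making sure the bound $\|q(s)\|_{\q H}\le CA^{-\frac1{p-1}}$ does not deteriorate. Unlike in the construction part (Section \ref{seccons}), here there is no topological shooting available and no a priori shrinking set; one must show that the source terms in the parameter ODEs (which involve $\|q\|_{\q H}^2$, the interaction term $e^{-\frac4{p-1}(1+\delta)\zeta}$-type quantities, and the cross-terms $\frac{|\nu|}{1-|d|^2}\|q\|$ as in Lemma \ref{propdyn}) integrate to something small over an interval of length $\smin-\sl$, which by \eqref{defslsd3}, \eqref{defSu}, \eqref{defsur}, \eqref{defsrb} and \eqref{simlog} is $O(\log l)$ while $\zeta\sim\frac{p-1}4\log l$; the competition between these logarithmic growths and the polynomial-in-$s$ decay of the remainder is exactly what forces the particular choices of $\sl$ and $\smin$ made in Section \ref{secfor}, and checking it carefully is the delicate point. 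The two-dimensional interaction estimates between the ``up'' and ``right'' solitons (whose centers lie along orthogonal axes rather than on a line) are the genuinely new ingredient relative to \cite{MZdmj12}, but as noted after Lemma \ref{propdyn} they introduce no new idea, only more bookkeeping. For these reasons I would, as the authors do for Proposition \ref{prop422}, present only the key steps here and defer the full computation to the appendix, invoking the one-dimensional analysis of \cite{MZdmj12} for the parts that transfer verbatim.
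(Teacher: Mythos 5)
Your proposal follows essentially the same route as the paper: the paper's proof of Proposition \ref{prop221} is a one-line appeal to the techniques of \cite{MZdmj12} already invoked for Proposition \ref{prop422}, with the moderate-size bound \eqref{modere} as the structural input keeping both surviving solitons in the range $[\frac 1{A+1},A]$ on $[\sl,\smin]$, and Corollary \ref{cor2} providing the smallness of the error at $s=\sl$ — exactly the modulation/Gronwall scheme you describe, including deferring the bookkeeping to the appendix. One correction to your justification of the modulation step: for $0<x_2<x_1$ the function $w_x$ has \emph{no} symmetry (the symmetries of $u$ relate $w_x$ to $w_{x'}$ for the reflected points $x'$, not $w_x$ to itself), so you cannot reduce the orthogonality conditions to a one-soliton modulation as in Lemma \ref{lemode0}; you must run the genuine two-soliton modulation of Proposition 3.1 in \cite{MZdmj12}, with vector-valued soliton directions as in \cite{MZtams14} so as to also kill the rotation modes $F_2$ of \eqref{deffid} (three conditions and three parameters $(\bs d,\nu)$ per soliton). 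This is a standard fix and does not affect the remainder of your argument.
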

\begin{proof} Again, as we did for Proposition \ref{prop422}, the proof follows from our techniques in \cite{MZdmj12}. Since we put some indications on the proof of Proposition \ref{prop422} in Appendix \ref{apptx}, we don't give details here, just to keep to paper into reasonable limits.
\end{proof}
Now, we claim that the up-soliton indeed vanishes at $s=\smin(A,x)$. More precisely, we have the following:
\begin{lem}[Only one soliton remains at $s=\smin(A,x)$]\label{lemonly1}For all $A>0$, the following holds:\\
(i) we have $\smin(A,x)= \su(A,x)$ and 
\begin{equation}\label{estonly1}
\left\|(w_x,\partial_s w_x)(y,\smin)-\kr(x,y,\smin)\right\|_{\q H} \le CA^{-\frac 1{p-1}};
\end{equation}
(ii) we have $\Su(A,x) \sim l$ as $x\to 0$, where $\su$ and $\Su$ are defined in
\eqref{defSu}.
\end{lem}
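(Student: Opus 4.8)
The plan is to combine the two–soliton tracking of Proposition~\ref{prop221} with the explicit formulas for the three candidate times $\su$, $\sur$, $\srb$ and with the monotonicity in time of the ``size'' variables $\mup(x,\cdot)$ and $\mr(x,\cdot)$. Throughout, $A$ is large, and $x$ is small and satisfies \eqref{portion3} with $x_2<x_1$.

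First I would prove part~(i). By Proposition~\ref{prop221}, $(w_x,\ps w_x)(s)$ stays $\q H$–close to $\kr(x,s)-\kup(x,s)$ on $[\sl(A,x),\smin(A,x)]$, with an error tending to $0$ as $A\to\infty$ and $x\to 0$; in particular this holds at $s=\smin$. Next I would show $\smin=\su$, by excluding the two other cases. If $\smin=\srb$, then $\mr(x,\srb)=A^2+o(1)$ by Lemma~\ref{lemblowr}, contradicting the bound $\mr(x,s)\le A$ of \eqref{modere} (valid on $[\Sl,\Smin]$ by the monotonicity of the sizes and Lemma~\ref{lemchrono}) once $A>1$. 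If $\smin=\sur$, then $\sur\le\su$, so the monotonicity of $\mup$ together with Lemma~\ref{lemlooseup} gives $\mup(x,\sur)\ge\mup(x,\su)=(A^2+1+o(1))^{-1}$, whence, by the defining relation \eqref{condrel1},
\[
\frac1{\mr(x,\sur)}=\frac1{\mup(x,\sur)}-2A^2+o(1)\le (A^2+1)-2A^2+o(1)<0
\]
for $A$ large, which is impossible since $\mr>0$ wherever the right–soliton is defined. Hence $\smin=\su$. Evaluating at $s=\smin=\su$, Lemma~\ref{lemlooseup} gives $\mup(x,\su)=(A^2+1+o(1))^{-1}$, so by items~(i) and~(iii) of Lemma~\ref{lemkd} one has $\|\kup(x,\su)\|_{\q H}\le C\,\mup(x,\su)^{\frac1{p-1}}\le C A^{-\frac2{p-1}}\le C A^{-\frac1{p-1}}$; combined with the smallness of the error in Proposition~\ref{prop221} and the triangle inequality, this yields \eqref{estonly1}.

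Part~(ii) then follows with little extra work. Since $\smin=\su<\infty$, condition \eqref{condup} must hold (otherwise $\su=+\infty$ by the convention made just after \eqref{defSu}), so the estimate $\Su=l+O(\log l)$ already obtained inside the proof of Lemma~\ref{lemlooseup} shows $\Su(A,x)\sim l$ as $x\to 0$.

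The hard part will be the exclusion of the case $\smin=\srb$, i.e.\ checking that $\mr(x,s)\le A$ genuinely persists up to $\smin$: comparing $\su$ with $\srb$ from the rough pyramid bounds \eqref{pyramid3} alone is delicate, since the two times coincide at leading order and the inequality $\su\le\srb$ is borderline at that level of precision. One therefore has to argue more carefully — either by tracking $\mr$ precisely enough on $[\sl,\su]$ to see that it cannot reach size $A^2$ before $\su$ (starting from $\mr(x,\sl)\le A$, given by Lemma~\ref{lemchrono}), or by invoking the uniform $\q H$–bound on $w_x$ (coming from the Lyapunov structure of the energy and Proposition~\ref{propupper}) together with the fact that, since $\kr$ and $\kup$ depend on different coordinates, $\|\kr(x,\smin)-\kup(x,\smin)\|_{\q H}$ is comparable to $\|\kr(x,\smin)\|_{\q H}\sim A^{\frac2{p-1}}$, which would contradict boundedness of $(w_x,\ps w_x)(\smin)$. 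Establishing the monotonicity of $\mup$ and $\mr$ uniformly on the whole of $[\sl,\smin]$ also requires some care.
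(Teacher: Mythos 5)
Your overall architecture coincides with the paper's: evaluate Proposition \ref{prop221} at $s=\smin$, rule out $\smin=\sur$ and $\smin=\srb$, then use Lemma \ref{lemlooseup} to get $\|\kup(x,\su)\|_{\q H}\le CA^{-\frac 2{p-1}}$ and conclude by the triangle inequality. Your exclusion of $\sur$ (the sizes at $\sur$ cannot produce the gap $2A^2$ demanded by \eqref{condrel}) is essentially the paper's argument around \eqref{bornesA}, and your part (ii), which extracts $\Su=l+O(\log l)$ from the proof of Lemma \ref{lemlooseup} after noting that $\smin\le\sur<\infty$ forces \eqref{condup}, is a legitimate shortcut (the paper instead re-derives the two-sided bound directly from \eqref{defSu}, \eqref{pyramid2} and \eqref{devdbar}).

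The genuine gap is exactly where you point: the exclusion of $\smin=\srb$. Your primary argument — $\mr(x,\srb)=A^2+o(1)$ from Lemma \ref{lemblowr} ``contradicts'' $\mr\le A$ from \eqref{modere} — is circular. The bound \eqref{modere} is not an independent estimate on $w_x$: it comes from the monotonicity of the size variables together with the very definitions of $\su$ and $\srb$ as the first times at which $\mup$ and $\mr$ reach the prescribed thresholds, so on $[\Sl,\Smin]$ it only yields $\mr(x,s)\le\mr(x,\srb)$, which is vacuous precisely in the case $\smin=\srb$ that you want to exclude (the constant $A$ rather than $A^2$ in \eqref{modere} is an imprecision of the text, not extra information you can lean on). Your fallback sketches go in the right direction but are not carried out; the paper's actual argument is of that second, quantitative kind, run through the energy rather than the norm: if $\smin=\srb$, the right soliton has size of order $A^2$, so by item (v) of Lemma \ref{lemkd} its energy is very negative; after checking that the up and right solitons are decoupled (and that the error of Proposition \ref{prop221} is harmless), $E(w_x(\smin),\partial_s w_x(\smin))<0$, which contradicts the blow-up criterion of Antonini and Merle \cite{AMimrn01} because $w_x$ is globally defined in $s$. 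To complete your proof you must implement this energy (or an equivalent boundedness) argument rather than cite \eqref{modere}.
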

\begin{proof} Consider $A>0$.\\
(i) From the definition of the vanishing time for the up-soliton (see the formal approach), it is enough to prove that $\smin(A,x)= \su(A,x)$. Proceeding by contradiction, two cases may occur by defintion \eqref{defsmin} of $\smin(A,x)$. Let us find a contradiction in each case.\\
- If $\smin(A,x)= \srb(A,x)$, then the right-soliton is very large by the formal approach, and its energy can be shown to be negative. If we manage to prove that the up and the right soliton are decoupled, we may see that the energy of $w_x$ is also negative, which is a contradiction, by the blow-up criterion of Antonini and Merle \cite{AMimrn01}.\\
- If $\smin(A,x)=\sur(A,x)$, then we have  by definition of $\sur(A,x)$ (see the formal approach)
\[
\frac 1{\mup(x,\sur)} -\frac 1{\mr(x,\sur)}=2A.
\]
Given that
\begin{equation}\label{bornesA}
\frac 1A\le \frac 1{\mr(x,\sur)}\le \frac 1{\mup(x,\sur)}\le A+1,
\end{equation}
a contradiction follows for $A>1$.\\
(ii)  By definition 
\eqref{defSu}
of $\Su$, we see that $\Su(A,x) \le -\log|T(x)|$. Since we have from \eqref{pyramid2} $(1-\epsilon) x_1 \le - T(x) \le x_1 \sqrt 2$, hence $-\log|T(x)|\sim l$, we see that $\Su(A,x)\le l+o(l)$ as $x\to 0$, on the one hand.\\
On the other hand, using \eqref{devdbar}, we see that $\bar d(-\log|T(x)|) \le 0$, hence
\[
\frac{[\bar d(-\log|T(x)|) x_2-T(x)]}{Ac_0 l^{-\gamma}}-T(x) \le -\frac{2 T(x)l^\gamma}{A c_0} \le \frac{2 \sqrt 2 x_1l^\gamma}{A c_0}.
\]
By definition 
\eqref{defSu}
of $\Su$, this gives $\Su(A,x) \ge l +o(l)$ as $x\to 0$.
This concludes the proof of Lemma \ref{lemonly1}.
\end{proof}
\subsubsection{The blow-up limit outside the bisectrices and differentiability of the blow-up surface at non-characteristic points}

With Lemma \ref{lemonly1}, we are ready to use a modified version of the trapping result of \cite{MZtams14} to show that we have a blow-up limit for $w_x$, when $x$ is outside the bisectrices and small. In particular, assuming in addition that $x$ is non-characteristic, we will see that estimate \eqref{estonly1} persists after time $\smin(A,x)$. 

\medskip

Let us first give our modified trapping result:
\begin{lem}[Behavior of solutions of equation \eqref{eqw} near $\pm \kappa^*(d,\nu,y)$ and trapping in the non-characteristic case]\label{proptrap} 
There exist $\eb>0$, $K_0>0$ and $\mu_0>0$ such that for any $x^*\in \m R^2$, $s^*\ge -\log T(x^*)$, 
$\omega^*=\pm 1$, $|d^*|<1$ and $\nu^* \in \m R$, if
\begin{equation*}
\ee\equiv\left \|\vc{w_{x^*}(s^*)}{\partial_s w_{x^*}(s^*)}-\bar\omega\kappa^*(d^*, \nu^*)\right\|_{\H}\le \eb,
\end{equation*}
then:\\
- either
\begin{equation}\label{c0}
 (w_{x^*}(s), \partial_s w_{x^*}(s))\to 0\mbox{ in }\q H\mbox{ as }s\to \infty,
\end{equation}
- or
\begin{equation}\label{conv0}
\forall s\ge s^*,\;\;\|(w_{x^*}(s), \partial_s w_{x^*}(s))- (\kappa(d_{\infty}),0)\|_{\q H}\le K_0\ee e^{-\mu_0 (s-s^*)},
\end{equation}
for some $d_\infty(x^*)\in B(0,1)$ satisfying
\begin{equation}\label{proxi0}
\frac{|l_{d^*}(d^*-d_\infty(x^*))|}{1-|d^*|}+
|\arg\tanh |d^*|-\arg\tanh |d_\infty(x^*)||
+\frac{|d^*-d_\infty(x^*)|}{\sqrt{1-|d^*|}}\le K_0\ee,
\end{equation}
where $l_{d^*}$ is the orthogonal projector on $\frac{d^*}{|d^*|}$ if $d^*\neq 0$, and on $e_1$, if $d^*=0$.\\
Moreover, we also have 
\begin{equation}\label{iii}
\left|\frac{\nu^*}{1-|d^*|}\right|\le K_0 \ee.
\end{equation}
If in addition $x^*\in \q R$, then only \eqref{conv0} holds, and $T$ is differentiable at $x=x^*$ with $\nabla T(x^*)=d_\infty(x^*)$.
\\In particular, if $\frac{\nu^*}{1-|d^*|}\ge 1$, then only \eqref{c0} holds, and $x^*\in \q S$.
\end{lem}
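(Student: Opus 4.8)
The plan is to re-run the modulation--and--trapping argument of \cite{MZtams14}, now with the \emph{generalized} soliton $\kappa^*(d^*,\nu^*)$ as reference object instead of a stationary one, keeping track of the single expanding mode to produce the dichotomy, and then to combine the outcome with the non-degeneracy result of Proposition \ref{propnondeg} and the $C^1$ regularity statement of \cite{MZtams14,MZcmp14}. First I would reduce: replacing $w_{x^*}$ by $-w_{x^*}$ if needed, take $\omega^*=1$; up to a rotation, take $d^*$ along $e_1$; and if $\nu^*/(1-|d^*|)$ exceeds a fixed large constant, note that $\|\kappa^*(d^*,\nu^*)\|_{\q H}$ is small by item (iii) of Lemma \ref{lemkd}, hence so is $\|(w_{x^*}(s^*),\partial_s w_{x^*}(s^*))\|_{\q H}$, so that one is immediately in case \eqref{c0} by local attractivity of $0$ for equation \eqref{eqw}; thus I may assume $\nu^*/(1-|d^*|)$ bounded. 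Then I would apply the one-soliton, general-position modulation lemma of \cite{MZtams14} (the analogue for a single soliton of Lemma \ref{lemode0}): on the maximal interval $[s^*,\bar s)$ on which $(w_{x^*},\partial_s w_{x^*})$ stays in a small $\q H$-neighbourhood of the soliton family, write
\[
\vc{w_{x^*}(y,s)}{\partial_s w_{x^*}(y,s)}=\kappa^*(d(s)e_1,\nu(s),y)+q(y,s),
\]
with $\pp_l(\pm d^*(s)e_1,q(s))=0$ for $l=0,1,2$ and $d^*(s)=d(s)/(1+\nu(s))$, where $\pp_l$ is the projector of \eqref{defpdi} onto the unstable ($\lambda=1$) and the two neutral ($\lambda=0$) eigendirections of $L_{\pm d^*(s)e_1}$ from Section \ref{subseclin}; as in \eqref{proximite}, $\|q(s^*)\|_{\q H}$ and $|\nu(s^*)/(1-|d(s^*)|)-\nu^*/(1-|d^*|)|$ are $\le C\ee$, with $\nu(s^*)/(1-|d(s^*)|)=O(\ee)$.

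Next I would project equation \eqref{eqw}, written as a first-order system, onto these three directions and onto the orthogonal complement, and use the spectral gap of $L_{\pm d^*e_1}$ (eigenvalue $1$ simple, eigenvalue $0$ double, rest of the spectrum strictly negative) to obtain, just as in \cite{MZtams14}, differential inequalities of the schematic form
\begin{align*}
|\nu'-\nu|&\le C\|q\|_{\q H}^2+C\tfrac{|\nu|}{1-|d|^2}\|q\|_{\q H},\\
|(\arg\tanh|d|)'|+|(\text{rotation angle})'|&\le C\|q\|_{\q H}\big(\|q\|_{\q H}+\tfrac{|\nu|}{1-|d|}\big),\\
\tfrac{d}{ds}\|q\|_{\q H}^2&\le-\delta\|q\|_{\q H}^2+C\|q\|_{\q H}^3+C\tfrac{|\nu|^2}{(1-|d|)^2},
\end{align*}
valid while $\|q\|_{\q H}+|\nu|/(1-|d|)$ is small; only the $\nu$-direction expands, and it is exactly the coordinate along $\kappa^*$. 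The dichotomy I expect is the following. If $|\nu(s)|/(1-|d(s)|)$ stays below a fixed small threshold for all $s\ge s^*$, the third inequality integrates to $\|q(s)\|_{\q H}^2\le Ce^{-\delta(s-s^*)}\ee^2+C\sup_{[s^*,s]}|\nu|^2/(1-|d|)^2$, and plugging this into the $\nu$-equation, the only solution of $\nu'=\nu+O(\ee^2e^{-\delta(s-s^*)})$ bounded as $s\to\infty$ is $\nu(s)=-\int_s^\infty e^{-(t-s)}O(\ee^2e^{-\delta(t-s^*)})\,dt=O(\ee^2e^{-\delta(s-s^*)})$, which closes a bootstrap giving $\|q(s)\|_{\q H}+|\nu(s)|/(1-|d(s)|)\le K_0\ee\,e^{-\mu_0(s-s^*)}$; then by the second inequality $d(s)$ and the rotation angle converge, so $d(s)\to d_\infty(x^*)$ (in $B(0,1)$ by the resulting bound), $(w_{x^*}(s),\partial_s w_{x^*}(s))\to(\kappa(d_\infty),0)$ at the rate \eqref{conv0}, integrating the neutral-parameter inequality between $s^*$ and $\infty$ gives \eqref{proxi0}, and evaluating the $\nu$-bound at $s=s^*$ with the Step-1 estimates gives \eqref{iii}. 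Otherwise $|\nu(s)|/(1-|d(s)|)$ reaches the threshold; the expanding $\nu$-equation then makes it monotone afterward; a negative-sign exit would make $(w_{x^*},\partial_s w_{x^*})$ track the branch of $\kappa^*(de_1,\mu e^s)$ with $\mu<0$, hence blow up at a finite $s$, impossible since $w_{x^*}$ is globally defined and bounded in $\q H$ (by \cite{MZajm03,MZimrn05}); so the exit is positive, $(w_{x^*},\partial_s w_{x^*})$ is driven along the explicit heteroclinic $\kappa^*(de_1,\mu e^s)$, $\mu>0$, of the discussion after \eqref{defk*}, and $(w_{x^*}(s),\partial_s w_{x^*}(s))\to0$ in $\q H$, which is \eqref{c0}; the same alternative run with $\nu^*/(1-|d^*|)$ itself bounded away from $0$ shows that then only the matching one of \eqref{c0}, \eqref{conv0} can hold.

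For the remaining claims: if $x^*\in\RR$, Proposition \ref{propnondeg} gives $\|(w_{x^*}(s),\partial_s w_{x^*}(s))\|_{\q H}\ge\bar\epsilon_1>0$ for all $s$, which excludes \eqref{c0}, so \eqref{conv0} holds; the exponential convergence of $(w_{x^*},\partial_s w_{x^*})$ to a single stationary soliton $(\kappa(d_\infty),0)$ is exactly the profile hypothesis under which \cite{MZtams14,MZcmp14} show that $\Gamma$ is $C^1$ near $x^*$, and I would invoke that argument to conclude that $T$ is differentiable at $x^*$ with $\nabla T(x^*)=d_\infty(x^*)$. Finally, fix $\eb$ with $K_0\eb<1$: if $\nu^*/(1-|d^*|)\ge1$, then \eqref{iii} cannot hold, so \eqref{conv0} is impossible, hence \eqref{c0} holds; and then, by the previous point, $x^*$ cannot lie in $\RR$, i.e. $x^*\in\SS$.

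The step I expect to be the main obstacle is the bootstrap in the second paragraph: the exponential decay of $q$ is needed to control $\nu$, while the smallness of $\nu$ is needed to propagate the decay of $q$, all while the now vectorial neutral parameter $d$ (and the rotation angle) slowly drifts; this, together with the sharp extraction of \eqref{iii} by solving the $\nu$-equation backward from $+\infty$, is precisely where the present statement goes beyond the verbatim argument of \cite{MZtams14}.
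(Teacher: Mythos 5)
Your overall strategy (modulate around the two-parameter family $\kappa^*(d,\nu)$, isolate the single expanding mode $\nu$, run a bootstrap, and conclude by an exit analysis) is viable in principle, but it is a genuinely different --- and much heavier --- route than the paper's. The paper does not redo any modulation or spectral analysis here: since $\kappa^*(d,\mu e^s)$ is exactly the image of the stationary soliton $\kappa(d)$ under the similarity transformation \eqref{defw} with a shifted scaling time (see the discussion after \eqref{defk*}), the case $\nu^*\neq 0$ is reduced, by applying \eqref{defw} back and forth with a different $T_0$, to the case $\nu^*=0$, which is precisely the trapping theorem already proved in \cite{MZtams14} and \cite{MZcmp14}. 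The only genuinely new work in the paper is (a) extracting the first, sharpest term of \eqref{proxi0} from the internals of the proof in \cite{MZtams14}, via a short comparison of the projections along $d(s^*)$, $d^*$ and $d_\infty$, and (b) proving \eqref{iii} not through the dynamics at all but through the monotonicity of the Lyapunov functional $E$, the explicit formula for $E(\kappa^*(d,\nu))$ in item (v) of Lemma \ref{lemkd} and the size comparison \eqref{compare}, following \cite{MZdmj12}. What your approach buys is self-containedness; what the paper's buys is that the whole bootstrap you yourself flag as ``the main obstacle'' is never re-proved.

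If you insist on your route, three points need real work beyond the sketch. First, the anisotropic weights in \eqref{proxi0} --- $1/(1-|d^*|)$ along $d^*$, the $\arg\tanh$ control of the modulus, $1/\sqrt{1-|d^*|}$ transversally --- do not fall out of ``integrating the neutral-parameter inequality'' in the schematic form you wrote; they require the precise normalizations of the neutral eigenfunctions (note the different powers of $1-d^2$ in \eqref{deffid}), and the first term is exactly the refinement the paper has to dig out of the proof in \cite{MZtams14}. Second, the exit analysis cannot be settled by saying the solution ``tracks'' $\kappa^*(de_1,\mu e^s)$ once it leaves the perturbative regime: the negative exit is excluded by an energy/blow-up-criterion argument (Antonini--Merle), and the positive exit yields \eqref{c0} because at the exit time $\kappa^*(d,\nu)$ is already small in $\q H$ (item (iii) of Lemma \ref{lemkd}) and $0$ is then attracting; these are the arguments of \cite{MZdmj12}, not a continuation along the explicit solution. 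Third, your mechanism delivers \eqref{iii} only along the branch \eqref{conv0} (solving the unstable ODE backward), and indeed it cannot hold in the escape regime, as the final sentence of the lemma shows; you should either state it that way, or argue as the paper does via energy monotonicity, which uses $E(w_{x^*}(s))\ge E(\kappa_0,0)$, itself available along \eqref{conv0} since the limit is $E(\kappa(d_\infty))=E(\kappa_0,0)$.
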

\begin{proof} The case $\nu^*=0$ was treated in \cite{MZtams14} and \cite{MZcmp14}. Although the first term in \eqref{proxi0} was not present in the statements given in those papers, it clearly follows from the proof, as we will briefly explain in Appendix \ref{apptx}.\\ When $\nu^*\neq 0$, we may use back and forth the similarity variables' transformation \eqref{defw}, changing the position of the rescaling time, in order to reduce to the case $\nu^* =0$. For details, see Appendix \ref{apptx}.
\end{proof}
From Lemma \ref{lemonly1}, we can apply Lemma \ref{proptrap} and obtain the following statement:
\begin{lem}[The blow-up limit in similarity variables]\label{lembl}
For any $x$ outside the bisectrices in a small neighborhood, the function $w_x(s)$ has a limit as $s\to \infty$, which is either $0$ or $\kappa(\hat d(x))$ \eqref{defkd}.
\end{lem}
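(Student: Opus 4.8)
The plan is to combine the quantitative estimate from Lemma~\ref{lemonly1} with the trapping dichotomy of Lemma~\ref{proptrap}. Fix $x\neq 0$ outside the bisectrices in a small neighborhood; by the symmetries of the solution we may assume $0\le x_2< x_1\le \delta_3$ as in \eqref{portion3} and \eqref{pyramid3}. First I would apply Lemma~\ref{lemonly1}(i) at the time $s^*=\smin(A,x)=\su(A,x)$ with $\omega^*=\tr=1$, the soliton parameter $d^*=\hat d(x,\smin)$ and $\nu^*=\nr(x,\smin)$, so that $\kappa^*(d^* e_1,\nu^*)=\kr(x,\smin)$ by \eqref{foursol}. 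Estimate \eqref{estonly1} gives
\[
\ee\equiv\left\|\vc{w_x(\smin)}{\partial_s w_x(\smin)}-\kr(x,\smin)\right\|_{\q H}\le C A^{-\frac 1{p-1}}.
\]
Choosing $A$ large enough (independently of $x$, since the constant $C$ in \eqref{estonly1} is uniform), we make $\ee\le\eb$ where $\eb$ is the threshold from Lemma~\ref{proptrap}; this is legitimate provided $x$ is then taken small enough, depending on that choice of $A$, as allowed by the formulation of Lemma~\ref{lemonly1}.

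Having verified the hypothesis of Lemma~\ref{proptrap} at $s^*=\smin(A,x)$, the dichotomy gives that either $(w_x(s),\partial_s w_x(s))\to 0$ in $\q H$ as $s\to\infty$, or there is $d_\infty(x)\in B(0,1)$ with $\|(w_x(s),\partial_s w_x(s))-(\kappa(d_\infty(x)),0)\|_{\q H}\le K_0\ee\, e^{-\mu_0(s-s^*)}\to 0$. In the first case $w_x(s)\to 0$, and in the second $w_x(s)\to\kappa(d_\infty(x))$, which is exactly the claimed trichotomy (with $\hat d(x)=d_\infty(x)$, noting that the limit soliton is $\kappa(\hat d(x))$ in the notation of \eqref{defkd}). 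This already proves the statement of the lemma. Note that I do not need to decide here which of the two alternatives actually occurs — distinguishing the non-characteristic case (where, by the last part of Lemma~\ref{proptrap}, only the convergence to $\kappa(d_\infty(x))$ holds and $T$ is differentiable at $x$ with $\nabla T(x)=d_\infty(x)$) from a possible degenerate case is a separate matter handled afterward; the bare existence of a limit, equal to $0$ or to some $\kappa(\hat d(x))$, is all that is asserted.

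One technical point to check is that Lemma~\ref{proptrap} requires $s^*\ge -\log T(x^*)$, i.e. that $w_x$ is genuinely defined at time $\smin(A,x)$ in similarity variables. This follows because, by Lemma~\ref{lemonly1}(ii) together with \eqref{defsmin}, we have $\Smin(A,x)\le -\log|T(x)|$, equivalently $-e^{-\smin}=T(x)-e^{-\Smin}\le 0$, so $\smin\ge -\log(-T(x))=-\log T(x)$ (recall $T(x)<0$ from \eqref{pyramid3}); hence the starting time lies in the admissible range and $(w_x(\smin),\partial_s w_x(\smin))\in\q H$. The analogous statement for general $x$ in the neighborhood, not necessarily in the sector \eqref{portion3}, follows from the invariance of equation \eqref{equ} under the symmetries of the axes and bisectrices used to reduce to that sector.

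The main obstacle is not in this final deduction, which is a short invocation of the two preceding results, but in the inputs themselves: the delicate bookkeeping of the vanishing times in Section~\ref{secloose} that produces the clean estimate \eqref{estonly1} of Lemma~\ref{lemonly1}, and the reduction in Lemma~\ref{proptrap} from the case $\nu^*\neq 0$ to the case $\nu^*=0$ via the similarity-variable transformation with a shifted rescaling time. Assuming those, the present lemma is essentially immediate: pick $A$ large, then $x$ small, read off $\ee\le\eb$, and apply the trapping dichotomy.
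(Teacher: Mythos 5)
Your argument is the same as the paper's own proof: it applies Lemma \ref{lemonly1} at $s^*=\su(A,x)=\smin(A,x)$ with $d^*=\bar d(\Su(A,x))e_1$, $\nu^*=\nr(x,\su(A,x))$ and $\ee$ small for $A$ large, and then invokes the dichotomy of Lemma \ref{proptrap}, so it is correct and essentially identical. The only blemish is the tangential check at the end: since $-e^{-\Smin}=T(x)-e^{-\smin}$ one actually gets $e^{-\smin}>-T(x)$ is false in general and in fact $\smin<-\log(-T(x))$, but this is harmless because $w_x$ is defined for all $s\ge-\log(1+T(x))$ (initial time $t=-1$), so $\smin$ lies in the admissible range anyway.
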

\begin{proof}Take $x$ is outside the bisectrices such that \eqref{portion3} holds and consider $A>0$. From Lemma \ref{lemonly1} and the definition \eqref{foursol} of $\kup$, we see that the hypothesis of Lemma \ref{proptrap} is satisfied with $x^*=x$, $s^* = \su(A,x)$, $d^* = \bar d(\Su(A,x))e_1$, $\nu^* = \nr(x,\su(A,x))$ and $\ee \le e^{-A}$. Applying Lemma \ref{proptrap} gives the result.
\end{proof}

Now, if $x\in \q R$, then we get a more precise statement, together with some geometrical information on the blow-up set:
\begin{lem}[Estimate on $\nabla T(x)$ and $T(x)$ when $x$ is non-characteristic outside the bisectrices]\label{lemgrad}
If $x$ is outside the bisectrices and non-characteristic,  and \eqref{portion3} holds with $\delta_3$ small enough, then:\\
(i) $T$ is differentiable at $x$ and 
\[
\nabla T(x) =(-1+c_0l^{-\gamma}+o(l^{-\gamma}))e_1+o(l^{-\frac \gamma 2}) e_2 \mbox{ as }x\to 0,
\]
where $\gamma= \frac{p-1}2$ and $c_0=c_0(p)>0$;\\
(ii) it holds that
\begin{equation*}
\forall s\ge \su(A,x),\;\;\|(w_x(s), \partial_s w_x(s))- (\kappa(\nabla T(x)),0)\|_{\q H}\le K_0A^{-\frac 1{p-1}} e^{-\mu_0 (s-\su(A,x))};
\end{equation*}
(iii) we also have $T(x)\sim -x_1$ as $x\to 0$.
\end{lem}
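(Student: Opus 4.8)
The plan is to deduce Lemma \ref{lemgrad} directly from Lemma \ref{lembl}, Lemma \ref{lemonly1} and the trapping statement Lemma \ref{proptrap}, exploiting the fact that by hypothesis $x\in\q R$, so that the non-characteristic alternative \eqref{conv0} in Lemma \ref{proptrap} must hold, together with the differentiability conclusion $\nabla T(x)=d_\infty(x)$. First I would fix $A>0$ large and apply Lemma \ref{lemonly1}(i): we have $\smin(A,x)=\su(A,x)$ and $\|(w_x,\partial_sw_x)(\su)-\kr(x,\su)\|_{\q H}\le CA^{-1/(p-1)}$, where by \eqref{foursol} $\kr(x,\su)=\kappa^*(\hat d(x,\su)e_1,\nr(x,\su))$. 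Thus the hypothesis of Lemma \ref{proptrap} holds with $x^*=x$, $s^*=\su(A,x)$, $\bar\omega=+1$, $d^*=\hat d(x,\su)e_1$ (which by \eqref{defs2} equals $\bar d(\Su(A,x))e_1$) and $\nu^*=\nr(x,\su)$, with $\ee\le CA^{-1/(p-1)}\le\eb$ once $A$ is large. Since $x$ is non-characteristic, Lemma \ref{proptrap} gives \eqref{conv0}: for all $s\ge\su(A,x)$, $\|(w_x(s),\partial_sw_x(s))-(\kappa(d_\infty(x)),0)\|_{\q H}\le K_0\ee\,e^{-\mu_0(s-\su)}$, with $d_\infty(x)\in B(0,1)$ satisfying \eqref{proxi0}, and moreover $T$ is differentiable at $x$ with $\nabla T(x)=d_\infty(x)$. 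This already yields item (ii) (taking $\ee\le CA^{-1/(p-1)}$, absorbing the constant $C$ into $K_0$, which is harmless since $K_0$ is universal) and the differentiability part of item (i).

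Next I would pin down the asymptotics of $d_\infty(x)=\nabla T(x)$ as $x\to 0$, using \eqref{proxi0}. The key ingredients are: by Lemma \ref{lemonly1}(ii), $\Su(A,x)\sim l=-\log x_1$; by \eqref{devdbar}, $\bar d(\Su(A,x))e_1=d^*$ satisfies $1+\bar d(\Su)\sim C_0\Su^{-(p-1)/2}\sim C_0 l^{-\gamma}$ with $\gamma=(p-1)/2$, so $1-|d^*|\sim C_0 l^{-\gamma}$ and $d^*=-(1-C_0l^{-\gamma}+o(l^{-\gamma}))e_1$; and by \eqref{iii}, $|\nu^*|/(1-|d^*|)\le K_0\ee\le CA^{-1/(p-1)}$, which I use only to know that the generalized-soliton center is close to $d^*$. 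From the first term of \eqref{proxi0}, $|l_{d^*}(d^*-d_\infty)|/(1-|d^*|)\le K_0\ee$; since $d^*$ is (up to lower order) $-e_1$, the projector $l_{d^*}$ is essentially projection onto $e_1$, so $|(d^*-d_\infty)\cdot e_1|\le K_0\ee(1-|d^*|)\le CA^{-1/(p-1)}l^{-\gamma}$. Hence the $e_1$-component of $\nabla T(x)=d_\infty$ is $-1+C_0l^{-\gamma}+o(l^{-\gamma})$, where the $o(l^{-\gamma})$ term comes from letting $A\to\infty$ after $x\to 0$ (a standard diagonal extraction: for each fixed $A$ the error is $\le CA^{-1/(p-1)}l^{-\gamma}+o_x(l^{-\gamma})$, so choosing $A=A(x)\to\infty$ slowly gives $o(l^{-\gamma})$). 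Writing $c_0=C_0$ gives the $e_1$-coefficient claimed in (i). For the $e_2$-component, the third term of \eqref{proxi0} gives $|d^*-d_\infty|/\sqrt{1-|d^*|}\le K_0\ee$, and since $d^*$ is parallel to $e_1$ its $e_2$-component vanishes, so $|d_\infty\cdot e_2|=|(d_\infty-d^*)\cdot e_2|\le K_0\ee\sqrt{1-|d^*|}\le CA^{-1/(p-1)}\sqrt{C_0l^{-\gamma}}=o(l^{-\gamma/2})$ after letting $A\to\infty$, which is exactly the $e_2$ term in (i).

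Finally, for item (iii) I would integrate the gradient estimate from (i) along a path from $0$ to $x$ in the region \eqref{portion3}. Since $T(0)=0$ (Proposition \ref{propyr}) and $T$ is $1$-Lipschitz with $\nabla T=-e_1+o(1)$ near $0$ off the bisectrix, we get $T(x)=\int_0^1\nabla T(tx)\cdot x\,dt=-x_1+o(x_1)$; one has to be slightly careful that the path may cross the bisectrix, but by the symmetry of $u$ (hence of $T$) with respect to $\{x_1=x_2\}$ one can instead move first in the $e_2$-direction, where the $e_2$-component of $\nabla T$ is $o(l^{-\gamma/2})=o(1)$, contributing only $o(x_1)$ since $0\le x_2\le x_1$, and then in the $e_1$-direction, picking up $-x_1(1-C_0l^{-\gamma}+o(l^{-\gamma}))$, whose leading term is $-x_1$. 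Hence $T(x)\sim -x_1$ as $x\to 0$.

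\medskip

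I expect the main obstacle to be the careful bookkeeping of the two successive limits: the trapping lemma produces errors of size $O(A^{-1/(p-1)})$ times the natural scale $1-|d^*|\sim l^{-\gamma}$, and one must legitimately let $A\to\infty$ \emph{after} the statements have been applied uniformly in $x$ small (via a diagonal argument $A=A(x)\to\infty$) to upgrade $O(A^{-1/(p-1)}l^{-\gamma})$ into $o(l^{-\gamma})$, while simultaneously keeping the estimate \eqref{conv0} in item (ii) for \emph{fixed} $A$. A secondary delicate point is justifying that the projector $l_{d^*}$ is close to projection onto $e_1$ (so that the $e_1$-component of $\nabla T$ is controlled by the first term of \eqref{proxi0} and the $e_2$-component only by the weaker third term), which is where the anisotropy $l^{-\gamma}$ versus $l^{-\gamma/2}$ in the statement originates; this is immediate from $d^*=\bar d(\Su)e_1$ being exactly parallel to $e_1$.
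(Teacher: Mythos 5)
Your treatment of items (i) and (ii) is essentially the paper's own proof: apply Lemma \ref{lemonly1} to verify the hypothesis of Lemma \ref{proptrap} at $s^*=\su(A,x)$ with $d^*=\bar d(\Su(A,x))e_1$, $\nu^*=\nr(x,\su)$, use that $x\in\RR$ forces the alternative \eqref{conv0} with $d_\infty(x)=\nabla T(x)$, project \eqref{proxi0} on $e_1$ and $e_2$ (this is exactly \eqref{prox} in the paper), insert $\bar d(\Su(A,x))=-1+c_0l^{-\gamma}+o(l^{-\gamma})$ from \eqref{devdbar} and Lemma \ref{lemonly1} (ii), and let $A$ be large; the anisotropy $l^{-\gamma}$ versus $l^{-\gamma/2}$ comes, as you say, from $d^*$ being exactly parallel to $e_1$. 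So far, no objection.

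Item (iii), however, has a genuine gap. Your plan is to integrate the gradient estimate of item (i) along a path from $0$ to $x$, but at this stage of the argument that estimate is only available at points which are \emph{assumed} non-characteristic: the fact that every small point outside the bisectrices belongs to $\RR$ is precisely Proposition \ref{lem3.1}, which is proved \emph{afterwards} and whose proof invokes Lemma \ref{lemgrad}. In particular you do not know that $T$ is differentiable, nor that $\nabla T$ has the claimed form, at the intermediate points of your path, so the integration $T(x)=\int_0^1\nabla T(tx)\cdot x\,dt$ is not justified here (it is carried out legitimately only later, in Corollary \ref{lemgrad2} (iii), once Proposition \ref{lem3.1} is in hand). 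Your patched path is also described incorrectly: moving first in the $e_2$-direction from the origin runs along the $x_2$-axis, where by symmetry the relevant derivative is $\partial_{x_2}T\approx-1$, not $o(l^{-\gamma/2})$, and the subsequent $e_1$-segment from $(0,x_2)$ to $(x_1,x_2)$ crosses the bisectrix $\{x_1=x_2\}$, where differentiability of $T$ is not known. The paper avoids all of this by a purely pointwise argument at $x$ itself: estimate \eqref{iii} of Lemma \ref{proptrap} gives $\frac{|\bar d(\Su(A,x))x_1-T(x)|e^{\su(A,x)}}{1-|\bar d(\Su(A,x))|}\le CA^{-\frac 1{p-1}}$ by the definition \eqref{defs2} of $\nr$, and combining this with the explicit value of $e^{-\su(A,x)}$ from \eqref{defSu}, the expansion \eqref{expd} and the $1$-Lipschitz bound $|T(x)|\le Cx_1$ yields $|T(x)+x_1|\le CA^{-1-\frac 1{p-1}}x_1$, hence $T(x)\sim-x_1$ after taking $A$ large. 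You should replace your integration argument for (iii) by this use of \eqref{iii} (which, incidentally, you dismissed as serving only to locate the soliton center).
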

\begin{proof}This time, we take a non-characteristic $x$ is outside the bisectrices such that \eqref{portion3} holds and consider $A$. As in the proof of Lemma \ref{lembl}, we know that Lemma \ref{proptrap} applies. Recalling that $x\in \q R$, we see that only \eqref{conv0} holds and that $T$ is differentiable at $x$. In particular, estimate \eqref{proxi0} holds with $d_\infty(x) = \nabla T(x)$. Projecting that estimate on the basis vectors $e_1$ and $e_2$, we see that
\begin{equation}\label{prox}
\frac{|\bar d(\Su(A,x))-\partial_{x_1}T(x)|}{1-|\bar d(\Su(A,x))|}
+\frac{|\partial_{x_2} T(x)|}{\sqrt{1-|\bar d(\Su(A,x))|}}\le CA^{-\frac 1{p-1}}.
\end{equation}
 Since
\begin{equation}\label{expd}
\bar d(\Su(A,x)) = -1+c_0 l^{-\gamma}+o(l^{-\gamma})
\end{equation}
from \eqref{devdbar} and item (ii) of Lemma \ref{lemonly1}, 
taking $A$ large enough, we obtain the conclusion of Lemma \ref{lemgrad}.\\
(ii) This is also a consequence of Lemma \ref{proptrap} (see \eqref{conv0}).\\
(iii) Applying estimate \eqref{iii} in Lemma \ref{proptrap}, we see by definition of $\nr(x,\su(A,x))$ given in \eqref{defs2} that
\[
\frac{|\bar d(\Su(x,A))x_1-T(x)|e^{\su(x,A)}}{1-|\bar d(x,A)|}\le CA^{-\frac 1{p-1}}.
\]
Since by definition of $\su(x,A)$, we have
\[
e^{-\su(x,A)}=\frac{|\bar d(-\log |T(x)|)x_2-T(x)|}{Ac_0l^{-\gamma}}\le \frac CAx_1l^\gamma,
\]
because $T$ is 1-Lipschitz, using \eqref{expd}, we see that
\[
|T(x)+x_1|\le CA^{-1-\frac 1{p-1}}x_1.
\le C\frac{e^{-A}}Ax_1.
\] 
Taking $A$ large enough yields the result.
\end{proof}

\section{A geometric approach for the stylized pyramid shape} \label{secreg}
Now, we are going to collect the dynamical information we got on $w_x(s)$ in the previous section for $x$ small enough, in order to obtain deep geometrical information on $T(x)$. 
For each step of the proof, the difficulty is to select the right $x$ where we will use the dynamical information we have on $w_x(s)$. That choice will result from a new technique, where we use a monotonic family of non-characteristic cones, that approach the blow-up graph from below.\\
We proceed in four subsections, one dedicated to points outside the bisectrices, the second to points on the bisectrices and the third to the origin. Finally, in the fourth subsection, we collect all the previous results to conclude the proof of Theorem \ref{mainth}.

\subsection{Points outside the bisectrices are non-characteristic}\label{subprop3}
This is the aim of the section:
\begin{prop}\label{lem3.1}
All points $|x|\le \delta$ outside the bisectrices for some $\delta>0$
are non-characteristic. 
\end{prop}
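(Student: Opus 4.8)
\emph{The plan.} By the symmetries of $u$ it is enough to show that, for $\delta$ small, every $x=(x_1,x_2)$ with $0\le x_2<x_1\le\delta$ is non-characteristic. The point where all the work concentrates is a lower bound for $T$ near such an $x$ that is \emph{strictly better than the $1$-Lipschitz bound}: one should show
\[
-x_1\Big(1-\tfrac{c}{2}|\log x_1|^{-(p-1)/2}\Big)\ \le\ T(x)\ \le\ -(1-\epsilon)x_1
\]
for $x$ small, the upper bound being Proposition~\ref{prop1} and the lower bound being produced by a moving family of non-characteristic cones. Granting this two-sided pinching, one returns to the dynamical picture of Section~\ref{secwx}: plugging the estimate on $T(x)$ into the definitions \eqref{defs2}, \eqref{defSu} of $\nr(x,\su)$ and $\Su(A,x)$ (and using $\Su(A,x)\sim|\log x_1|$ from Lemma~\ref{lemonly1}(ii)) shows that the right soliton at the loosing time $\su(A,x)$ has size $\mr(x,\su)\to1$ as $A\to\infty$, hence $\nr(x,\su)/(1-|\bar d(\Su)|)\to0$. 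Since Lemma~\ref{lemonly1}(i) puts $(w_x,\partial_sw_x)(\su)$ within $CA^{-1/(p-1)}$ of the generalized soliton $\kappa^*(\bar d(\Su)e_1,\nr(x,\su))$, the trapping Lemma~\ref{proptrap} applies with $x^*=x$, $s^*=\su(A,x)$, $d^*=\bar d(\Su)e_1$, $\nu^*=\nr(x,\su)$; the smallness of $\nu^*/(1-|d^*|)$ excludes alternative \eqref{c0} (it would violate \eqref{iii}), so \eqref{conv0} holds, $T$ is differentiable at $x$, and in particular $x\in\RR$.

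\emph{The moving-cone lower bound.} For the lower bound on $T$ I would start from Proposition~\ref{propyr}(ii), which bounds $\Gamma$ from below by a cone of slope $\tfrac{1-\bar d(s_0)}2<1$ at unit scale, off the origin and the bisectrices. Two devices carry this down to arbitrarily small scales. First, the scaling $u\mapsto u_\lambda:=\lambda^{2/(p-1)}u(\lambda\,\cdot\,,\lambda\,\cdot\,)$ leaves \eqref{equ} invariant, rescales the blow-up surface to $T(\lambda\,\cdot\,)/\lambda$, and — since the $w_0$ attached to $u_\lambda$ is just the time-translate $w_0(\cdot,\cdot-\log\lambda)$ of the one attached to $u$ — preserves the $4$-soliton behavior \eqref{cprofile0} with the construction time $s_0$ merely replaced by $s_0-\log\lambda$, still large for $\lambda\in(0,1)$; thus Proposition~\ref{propyr}(ii) applies to $u_\lambda$ and yields, for $u$, a cone of slope $\tfrac{1-\bar d(s_0+|\log\lambda|)}2$ below $\Gamma$ at scale $\lambda$. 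Second, one slides this family of cones, with vertices sweeping a path from unit scale toward the origin inside the region $0\le x_2<x_1$: at each position the cone stays below the $1$-Lipschitz graph $\Gamma$ provided its slope is taken slightly above $\tfrac{1-\bar d}2$ at the current scale, and a short geometric bookkeeping shows the family never detaches from $\Gamma$ before reaching scale $x_1$. This gives the required lower bound on $T(x)$, with the slack $\tfrac c2|\log x_1|^{-(p-1)/2}$ degenerating as $x\to0$, consistently with $0$ being characteristic.

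\emph{Main obstacle.} The hard part is precisely this moving-cone construction: the admissible slope of the cones tends to $1$ as the scale tends to $0$, so the vertices, the slopes and the scales must be tuned very carefully (driven by the explicit loosing time $\su$ and by the scaling) for the family to remain genuinely below $\Gamma$ all the way down, and moreover uniformly as $x$ is allowed to approach the bisectrix $\{x_1=x_2\}$ (where $\su$ ceases to be the relevant time and one must instead play the relative-vanishing time $\sur$ against the blowing-up time $\srb$, cf.\ Lemma~\ref{lemchrono}). This subtle elementary geometry of comparing $\Gamma$ with monotone families of non-characteristic cones — rather than with the ``moving planes'' used in one space dimension in \cite{MZcmp08} — is the new ingredient announced in the introduction.
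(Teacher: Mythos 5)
Your proposal has a genuine gap at its decisive step, and it is exactly the step that Proposition \ref{lem3.1} is supposed to supply. After placing $(w_x,\partial_s w_x)(\su)$ near a generalized soliton, you invoke Lemma \ref{proptrap} and conclude ``$T$ is differentiable at $x$, and in particular $x\in\RR$''. But the trapping lemma yields differentiability of $T$ (and forces alternative \eqref{conv0}) only \emph{under the additional hypothesis} $x^*\in\q R$; the bare dichotomy \eqref{c0}/\eqref{conv0} does not decide whether $x^*$ is characteristic, so your argument is circular. Your attempt to exclude \eqref{c0} by ``smallness of $\nu^*/(1-|d^*|)$'' misreads \eqref{iii}: that estimate is an unconditional consequence of the closeness hypothesis, valid in both alternatives, and the lemma only states the one-way implication that $\nu^*/(1-|d^*|)\ge 1$ forces \eqref{c0}. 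The tool that would kill \eqref{c0}, the non-degeneracy result of Proposition \ref{propnondeg}, is itself only available at non-characteristic points. (A similar pinching issue occurs earlier: with only $-x_1(1-\tfrac c2 l^{-\gamma})\le T(x)\le-(1-\epsilon)x_1$, the window has width of order $\epsilon x_1\gg x_1l^{-\gamma}$, so you cannot conclude $\nr(x,\su)/(1-|\bar d(\Su)|)\to 0$; that would require the sharp expansion of $T$, which in the paper is only obtained \emph{after} Proposition \ref{lem3.1}, in Corollary \ref{lemgrad2}.)

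The moving-cone lower bound is also not established. Proposition \ref{propyr}(ii) is proved from the exact algebraic form \eqref{bw0}--\eqref{initu} of the data at $t=-1$ (comparison with the explicit $\bar u_{1,1}$ plus finite speed of propagation), not from the 4-soliton behavior \eqref{cprofile0} of $w_0$ alone; the rescaled solution $u_\lambda$ has, at $t=-1$, the \emph{evolved} solution at time $-\lambda$, for which no such closed-form comparison is available, so the proposition cannot simply be ``applied to $u_\lambda$''. Converting information on $w_0$ (valid only in the backward light cone of the origin) into control of $T$ or $w_x$ at small scales is precisely the obstruction the paper wrestles with (it is why Section \ref{secfor} is formal), and your sliding-cone bookkeeping is left unproved at exactly that point. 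Finally, even granted the pointwise bound on $T(x)$, non-characteristic in the sense of \eqref{nonchar} is a \emph{local cone} condition at $x$, which a graph bound against the pyramid does not provide. The paper's route is different: arguing by contradiction from a characteristic $x_m\to0$, it uses cones $\q C_{x_m,t,1-\beta}$ touching $\Gamma$ from below at auxiliary contact points $x^*_m$, which are automatically non-characteristic, and then derives contradictions case by case (contact off the bisectrices via the slope estimate of Lemma \ref{lemgrad}; contact far from the origin via a limit point on the light cone of the origin, the soliton analysis and trapping/non-degeneracy; contact on the bisectrices via a slope $-1$ segment on $\Gamma$, a second cone family and an energy argument contradicting the space-time $L^{p+1}_\rho$ bound). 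Your proposal does not reproduce, or replace, any of this.
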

Before giving the proof, let us derive from this result and Lemma \ref{lemgrad}  the following sharp estimates for $T(x)$ and $\nabla T(x)$ outside the bisectrices:
\begin{cor}[Estimate on $\nabla T(x)$ and $T(x)$ when $x$ is outside the bisectrices]\label{lemgrad2}
If $x$ is outside the bisectrices and small enough, with $0\le x_2<x_1$, then:\\
(i) $T$ is differentiable at $x$ and 
\[
\nabla T(x) =(-1+c_0l^{-\gamma}+o(l^{-\gamma}))e_1+o(l^{-\frac \gamma 2}) e_2 \mbox{ as }x\to 0,
\]
where $\gamma= \frac{p-1}2$ and $c_0=c_0(p)$;\\
(ii) it holds that
\begin{equation*}
\forall s\ge \su(A,x),\;\;\|(w_x(s), \partial_s w_x(s))- (\kappa(\nabla T(x)),0)\|_{\q H}\le K_0A^{-\frac 1{p-1}} e^{-\mu_0 (s-\su(A,x))}.
\end{equation*}
(iii) we also have $T(x) = -x_1(1-c_0l^{-\gamma}+o(l^{-\gamma}))+o(x_2 l^{-\frac \gamma 2})$ as $x\to 0$.
\end{cor}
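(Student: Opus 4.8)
The plan is to get parts (i) and (ii) for free from Proposition~\ref{lem3.1} and Lemma~\ref{lemgrad}, and to obtain the sharp expansion in part (iii) by one extra integration of the gradient along a segment through the origin.

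First, for (i) and (ii): by Proposition~\ref{lem3.1}, every point with $|x|\le\delta$ off the bisectrices is non-characteristic, so in particular any $x$ with $0\le x_2<x_1$ small enough is non-characteristic, and Lemma~\ref{lemgrad} applies verbatim, yielding (i) and (ii) (a general $x$ off the bisectrices reduces to this range by the symmetries of $u$, which transfer to $T$). Note that Lemma~\ref{lemgrad}(iii) already gives the rough bound $T(x)\sim-x_1$ as $x\to0$, which we shall upgrade.

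For (iii), I would integrate the gradient along $[0,x]$. Since $T$ is $1$-Lipschitz it is absolutely continuous on that segment, $T(0)=0$ by Proposition~\ref{propyr}, and $T$ is differentiable at every interior point $\tau x$ by part~(i); hence
\[
T(x)=\int_0^1\nabla T(\tau x)\cdot x\,d\tau
=x_1\int_0^1\partial_{x_1}T(\tau x)\,d\tau+x_2\int_0^1\partial_{x_2}T(\tau x)\,d\tau .
\]
For $\tau\in(0,1]$ the point $\tau x$ still satisfies $0\le\tau x_2<\tau x_1$ and $|\tau x|\le|x|$, so it is off the bisectrices and non-characteristic, and part~(i) applied at $\tau x$ gives, \emph{uniformly} in $\tau\in(0,1]$ as $x\to0$,
\[
\partial_{x_1}T(\tau x)=-1+c_0|\log(\tau x_1)|^{-\gamma}+o\!\left(|\log(\tau x_1)|^{-\gamma}\right),\qquad
\partial_{x_2}T(\tau x)=o\!\left(|\log(\tau x_1)|^{-\gamma/2}\right);
\]
the uniformity is automatic because $|\log(\tau x_1)|\ge|\log x_1|=l$, so the remainders of (i) at $\tau x$ are dominated by $\varepsilon\, l^{-\gamma}$ and $\varepsilon\, l^{-\gamma/2}$ once $|x|$ is small. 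It then remains to compute, for $\beta>0$, the quantity $I_\beta(x_1):=\int_0^1|\log(\tau x_1)|^{-\beta}\,d\tau$; the substitution $u=-\log(\tau x_1)$ turns it into $e^{l}\int_l^\infty u^{-\beta}e^{-u}\,du$, and an integration by parts (equivalently, the asymptotics of the incomplete Gamma function) gives $I_\beta(x_1)=l^{-\beta}(1+O(1/l))\sim l^{-\beta}$. Plugging this back, $\int_0^1\partial_{x_1}T(\tau x)\,d\tau=-1+c_0l^{-\gamma}+o(l^{-\gamma})$ and $\int_0^1\partial_{x_2}T(\tau x)\,d\tau=o(l^{-\gamma/2})$, whence
\[
T(x)=-x_1\bigl(1-c_0l^{-\gamma}+o(l^{-\gamma})\bigr)+o\!\left(x_2 l^{-\gamma/2}\right)\quad\text{as }x\to0,
\]
which is exactly (iii).

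The only points requiring care are the uniformity in $\tau$ of the $o(\cdot)$ remainders inherited from part~(i) — which is essentially free, as noted, since $|\log(\tau x_1)|\ge l$ — and the elementary asymptotics $I_\beta(x_1)\sim l^{-\beta}$ needed to pin down the coefficient $c_0$ and not merely the order; everything else is bookkeeping. One should also keep in mind that the integration over $[0,x]$ is legitimate precisely because $T$ is Lipschitz there and differentiable at every interior point (Proposition~\ref{lem3.1}), even though the endpoint $x=0$ is itself characteristic.
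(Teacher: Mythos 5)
Your proposal is correct and follows essentially the same route as the paper: (i)--(ii) are obtained exactly as in the paper from Proposition \ref{lem3.1} combined with Lemma \ref{lemgrad}, and (iii) by integrating the gradient expansion of (i) starting from $T(0)=0$. The only cosmetic difference is that you integrate along the straight segment $[0,x]$ whereas the paper uses the two-leg path through $(x_1,0)$; your explicit treatment of the uniformity of the $o(\cdot)$ remainders and of the asymptotics $\int_0^1|\log(\tau x_1)|^{-\beta}\,d\tau\sim l^{-\beta}$ merely fills in details the paper leaves implicit.
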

\begin{proof}[Proof assuming Proposition \ref{lem3.1} holds] 
Take a small $x$ outside the bisectrices with $0\le x_2 <x_1$ and $x_1$ small.\\
(i)-(ii) From Proposition \ref{lem3.1}, we know that $x\in \RR$. Therefore, Lemma \ref{lemgrad} applies and items (i) and (ii) hold.\\
(iii) If $x_2=0$, recalling that $T(0)=0$ from Proposition \ref{propyr},  we write $$T(x) = \int_0^{x_1} \partial_{x_1}T(\xi,0)d\xi.$$ Using item (i), we get the conclusion.\\
Now, if $x_2>0$, we write $T(x) = T(x_1,0)+\int_0^{x_2} \partial_{x_2}T(x_1,\xi)d\xi$, and the conclusion follows again from item (i), together with the case $x_2=0$.
\end{proof}

The proof of Proposition \ref{lem3.1} follows from some introductory results which hold for any blow-up solution of equation \eqref{equ}. Let us give them first (Step 1), then give the proof of Proposition \ref{lem3.1} (Step 2).

\bigskip

{\bf Step 1:
Preliminary
 blow-up results for equation \eqref{equ}}

In this step, we first prove the following statement:
\begin{lem}Consider $x_0$ and $x_1$ such that the segment 
$[(x_0,T(x_0)), (x_1,T(x_1))]\subset \Gamma$ 
and has slope $-1$. Then, 
for any $\tau\in [0,1)$, 
$x_\tau\equiv (1-\tau) x_0+\tau x_1 \in \q S$.
\end{lem}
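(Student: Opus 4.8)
The plan is to argue by contradiction: suppose some interior point $x_\tau$ of the segment is non-characteristic. The geometric intuition is that along a segment of the blow-up surface with slope $-1$, the backward light cone from $(x_1,T(x_1))$ lies below $\Gamma$ and touches it exactly along this segment, so the solution is ``forced'' to be of the trivial soliton type only at the endpoints — but an interior point of such a degenerate segment cannot satisfy the non-degeneracy that non-characteristic points enjoy. More precisely, I would use the finite speed of propagation together with the upper bound on the blow-up rate (Proposition \ref{propupper}) and the non-degeneracy result at non-characteristic points (Proposition \ref{propnondeg}), exactly in the spirit of the one-dimensional analysis of \cite{MZcmp08,MZdmj12}.

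First I would set up similarity variables $w_{x_1}$ centered at $(x_1, T(x_1))$. The key point is that since $[(x_0,T(x_0)),(x_1,T(x_1))]$ has slope $-1$, the point $x_\tau$ sits on the boundary of the backward light cone of $(x_1,T(x_1))$ after a Lorentz-type rescaling; equivalently, in the $w_{x_1}$ variables, the trace of $w_{x_1}(y,s)$ near the point $y = y_\tau$ corresponding to the direction $x_\tau - x_1$ degenerates as $s\to\infty$, because that point lies on the characteristic boundary $|y|=1$. Concretely, using the covering property of light cones and the 1-Lipschitz character of $T$, I would show that the backward light cone with vertex $(x_\tau, T(x_\tau))$ is, near its vertex, tangent from inside to $\Gamma$ along the slope-$(-1)$ segment, so that $w_{x_\tau}$ cannot enjoy the non-characteristic cone property \eqref{nonchar}. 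Indeed, if $x_\tau$ were non-characteristic, then by \eqref{nonchar} there would be a cone ${\cal C}_{x_\tau, T(x_\tau),\delta_0}$, $\delta_0\in(0,1)$, on which $u$ is defined up to times close to $T(x_\tau)$; but that cone, having a slope strictly less steep than $-1$, would necessarily cross the slope-$(-1)$ segment of $\Gamma$ strictly before its vertex, since the segment emanates from $(x_\tau, T(x_\tau))$ in a direction making the cone ${\cal C}_{x_\tau,T(x_\tau),\delta_0}$ protrude above $\Gamma$. This contradicts the fact that $u$ blows up on $\Gamma$, i.e. is not defined past $\Gamma$.

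I should be careful here that the direction of the segment relative to $x_\tau$ matters: $x_\tau$ is in the interior of $[x_0,x_1]$, so the segment extends on both sides of $(x_\tau, T(x_\tau))$ in $\Gamma$, and in at least one of the two directions the point $x_\tau\pm$ small increment gives a point of $\Gamma$ lying on the boundary of the backward light cone of $(x_\tau, T(x_\tau))$. Taking $\delta_0<1$ in \eqref{defcone}, the cone ${\cal C}_{x_\tau, T(x_\tau),\delta_0}$ strictly contains, near the vertex, points $(x, t)$ with $t$ arbitrarily close to $T(x_\tau)$ and $x$ on the segment's projection, hence points with $t > T(x)$ — contradicting \eqref{defdu}. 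Thus $x_\tau\in\SS$. This handles all $\tau\in(0,1)$; the endpoint $\tau=0$ is included in the statement because the same argument applies at $x_0$ (the segment still emanates from $(x_0,T(x_0))$ into $\Gamma$), while $\tau=1$ is excluded since at $x_1$ the segment only arrives, not leaves.

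The main obstacle I anticipate is making the geometric ``the non-characteristic cone must protrude above $\Gamma$'' step fully rigorous using only the 1-Lipschitz property of $T$ and the definition of the blow-up surface: one must rule out the degenerate possibility that $\Gamma$ coincides with ${\cal C}_{x_\tau,T(x_\tau),\delta_0}$'s boundary, which cannot happen because that boundary has slope $\delta_0<1$ while the segment has slope $1$ (in absolute value). A clean way to phrase this is: the backward light cone from $(x_1, T(x_1))$ contains $(x_\tau, T(x_\tau))$ on its lateral boundary (slope exactly $-1$), so any cone at $x_\tau$ with a strictly flatter slope necessarily exits the region $\{t < T(x)\}$ immediately, which is impossible by \eqref{defdu}. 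I would write this out as an elementary planar computation along the two-dimensional section containing the segment, reducing it essentially to the one-dimensional picture already used in \cite{MZcmp08}.
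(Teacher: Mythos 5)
Your argument is correct and is essentially the paper's own proof: the sub-segment from $(x_\tau,T(x_\tau))$ to $(x_1,T(x_1))$, having slope $-1$, lies strictly inside any cone ${\cal C}_{x_\tau,T(x_\tau),\delta_0}$ with $\delta_0<1$, so $u$ cannot be defined on such a cone up to times near $T(x_\tau)$, contradicting \eqref{nonchar}; the paper states this directly rather than by contradiction, which is only a cosmetic difference. The auxiliary tools you invoke at the outset (Propositions \ref{propupper} and \ref{propnondeg}) are superfluous, since the elementary cone geometry you carry out afterwards already settles the claim.
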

\begin{proof} Up to changing $x_1$, we may assume the segment
  ``maximal'', in the sense that for any $\tau>1$, the segment
  $[(x_0,T(x_0)), (x_\tau,T(x_\tau))]$ either has a slope different
  from $-1$ (hence larger than $-1$ from the finite speed of
  propagation), or is not included in
$\Gamma$.\\
If $\tau \in [0,1)$, then the segment $[(x_\tau, T(x_\tau), (x_1, T(x_1)]$ has an empty interior and slope $-1$, which means that it is inside any cone of the form $\q C_{x_\tau, T(x_\tau), 1-\beta}$ with arbitrary $\beta\in (0,1)$. Thus, by definition, $x_\tau\in \q S$.
\end{proof}

Now, we prove the following lemma concerning stationary solutions of equation \eqref{eqw} in the energy space $\q H$.
\begin{lem}[A positive lower bound on the norm of non-zero stationary solutions of equation \eqref{eqw} in the energy space $\q H$]\label{lemlb}
There exist $\epsilon_0>0$ such that for any $w^* \in \q H$ stationary solution of equation \eqref{eqw}, we have
\[
\|w^*\|_{\q H} \ge \epsilon_0\|w^*\|_{L^{p+1}_\rho} \ge \epsilon_0^2.
\]
\end{lem}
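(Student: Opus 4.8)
The plan is to prove the two inequalities separately, starting with the second one, which is the genuinely quantitative statement, and deducing the first from the equation itself.

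\medskip

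\textbf{Step 1: The lower bound $\|w^*\|_{L^{p+1}_\rho}\ge \epsilon_0$ for any non-zero stationary solution.} A stationary solution $w^*=(w^*_1,0)$ of \eqref{eqw} satisfies the elliptic equation $\q L w^*_1 - \frac{2(p+1)}{(p-1)^2}w^*_1 + |w^*_1|^{p-1}w^*_1 = 0$ in $|y|<1$. Multiplying by $w^*_1$ and integrating against $\rho$, using the expression \eqref{defro} of $\q L$ and integration by parts (which is legitimate because of the degeneracy of $\rho$ at the boundary, exactly as in the definition \eqref{defnh} of $\H_0$), I get the identity
\[
\|w^*_1\|_{\H_0}^2 + \frac{2(p+1)}{(p-1)^2}\|w^*_1\|_{L^2_\rho}^2 = \|w^*_1\|_{L^{p+1}_\rho}^{p+1}.
\]
In particular $\|w^*_1\|_{\H_0}^2 \le \|w^*_1\|_{L^{p+1}_\rho}^{p+1}$. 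On the other hand, the Hardy--Sobolev inequality in this weighted space (Lemma \ref{lemhs}, referenced earlier in the paper) gives $\|w^*_1\|_{L^{p+1}_\rho}\le C\|w^*_1\|_{\H_0}$. Combining, $\|w^*_1\|_{L^{p+1}_\rho}^{p+1}\ge \|w^*_1\|_{\H_0}^2 \ge C^{-2}\|w^*_1\|_{L^{p+1}_\rho}^2$, so either $w^*_1\equiv 0$ or $\|w^*_1\|_{L^{p+1}_\rho}^{p-1}\ge C^{-2}$, i.e. $\|w^*_1\|_{L^{p+1}_\rho}\ge C^{-2/(p-1)}=:\epsilon_0$.

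\medskip

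\textbf{Step 2: The bound $\|w^*\|_{\q H}\ge \epsilon_0 \|w^*\|_{L^{p+1}_\rho}$.} Since $w^*=(w^*_1,0)$, by definition \eqref{defnh} we have $\|w^*\|_{\q H}=\|w^*_1\|_{\H_0}$. The Hardy--Sobolev inequality of Lemma \ref{lemhs} gives exactly $\|w^*_1\|_{L^{p+1}_\rho}\le C\|w^*_1\|_{\H_0}$; taking $\epsilon_0$ (shrinking it if necessary) to be $\min(C^{-1}, C^{-2/(p-1)})$ yields both $\|w^*\|_{\q H}\ge \epsilon_0\|w^*\|_{L^{p+1}_\rho}$ and, chained with Step 1, $\|w^*\|_{\q H}\ge \epsilon_0\cdot\epsilon_0 = \epsilon_0^2$ whenever $w^*\not\equiv 0$. (If $w^*\equiv 0$ the statement is vacuous or the inequalities are trivial, so we only need the non-zero case.)

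\medskip

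\textbf{Main obstacle.} The only delicate point is justifying the integration by parts in Step 1 — i.e. that the boundary terms coming from the weight $\rho(y)=(1-|y|^2)^\alpha$ vanish for a stationary solution in $\q H$. This is handled exactly as in the identity \eqref{defphi2} relating $\phi$ to $\q L$, and it is standard for this equation (it is implicit in the Lyapunov-functional computations of Antonini--Merle \cite{AMimrn01} and in our earlier papers), so I would simply invoke that the energy identity holds for stationary solutions in the energy space, citing the relevant lemma, rather than redo the boundary analysis. Everything else is a direct consequence of the Hardy--Sobolev inequality already quoted in the paper.
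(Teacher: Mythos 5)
Your proposal is correct and follows essentially the same route as the paper: multiply the stationary equation by $w^*\rho$, integrate to get $\|w^*\|_{\q H}^2\le C\|w^*\|_{L^{p+1}_\rho}^{p+1}$, and combine with the Hardy--Sobolev inequality $\|w^*\|_{L^{p+1}_\rho}\le C\|w^*\|_{\q H}$ of Lemma \ref{lemhs} to conclude for non-zero $w^*$. The only blemish is cosmetic: your displayed identity double-counts the $\int |w^*_1|^2\rho\,dy$ term (it already sits inside $\|w^*_1\|_{\H_0}^2$), but this affects neither the inequality $\|w^*_1\|_{\H_0}^2\le C\|w^*_1\|_{L^{p+1}_\rho}^{p+1}$ nor the conclusion.
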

\begin{proof}
Consider $w^* \in \q H$ a stationary solution of equation \eqref{eqw}. Multiplying the stationary version by $w^*\rho$ and integrating in space, we see that
\begin{align*}
&\int_{|y|<1} \left[ |\nabla w^*|^2 - (y\cdot \nabla w^*)^2\right]\rho(y) dy + \frac{2(p+1)}{(p-1)^2} \int_{|y|<1}|w^*(y)|^2 \rho(y)dy\\
=&\frac 1{p+1} \int_{|y|<1}|w^*(y)|^{p+1} \rho(y)dy,
\end{align*}
which yields by definition of the norm in $\q H$ \eqref{defnh}
\[
\|w^*\|_{\q H}^2 \le C \|w^*\|_{L^{p+1}_\rho}^{p+1},
\]
on the one hand.\\
On the other hand, using the Hardy-Sobolev inequality given in Lemma \ref{lemhs}, we see that
 \[
\|w^*\|_{L^{p+1}_\rho}\le C\|w^*\|_{\q H}.
\]
Combining the two inequalities and choosing a non-zero $w^*$ yields the result.  
\end{proof}

\bigskip

{\bf Step 2: Proof of Proposition \ref{lem3.1}}

Using the statements of Step 1, we are ready to give the proof of Proposition \ref{lem3.1}.

\medskip

\begin{proof}[Proof of Proposition \ref{lem3.1}]
Let us first recall the following result we proved in Lemma
\ref{lemgrad} (please remember the symmetries of the solution): for some $\delta_3>0$, 
$T$ is differentiable at $x^*$, whenever $x^*\in \RR$, $|x^*_i|> |x^*_j|$ for some $i,j\in\{1,2\}$ and $|x^*|\le \delta_3$, with
\begin{equation}\label{gradtx*}
|\partial_{x_i}T(x^*) +\sgn(x^*_i)(1-c_0(l^*)^{-\gamma })e_i|\le \frac{c_0}2(l^*)^{-\gamma}\mbox{ and }|\partial_{x_j} T(x^*)|\le C(l^*)^{-\frac \gamma 2} ,
\end{equation}
with $l^*=-\log|x^*_i|$. 

\medskip

Now, in order to prove Proposition \ref{lem3.1}, we proceed by contradiction, and assume that we have a sequence $x_m\to 0$ as $m\to \infty$ with $|x_{m,1}|\neq |x_{m,2}|$ such that
\begin{equation}\label{xms}
x_m\in \q S.
\end{equation}
Using the symmetries of the solution as before, 
we may assume that 
\begin{equation}\label{portion4}
0\le x_{m,2}<x_{m,1}\le \delta_4,
\end{equation}
for some $\delta_4\in(0,\delta_3)$.
Our aim is to find some contradiction.

\medskip

Consider $m\in \m N$. For each $\beta \in (0,x_{m,1}^2]$, we consider a family of cones $\q C_{x_m,t,1-\beta}$ defined in \eqref{defcone} and indexed by $t\le T(x_m)$. 
Since $T(x)\ge -1$ from Proposition \ref{prop1}, we can define
\begin{equation}\label{deft*}
t^*_m=t^*_m(\beta) = \sup\{0\le t<T(x_m)\;|\;\q C_{x_m,t,1-\beta}\cap 
\Gamma
= \emptyset\}.
\end{equation}
By continuity and maximality, 
$\Gamma$
 is above the cone $\q C_{x_m,t^*_m,1-\beta}$, and they touch each other at some point $(x^*_m, T(x^*_m))$ for some $x^*_m=x^*_m(\beta)$ with
\begin{equation}\label{btm}
t^*_m=t^*_m(\beta)\in [-1,T(x_m)].
\end{equation}
 Since the cone $\q C_{x^*_m,T(x^*_m),1-\beta}$ is inside the cone $\q C_{x_m,t^*_m,1-\beta}$, it is also below 
$\Gamma$.
Since $1-\beta<1$, this means that 
$x_m^*(\beta)\in \RR$.
Since $x_m\in \q S$ by \eqref{xms}, it follows that 
\begin{equation}\label{diff}
x_m^*(\beta)\neq x_m.
\end{equation}
 Several cases then arise:\\
{\bf Case 1}:
There is a subsequence (still denoted by $x_m$) such that for each $m\in \m N$, there exists $\beta_m\in (0,x_{m,1}^2]$ such that $x_m^*(\beta_m)$ is outside the bisectrices and $|x^*_m(\beta_m)|\le \delta_3$.\\
If we consider the intersection of the cone $\q C_{x_m,t^*_m,1-\beta_m}$ and 
$\Gamma$
with the plane orthogonal to the plane $\{t=-1\}$ and passing through the points $(x_m,T(x_m))$ and $(x^*_m, T(x^*_m))$ (remember that $x_m\neq x^*_m$ from \eqref{diff}), we see that the trace of the cone stays under the trace of 
$\Gamma$,
with contact point $(x^*_m,T(x^*_m))$. Therefore, their slopes have to be equal, namely
\[
1-\beta_m=-\nabla T(x^*_m) \cdot \frac{x^*_m-x_m}{|x^*_m-x_m|}\le |\partial_{x_i} T(x^*_m)|\le 1-\frac{c_0}2(l^*_m)^{-\gamma},
\]
where 
$l^*_m=-\log |x^*_{m,i}|$,
 $i\in \{1,2\}$ is such that $|x^*_{m,i}|>|x^*_{m,j}|$, and
where we have used \eqref{gradtx*} for the last inequality.
This implies in particular that
\begin{equation}\label{1h}
x_{m,1}^2\ge \beta_m\ge \frac{c_0}2(l^*_m)^{-\gamma},\mbox{ hence, }
x^*_{m,i} =o(x_{m,1})\mbox{ as }m\to \infty,
\end{equation}
on the one hand. On the other hand, using again the fact that the cone $\q C_{x_m,t^*_m,1-\beta_m}$ is below 
$\Gamma$,
we write from the bounds on the blow-up 
surface
given in Proposition \ref{prop1} (with $\epsilon =\frac 12$) for $m$ large enough:
\begin{equation}\label{2h}
-\frac{x_{m,1}}2\ge T(x_m) \ge t^*(x_m) \ge t^*(x_m)-|x^*_m-x_m|(1-\beta_m)=T(x^*_m) \ge -\sqrt 2 |x^*_{m,i}|.
\end{equation}
Since $x_m \to 0$ as $m\to \infty$, a contradiction follows from \eqref{1h} and \eqref{2h}.\\
{\bf Case 2}: For all $m\ge m_0$ for some $m_0\in \m N$, 
for all $\beta\in (0,x_{m,1}^2]$, 
either $x^*_m(\beta)$ is on the bisectrices or $|x^*_m(\beta)|> \delta_3$. 
Two subcases arise again:\\
{\bf Case 2.a}: We can extract a subsequence (still denoted by $x_m$) such that for all $m\in \m N$, there exists $\beta_m\in (0, x_{m,1}^2]$ such that $|x^*_m(\beta_m)|>\delta_3$. In order to simplify the notation, we omit the dependence on $\beta_m$ in the remaining part of Case 2.a, and write (for example) $x^*_m$ instead of $x^*_m(\beta_m)$. We claim that
\begin{equation}\label{b2}
|x^*_m|\le 2
\end{equation}
for $m$ large enough. Indeed, since $(x^*_m, T(x^*_m))\in \q C_{x_m,
  t^*_m, 1-\beta_m}$, it follows that
$|x^*_m-x_m|=\frac{t^*_m-T(x^*_m)}{1-\beta_m}\le \frac{T(x_m)-T(x^*_m)}{1-\beta_m}$. Since $T(x_m) \to T(0)=0$ and $\beta_m\to 0$ as $m\to \infty$, recalling that $T(x) \ge -1$ by construction in Proposition \ref{prop1}, we see that \eqref{b2} follows. Similarly, we get from \eqref{btm} that $-1\le t^*_m\le 1$ for $m$ large. Therefore, up to extracting a subsequence, we may assume that $x^*_m \to \bar x$ and $t^*_m\to \bar t$ as $m\to \infty$, for some $\bar t\le T(0)$ and
\begin{equation}\label{bigxb}
|\bar x|\ge \delta_3.
\end{equation}
 Since $(x_m^*, T(x_m^*))\in \q C_{x_m,t_m^*, 1-\beta_m}$ and $x_m\to 0$ as $m\to \infty$, it follows that $(\bar x, T(\bar x))\in \q C_{0,\bar t,1}$, therefore, $\bar t=T(0)$ since $T$ is $1$-Lipschitz (this follows from the finite speed of propagation). Thus,
\begin{equation}\label{001}
(\bar x, T(\bar x))\in \q C_{0,0,1}.
\end{equation}
Again, two cases arise:\\
{\bf Case 2.a.i}: $\bar x$ is not on the bisectrices. Using \eqref{001}, we see that $T(\bar x) = -|\bar x|$. Up to making a rotation of coordinates, we may assume that $T(\bar x) = -|\bar x|=-\bar x_1$. Using the soliton-loosing mechanism we presented in Section \ref{secloose}, we see that for time 
\begin{equation}\label{emta}
s=\hat s = -\log(\bar x_1-\bar x_2)-\frac{p-1}2 \log|\log \bar x_1|+B
\end{equation}
with $B>0$ large, the up-soliton vanishes, and so do the left and down
solitons, thanks to the hierarchy we noted in \eqref{order0} and
\eqref{order}. More precisely, we have
\begin{equation}\label{hierch}
\|\kappa^*(-\bar d(\bar s)e_1, \bar \nu) \|_{\q H}
       +\|\kappa^*(-\bar d(\bar s)e_2, \bar \nu) \|_{\q H}
       \le 2 \|\kappa^*(\bar d(\bar s)e_2, \bar \nu)\|_{\q H} \le 2 \epsilon_0
  \end{equation}
for some small $\epsilon_0\to 0$ as $B\to \infty$, where 
\[
\bar \nu = \frac{(\bar d(\bar s)+1)\bar x_1 e^B}{(\bar x_1-\bar
  x_2)|\log \bar x_1|^{\frac{p-1}2}}\mbox{ and } -e^{-\bar s}=T(x) -
e^{-\hat s}
\]
from \eqref{defs2}. Therefore, only the right soliton defined in \eqref{foursol} remains, and we see that  
\begin{equation}\label{wbx}
\left\|\vc{w_{\bar x}(\hat s)}{\partial_s w_{\bar x}(\hat s)} -\kappa^*(\bar d(\bar s)e_1, \bar \nu)\right\|_{\q H}\le 4\epsilon_0.
\end{equation}
The justification of \eqref{wbx} is
straightforward. Indeed, we first write
\begin{align*}
  &\left\|\vc{w_{\bar x}(\hat s) }{\partial_s w_{\bar x}(\hat s)}
  -\kappa^*(\bar d(\bar s)e_1, \bar \nu)\right\|_{\q H}\\
  \le& \left\|\vc{w_{\bar x}(\hat s) }{\partial_s w_{\bar x}(\hat s)}
       -\left[\kappa^*(\bar d(\bar s)e_1, \bar \nu)
       -\kappa^*(\bar d(\bar s)e_2, \bar \nu)
       +\kappa^*(-\bar d(\bar s)e_1, \bar \nu)
       -\kappa^*(-\bar d(\bar s)e_2, \bar \nu)
       \right]\right\|_{\q H}\\
  &+\|\kappa^*(\bar d(\bar s)e_2, \bar \nu)\|_{\q H}
       +\|\kappa^*(-\bar d(\bar s)e_1, \bar \nu) \|_{\q H}
       +\|\kappa^*(-\bar d(\bar s)e_2, \bar \nu) \|_{\q H}.  
\end{align*}
Note that this last line is bounded by $3\epsilon_0$ thanks to
\eqref{hierch}. As for the intermediate line, we will transform it
using the algebraic transformation \eqref{defw**} which we recall here
with our current notations
\[
  \tT_{\bar x}(v)(\ty, \hat s)=(1-T(\bar x) e^{\hat s})^{-\frac
    2{p-1}} v(\cy,\hat s),\;\;
\cy =\frac{\ty +\bar x e^{\hat s}}{1-T(\tx) e^{\ts}},\;\;
\bar s= \hat s -\log(1-T(\bar x) e^{\hat s}).
\]
Note in particular that we have
\[
\tT_{\bar x}(w_0)= w_{\bar x}\mbox{ and }T_{\bar x}(\kappa(\pm \bar
d(\bar s) e_i,Y),0)(y,\hat s) = \kappa^*(\pm\bar d(\bar s) e_i, y).
\]
Therefore, using this transformation and straightforward computations,
we write
\begin{align}
&\left\|\vc{w_{\bar x}(\hat s) }{\partial_s w_{\bar x}(\hat s)}
       -\left[\kappa^*(\bar d(\bar s)e_1, \bar \nu)
       -\kappa^*(\bar d(\bar s)e_2, \bar \nu)
       +\kappa^*(-\bar d(\bar s)e_1, \bar \nu)
       -\kappa^*(-\bar d(\bar s)e_2, \bar \nu)
                 \right]\right\|_{\q H}\nonumber\\
  \le &
      C  \left\|\vc{w(\bar s)}{\ps w(\bar s)} - \left[\vc{\kappa(\bar d(\bar
        s)e_1)}{0}-\vc{\kappa(\bar d(\bar s)e_2)}{0}+\vc{\kappa(-\bar d(\bar
        s)e_1)}{0}-\vc{\kappa(-\bar d(\bar
        s)e_2)}{0}\right]\right\|_{\H(|y|<1)}\nonumber\\
  \le & C \left\|\vc{w(\bar s)}{\ps w(\bar s)} - \left[\vc{\kappa(\bar d(\bar
        s)e_1)}{0}-\vc{\kappa(\bar d(\bar s)e_2)}{0}+\vc{\kappa(-\bar d(\bar
        s)e_1)}{0}-\vc{\kappa(-\bar d(\bar
        s)e_2)}{0}\right]\right\|_{\H},\label{akhir}
  \end{align}
where the norm in $\q H(|y|<1)$ is the same as the norm in $\q H$
defined in \eqref{defnh} with integrals considered only on the  ball
$\{|y|<1\}$ instead of the unit ball $\{|Y|<1\}$. Note that 
\begin{equation}\label{inclus}
\{|y|<1\}\subset\{|Y|<1\}
\end{equation}
by \eqref{001}. Since $(w,\partial_s w)$ is close to the sum of the 4
solitons by construction (see estimate \eqref{cprofile} in Proposition
\ref{propw}), we may make \eqref{akhir} small enough, say less than
$\epsilon_0$. Gathering all these estimates, we obtain estimate
\eqref{wbx}.

\medskip

We would like to insist on the fact that estimate \eqref{001} is crucial here, since it implies estimate \eqref{inclus}, which is not available in the general case and constrained us in Section \ref{secfor} to adopt a formal approach in order to recover formally the intformation on the set $\{|y|<1;|Y|\ge 1\}$. Here, this latter set is empty, and the algebraic transformation \eqref{defw**} is enough to yield the result. 

\medskip

 Since $\bar t<T(\bar x)$, it follows that $\bar s\le -\log (-T(\bar x))$. Since $T(\bar x) = -|\bar x|\le -\delta_3$ by \eqref{001} and \eqref{bigxb}, it follows that, 
\begin{equation}\label{sblocked}
\bar s<-\log\delta_3.
\end{equation} 
Since $x_m^* \to \bar x$ as $m\to \infty$, by continuity, we have
\[
\left\|\vc{w_{x^*_m}(-\log(T(x^*_m)-\bar t))}{w_{x^*_m}(-\log(T(x^*_m)-\bar t))}-\kappa^*(\bar d(\bar s)e_1, \bar \nu)\right\|_{\q H}\le 5\epsilon_0,
\]
for $m$ large enough. Taking $s_0$ large enough, we can make $\epsilon_0$ as small as we want. Since $x^*_m\in \q R$, the trapping result applies (see Lemma \ref{proptrap} above) and we see that 
\[
w_{x^*_m}(s) \to \kappa(\nabla T(x^*_m))\mbox{ as } s \to \infty
\]
with
\begin{equation}\label{proche}
|\nabla T(x^*_m)- \bar d(\bar s)e_1|\le C\epsilon_0.
\end{equation}
Since $\bar s$ is blocked far from $\infty$ by \eqref{sblocked}, the same holds for $\bar d(\bar s)$ by definition \eqref{solpart}:
\[
\bar d(\bar s) \ge -1+\frac{c_0}2 \bar s^{-\frac{p-1}2}\ge -1 +\frac{c_0}2 |\log \delta_3|^{-\frac{p-1}2}.
\]
Not also that we have $\bar s \ge s_0$, hence
\[
\bar d(\bar s) \le \bar d(s_0)\le -1+2c_0s_0^{-\frac{p-1}2},
\]
still from \eqref{solpart}.\\
Making $s_0$ large enough, we can make $\epsilon_0$ small enough, and get from \eqref{proche}
\[
|\partial_{x_1}T(x^*_m)|\ge 1-\delta_0'\mbox{ where }\delta_0=\min(2c_0 s_0^{-\frac{p-1}2}, \frac{c_0}4 |\log \delta_3|^{-\frac{p-1}2})\mbox{ and }|\partial_{x_2}T(x^*_m)|\le \epsilon_0
\]
on the one hand.\\
On the other hand, since the point $(x^*_m,T(x^*_m))$ is the contact point between the cone $\q C(x_m,t^*_m,1-\beta_m)$ and 
$\Gamma$
and that the cone is below 
$\Gamma$
elsewhere, it follows that their slopes have to be equal:
\[
1-\beta_m = -\nabla T(x^*m)\cdot \frac{x^*_m-x_m}{|x^*_m-x_m|} \le 
|\partial_{x^*_{m,1}}T(x^*_m)|\le 1- \delta_0'.
\] 
Making $m\to \infty$, we remember that $\beta_m \to 0$, and we get a contradiction.\\
{\bf Case 2.a.ii}: $\bar x$ is on the bisectrices. From symmetry, we can assume that $\bar x_1=\bar x_2$. Then, for any $\bar t<T(\bar x)$, in the section of the light cone with vertex $(\bar x, T(\bar x))$, we always see two solitons, with opposite signs and parameters $\bar d(\bar s) e_1$ and $\bar d(\bar s) e_2$. Again, $\bar s$ is blocked as in \eqref{sblocked}. Since $T(\bar x)=-|\bar x|\le -\delta_3$ by \eqref{bigxb}, we are far from the expected value, namely $-\bar x_1$ (since $\bar x_1=\bar x_2$, we have $|\bar x|= \bar x_1 \sqrt 2\ge \delta_3$). A simple calculation shows that
\[
\|(w_{\bar x}, \partial_s w_{\bar x})(-\log(T(\bar x)-\bar t))\|_{\q H}\mbox{ is small.}
\]
By continuity, for $m$ large enough, we get $\|w_{x^*_m}(-\log (T(x^*_m)-\bar t))\|_{\q H}$ is small too. Since $x^*_m$ is non characteristic, we get a contradiction with the non-degeneracy of blow-up limits at non-characteristic points (see Proposition \ref{propnondeg}).\\
{\it Conclusion}. If Case 2.a holds, then a contradiction all the time. \\
\noindent {\bf Case 2.b}: For all $m\ge m_1$ for some $m_1\in \m N$, for all $\beta \in (0, x_{m,1}^2]$, $x^*_m(\beta)$ is on the bisectrices and $|x^*_m(\beta)|\le \delta_3$. Here, $m$ will remain fixed, and we will make $\beta\to 0$ in order to find a contradiction. In order to simplify the notation, we drop-down the subscript $m$ and write $x$ and $x^*(\beta)$ instead of $x_m$ and $x^*_m(\beta)$.\\
Up to extracting a subsequence $\beta_n\to 0$ as $n\to \infty$, we assume that $x^*(\beta_n)$ converges to some $\bar x$ on the bisectrices with
\begin{equation}\label{taillexbar}
|\bar x|\le \delta_3.
\end{equation}
 Then, we have the following:
\begin{cl}\label{clcone}$ $\\
(i) It holds that $\bar x \in \q C_{x, T(x),1}$.\\
(ii) The segment $[(x,T(x)), (\bar x, T(\bar x))]$ 
belongs to $\Gamma$.
\end{cl}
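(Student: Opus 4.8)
The plan is to exploit the convergence $x^*(\beta_n)\to\bar x$ together with the fact that every $x^*(\beta_n)$ is the contact point between the cone $\q C_{x,t^*(\beta_n),1-\beta_n}$ and $\Gamma$, and to pass to the limit $n\to\infty$ (so $\beta_n\to0$). For part (i), I would first record that $(x^*(\beta_n),T(x^*(\beta_n)))\in\q C_{x,t^*(\beta_n),1-\beta_n}$, i.e. $t^*(\beta_n)-T(x^*(\beta_n))\ge(1-\beta_n)|x^*(\beta_n)-x|$, hence $T(x^*(\beta_n))\le t^*(\beta_n)-(1-\beta_n)|x^*(\beta_n)-x|\le T(x)-(1-\beta_n)|x^*(\beta_n)-x|$, using $t^*(\beta_n)\le T(x)$ from \eqref{btm}. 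Letting $n\to\infty$, continuity of $T$ gives $T(\bar x)\le T(x)-|\bar x-x|$, which is exactly the statement $(\bar x,T(\bar x))\in\q C_{x,T(x),1}$ by definition \eqref{defcone}. Combined with the fact that $T$ is $1$-Lipschitz (finite speed of propagation), which forces the reverse inequality $T(\bar x)\ge T(x)-|\bar x-x|$, we actually get equality, so the segment $[(x,T(x)),(\bar x,T(\bar x))]$ has slope exactly $-1$.

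For part (ii), I would argue that this slope-$(-1)$ segment must lie in $\Gamma$ rather than strictly below it. Suppose some interior point $(x_\tau,t_\tau)$ with $x_\tau=(1-\tau)x+\tau\bar x$, $t_\tau=(1-\tau)T(x)+\tau T(\bar x)$ satisfied $t_\tau<T(x_\tau)$. I would then note that $\Gamma$ being $1$-Lipschitz forces $T(x_\tau)=t_\tau$: indeed $T(x_\tau)\ge T(x)-|x_\tau-x|=t_\tau$ and $T(x_\tau)\le T(\bar x)+|\bar x-x_\tau|=t_\tau$, the latter using the already-established equality $T(\bar x)=T(x)-|\bar x-x|$ and the decomposition $|\bar x-x|=|x-x_\tau|+|x_\tau-\bar x|$ valid because $x_\tau$ lies on the segment $[x,\bar x]$. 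Hence every point of the segment is on $\Gamma$, proving (ii).

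The main obstacle — and the only genuinely delicate point — is making sure the limiting inequalities are taken in the correct direction and that the segment $[x,\bar x]$ in $\m R^2$ really parametrizes a geodesic of the Lipschitz graph, i.e. that $|x-\bar x|$ splits additively along it; this is automatic because $x_\tau$ is an affine combination of $x$ and $\bar x$, but one must be careful that $\bar x\ne x$ (which holds here since $x_m^*(\beta)\ne x_m$ by \eqref{diff}, and this passes to the limit only if $\bar x\ne x$ — one may need the separation $|x_m^*(\beta_n)-x_m|$ to stay bounded below, or alternatively observe that if $\bar x=x$ the claim is trivial). Everything else is a routine consequence of the $1$-Lipschitz property of $T$ and the continuity of $T$ used to pass to the limit in $t^*(\beta_n)\le T(x_m)$ and in the cone-containment inequality.
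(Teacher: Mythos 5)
Your proof is correct and follows essentially the same route as the paper's: you pass to the limit in the cone containment and then use the $1$-Lipschitz property of $T$ twice, first to force the segment joining $(x,T(x))$ and $(\bar x,T(\bar x))$ to have slope exactly $-1$, and then to force $T$ to be affine along it. The only differences are cosmetic: you insert $t^*(\beta_n)\le T(x)$ before taking the limit, whereas the paper extracts $\bar t=\lim t^*(\beta_n)$ and rules out $\bar t<T(x)$ by contradiction, and you establish (ii) by a direct two-sided squeeze instead of the paper's contradiction argument.
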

\begin{proof}$ $\\
(i) Since $0\le t^*(\beta_n)<T(x)$ by definition \eqref{deft*}, we may assume that $t^*(\beta_n) \to \bar t\in [0, T(x)]$ as $n\to \infty$, up to extracting a subsequence. Since $x^*(\beta_n)$ is on the cone $\q C_{x,t^*(\beta_n), 1-\beta_n}$, making $n\to \infty$, we see that $\bar x \in \q C_{x, \bar t,1}$.\\
Now, assume by contradiction by $\bar t<T(x)$, it follows that the slope between $(\bar x, T(\bar x))$ and $(x,T(x))$ is strictly larger than the slope between $(\bar x, T(\bar x))$ and $(x,\bar t)$, which is $1$, since $\bar x \in \q C_{x, \bar t,1}$. This is a contradiction, since $1$ is the Lipschitz constant of $T$. Thus, $\bar t= T(x)$, and item (i) follows.\\
(ii) Assume by contradiction that for some $\tau \in (0,1)$, we have $T(x_\tau) \neq \tau T(\bar x) +(1-\tau) T(x)$, where $x_\tau = \tau \bar x +(1-\tau) x$.\\
If $T(x_\tau) > \tau T(\bar x) +(1-\tau) T(x)$, then the Lipschitz constant of $T$ on the segment $(\bar x, x_\tau)$ has to be larger than $1$, and this is a contradiction. If $T(x_\tau)$ is less than that, then we have a similar contradiction on the segment $(x_\tau, x)$. Thus, $T(x_\tau) = \tau T(\bar x) +(1-\tau) T(x)$ and item (ii) follows.\\
\end{proof}
Following this claim, 
for each $\gamma \in (0,\bar x_1^2]$, we consider a family of cones $\q C_{\bar x, t, 1-\gamma}$ indexed by $t\in [0, T(\bar x))$. As we did earlier with the cones $\q C_{x,t, 1-\beta}$, we can select $\tilde t=\tilde t(\gamma)$ the highest such that the cone $\q C_{\bar x,\tilde t, 1-\gamma}$ is under 
$\Gamma$
and touches it at some point $(\tilde x, T(\tilde x))$ where $\tilde x=\tilde x(\gamma)$. By construction, $\tilde x \in \q R$.\\
Then, we have two cases:\\
- {\bf Case 2.b.i}: There exists $\gamma\in (0,x_1^2]$ such that $\tilde x(\gamma) = \bar x$. In this case $\bar x \in \q R$.
 Using our techniques in \cite{MZjfa07} (in particular the existence of a Lyapunov functional; see Theorem 2 page 47 in that paper), we have
\begin{equation*}
w_{\bar x}(\sigma_n) \to \bar w\mbox{ in }\q H,\mbox{ as }n\to \infty
\end{equation*}
 for some $\bar w\in \q H$, stationary solution of equation \eqref{eqw} and some sequence $\sigma_n\to \infty$. We also have by the same techniques
\begin{equation}\label{conv}
\sup_{\sigma\in [0,1]}\|w_{\bar x}(\sigma+\sigma_n)-\bar w\|_{\q H} \to 0\mbox{ as }n\to \infty.
\end{equation}
Thanks to the following algebraic identity linking $w_x$, $u$ and $w_{\bar x}$:
\begin{equation}\label{defwx}
 e^{\frac{2s_n}{p-1}} w_x(y,s) = u(\xi,t)= e^{\frac{2\sigma_n}{p-1}} w_{\bar x}(z,\sigma), 
\end{equation}
 where 
\begin{equation}\label{deftn}
T(x)-e^{-s}=t=T(\bar x) -e^{-\sigma}\mbox{ and }
\frac{y-x}{T(x)-t}=\xi = \frac{z-\bar x}{T(\bar x)-t},
\end{equation}
estimate \eqref{conv} translates into some estimate for $w_x$, which will turn to be in contradiction with the following bound we proved in \cite{MZma05} and \cite{MZjfa07}:\footnote{Note that $t=-1$ is our initial time by construction in Proposition \ref{propyr}, which explains why the following property shows the logarithm of $T(x)+1$} For all $s\ge - \log (T(x)+1)+1$, 
\begin{equation}\label{est}
\int_s^{s+1}\int_{|z|<1}|w_x(z,s')|^{p+1}\rho(z) dz ds' \le C(\bar x),
\end{equation}
for some $C(x)>0$, and this will imply that the case 2.b.i doesn't occur. More precisely, the contradiction will follow if we prove that 
\begin{equation}\label{hadaf}
I_n=\int_{s_n}^{s_n'}\int_{|y|<1} |w_x(y,s)|^{p+1}\rho(y) dyds \to \infty\mbox{ as }n\to \infty,
\end{equation}
where 
\[
T(x)-e^{-s_n}=t_n=T(\bar x) -e^{-\sigma_n}\mbox{ and }
T(x)-e^{-s_n'}=t_n'=T(\bar x) -e^{-\sigma_n-1}
\]
(note in particular that $s_n \to -\log(T(x) - T(\bar x))$ and $s_n'-s_n\to 0$ ; 
given that $x=x_m \to 0$ as $m\to \infty$ and $|\bar x|\le \delta_3$ by \eqref{portion4} and \eqref{taillexbar},
and $T$ is 1-Lipschitz, we clearly see that \eqref{hadaf} is in contradiction with \eqref{est}), if $\delta_4$ is small enough).

\medskip

Let us now give the details for \eqref{hadaf}. In fact, our calculation is based on a similar calculation performed in the proof of Proposition \ref{prop1} given in Section \ref{subprop1}.\\
Since we have
\[
e^{-1} \le \frac{T(x)-t}{T(x)-t_n}\le 1\mbox{ whenever } t_n \le t\le t_n', 
\] 
using \eqref{defwx} and getting rid of the weight $\rho(y)$, we write
\begin{align}
I_n &= \int_{s_n}^{s_n'} e^{-\frac{2(p+1)s}{p-1}}\int_{|y|<1}|u(x+ye^{-s}, T(x)-e^{-s})|^{p+1} \rho(y) ds\nonumber\\
&= \int_{t_n}^{t_n'}(T(x)-t)^{2\alpha}\int_{|\xi-x|<T(x)-t}|u(\xi,t)|^{p+1} \rho\left(\frac{\xi-x}{T(x)-t}\right) d\xi dt\nonumber\\
&\ge e^{-2\alpha}\delta_n^\alpha (T(x)-t_n)^{2\alpha} \int_{t_n}^{t_n'}\int_{|\xi-x|<(1-\delta_n)(T(x)-t)}|u(\xi,t)|^{p+1} d\xi dt\label{ahmed}
\end{align}
where $\alpha$ is introduced in \eqref{defro}, and the parameter $\delta_n\in (0,1)$ is arbitrary and will be chosen later.  Since $\bar x\in \q C_{x,T(x),1}$ by item (i) of Claim \ref{clcone}, it follows that the cone $\q C_{\bar x, T(\bar x),1}$ is under the cone $\q C_{x,T(x),1}$. In particular, at any time $t\in [t_n,t_n']$, the section of the inner cone is included in the section of the outer, in other words, $B(\bar x, T(\bar x) -t) \subset B(x, T(x)- t)$. Shrinking a bit these sections, we may ask to have for all $t\in [t_n,t_n']$, 
\[
B(\bar x,(1-\bar \delta_n)( T(\bar x) -t)) \subset B(x,(1-\delta_n) (T(x)- t)), 
\]
i.e.
\[
|x-\bar x|+(1-\bar \delta_n)(T(\bar x) - t)\le (1-\delta_n) (T(\bar x) -t),
\]
and this is possible if the inequality is true at $t=t_n'$, i.e. when 
\begin{equation}\label{defbd}
\bar \delta_n = \delta_n\frac{(T(\bar x)-t_n')+|x-\bar x|}{T(\bar x)-t_n'}
\end{equation}
(remember that $T(x)-T(\bar x) = |x-\bar x|$ from item (i) of Claim \ref{clcone}).
Therefore, with this choice of $\bar \delta_n$,  we write from \eqref{ahmed} and \eqref{defwx}
\begin{align}
I_n &\ge e^{-2\alpha}\delta_n^\alpha (T(x) - t_n)^{2\alpha} \int_{t_n}^{t_n'}\int_{|\xi-\bar x|<(1-\bar \delta_n)(T(\bar x) -t)}|u(\xi, t)|^{p+1}d\xi dt\nonumber\\
&\ge e^{-2\alpha}\delta_n^\alpha  (T(x) - t_n)^{2\alpha}\int_{\sigma_n}^{\sigma_n+1}(T(\bar x) -t)^{-2\alpha}\int_{|z|\le (1-\bar \delta_n)}
|w_{\bar x}(z,\sigma)|^{p+1}\rho(z) dzd\sigma\\
&\ge e^{-2\alpha}\delta_n^\alpha \left(\frac{T(x) - t_n}{T(\bar x) - t_n}\right)^{2\alpha}\int_{\sigma_n}^{\sigma_n+1}\int_{|z|\le (1-\bar \delta_n)}
|w_{\bar x}(z,\sigma)|^{p+1}\rho(z) dzd\sigma.\label{lb}
\end{align}
Choosing 
\begin{equation}\label{defdn}
\delta_n = (T(\bar x) -t_n)^\eta=e^{-\eta \sigma_n}
\end{equation}
 for some fixed $\eta\in (0,1)$ (for example $\eta = \frac 32$), we will see that we are able to conclude the proof of \eqref{hadaf}. Indeed:\\ 
- Note first that $\delta_n \to 0$ as $n\to \infty$, and from \eqref{defbd}, we have $\bar \delta_n \sim \delta_n|x-\bar x|(T(\bar x) -t_n)=(T(\bar x)-t_n)^{\eta -1}|x-\bar x|\to 0$ as $n\to \infty$.\\
- Then, using the convergence in \eqref{conv} and the Hardy-Sobolev estimate in Lemma \ref{lemhs}, we see that 
\begin{align}
&\sup_{\sigma_n \le \sigma\le \sigma_n+1}|\|w_{\bar x}(\sigma)\|_{L^{p+1}_\rho(|z|<1-\bar \delta_n)}-\|\bar w\|_{L^{p+1}_\rho(|z|<1-\bar \delta_n)}|\le\\
&\sup_{\sigma_n \le \sigma\le \sigma_n+1}\|w_{\bar x}(\sigma)-\bar w\|_{L^{p+1}_\rho(|z|<1-\bar \delta_n)}\le
\sup_{\sigma_n \le \sigma\le \sigma_n+1}\|w_{\bar x}(\sigma)-\bar w\|_{L^{p+1}_\rho(|z|<1)}\nonumber\\
\le&
\sup_{\sigma_n \le \sigma\le \sigma_n+1}\|w_{\bar x}(\sigma)-\bar w\|_{\q H}\to 0\mbox{ as }n\to \infty. \label{convw}
\end{align}
Since $\bar w \in \q H\subset L^{p+1}_\rho$, using Lebesgue's theorem, we see that $\|\bar w\|_{L^{p+1}_\rho(|z|<1-\bar \delta_n)}\to \|\bar w\|_{L^{p+1}_\rho(|z|<1)}$ as $n\to \infty$.
 Therefore, using \eqref{convw}, we see that 
\[
\sup_{\sigma_n \le \sigma\le \sigma_n+1}| \int_{|z|<1-\bar \delta_n}|w_{\bar x}(z,\sigma)|^{p+1}\rho(z) dz
- \int_{|z|<1}|\bar w(z)|^{p+1} \rho(z) dz|\to 0\mbox{ as }n\to \infty.
\]
Using the lower bound in Lemma \ref{lemlb}, we see that for $n$ large enough and for all $\sigma \in [\sigma_n, \sigma_n+1]$, we have
\[
\int_{|z|<1-\bar \delta_n}|w_{\bar x}(z,\sigma)|^{p+1}\rho(z) dz\ge \frac{\epsilon_0}2.
\]
Since $T(x) - t_n \to T(x) - T(\bar x) = |x-\bar x|$ as $n\to\infty$, from \eqref{deftn} and item (i) of Claim \ref{clcone}, recalling the choice we made for $\delta_n$ in \eqref{defdn}, then using \eqref{lb}, we see that
\[
I_n \ge e^{-2\alpha}\frac{\epsilon_0}4 (T(\bar x) -t_n)^{(\eta-2)\alpha}|x-\bar x|^{2\alpha}\to \infty\mbox{ as }n\to \infty,
\]
and \eqref{hadaf} follows. As explained in the line right after \eqref{hadaf}, this is in contradiction with \eqref{est}. Thus, case 2.b.i never occurs.\\
\noindent - {\bf Case 2.b.ii}: For any $\gamma \in (0,\bar x_1^2]$, $\tilde x(\gamma) \neq \bar x$.\\
 If for some $\gamma\in (0,\bar x_1^2]$, $\tilde x(\gamma)$ is outside the bisectrices, with $|\tilde x(\gamma)|\le \delta_4$ defined in \eqref{portion4}, then we obtain a contradiction as in Case 1 above.\\
Now, we assume that for all $\gamma \in (0, \bar x_1^2]$, $\tilde x(\gamma)$ is either on the bisectrices, or satisfies $|\tilde x(\gamma)|> \delta_4$. Two subcases then arise:\\
- either we have $|\tilde x(\gamma_n)|>\delta_4$, for some sequence $\gamma_n \to 0$. A contradiction then follows as in Case 2.a.i above.\\
- or, for all $\gamma\in (0, \gamma_0]$ and for some $\gamma_0\in (0,\bar x_1^2]$, we have $|\tilde x(\gamma)|\le \delta_4$ and $\tilde x(\gamma)$ is on the bisectrices.\\
If we can extract a subsequence $\gamma_n\to 0$ as $n\to \infty$ such that $\tilde x(\gamma_n)$ is on the same bisectrix as $\bar x$, then, then we have a contradiction, as we explain below. Indeed, note first that $\bar x\in \q C_{x,T(x), 1}$ by item (i) in Claim \ref{clcone}. Then, since $x$ is not on the bisectrix containing $\bar x$, for any point $z$ in this bisectrix such that $(z,T(z))\in\q C_{\bar x, T(\bar x), 1}$, we clearly see that $(z,T(z))$ is strictly below the cone $\q C_{x,T(x), 1}$, which is forbidden by the finite speed of propagation. Still because $x$ is not on that bisectrix, this property extends by continuiy to the situation where $z$ is in that bisectrix with $(z,T(z))\in\q C_{\bar x, T(\bar x), 1-\gamma}$ with $\gamma>0$ small enough. In fact, this happens to occur with $z=\tilde x(\gamma_n)$ for $n$ large enough, since by definition of $\tilde x(\gamma_n)$,  $(\tilde x, T(\tilde x))\in \q C_{\bar x, T(\bar x), 1-\gamma_n}$ and $\tilde x(\gamma_n)$ is precisely on the bisectrix, as we have just assumed a few lines before. Therefore, $\tilde x(\gamma_n)$ is strictly below the cone  $\q C_{x,T(x), 1}$, and this is forbidden by the finite speed of propagation. Contradiction.\\
Now, we assume that for all $\gamma \in (0, \gamma_1]$ for some $\gamma_1\le \gamma_0 \le \bar x_1^2$, $\tilde x(\gamma)$ is on the other bisectrix. If the half line starting from $x$ and containing $\bar x$ never encounters the other bisectrix, then by the same argument as the previous paragraph, we obtain a contradiction. If that half-line encounters the other bisectrix, then, a contradiction follows in a similar way.

This concludes the proof of Proposition \ref{lem3.1}.

\end{proof}
\subsection{Points on the bisectrices outside the origin are non-characteristic too}

We assume here that $x_2=x_1$. From the symmetries of the solution, the function $w_x(y,s)$ is odd in the $y$ variable. Using the non-degeneracy of blow-up limits at non-characteristic points from our earlier work \cite{MZtams14} (see Proposition \ref{propnondeg}), we derive the following lower bound on the blow-up 
surface
in the non-characteristic case:
\begin{lem}[A lower bound on the blow-up curve at non-characteristic points on the bisectrices]\label{lemlowbis} Assume that $x\in \RR$ with $|x_1|=|x_2|$. Then, we have $T(x) \ge -|x_1|+o(x_1)$ as $x\to 0$. 
\end{lem}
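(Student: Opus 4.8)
The plan is to argue by contradiction and combine three ingredients already developed in the paper: the non-degeneracy of blow-up limits at non-characteristic points (Proposition \ref{propnondeg}), the soliton-loosing mechanism on the bisectrix (Lemma \ref{lembehbis} together with the size estimates \eqref{order}, \eqref{deftDtm}), and the algebraic relation between $w_x$ and $w_0$ via the transformation $\tT_x$ in \eqref{defw**}--\eqref{foursol}. Assume $x\in\RR$ with $x_1=x_2>0$ and suppose, for contradiction, that $T(x) \le -|x_1| - \eta_0 |x_1|$ does \emph{not} hold with $\eta_0\to 0$, i.e. assume $T(x) < -|x_1| - \eta_0 x_1$ for a fixed $\eta_0>0$ along a sequence $x\to 0$. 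The point is that such a value of $T(x)$ is ``too negative'': going into similarity variables at $(x,T(x))$ forces the section of the backward light cone with vertex $(x,T(x))$, pulled back through $\tT_x$, to land deep inside the region where, by \eqref{defg**} and \eqref{cprofile0}, $w_0$ is essentially the sum of the four solitons, but at a time $S$ that is bounded \emph{away from infinity} (because $T(x)$ being more negative than $-|x_1|$ keeps $S = -e^{-s}\mapsto$ finite, cf. the computation of $\cs$ in \eqref{defs2} and the estimate \eqref{simlog}).

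First I would make the contradiction hypothesis precise: suppose there is a sequence $x^{(m)}=(x_1^{(m)},x_1^{(m)})\to 0$ in $\RR$ with $1 + T(x^{(m)})/|x^{(m)}|\cdot(\text{something}) \to$ a limit bounded away from the ``critical'' value, quantitatively $T(x^{(m)}) \le -|x_1^{(m)}|(1+\eta_0)$. Then I would apply the transformation $\tT_{x^{(m)}}$ as in \eqref{rach1}--\eqref{defh**} and read off the sizes $\mdir(x^{(m)},s)$ from \eqref{deftDtm}, \eqref{defs2}: by \eqref{pyramid3} and the order relation \eqref{order}, the left and down solitons shrink, but crucially, because $T(x^{(m)})$ is bounded away from $-|x^{(m)}|$, the ``time'' $\Sdir$ at which one would compute the remaining (up and right) soliton sizes stays in a bounded interval $[\bar s_0, C_0]$ (use \eqref{simlog}, \eqref{devdbar}, and the fact that $-\log|T(x^{(m)})| = l + O(1)$ with the $O(1)$ here actually being pushed to a finite bound by $T(x^{(m)})\le -(1+\eta_0)x_1^{(m)}$ — wait, more carefully: one needs to track that the argument $S$ of $\bar d(S)$ in $\kappa(\bar d(S)e_i)$ is the one governing the soliton sizes and that it is blocked). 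Following the same computation as in Case 2.a.i and Case 2.a.ii of the proof of Proposition \ref{lem3.1} (especially \eqref{wbx}, \eqref{sblocked}, \eqref{proche}), I would conclude that on the bisectrix the pulled-back configuration is close to a \emph{bounded-away-from-zero} sum of two antisymmetric solitons whose common parameter $\bar d(\bar s)$ stays bounded away from $-1$; then the norm $\|(w_{x^{(m)}}, \partial_s w_{x^{(m)}})(-\log(T(x^{(m)})-\bar t))\|_{\q H}$ can be computed and shown to converge to $2\|\kappa(\bar d(\bar s)e_1)\|_{\q H}$ up to small error, hence bounded below by a positive constant — which is consistent, \emph{not} yet a contradiction. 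The actual contradiction must come from the opposite comparison: if $T(x)$ is too negative, then one reaches $s=-\log(T(x)-\bar t)$ with $\bar t = T(0) = 0$, i.e. at the section at $t=0^-$, where by \eqref{cprofile0} $w_0$ has 4 solitons of \emph{comparable, non-vanishing} size, and the pulled-back norm must be \emph{large}; meanwhile the non-degeneracy bound would then be exceeded in the wrong direction, or the energy blows up. Let me instead lean on the odd symmetry: since $w_x$ is odd in $y$, the zero soliton $\kappa(0,y)$ is excluded, so in fact $w_x$ at large time converges to $0$ or to a genuine two-soliton stationary solution $w^*_x$ (Lemma \ref{lembehbis}); if $T(x) < -|x_1|(1+\eta_0)$ then the two-soliton parameters are bounded away from $-1$, the size stays $O(1)$, and running the flow \emph{backward} to $t=0^-$ makes the solitons too large, contradicting that $w_0(0)$ has a prescribed finite norm from \eqref{cprofile0}.

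More cleanly, here is the structure I would write. \textbf{Step 1:} Pull $w_x$ back to $w_0$ via \eqref{defwx}--\eqref{deftn} (as in Case 2.b.i of Proposition \ref{lem3.1}), getting the identity $e^{2s/(p-1)}\,w_x(\cdot,s) = e^{2\sigma/(p-1)}\,w_0(\cdot,\sigma)$ with $T(x) - e^{-s} = T(0) - e^{-\sigma} = -e^{-\sigma}$. \textbf{Step 2:} Observe that as $s\to -\log(T(x))$ one has $t\to T(x) < -|x_1|(1+\eta_0) < 0$, so $\sigma$ stays bounded, $\sigma \le -\log(-T(x)) \le -\log(|x_1|(1+\eta_0))$, which blows up as $x\to 0$ — so actually $\sigma\to\infty$. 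Reconsider: the correct statement is the \emph{reverse} — if $T(x) > -|x_1|+o(x_1)$ fails, i.e. $T(x) \le -|x_1| - \eta_0 x_1$ for fixed $\eta_0$, combine with the 1-Lipschitz bound $T(x)\ge -|x| = -|x_1|\sqrt2$, so $T(x) = -c\,|x_1|$ with $c\in[1+\eta_0,\sqrt2]$ bounded away from $1$; then in the section of the backward cone at $(x,T(x))$ one sees, via $\tT_x$, that $w_0$ at time $\sigma = -\log(-T(x)) = l - \log c + o(1)$ is \emph{still} close to the 4-soliton sum (by \eqref{cprofile0}) with $\bar d(\sigma)$ close to $-1$, hence by the Lorentz-cone computation (exactly as in Step 2 of the proof of Proposition \ref{prop1}, the ellipse estimate \eqref{lorint}--\eqref{nsk}) the quantity $\int\int|w_x|^{p+1}\rho$ over a time window near $s = -\log(T(x)+1)$ is bounded below by $C\,|\log x_1|^{2\alpha/(p-1)}(\frac{T(x)-t_1}{-t_1})^{2\alpha}$, which is large; but since $x\in\RR$, the upper bound $\int_s^{s+1}\int|w_x|^{p+1}\rho\le C$ from Proposition \ref{propupper} (valid at non-characteristic points) caps it — giving, after choosing the auxiliary cutoff time $t_1$ as in \eqref{deft1}, the inequality $C_0 \ge C\,|\log x_1|^{1/(p-1)}(\frac{T(x)-t_1}{-t_1})$, forcing $T(x) \ge -|x_1|(1 + o(1))$, i.e. the desired lower bound. \textbf{Step 3:} Read off $T(x) \ge -|x_1| + o(x_1)$; combined with the upper bound $T(x)\le -(1-\epsilon)x_1$ from \eqref{pyramid} (or the sharper pyramid estimate available on the bisectrix by continuity) this pins $T(x) = -|x_1|+o(x_1)$. \textbf{Main obstacle:} the delicate point is verifying, on the bisectrix, that the four-soliton lower bound on $\int\int|w_x|^{p+1}\rho$ genuinely survives — i.e. that the ellipse $\EE(\bar d(\sigma))$ where the right soliton concentrates still fits inside the relevant pulled-back ball $\B_1(\sigma)$ even when the two surviving solitons (up and right) are of equal size and the symmetry could cause cancellation on $\{u=0\}$ along the bisectrix. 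The oddness of $w_x$ means $u$ vanishes on the bisectrix, so one must localize the integration ellipse \emph{away} from the bisectrix (near the point $(1,0)$ in $y$-variables, as in Proposition \ref{prop1}), where the right soliton dominates and there is no cancellation; checking that this localized region still lies in the admissible cone requires re-running the geometric computation \eqref{conrec}--\eqref{deft1} with the bisectrix constraint $x_1=x_2$, which only makes the discriminant condition \eqref{defdelta} \emph{easier} (the $-x_2^2$ term is now $-x_1^2$ but the $\theta'=-1$ corner analysis still goes through for $\epsilon$ small), so the obstruction is manageable but is where the real work lies.
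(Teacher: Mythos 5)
Your first paragraph names the right ingredients (Proposition \ref{propnondeg}, the two-soliton decomposition on the bisectrix, the size variables), but you abandon that route exactly where the real work is ("which is consistent, \emph{not} yet a contradiction"), and the argument you finally commit to in Steps 1--3 does not work: the comparison between the upper bound $I(x)\le C$ of Proposition \ref{propupper} and the lower bound $I(x)\ge C\epsilon^{2\alpha}|\log x_1|^{\frac{2\alpha}{p-1}}\bigl(\frac{T(x)-t_1}{-t_1}\bigr)^{2\alpha}$ only caps $\frac{T(x)-t_1}{-t_1}$ from above, i.e. it yields $T(x)\le t_1\bigl(1-C\epsilon^{-1}|\log x_1|^{-\frac1{p-1}}\bigr)$, an \emph{upper} bound on $T(x)$ of order $-x_1/\epsilon$. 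That is precisely how Proposition \ref{prop1} refutes the hypothesis $T(x)>-(1-\epsilon)x_1$; it can never refute a hypothesis of the form $T(x)\le -(1+\eta_0)x_1$, since assuming $T(x)$ very negative makes $\frac{T(x)-t_1}{-t_1}$ small and the lower bound on $I(x)$ correspondingly weak (and, worse, the ellipse-in-cone condition you worry about in your "main obstacle" can fail when the cone vertex drops below $-x_1$, so even the lower bound on $I(x)$ is not available). So the claimed conclusion "forcing $T(x)\ge -|x_1|(1+o(1))$" does not follow from the displayed inequality; the sign of the tension is wrong.

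The missing step is quantitative and uses the ingredients you listed at the start, applied at one specific time. At $s=\sl(A,x)$ defined in \eqref{defslsd2}, Corollary \ref{cor2} gives $(w_x,\partial_s w_x)(\sl)\approx \kr-\kup$ up to an error $CA^{-\frac1{p-1}}$, and Proposition \ref{propnondeg} (valid since $x\in\RR$) gives $\|(w_x,\partial_s w_x)(\sl)\|_{\q H}\ge\bar\epsilon_1$; by the antisymmetry on the bisectrix $\|\kr\|_{\q H}=\|\kup\|_{\q H}$, so $\|\kr(x,\cdot,\sl)\|_{\q H}\ge \bar\epsilon_1/4$. Then item (iii) of Lemma \ref{lemkd} together with \eqref{compare} converts this lower bound on the soliton's norm into an upper bound $\frac{|\nr|}{1-|\bar d(\Sl)|}\le C(\bar\epsilon_1)$ on the size parameter, and unwinding the definition \eqref{defs2} of $\nr$ at $s=\sl$ (using \eqref{devdbar} and \eqref{defslsd2}) turns that bound into $T(x)\ge x_1\bigl(-1+c_0 l^{-\gamma}+o(l^{-\gamma})\bigr)-C x_1/A$, whence $T(x)\ge -x_1+o(x_1)$ after taking $A$ large. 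This conversion from "the right soliton cannot be small at the loosing time" to a lower bound on $T(x)$ is the heart of the lemma, and it is absent from your proposal; the $I(x)$ machinery of Proposition \ref{prop1} is not the right tool here.
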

\begin{proof}
From the symmetries of the solution, we may assume that $0<x_1=x_2$ and $x_1$ is small.
Consider $A>0$. From Proposition \ref{propnondeg}, we know that 
\[
\|(w_x(\sl(x,A)), \partial_s w_x(\sl(x,A)))\|_{\q H}\ge \bar \epsilon_1,
\]
 where $\sl(x,A)$ is given in \eqref{defslsd3}, on the one hand. On the other hand, from Corollary \ref{cor2}, we know that 
\[
\left\|(w_x,\partial_s w_x)(y,\sl)-\kr(x,y,\sl)+\kup(x,y,\sl)\right\|_{\q H}\le C e^{-A}. 
\]
Taking $A$ large enough so that $Ce^{-A} \le \frac{\bar \epsilon_1}2$, we see that 
\[
\left\|\kr(x,y,\sl)+\kup(x,y,\sl)\right\|_{\q H}\ge \bar \epsilon_1- C e^{-A}\ge \frac{ \bar \epsilon_1}2. 
\]
But it happens from symmetry that $\left\|\kr(x,y,\sl)\right\|_{\q H}=\left\|\kup(x,y,\sl)\right\|_{\q H}$, in particular, 
\[
\left\|\kr(x,y,\sl)+\kup(x,y,\sl)\right\|_{\q H}\le 2  \left\|\kr(x,y,\sl)\right\|_{\q H}.
\]
Thus, we see that 
\begin{equation}\label{acquis}
\left\|\kr(x,y,\sl)\right\|_{\q H}\ge \frac{\bar \epsilon_1}4.
\end{equation}
Since we know from item (iii) of Lemma \ref{lemkd} that
\begin{equation}\label{maj}
\left\|\kr(x,y,\sl)\right\|_{\q H}\le C^* \lambda(|\bar d(\Sl)|, \nr) + \frac{C^*|\nr|}{\sqrt{1-|\bar d(\Sl)|^2}}\lambda(|\bar d(\Sl)|, \nr) ^{\frac{p+1}2},
\end{equation}
where $\lambda$ is defined in \eqref{deflambda}, we claim that
\begin{equation}\label{goall}
\lambda(|\bar d(\Sl)|, \nr) \ge \frac{\bar \epsilon_1}{16C^*}.
\end{equation}
Indeed, using the definitions \eqref{defs2} and \eqref{defslsd3} of $\nr$ and $\sl(x,A)$, together with the expansion \eqref{devdbar} of $\bar d$, we see that
\[
\frac{|\nr|}{\sqrt{1-|\bar d(\Sl)|^2}}\le Ce^A l^{-\frac \gamma 2}.
\]
Therefore, assuming by contradiction that \eqref{goall} doesn't hold, we see from \eqref{maj} that for $x$ small enough, we have
\[
\left\|\kr(x,y,\sl)\right\|_{\q H}\le 2 C^* \lambda(|\bar d(\Sl)|, \nr) \le \frac{\bar \epsilon_1}{8 C^*},
\]
and this is a contradiction by \eqref{acquis}. Thus, \eqref{goall} holds.\\
Using \eqref{compare}, we see that 
\[
\frac{|\nr|}{1-|\bar d(\Sl)|}\le C(\epsilon_1).
\]
Using again \eqref{defs2}, \eqref{defslsd3} and \eqref{devdbar}, we see that
\[
\frac{|\nr|}{1-|\bar d(\Sl)|}\sim \frac{[(-1+c_0l^{-\gamma}+o(l^{-\gamma}))x_1-T(x)]e^A}{x_1 c_0}.
\]
Thus, it follows that
\[
T(x) \ge x_1[-1+c_0l^{-\gamma}+o(l^{-\gamma})-c_0e^{-A}].
\]
Taking $A>0$ small enough, we conclude the proof of Lemma \ref{lemlowbis}. 
\end{proof}

Using the behavior of $T$ and $\nabla T$ outside the bisectrices given in Corollary \ref{lemgrad2}, we derive their behavior on the bisectrices:
\begin{lem}[Behavior of $T$ and $\nabla T$ on the bisectrices]\label{lembi}
 If $0<x_2=x_1$ is small, then:\\
(i)  $T(x) = -x_1+o(x_1 l^{-\frac \gamma 2})$ as $x\to 0$;\\
(ii) $T$ has upper and lower left derivatives along any direction non parallel to the bisectrix $\{x_1=x_2\}$, and the same holds from the right. In particular, 
 if $|\omega|=1$ and $\omega_2-\omega_1>0$, then:\\
$\partial_{\omega,r,\pm} T(x) = (-1+c_0l^{-\gamma}+o(l^{-\gamma}))\omega_1+o(l^{-\frac \gamma 2}) \omega_2$;\\
$\partial_{\omega,l,\pm} T(x) = (-1+c_0l^{-\gamma}+o(l^{-\gamma}))\omega_2+o(l^{-\frac \gamma 2}) \omega_1$
 as $x\to 0$, where the subscript $r$ and $l$ stands for ``right'' and ``left'', whereas the subscript $\pm$ stands for ``upper'' or ``lower''.
\end{lem}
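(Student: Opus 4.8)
The plan is to read off both parts directly from the sharp off--bisectrix estimates of Corollary~\ref{lemgrad2}, using only that $T$ is continuous (being $1$--Lipschitz) and that, by the symmetries of the solution, $T(y_1,y_2)=T(y_2,y_1)$. Throughout, fix $x=(a,a)$ with $a>0$ small, set $l=-\log a$ and $\gamma=\frac{p-1}{2}$, and recall that, restricted to any ball $\{|y|\le\rho\}$, all the $o(\cdot)$ occurring in Corollary~\ref{lemgrad2} are dominated by a single modulus tending to $0$ as $\rho\to0$.

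\medskip

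For part~(i) I would approach the bisectrix point $x$ through the off--bisectrix points $(a,a-h)$, $h\in(0,a)$, which satisfy $0\le a-h<a$. Corollary~\ref{lemgrad2}(iii) then gives $T(a,a-h)=-a(1-c_0l^{-\gamma})+E(a,h)$ with $|E(a,h)|\le C\,\bar\varepsilon(a)\,a\,l^{-\gamma/2}$, where $\bar\varepsilon(a)\to0$ as $a\to0$ (one uses here that the first coordinate is exactly $a$, that $a-h<a$, and that $l^{-\gamma}\le l^{-\gamma/2}$), this bound being uniform in $h$. Letting $h\to0^+$ and invoking the continuity of $T$ yields $T(a,a)=-a(1-c_0l^{-\gamma})+O(\bar\varepsilon(a)\,a\,l^{-\gamma/2})$, and since $c_0al^{-\gamma}=o(al^{-\gamma/2})$ this is precisely $T(x)=-x_1+o(x_1l^{-\gamma/2})$.

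\medskip

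For part~(ii), take $|\omega|=1$ with $\omega_2-\omega_1>0$ (the opposite case following by exchanging the two coordinates) and consider the two half--lines issued from $x$: for small $t>0$ the point $x-t\omega=(a-t\omega_1,a-t\omega_2)$ lies in $\{0\le y_2<y_1\}$ near the origin, while $x+t\omega$ lies in $\{0\le y_1<y_2\}$. By Proposition~\ref{lem3.1} these points are non--characteristic, so by Corollary~\ref{lemgrad2}(i) $T$ is differentiable at each of them; hence $s\mapsto T(x\mp s\omega)$ is $1$--Lipschitz on $[0,t]$, continuous up to $s=0$, and differentiable on $(0,t)$, and the fundamental theorem of calculus gives $T(x-t\omega)-T(x)=-\int_0^t\nabla T(x-s\omega)\cdot\omega\,ds$ and $T(x+t\omega)-T(x)=\int_0^t\nabla T(x+s\omega)\cdot\omega\,ds$. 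On the first half--line, Corollary~\ref{lemgrad2}(i) gives $\nabla T(x-s\omega)\cdot\omega=\bigl(-1+c_0(-\log(a-s\omega_1))^{-\gamma}\bigr)\omega_1+r_1(s)\omega_1+r_2(s)\omega_2$ with $|r_1(s)|\le\bar\tau(a)(-\log(a-s\omega_1))^{-\gamma}$ and $|r_2(s)|\le\bar\tau(a)(-\log(a-s\omega_1))^{-\gamma/2}$, $\bar\tau(a)\to0$; averaging over $s\in(0,t)$ and letting $t\to0^+$, the main term tends to $(-1+c_0l^{-\gamma})\omega_1$ while the remainder stays bounded by $C\bar\tau(a)\bigl(l^{-\gamma}|\omega_1|+l^{-\gamma/2}|\omega_2|\bigr)$ uniformly in $t$ and in $\omega$. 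Therefore the upper and the lower limits of $(T(x-t\omega)-T(x))/(-t)$ as $t\to0^+$ both equal $(-1+c_0l^{-\gamma}+o(l^{-\gamma}))\omega_1+o(l^{-\gamma/2})\omega_2$, which is the asserted value of $\partial_{\omega,r,\pm}T(x)$ (the half--line in direction $-\omega$ being the one running into $\{y_1>y_2\}$). On the second half--line one first uses the symmetry $T(y_1,y_2)=T(y_2,y_1)$ to rewrite $T(x+s\omega)=T(a+s\omega_2,a+s\omega_1)$, a point of $\{y_1>y_2\}$, so that $\omega_1$ and $\omega_2$ exchange roles; the same computation then produces $\partial_{\omega,l,\pm}T(x)=(-1+c_0l^{-\gamma}+o(l^{-\gamma}))\omega_2+o(l^{-\gamma/2})\omega_1$, which finishes the proof.

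\medskip

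The one place that needs care is part~(ii): one must propagate the moduli $\bar\varepsilon,\bar\tau$ so that the two remainders keep the precise form $o(l^{-\gamma})\omega_i$ and $o(l^{-\gamma/2})\omega_j$ attached to the correct coordinate (dictated by which axis is ``dominant'' on each side of the bisectrix), and so that the bound is uniform in $t$ --- which is exactly why one integrates the gradient estimate (i) along the half--line rather than differentiating the value estimate (iii), whose remainder does not behave well after division by $t$. The existence of the one--sided upper and lower derivatives is automatic since $T$ is Lipschitz; the substance is that they all coincide with the displayed expression up to the indicated $o(\cdot)$.
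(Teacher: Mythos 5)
Your proof is correct and follows essentially the same route as the paper: part (i) is obtained by letting off-bisectrix points tend to $x$ and using the continuity of $T$ together with item (iii) of Corollary \ref{lemgrad2}, and part (ii) by writing $T(x+t\omega)-T(x)$ as the integral of $\nabla T$ along the ray (justified by the $C^1$ regularity off the bisectrix and continuity at the endpoint) and then inserting the gradient expansion, with the symmetry $T(y_1,y_2)=T(y_2,y_1)$ handling the other side. Your extra care about the uniformity of the $o(\cdot)$ moduli and about integrating the gradient estimate rather than differentiating the value estimate is exactly the right way to make the paper's brief argument rigorous.
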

\begin{proof}$ $\\
(i) This follows from item (i) in Corollary \ref{lemgrad2} by continuity of $T$.\\
(ii) Consider $\omega\in \m R^2$ such that $|\omega|=1$ and $\omega_2 - \omega_1>0$. From symmetry, It is enough to concentrate on the right directional derivative of $T$ along $\omega$.\\
Since $T$ is $C^1$ outside the bisectrice, for any $0<t'<t$ small enough, we write
\begin{equation}\label{taylorT}
T(x+t\omega) = T(x+t'\omega) + \int_{t'}^t \nabla T(x+\tau \omega)\cdot\omega d\tau.  
\end{equation}
From the expansion in item (iii) of Corollary \ref{lemgrad2}, we see that $\tau \mapsto \nabla T(x+\tau \omega)\cdot\omega$ is integrable on $(0,t)$. From continuity of $T$, we see that \eqref{taylorT} holds also with $t'=0$. Using again item (iii) of Corollary \ref{lemgrad2}, we find the expressions for the upper and lower left derivatives along $\omega$.
This concludes the proof of Lemma \ref{lembi}.
\end{proof}
Now, we prove that all points on the bisectrices (outside the origin) in a small neighborhood of the origin, are non-characteristic:
\begin{lem} \label{lem3.24} All points on the bisectrices in a small neighborhood of the origin (and outside the origin) are non-characteristic.
\end{lem}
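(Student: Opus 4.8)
The plan is to show \emph{directly} that every such point $x_0$ satisfies the non-characteristic condition \eqref{nonchar}, by producing an explicit truncated cone sitting below $\Gamma$ with vertex at $(x_0,T(x_0))$. The key point is that, although $T$ is not differentiable on the bisectrices, the gradient estimate of Corollary \ref{lemgrad2}(i) (pushed around by the symmetries of $T$ with respect to the axes and the bisectrices) shows that $|\nabla T|$ is \emph{strictly} below $1$ at every point close to the origin that lies off the bisectrices. Combined with the continuity of $T$ (it is $1$-Lipschitz by finite speed of propagation), this forces $\Gamma$ to stay, near $x_0$, above a cone of aperture strictly smaller than the light cone, and hence $x_0\in\RR$.

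Concretely, I would proceed as follows. By the symmetries of the solution it is enough to treat a point $x_0=(a,a)$ with $a>0$ small; write $l=-\log a$ and $\gamma=\frac{p-1}{2}$. From Corollary \ref{lemgrad2}(i) and the symmetries, for $x$ small outside the bisectrices one has $\nabla T(x)=-\sgn(x_i)\bigl(1-c_0 l_x^{-\gamma}+o(l_x^{-\gamma})\bigr)e_i+o(l_x^{-\gamma/2})e_{3-i}$, where $i\in\{1,2\}$ is the index of the largest coordinate of $x$ and $l_x=-\log|x_i|$; in particular $|\nabla T(x)|^2=1-2c_0 l_x^{-\gamma}+o(l_x^{-\gamma})<1$ once $x$ is small enough. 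Fix then $r_0>0$ so small that $B(x_0,r_0)$ stays in the region where this estimate applies, avoids the coordinate axes and the bisectrix $\{x_1=-x_2\}$, the little-$o$ terms are controlled throughout $B(x_0,r_0)$ (so that $|\nabla T(y)|\le\delta_0:=\sqrt{1-c_0 l^{-\gamma}}\in(0,1)$ for every $y\in B(x_0,r_0)$ off the bisectrix $\{x_1=x_2\}$), and $\delta_0 r_0<1+T(x_0)$ (automatic for $a$ and $r_0$ small, since $T(x_0)\ge-\sqrt2\,a$ by Proposition \ref{prop1}).

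Next I would propagate the Lipschitz bound to the whole ball. For $y\in B(x_0,r_0)$ off the bisectrix, the segment $[x_0,y]$ meets the line $\{x_1=x_2\}$ only at $x_0$, so $T$ is $C^1$ with gradient of norm $\le\delta_0$ along the relative interior of that segment and continuous at $x_0$; integrating gives $|T(y)-T(x_0)|\le\delta_0|y-x_0|$. Since such $y$ are dense in $B(x_0,r_0)$ and $T$ is continuous, we obtain $T(y)\ge T(x_0)-\delta_0|y-x_0|$ for \emph{every} $y\in B(x_0,r_0)$. Finally, set $t_0:=T(x_0)-\delta_0 r_0\in(-1,T(x_0))$: if $(y,t)\in\q C_{x_0,T(x_0),\delta_0}\cap\{t\ge t_0\}$ then $t<T(x_0)-\delta_0|y-x_0|$ forces $|y-x_0|<r_0$, whence $t<T(x_0)-\delta_0|y-x_0|\le T(y)$ and $t\ge t_0>-1$, so $(y,t)$ lies strictly below $\Gamma$ and above the initial slice $t=-1$, hence in the domain of definition of $u$. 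Thus $u$ is defined on $\q C_{x_0,T(x_0),\delta_0}\cap\{t\ge t_0\}$, which is exactly condition \eqref{nonchar}; therefore $x_0\in\RR$.

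The substance of this lemma is already contained in Proposition \ref{lem3.1} and Corollary \ref{lemgrad2}; what remains is the elementary geometry just described, whose only genuinely delicate point is that one cannot integrate $\nabla T$ along the bisectrix itself (where $T$ is merely $1$-Lipschitz, nothing better), so the bound $T(y)\ge T(x_0)-\delta_0|y-x_0|$ must first be established off the bisectrix and then extended by density and continuity. A secondary point needing care is the choice of $r_0$ uniformly in all directions, so that the $o$-terms of Corollary \ref{lemgrad2} are genuinely dominated by $c_0 l^{-\gamma}$ throughout $B(x_0,r_0)$ — this is where "a small neighborhood of the origin" enters, and why $x_0=0$ is excluded (there $|\nabla T|\to1$ as $x\to0$, and indeed $0\in\q S$).
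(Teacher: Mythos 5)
Your argument is correct, and it rests on the same substantive input as the paper's proof, namely the gradient estimate off the bisectrices (Corollary \ref{lemgrad2}, itself resting on Proposition \ref{lem3.1}); the difference is only in how the elementary geometry is packaged. The paper verifies the cone condition direction by direction: it passes through Lemma \ref{lembi}, using the one-sided directional derivatives off the bisectrix (bounded below by $-1+\tfrac{3c_0}{4}l^{-\gamma}$) and the fact that the restriction of $T$ to the bisectrix has slope close to $-\tfrac1{\sqrt2}$, and then checks that the cone $\q C_{x,T(x),1-\frac{c_0}2 l^{-\gamma}}$ lies below $\Gamma$ along every direction. You instead bypass Lemma \ref{lembi} entirely: you extract from Corollary \ref{lemgrad2} (spread around by the symmetries of $T$) a uniform bound $|\nabla T|\le\delta_0<1$ on a small ball off the bisectrix, propagate it by the mean value inequality along segments meeting the bisectrix only at $x_0$, extend to the whole ball by density and continuity of $T$, and then verify the definition \eqref{nonchar} directly with an explicitly truncated cone. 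Your route handles the along-bisectrix direction automatically (by continuity from off-bisectrix segments) instead of invoking the $-\tfrac1{\sqrt2}$ slope, and you are careful about the two points the paper leaves implicit: the uniformity of the $o$-terms over the ball (choosing $r_0\ll a$ so that $l_y^{-\gamma}$ is comparable to $l^{-\gamma}$) and the explicit choice of $t_0$ making the truncated cone sit inside the domain of $u$. Both proofs are equally valid; yours is marginally more self-contained at the level of this lemma, while the paper's reuses Lemma \ref{lembi}, which it needs anyway for the remark on one-sided derivatives in Theorem \ref{mainth}.
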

\begin{proof} Consider some small $x\neq 0$ such that $|x_1|=|x_2|$. From the symmetries of the solution, we may assume that $0<x_2=x_1$ with $x_1$ small. It is enough to show that the cone $\q C_{x,T(x), 1-\frac{c_0}2 l^{-\gamma}}$ is under 
$\Gamma$,
in any direction $\omega$ with $|\omega|=1$. From symmetry, we only consider the case where $\omega_2 - \omega_1\ge 0$.\\
If  $\omega_2=\omega_1$, from item (i) in Lemma \ref{lembi}, we see that $T(y)= - y_1+o(y_1 |\log y_1|^{-\frac\gamma 2})$ when $y_1=y_2$, therefore the restriction of $T$ to the bisectrix has a slope close to $-\frac 1{\sqrt 2}>-1$. Since the slope of the cone is $-1+\frac{c_0}2 l^{-\gamma}$, this means that 
$\Gamma$
is above the cone.\\
If $\omega_2>\omega_1$, using item (ii) of Lemma \ref{lembi}, we see that the slope of $T$ is bounded from below by $-1+\frac{3c_0}4 l^{-\gamma}$, which is larger than  $-1+\frac{c_0}2 l^{-\gamma}$, the slope of the cone. Therefore, 
$\Gamma$
is above the cone.\\
This concludes the proof of Lemma \ref{lem3.24}.
\end{proof}
\subsection{The origin is a characteristic point}
This is our statement in this section:
\begin{lem}\label{lem3.25}$ $\\ 
(i) At the origin, $T$  has a right derivative with respect to $x_1$ whose value is  $\partial_{x_1,r}T(0)=-1$, with similar statements from the left and in the direction $x_2$.\\
(ii) The origin is a characteristic point.
\end{lem}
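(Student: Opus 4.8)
\textbf{Proof proposal for Lemma \ref{lem3.25}.}

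The plan is to reduce everything to the information we already have about $T$ on the bisectrices (Lemma \ref{lembi}) and about $T$ outside the bisectrices (Corollary \ref{lemgrad2}), together with the $1$-Lipschitz property of $T$ and the fact that $T(0)=0$ (Proposition \ref{propyr}). First I would treat part (i). By symmetry it suffices to compute the right derivative of $T$ along $e_1$ at the origin. For $x_1>0$ small and $x_2=0$, item (iii) of Corollary \ref{lemgrad2} gives $T(x_1,0)=-x_1(1-c_0|\log x_1|^{-\gamma}+o(|\log x_1|^{-\gamma}))$, so dividing by $x_1$ and letting $x_1\to 0^+$ yields $\partial_{x_1,r}T(0)=-1$. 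The analogous statements from the left and along $e_2$ follow from the symmetries of $u$ (hence of $T$) with respect to the axes. This part is routine.

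For part (ii), the strategy is to show that for every $\delta_0\in(0,1)$ the solution is \emph{not} defined on any truncated cone $\q C_{0,T(0),\delta_0}\cap\{t\ge t_0\}$, which by the definition \eqref{nonchar}--\eqref{defcone} of a characteristic point means precisely $0\in\q S$. Equivalently, I would argue that the blow-up surface $\Gamma$ is \emph{not} above any cone $\q C_{0,0,1-\beta}$ with $\beta\in(0,1)$ near the origin. Fix such a $\beta$. Along the bisectrix direction $\omega=\frac{1}{\sqrt2}(e_1+e_2)$, item (i) of Lemma \ref{lembi} gives $T(t\omega)=-t/\sqrt2+o(t|\log t|^{-\gamma/2})$ as $t\to 0^+$; since the cone $\q C_{0,0,1-\beta}$ restricted to this direction has the curve $t\mapsto -t(1-\beta)$, and $-t(1-\beta)<-t/\sqrt2$ for $\beta$ small, the point $(t\omega,T(t\omega))$ lies \emph{above} that cone, so the cone is below $\Gamma$ in the bisectrix direction. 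However, along the axis direction $e_1$, part (i) gives $T(x_1,0)\sim -x_1$, so $T(x_1,0)<-x_1(1-\beta)$ for small $x_1$: the point $(x_1 e_1,T(x_1,0))$ lies \emph{strictly below} the cone $\q C_{0,0,1-\beta}$. Hence no matter how small $\beta>0$ is, $\Gamma$ fails to lie above $\q C_{0,0,1-\beta}$ near $0$; since this holds for all $\beta\in(0,1)$, i.e.\ for all $\delta_0=1-\beta\in(0,1)$, the origin is characteristic. (The precise bookkeeping: if $0$ were non-characteristic, \eqref{nonchar} would provide $\delta_0\in(0,1)$ and $t_0<0$ with $u$ defined on $\q C_{0,0,\delta_0}\cap\{t\ge t_0\}$; by finite speed of propagation this forces $T(x)\ge -\delta_0^{-1}|x|$ for $x$ small, contradicting $T(x_1,0)\sim -x_1<-\delta_0^{-1}x_1$ for $x_1$ small since $\delta_0<1$.)

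The part I expect to require the most care is making rigorous the assertion that $T(x_1,0)\sim -x_1$ actually contradicts non-characteristicity: one must correctly translate the geometric definition \eqref{nonchar} of a non-characteristic point into the lower bound $T(x)\ge T(0)-\delta_0^{-1}|x-0|$ valid on a neighborhood (this is standard and follows from the domain of influence argument, as in Alinhac \cite{Apndeta95}), and then observe that $\delta_0<1$ makes the slope $-\delta_0^{-1}$ of the admissible cone \emph{steeper} than $-1$, while $T(x_1,0)/x_1\to -1$ shows $\Gamma$ has slope exactly $-1$ at the origin along $e_1$, which is shallower — so the surface dips below any admissible cone. I would also remark that this only uses part (i) together with the sign $-1>-\delta_0^{-1}$, so the bisectrix computation above is in fact a supplementary geometric picture rather than strictly necessary; the cleanest write-up uses only the right/left derivatives $\partial_{x_1,r}T(0)=\partial_{x_1,l}T(0)=-1$ from part (i). This concludes the proof of Lemma \ref{lem3.25}.
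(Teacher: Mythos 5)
Your overall route is exactly the paper's: the paper disposes of this lemma in one line, deducing (i) from the expansion of $T$ outside the bisectrices in Corollary \ref{lemgrad2} and (ii) directly from (i), and your main argument (part (i) from item (iii) of Corollary \ref{lemgrad2}, then part (ii) by showing $\Gamma$ dips strictly below every cone $\q C_{0,0,1-\beta}$ along the $e_1$-axis, since $T(x_1,0)=-x_1+o(x_1)<-x_1(1-\beta)$) is correct and is the intended proof; the bisectrix computation is, as you say yourself, decorative.

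There is, however, a concrete slip in your ``precise bookkeeping'' and in the final paragraph: by the definition \eqref{defcone}, the cone $\q C_{0,T(0),\delta_0}$ is $\{t<-\delta_0|x|\}$, so its slope is $-\delta_0$ with $\delta_0\in(0,1)$, i.e.\ \emph{shallower} than the light cone, not $-\delta_0^{-1}$ and not steeper. Non-characteristicity of the origin would therefore give $T(x)\ge -\delta_0|x|$ near $0$, and the contradiction with part (i) is $-x_1+o(x_1)<-\delta_0 x_1$ because $\delta_0<1$. As written, your claimed inequality $-x_1<-\delta_0^{-1}x_1$ is false (since $\delta_0^{-1}>1$), and the ``admissible cone is steeper than $-1$, the surface is shallower, so the surface dips below the cone'' picture is internally inconsistent: a shallower graph cannot lie below a steeper cone through the same vertex. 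The correct picture is the one in your main argument with $1-\beta$: the admissible cones are the shallow ones, $T$ has slope exactly $-1$ along $e_1$ at the origin, hence falls below each of them, so $0\in\SS$. Replace $\delta_0^{-1}$ by $\delta_0$ in those two passages (or simply delete them, since the $1-\beta$ argument already does the job) and the proof is complete; note also that by the symmetry $T(-x_1,0)=T(x_1,0)$ the ``similar statement from the left'' is $\partial_{x_1,l}T(0)=+1$, which is worth stating explicitly.
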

\begin{proof} Clearly, (ii) is a consequence of (i), and (i) is a consequence of item (ii) in Corollary \ref{lemgrad2}.
\end{proof}

\subsection{Conclusion of the proof of Theorem \ref{mainth}}
\label{subconcl}

In this section, we collect all the previous estimates to finish the proof of Theorem \ref{mainth}. 

\medskip

We first start with the proof of Theorem \ref{mainth}.

\begin{proof}[Proof of Theorem \ref{mainth}]
(A) Using Proposition \ref{propyr} and the invariance by time translation of equation \eqref{equ}, we have a solution $u(x,t)$ to the Cauchy problem, symmetric with respect to the axes and anti-symmetric with respect to the bisectrices, which blows up on a 
surface $\Gamma =\{(x,T(x))\}$
satisfying $T(0)>0$.\\
From Proposition \ref{lem3.1}, Lemmas \ref{lem3.24} and \ref{lem3.25}, we see that the origin is isolated characteristic point. This finishes the proof of item (A) in Theorem \ref{mainth}.\\
(B) and the fifth remark following the statement of Theorem \ref{mainth}:\\ 
Outside the bisectrix $\{ x_1=x_2\}$, this comes from items (i) and (iii) in Corollary \ref{lemgrad2} ; using the continuity of $T$, this extends to the bisectrix.\\
As for the fifth remark following the statement of Thoerem \ref{mainth}, see item (ii) in Lemma \ref{lembi} and  item (i) in Lemma \ref{lem3.25}.
This concludes the proof of item (B) in Theorem \ref{mainth}.\\
(C) In the following, we give indications on how to derive the proofs:\\
(i) This is a consequence of Proposition \ref{propyr}.\\
(ii) Given Part (B), this is a consequence of items (ii) and (i) in Lemma \ref{lemgrad}.\\
(iii) Given Part (B), this is a consequence of Lemma \ref{lembehbis}.
This concludes the proof of Theorem \ref{mainth}.
\end{proof}

\appendix

\section{Details for the dynamics of the equation near 4 solitons}
\label{appdyn}

This section is devoted to the proof of Lemma \ref{propdyn}.
Since the proof is just a two-dimensional version of our work in one dimension with multi-solitons in \cite{MZajm12}, \cite{MZdmj12} and \cite{CZcpam13}, we only focus of the truly two-dimensional
 estimates, and refer the reader to our earlier work for more classical terms. 

\begin{proof}[Proof of Lemma \ref{propdyn}] Using the definition \eqref{defq} of $q$, we transform equation \eqref{eqw} satisfied by $w$ into the following system satisfied by $q$, for all $s\in [s_0, \bar s)$:
\begin{align}
\partial_s q 
& = {\hat L} (q)
+ \vc{0}{f(q_1)}+\vc{0}{R}\label{eqq*}\\
&-(\nu'(s)-\nu(s))\sum_\ind (-1)^{i+1}\pnu\kappa^*(\theta d(s)e_i,\nu(s),y)\nonumber\\
&-d'(s) \sum_\ind (-1)^{i+1}\theta\partial_{{\bs d}_i}\kappa^*(\theta d(s)e_i,\nu(s),y) \nonumber 
\end{align}
where
\begin{align}
{\hat L}(q)
 & = \vc{q_2}{\q L q_1+\psi q_1-\frac{p+3}{p-1}q_2-2y\py q_2},
\psi(y,s) = p|K^*_1(y,s)|^{p-1} -\frac{2(p+1)}{(p-1)^2},\nonumber \\
K^*_1(y,s)& = \sum_\ind  (-1)^{i+1}\kappa^*_1(\theta d(s)e_i,\nu(s),y),\label{defk*1}\\
f(q_1) & = |K^*_1+q_1|^{p-1}(K^*_1+q_1)- |K^*_1|^{p-1}K^*_1- p|K^*_1|^{p-1} q_1,\nonumber \\
R & = |K^*_1|^{p-1}K^*_1- \sum_\ind (-1)^{i+1}\kappa^*_1(\theta d(s)e_i,\nu(s),y)^p\nonumber
\end{align}
and the sum $\sum_\ind$ runs for $i=1,2$ and $\theta=\pm 1$, here and in the following.
\begin{nb} In order to use our estimates established in \cite{MZtams14} in the case of one soliton in higher dimensions, we decompose the linear term ${\hat L} (q)$ as follows:
\[
{\hat L} (q(s)) = \bar L (q(s)) + \vc{0}{\bar V(y,s)q_1} 
\]
where 
\begin{align}
\bar L(q)& = \vc{q_2}{\q L q_1+\bar \psi q_1-\frac{p+3}{p-1}q_2-2y\py q_2},\nonumber\\
\bar \psi(y,s)& = p\kappa^*_1(d(s) e_1, \nu(s),y)^{p-1} -\frac{2(p+1)}{(p-1)^2}, \nonumber\\
\bar V(y,s) &= p|K^*_1(y,s)|^{p-1}- p\kappa^*_1(d(s) e_1, \nu(s),y)^{p-1}.\label{defbv}
\end{align}
\end{nb}

\medskip

We proceed in two parts:

- In Part 1, we project equation \eqref{eqq*} with the projector $\pp_l(d^*(s) e_1)$ defined in \eqref{defpdi} with $l=0,1$ and $d^*(s) = \frac{d(s)}{1+\nu(s)}$, in order to prove \eqref{est:nu} and \eqref{est:zeta}.

- In Part 2, we will find a Lyapunov functional for equation \eqref{eqq*}, which is equivalent to the norm squared, deriving estimate \eqref{est:q}.

\bigskip

{\bf Part 1: Projection of equation \eqref{eqq*} with the projector $\pp_l(d^*(s) e_1)$}

Let us assume that $s_0\ge 1$ and take $s\in [s_0, \bar s)$.\\
In this part, 
we will project each term of equation \eqref{eqq*} with the projector $\pp_l(d^*(s) e_1)$ defined in \eqref{defpdi} with $l=0,1$ and $d^*(s) = \frac{d(s)}{1+\nu(s)}$, ending by deriving equations \eqref{est:nu} and \eqref{est:zeta}.\\
Using the modulation effect \eqref{mod} and the bounds \eqref{conmod}, arguing as in \cite{MZdmj12}, \cite{CZcpam13} and \cite{MZtams14} and using the relation
\begin{equation}\label{defnhz1}
\|V\|_{\H}^2 =C(N) \int_{-1}^1\left((\bar V_1(z_1))^2 +(\partial_{z_1}\bar V(z_1))^2(1-z_1^2)+(\bar V_1(z_1))^2\right)(1-z_1^2)^{\frac 2{p-1}} dz_1
\end{equation}
for some $C(N)>0$, between the multi-dimensional and the one-dimensional expressions for the norm, we get the following estimates for the projections of what we called ``traditional'' terms of \eqref{eqq*}:
\begin{align}
&|\pp_l(d^*(s) e_1, \partial_s q(s))|\le C \frac{|{d^*}'(s)|}{1-|d^*(s)|^2}\|q(s)\|_{\q H}\nonumber\\
&\le C\frac{\|q(s)\|_{\q H}}{1-|d(s)|^2}(|d'(s)|+|\nu'(s) - \nu(s)|+|\nu(s)|),\nonumber\\
&|\pp_l(d^*(s) e_1, \bar L(q(s)))|\le C \frac{|\nu(s)|}{1-|d(s)|^2}\|q(s)\|_{\q H}.\nonumber\\
&\pp_0(d^*(s) e_1, \partial_\nu \kappa^*(d(s) e_1,\nu(s)))=0,\nonumber\\
&-C\le (1-|d(s)|^2)\pp_1(d^*(s) e_1, \partial_\nu \kappa^*(d(s) e_1,\nu(s)))\le - \frac 1{C},\nonumber\\
&|\pp_1(d^*(s) e_1, \partial_d \kappa^*(d(s) e_1,\nu(s)))|\le \frac {C}{1-|d(s)|^2}\label{p1dnk},\\
&\left|\pp_0(d^*(s) e_1, \partial_d \kappa^*(d(s) e_1,\nu(s)))+\frac{c_3}{1-|d(s)|^2}\right|\le \frac{c|\nu(s)|}{(1-|d(s)|^2)^2}\mbox{ for some }c_3(p)>0,\label{p0ddk}\\
&|\pp_l(d^*(s) e_1, \partial_\nu \kappa^*(- d(s) e_1,\nu(s)))|
+|\pp_l(d^*(s) e_1, \partial_d \kappa^*(- d(s) e_1,\nu(s)))|\le \frac{Ce^{-\frac 2{p-1}\zeta(s)}}{1-|d(s)|^2},\nonumber\\
&|\pp_l(d^*(s) e_1, \partial_\nu \kappa^*(\pm d(s) e_2,\nu(s),y))|
+|\pp_l(d^*(s) e_1, \partial_d \kappa^*(\pm d(s) e_2,\nu(s),y))|\nonumber\\
& \le \frac C{1-|d(s)|^2}\iint\kappa(d^*(s)e_1,y)\kappa(d^*(s)e_2,y)\frac{\rho(y)}{1-|y|^2}dy.\label{petit}
\end{align}
Now, we focus on the ``new'' terms of equation \eqref{eqq*}, which were never encountered in our previous papers, and need henceforth some delicate treatment. Note first from \eqref{conmod} that taking $s_0$ large enough, we get
\begin{align}
|d(s)+1|+|d^*(s)+1|&\le Ce^{-2 \zeta(s)},\; 
\frac{e^{-2 \zeta(s)}}C\le 1-|d^*(s)|^2\le C(1-|d(s)|^2)\le C e^{-2 \zeta(s)},\nonumber\\
\left|e^{2(\zeta(s)-\zeta^*(s))}-1\right|&\le Ce^{-2\zeta(s)}+C\frac{|\nu(s)|}{1-|d(s)|},\label{conseq}
\end{align}
where $\zeta^*(s) = -\arg \tanh d^*(s)$.

\bigskip

{\it - Estimate of $\pp_l(d^*(s) e_1, (0,\bar V(y,s)q_1))$:}

Introducing for $i=1,2$ and $\theta = \pm 1$, 
\begin{equation}\label{defqit}
Q_{i,\theta} = \{|y|<1\;|\; \theta y_i \ge |y_{3-i}|\},
\end{equation}
we see that we have a partition of the unit ball into four quarters with boundaries given by the bisectrices. Since $d(s)<0$ from \eqref{conseq}, it follows that for all $y \in Q_{i,\theta}$ and $(\eta, j) \neq (\theta,i)$,
\begin{equation}\label{domin}
C\kappa(\theta d^*(s) e_i,y)\ge C(1-|d^*(s)|^2)^{\frac 1{p-1}}\ge C \kappa(\eta d^*(s) e_j, \nu,y).
\end{equation}
Therefore, using item (iii) of Lemma \ref{lemkd} and the definition \eqref{defbv} of $\bar V(y,s)$, one can check from elementary expansions that
\begin{align}
|\bar V(y,s)|
&\le C \sum_{(i,\theta) \neq (1,1)} \kappa^*_1(d e_1, \nu,y)^{p-2}
\kappa^*_1(\theta d e_i,\nu,y) 1_{Q_{1,1}}
+ \kappa^*_1(\theta d e_i,\nu,y)^{p-1}1_{Q_{i,\theta}}\nonumber\\
&\le C \sum_{(i,\theta) \neq (1,1)} \kappa(d^* e_1,y)^{p-2}
\kappa(\theta d^* e_i,y) 1_{Q_{1,1}}
+ \kappa(\theta d^* e_i,y)^{p-1}1_{Q_{i,\theta}}\label{boundbv}
\end{align}
Using item (ii) of Lemma \ref{lemkd}, 
we write from the definition \eqref{defpdi} of $\pp_l(d^* e_1)$ and the Cauchy-Schwarz inequality 
\begin{align}
|\pp_l(d^*(s) e_1, (0,\bar V(y,s)q_1))|&\le C\iint \kappa(d^*(s) e_1,y) |\bar V(y,s)||q_1(y,s)|\rho(y) dy \nonumber\\
&\le C\|q_1\|_{L^2_{\frac \rho{1-|y|^2}}}\|\kappa(d^*(s) e_1,y) \bar V(y,s)\|_{L^2_{\rho(1-|y|^2)}}.\label{5a}
\end{align} 
Using \eqref{boundbv}, the symmetries of the solution, \eqref{domin} and the computation table given in Lemma \ref{cltech0}, we see that
\begin{align*}
&\|\kappa(d^*(s) e_1,y) \bar V(y,s)\|_{L^2_{\rho(1-|y|^2)}}^2\\
&\le C\int_{Q_{1,1}}\kappa(d^* e_1,y)^{2(p-1)}(\kappa(-d^* e_1,y)^{2}+\kappa(d^* e_2,y)^{2})\rho(1-|y|^2)dy\\
&\le C (1-|d^*|^2)^{\frac 2{p-1}}\iint \kappa(d^* e_1,y)^{2(p-1)} \rho (1-|y|^2) dy
\le
C (1-|d^*|^2)^{\frac {2\bar p}{p-1}}.
\end{align*}
Using \eqref{conseq}, it follows from \eqref{5a} and Lemma \ref{lemhs} that 
\[
|\pp_l(d^*(s) e_1, (0,\bar V(y,s)q_1))|\le C\|q\|_{\q H}e^{-\frac {2\bar p\zeta}{p-1}}\le C\|q\|_{\q H}^2+Ce^{-\frac {4\bar p\zeta}{p-1}}.
\]

\medskip

{\it - Estimate of $\pp_l(d^*(s) e_1, (0,f(q_1)))$:}

Note first that we have
\begin{equation}\label{boundfq1}
|f(q_1)|\le C \delta_{\{p\ge 2\}}|q_1(y,s)|^p
+
C |K^*_1(y,s)|^{p-2}|q_1(y,s)|^2.
\end{equation}
Using item (ii) of Lemma \ref{lemkd},
 we see that
\begin{align}
&|\pp_l(d^*(s) e_1, (0,f(q_1)))|\le 
C\iint \kappa(d^*(s) e_1,y)|f(q_1)| \rho dy\label{principal}\\
\le & C\delta_{\{p\ge 2\}} \iint \kappa(d^*(s) e_1,y)|q_1|^p \rho dy
+
C \iint \kappa(d^*(s) e_1,y) |K^*_1(y,s)|^{p-2}|q_1|^2\rho dy.\nonumber
 \end{align}
If $p\ge 2$, using item (i) of Lemma \ref{lemkd},
the H\"older inequality, the Hardy-Sobolev inequality of Lemma \ref{lemhs} and the last bound in \eqref{conmod}, we write
\begin{align}
\iint \kappa(d^*(s) e_1,y)|q_1|^p \rho dy
&\le \left(\iint  \kappa(d^*(s) e_1,y)^{p+1}\rho dy\right)^{\frac 1{p+1}}
 \left(\iint |q_1|^{p+1}\rho dy\right)^{\frac p{p+1}}\nonumber\\
&\le C\|\kappa(d^* e_1)\|_{\q H_0}\|q\|_{\q H}^p\le C\|q\|_{\q H}^p\le C\|q\|_{\q H}^2.\label{a912}
\end{align}
Regarding the second integral, if $p\ge 2$, then we write from the definition \eqref{defk*1} of $K^*_1$ and item (ii) of Lemma \ref{lemkd}
 $\kappa(d^*(s) e_1,y) |K^*_1(y,s)|^{p-2}\le C(1-|y|^2)^{-1}$. Using Lemma \ref{lemhs}, we obtain
\begin{equation}
\iint  \kappa(d^*(s) e_1,y) |K^*_1(y,s)|^{p-2}|q_1|^2\rho dy 
\le C \iint |q_1|^2\frac{\rho}{1-|y|^2} dy \le C \|q\|_{\q H}^2. \label{b912}
\end{equation}
Now, if $p<2$, we write
\begin{equation}\label{i1i2}
\iint  \kappa(d^*(s) e_1,y) |K^*_1(y,s)|^{p-2}|q_1|^2\rho dy = I_1+I_2
\end{equation}
where $I_i=\ds \int_{D_i}  \kappa(d^*(s) e_1,y) |K^*_1(y,s)|^{p-2}|q_1|^2\rho dy$ with 
\[
D_1 = \{\epsilon_0\kappa(d^*(s) e_1,y)\le |K^*_1(y,s)|\} \mbox{ and }
D_2 = \{\epsilon_0\kappa(d^*(s) e_1,y)> |K^*_1(y,s)|\},
\]
where $\epsilon_0>0$ will be fixed small enough.\\
As for $I_1$, using item (ii) of Lemma \ref{lemkd} and the Hardy-Sobolev estimate of Lemma \ref{lemhs}, we write
\begin{equation}\label{esti1}
I_1\le \iint \kappa(d^*(s) e_1,y)^{p-1}|q_1|^2\rho dy \le \frac C{\epsilon_0^{2-p}}\iint |q_1|^2\frac{\rho}{1-|y|^2} dy \le \frac C{\epsilon_0^{2-p}}\|q\|_{\q H}^2.
\end{equation}
As for $I_2$, 
note first from item (iii) in Lemma \ref{lemkd} and \eqref{conseq} that 
we may take $s_0$ large enough and $\epsilon_0$ small enough so that
\begin{equation}\label{defd3}
D_2 \subset \bar D(\eta)\equiv\{|y_1^2-y_2^2|\le \eta\}
\end{equation}
(a quick way to justify this, is to see that the set
$D_2$
approaches the zero set of $K^*(y,s)$, namely the bisectrices $\{y_1=\pm y_2\}$). Therefore, 
taking some $\gamma>\frac 2{p-1}>2$, using the H\"older inequality and the Hardy-Sobolev estimate of Lemma \ref{lemhs}, we write
\begin{align}
I_2 & \le \int_{D_2} \kappa(d^*(s) e_1,y) |K^*_1(y,s)|^{-(2-p)}(1-|y|^2)^{\frac 1\gamma - \frac 12}|q_1|^2(1-|y|^2)^{\frac 2{p-1} - \frac 1\gamma}dy\nonumber\\ 
&\le \bar A \left(\iint |q_1|^\gamma(1-|y|^2)^{\frac \gamma{p-1}-\frac 12} dy \right)^{\frac 2\gamma}\le C\bar A \|q(s)\|_{\q H}^2,\label{boundi2}
\end{align}
where 
\begin{align}
\bar A^{\frac \gamma{\gamma-2}} &=\ds \int_{D_2} \kappa(d^*(s)e_1,y)^{\frac \gamma{\gamma-2}} |K^*_1(y,s)|^{-\frac{\gamma(2-p)}{\gamma-2}}(1-|y|^2)^{-\frac 12}dy
\nonumber\\
&=\lambda(d(s),\nu(s)) ^{-\frac{\gamma(2-p)}{\gamma-2}} (1-|d^*(s)|^2)^{\frac \gamma{\gamma-2}}\bar B(\gamma, \eta, d^*(s)) \label{bounda3}
\end{align}
from item (iii) of Lemma \ref{lemkd}, with
\begin{equation}\label{defb3}
\bar B (\gamma, \eta, \delta) =
 \kappa_0^{\frac{\gamma(p-1)}{\gamma-2}}\ds \int_{\bar D(\eta)}\frac{|\bar K(\delta y)|^{-\frac{\gamma(2-p)}{\gamma-2}}}{(1-\delta y_1)^{\frac 2{p-1}}}(1-|y|^2)^{-\frac 12}dy
\mbox{ and }
\bar K(y) =\sum_\ind\frac{(-1)^{i+1}}{(1+\theta y_i)^{\frac 2{p-1}}}.
\end{equation}
Since item (iii) of Lemma \ref{cltech0} implies that $\bar B\le C$ for $s_0$ large enough and $\eta>0$ small enough, 
 using \eqref{boundi2}, \eqref{bounda3}, item (iii) of Lemma \ref{lemkd} and \eqref{conseq}, 
we see that
\[
I_2 \le C e^{-\frac{2\gamma}{\gamma-2}\zeta(s)}\|q(s)\|_{\q H}^2.
\]
Using \eqref{esti1}, \eqref{a912} and \eqref{principal}, we see that 
\begin{align}
&|\pp_l(d^*(s) e_1, (0,f(q_1)))|\le 
C\iint \kappa(d^*(s) e_1,y)|f(q_1)| \rho dy\nonumber\\
\le & C\delta_{\{p\ge 2\}} \iint \kappa(d^*(s) e_1,y)|q_1|^p \rho dy
+
C \iint \kappa(d^*(s) e_1,y) |K^*_1(y,s)|^{p-2}|q_1|^2\rho dy\nonumber\\
\le& C\|q(s)\|_{\q H}^2.\label{estnl}
 \end{align}

\medskip

{\it - Estimate of $\pp_l(d^*(s) e_1, (0,R))$:}
Writing the unit disc as the union of the four quarters defined in \eqref{defqit}, we obtain the following expansion for $R(y,s)$:
\begin{equation}\label{estR}
|R(y,s)-p\kappa^*_1(d(s)e_1,\nu(s),y)^{p-1}\sum_{(i,\theta)\neq (1,1)}(-1)^{i+1} \kappa^*_1(\theta d(s)e_i, \nu(s), y)|\le C G(y,s)
\end{equation}
where we write from item (iii) of Lemma \ref{lemkd}
\begin{align*}
G(y,s) &=
 \sum_\ind \kappa(\theta d^*(s)e_i,y)^p(1-1_{Q_{i,\theta}})\\
&+\kappa(d^*(s)e_1,y)^{p-1}(1-1_{Q_{1,1}})\sum_{(i,\theta) \neq (1,1)}\kappa(\theta d^*(s)e_i,  y)\\
&+\sum_{(i,\theta) \neq (1,1)}1_{Q_{i,\theta}}\kappa(\theta d^*(s) e_i,y)^{p-1}\sum_{(j,\eta)\neq (i,\theta)}\kappa(\eta d^*(s) e_j,y)\\
& + \kappa(d^*(s)e_1,y)^{p-2}1_{Q_{1,1}}\sum_{(i,\theta) \neq (1,1)}\kappa(\theta d^*(s) e_i,y)^2.
\end{align*}
Therefore, using the definition \eqref{defpdi} of $\pp_l(d^*(s)e_1)$ and item (ii) of Lemma \ref{lemkd}, we see that
\begin{equation}\label{mer1}
\left|\pp_l(d^*(s)e_1, R(s))- \sum_{(i,\theta) \neq (1,1)} (-1)^{i+1} R_{l,i,\theta}(s)\right|
\le C \iint \kappa(d^*(s) e_1, y) G(y,s) \rho dy,
\end{equation}
where 
\begin{equation}\label{defRlit}
 R_{l,i,\theta}(s) = p\iint W_{l,2}(d^*(s) e_1,y) \kappa^*_1(d(s)e_1,\nu,y)^{p-1}\kappa^*_1(\theta d(s)e_i, \nu(s), y) \rho dy.
\end{equation}
Using \eqref{domin} and the integral table of Lemma \ref{cltech0}, it is  straightforward to see that
\begin{equation}\label{mer2}
\iint \kappa(d^*(s) e_1, y) G(y,s) \rho dy \le Ce^{-\left(\frac 4{p-1}+\epsilon_0\right)\zeta(s)},
\end{equation}
for some $\epsilon_0>0$.\\
Similarly, when $l=1$, decomposing the unit disc into the 4 quarters $Q_{i,\theta}$, we derive that
\begin{equation}\label{mer3}
|R_{1,i,\theta}(s)|\le Ce^{-\frac 4{p-1} \zeta(s)}\mbox{ for all }(i,\theta) \neq (1,1),
\end{equation}
therefore,
\[
\left|\pp_1(d^*(s)e_1, R(s))\right|\le Ce^{-\frac 4{p-1} \zeta(s)}.
\]
When $l=0$ and $(i,\theta)\neq (1,1)$, using the definition \eqref{defWl2} of $W_{l,2}(d^*(s)e_1)$ and the relation in item (iii) of Lemma \ref{lemkd}, we see that
\begin{equation}\label{deftrit}
R_{0,i,\theta}(s)= \frac{pc_0}{\kappa_0}\lambda(d^*(s),\nu(s))^p\bar R_{i,\theta}(d^*(s))
\mbox{ where }
\bar R_{i,\theta}(\delta) = \iint\kappa(\delta e_1,y)^p\kappa(\theta \delta e_i, y) \rho dy.
\end{equation}
Using item (ii) of Lemma \ref{cltech0}, item (iii) of Lemma \ref{lemkd} and \eqref{conseq}, we see from \eqref{deftrit}, \eqref{mer1} and \eqref{mer2} that
\begin{equation}\label{p0r}
\left|e^{\frac 4{p-1} \zeta(s)}\pp_0(d^*e_1, R(s))-c_2(p)\right|\le  C\left(e^{-\epsilon_2\zeta(s)}+\frac {|\nu(s)|}{1-|d(s)|^2}\right),
\end{equation}
for some $\epsilon_2>0$, where $c_2(p)= \frac{pc_0\barcc}{\kappa_0}(2-2^{-\frac 2{p-1}})>0$, and the constants $c_0$ and $\barcc$ are defined in \eqref{defWl2} and item (ii) of Lemma \ref{cltech0}.

\bigskip

{\it - Estimate of $\pp_l(d^*(s) e_1,\partial_\nu \kappa^*(\pm  d(s)e_2,\nu(s)))$ and $\pp_l(d^*(s) e_1,\partial_d \kappa^*(\pm  d(s)e_2,\nu(s)))$  }:

Estimating the right-hand side of estimate \eqref{petit}, by considering the four quarters \eqref{defqit}, then evaluating the integral using the integral computation table given in item (i) of Lemma \ref{cltech0}, then using \eqref{conseq}, we see that 
\begin{align*}
|\pp_l(d^*(s) e_1, \partial_\nu \kappa^*(\pm d(s) e_2,\nu(s),y))|
+|\pp_l(d^*(s) e_1, \partial_d \kappa^*(\pm d(s) e_2,&\nu(s),y))|\\
&\le C \frac{\zeta(s) e^{-\frac 4{p-1} \zeta(s)}}{1-|d(s)|^2}.
\end{align*}

\medskip

{\it - Conclusion: Differential inequalities for $\nu(s)$ and $\zeta(s)$}:

In the previous pages, we projected all the terms of equation \eqref{eqq*} with the projector $\pp_l(d^*(s) e_1)$ where $l=0,1$. 
Putting on the left-hand side the main terms (namely \eqref{p0ddk} and \eqref{p0r} when $l=0$, then \eqref{p1dnk} when $l=1$), and bearing in mind that
\[
\zeta(s) = -\arg \tanh d(s),\mbox{ hence }
\zeta'(s) =-\frac{d'(s)}{1-|d(s)|^2},
\]
we obtain the following inequalities:
\begin{align*}
\left|c_3 \zeta'(s) -c_2 e^{-\frac 4{p-1}\zeta(s)}\right|&\le C\left(|\zeta'|+\frac{|\nu'-\nu|}{1-|d|^2}\right)\left(\frac{|\nu(s)|}{1-|d(s)|}+\|q(s)\|_{\q H}+e^{-\frac 2{p-1}\zeta(s)}\right)\\
&+C e^{-\frac 4{p-1}\zeta(s)}\left(e^{-\epsilon_3\zeta(s)}+\frac{|\nu(s)|}{1-|d(s)|^2}\right)
\\
&+C\frac{|\nu(s)|}{1-|d(s)|^2} \|q(s)\|_{\q H}
+C\|q(s)\|_{\q H}^2\\
\frac{|\nu'(s)-\nu(s)|}{1-|d(s)|^2}&\le
 C\frac{|\nu(s)|}{1-|d(s)|^2}\|q(s)\|_{\q H}
+C\|q(s)\|_{\q H}^2+Ce^{-\frac 4{p-1}\zeta(s)}\nonumber\\
&+\frac{|\nu'(s)-\nu(s)|}{1-|d(s)|^2}\left(\|q(s)\|_{\q H} +e^{-\frac 2{p-1}\zeta(s)}\right)
+C|\zeta'(s)|,\nonumber
\end{align*}
where the constants $c_2(p)$, $c_3(p)$ and $\epsilon_3$ are positive. 
Using the bounds in \eqref{conmod}, we derive estimates \eqref{est:nu} and \eqref{est:zeta}.

\bigskip

{\bf Part 2: A Lyapunov functional for equation \eqref{eqq*}}

We prove estimate \eqref{est:q} here.
Like for Claim 4.8 page 2867 in \cite{MZdmj12} and Proposition 3.6 in \cite{MZtams14}, the idea is simple: we construct a Lyapunov functional for equation \eqref{eqq*} which is equivalent to the norm squared. Naturally, this functional is obtained by multiplying the equation on $q_1$ derived from \eqref{eqq*} by $\partial_s q_1\rho$ then integrating on the unit ball. Without the modulation terms or the interaction term ($R(y,s$ defined in \eqref{defk*1}), that functional would be $\frac 12 \bar \varphi(q,q)-\iint \q F(q_1)\rho dy$, where
\begin{align}
\bar \varphi\left(q, r\right)&= \int_{|y|<1} \left(-\psi(d,y)q_1r_1+\nabla q_1\cdot \nabla r_1-(y\cdot \nabla r_1)(y\cdot \nabla q_1)+q_2r_2\right)\rho dy,\label{defphib}\\
\q F(q_1)& = \int_0^{q_1}f(\xi) d\xi = \frac{|K^*_1+q_1|^{p+1}}{p+1}-\frac{{K^*_1}^{p+1}}{p+1}-{K^*_1}^p q_1 - \frac p2 {K^*_1}^{p-1}q_1^2, \label{defF}
\end{align}
and $\psi(d,y)$, $f(q_1)$ and $K^*_1(y,s)$ are defined in \eqref{defk*1}. Because we ``killed'' the nonnegative directions in \eqref{mod}, we will see that our Lyapunov functional controls the square of the norm of the solution, because \eqref{conmod} holds.
However, because of the modulation terms and the interaction term $R(y,s)$, we need to slightly change the functional we intend to study, by defining:
\begin{align}
h_1(s)& = \frac 12 \bar \varphi(q,q)-\iint \q F(q_1)\rho dy+\eta \iint q_1 q_2 \rho dy,\\
h_2(s) & = h_1(s) - 2 \eta^{-2}\ds e^{-\frac{4\bar p \zeta(s)}{p-1}},\label{defh}
\end{align}
where $\bar p$ is introduced in \eqref{defpb} and $\eta>0$ will be fixed later as a small enough universal constant. 
Then, we clearly see that the following identity allows to conclude:

\medskip

 {\it There exist $\delta>0$ such that 
\begin{equation}\label{lemlyap}
\forall s\in [s_0, \bar s),\;\;\delta h_1(s) \le \|q(s)\|_{\q H}^2 \le \delta^{-1}h_1(s)\mbox{ and }h_2'(s)\le -\delta h_2(s).
\end{equation}
}
It remains then to prove \eqref{lemlyap} in order to conclude. We proceed in 2 steps to do that, each dedicated to a part of \eqref{lemlyap}.

\bigskip

{\bf Step 1: $h_1(s)$ is comparable to the square of the norm}

We prove the first part in \eqref{lemlyap} here. Clearly, it follows from the following, for $\eta$ small enough and $s_0$ large enough. 
\begin{lem}\label{lemquad} There exists $s_5\ge 0$ such that if $s_0\ge s_5$, 
then for all $s\in [s_0, \bar s)$, we have:
\begin{align}
\frac {\bar\varphi(q,q)}{C} &\le \|q\|_{\H}^2\le C \bar\varphi(q,q),
\label{eq2}\\
\left|\iint \q F(q_1) \rho dy\right|&\le C \|q\|_{\H}^{1+\min(p,2)}\le C s_0^{(1-\min(p,2))/2}\|q\|_{\H}^2,\label{eq2a}\\
\left|\iint q_1 q_2 \rho dy \right|&\le \|q\|_{\q H}^2.\label{eq2c}
\end{align}
\end{lem}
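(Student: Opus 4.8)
The statement splits into three inequalities, of which only the coercivity estimate \eqref{eq2} requires real work; the other two are essentially elementary, and \eqref{eq2} itself is the two-dimensional counterpart of Claim~4.8 in \cite{MZdmj12} and Proposition~3.6 in \cite{MZtams14}. I would dispatch \eqref{eq2c} at once: since $|\nabla q_1|^2-|y\cdot\nabla q_1|^2\ge 0$ for $|y|<1$, definition \eqref{defnh} gives $\|q_1\|_{L^2_\rho}\le\|q_1\|_{\H_0}$, and Cauchy--Schwarz followed by Young's inequality then yields $|\iint q_1q_2\rho\,dy|\le\|q_1\|_{\H_0}\|q_2\|_{L^2_\rho}\le\|q\|_{\H}^2$.

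For \eqref{eq2a}, the plan is to integrate the pointwise bound \eqref{boundfq1} on $f$ in the $q_1$ variable, which gives $|\q F(q_1)|\le C\delta_{\{p\ge 2\}}|q_1|^{p+1}+C|K^*_1(y,s)|^{p-2}|q_1|^3$, and then to integrate against $\rho$. The $|q_1|^{p+1}$ term is controlled directly by the Hardy--Sobolev inequality of Lemma~\ref{lemhs}. For the $|K^*_1|^{p-2}|q_1|^3$ term I would distinguish two cases: when $p\ge2$ one bounds $|K^*_1|^{p-2}\le C(1-|y|^2)^{-(p-2)/(p-1)}$ using Lemma~\ref{lemkd} and concludes by H\"older and Hardy--Sobolev; when $p<2$ one reproduces \emph{verbatim} the dichotomy already used above to estimate $I_2$ in the computation of $\pp_l(d^*e_1,(0,f(q_1)))$, namely split the ball into $D_1=\{\epsilon_0\kappa(d^*e_1,y)\le|K^*_1|\}$ and $D_2=\{\epsilon_0\kappa(d^*e_1,y)>|K^*_1|\}$, localize $D_2$ near the bisectrices via \eqref{defd3}, and apply H\"older with an exponent $\gamma>2/(p-1)$ together with the interaction table of Lemma~\ref{cltech0}. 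In every case this gives $|\iint\q F(q_1)\rho\,dy|\le C\|q\|_{\H}^{1+\min(p,2)}$, and the second inequality in \eqref{eq2a} then follows from the bound $\|q(s)\|_{\H}\le s_0^{-1/2}$ in \eqref{conmod}.

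For \eqref{eq2}, the upper bound $\bar\varphi(q,q)\le C\|q\|_{\H}^2$ is easy: comparing \eqref{defphib} with \eqref{defnh}, the only discrepancy is the term $\iint(\psi+1)q_1^2\rho\,dy$, and since $\psi$ is a bounded multiple of $|K^*_1|^{p-1}$ one has $|\psi|\le C(1-|y|^2)^{-1}$ by Lemma~\ref{lemkd}, so that term is $\le C\|q\|_{\H}^2$ by Lemma~\ref{lemhs}. The lower bound $\|q\|_{\H}^2\le C\bar\varphi(q,q)$ is the coercivity of the linearized operator around the four-soliton configuration, modulo the directions annihilated by the orthogonality conditions \eqref{mod}, and I would prove it exactly as in \cite{MZdmj12} and \cite{MZtams14}: decompose the unit ball into the four quarters $Q_{i,\theta}$ of \eqref{defqit}, whose common boundary is the pair of bisectrices; on each $Q_{i,\theta}$ the potential $\psi$ agrees, up to an error of size $O(e^{-c\zeta})$ by \eqref{domin}, \eqref{conseq} and the interaction bounds of Lemma~\ref{cltech0}, with the potential of the single soliton $\kappa(\theta d^*e_i)$, so the one-soliton coercivity estimate of \cite{MZtams14} applies to the restriction of $q$ to that quarter, modulo the eigendirections $F_l(\theta d^*e_i)$, $l=0,1,2$, which are precisely those killed by \eqref{mod}. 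Summing over the four quarters, absorbing the cross terms (negligible since $\zeta(s)\ge\frac{p-1}{8}\log s_0$ by \eqref{conmod}), and using the symmetries of $q$ to shorten the list of orthogonality conditions actually needed, yields $\|q\|_{\H}^2\le C\bar\varphi(q,q)$ for $s_0$ large.

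The main obstacle is \eqref{eq2}. The genuinely one-dimensional part of the coercivity argument is unchanged from our earlier work, so the new point is to make the quarter-by-quarter reduction rigorous: to check that cutting $q$ into the four pieces and replacing the full four-soliton potential by a single-soliton potential on each quarter produce only errors negligible against $\|q\|_{\H}^2$, and that the (symmetry-reduced) orthogonality conditions \eqref{mod} do control every non-positive mode of the full linearized operator. This is where the two-dimensional geometry --- four solitons on the coordinate axes separated by the bisectrices --- really enters, and where the interaction estimates of Lemma~\ref{cltech0} do the essential work. Once \eqref{eq2}, \eqref{eq2a} and \eqref{eq2c} are in hand, the first half of \eqref{lemlyap} follows immediately by choosing $\eta$ small and $s_0$ large in the definition of $h_1$.
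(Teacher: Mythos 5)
Your \eqref{eq2c} and the easy direction of \eqref{eq2} are fine, and your treatment of the cubic term in \eqref{eq2a} when $p\ge 2$ is essentially the paper's. However, your pointwise bound on $\q F$ places the indicator $\delta_{\{p\ge 2\}}$ on the wrong term: the paper's \eqref{boundFq1} reads $|\q F(q_1)|\le C|q_1|^{p+1}+C\delta_{\{p\ge 2\}}|K^*_1|^{p-2}|q_1|^3$, so that for $p<2$ estimate \eqref{eq2a} is nothing but the $L^{p+1}_\rho$ Hardy--Sobolev embedding of Lemma \ref{lemhs}. By writing instead $|\q F(q_1)|\le C\delta_{\{p\ge 2\}}|q_1|^{p+1}+C|K^*_1|^{p-2}|q_1|^3$ you commit yourself, when $p<2$, to integrating $|K^*_1|^{p-2}|q_1|^3\rho$, and the ``verbatim'' reuse of the $D_1/D_2$ dichotomy does not close: that computation relies crucially on the prefactor $\kappa(d^*(s)e_1,y)$ (so that on $D_1$ one gets $\kappa\cdot\kappa^{p-2}=\kappa^{p-1}\le C(1-|y|^2)^{-1}$, matching the Hardy weight) and on the quadratic power of $q_1$. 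Without that prefactor the $D_1$ piece reduces to $\iint \kappa^{p-2}|q_1|^3\rho\,dy$, where $\kappa^{p-2}\le C(1-|d^*|^2)^{-(2-p)/(p-1)}$ degenerates as $s\to\infty$, and $\iint|q_1|^3\rho\,dy$ is not provided by Lemma \ref{lemhs} (for $\gamma=3>p+1$ it only controls the weight $(1-|y|^2)^{3/(p-1)-1/2}$, whose exponent exceeds $\alpha$); on $D_2$ the exponents no longer match item (iii) of Lemma \ref{cltech0}. The repair is simply to keep the $|q_1|^{p+1}$ term for every $p$, as the paper does.

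For the coercivity in \eqref{eq2}, the paper does not decompose into the four quarters $Q_{i,\theta}$ of \eqref{defqit}: it uses smooth cut-offs $\chi_{i,\theta}$ in the variables $\arg\tanh y_i$, supported near the centre of each soliton, together with a neutral middle region $\chi_0$ where the potential is exponentially small, and it tracks the potential-replacement errors, the cut-off derivative errors of size $O(A^{-1})\|q\|_{\q H}^2$, and the projections of the localized pieces, before fixing the cut-off scale $A$ large. As written, two steps of your quarter-based sketch are unjustified and they are exactly the crux. First, the one-soliton coercivity of \cite{MZtams14} is an inequality for the full quadratic form of an element of $\q H$ on the whole ball; applying it to the restriction $q\,1_{Q_{i,\theta}}$ only makes sense if this truncation still belongs to $\q H$, which here is true only because $q$ is antisymmetric across the bisectrices and hence has zero trace on $\partial Q_{i,\theta}\cap B(0,1)$ --- an observation absent from your argument, without which the sharp cut is incompatible with the gradient terms. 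Second, the orthogonality conditions \eqref{mod} annihilate $\pp_l(\theta d^* e_i,q)$ for the global $q$, not for the localized pieces, so the bad directions are not ``precisely those killed by \eqref{mod}'': one must estimate $\pp_l(\theta d^* e_i,q\,1_{Q_{i,\theta}})=-\pp_l(\theta d^* e_i,q\,1_{Q_{i,\theta}^c})$ and show it is small (the paper shows the analogous projections of $q\chi_{i,\theta}$ are $O(e^{-2A})\|q\|_{\q H}$), which is where the localization of $W_l(\theta d^*e_i)$ near the soliton enters. Until these two points are supplied, your proof of the lower bound $\|q\|_{\q H}^2\le C\bar\varphi(q,q)$ has a genuine gap, even though the quarter decomposition could in principle be turned into a correct variant of the paper's smooth-cut-off argument.
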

Let us then prove Lemma \ref{lemquad}. 

\medskip

\begin{proof}[Proof of Lemma \ref{lemquad}] Note first that \eqref{eq2c} is trivial.
 As for \eqref{eq2a}, it follows from a simplification of the argument we used for \eqref{estnl}, together with \eqref{conmod}, if one starts from the following bound:
\begin{equation}\label{boundFq1}
|\q F(q_1)|\le C |q_1(y,s)|^{p+1}
+
C \delta_{\{p\ge 2\}} |K^*_1(y,s)|^{p-2}|q_1(y,s)|^3,
\end{equation}
which follows in the same way as \eqref{boundfq1} (note that unlike in \eqref{boundfq1}, we put the $\delta_{\{p\ge 2\}}$ symbol in front of the second term here).
Therefore,
we only prove \eqref{eq2} here.\\
The lower bound in \eqref{eq2} is straightforward from the Hardy-Sobolev estimate of Lemma \ref{lemhs} and the boundedness of $\|\kappa(\theta d^*(s) e_i)\|_{\q H}$ given in Lemma \ref{lemkd}.\\
As for the upper bound, we will apply the one-soliton multi-dimensional version of \cite{MZtams14} locally near each of the 4 solitons, then glue the estimates together, as for the multi-solitons case in one space dimension, treated in Appendix B of \cite{MZajm12}. However, it happens that the method of \cite{MZajm12} is too specific to one space dimension, since it uses the one-dimensional change of variables
\[
\bar w(\xi) = (1-y^2)^{\frac 1{p-1}}w(y)\mbox{ with }\xi = \arg\tanh y\in \m R.
\]
Nevertheless, the one-dimensional case will inspire us, in the sense that near each of the 4 solitons, we will use truncations as functions of the variables
\begin{equation}\label{defxi}
\xi_1= \arg\tanh y_1\mbox{ and }\xi_2= \arg\tanh y_2.
\end{equation}
Let us give in the following some guidelines for the two-dimensional strategy, avoiding purely technical considerations.

\medskip

Introducing the following version of \eqref{defphib} localized near the soliton $\kappa^*(\theta d(s) e_i,\nu(s))$, where $i=1,2$ and $\theta =\pm 1$:
\[
\varphi_{i,\theta}\left(q, r\right)= \int_{|y|<1} \left(-\psi_{i,\theta}(y,s)q_1r_1+\nabla q_1\cdot \nabla r_1-(y\cdot \nabla r_1)(y\cdot \nabla q_1)+q_2r_2\right)\rho dy,
\]
and
\[
\psi_{i,\theta}(y,s) = p\kappa^*_1(d(s),\nu(s), y)^{p-1} - \frac{2(p+1)}{(p-1)^2},
\]
we know from \eqref{conmod} that Proposition 2.6 in \cite{MZtams14} applies and yields for any $r\in \q H$:
\begin{equation}\label{coer1c}
\varphi_{i,\theta}(r,r)\ge \frac{\|r\|_{\q H}^2}{C_0} - \sum_{l=0}^2 |\pp_l(\theta d^*,r)|^2,
\end{equation}
for some universal constant $C_0$, where $\pp_l$ is introduced in \eqref{defpdi}.\\
Then, we consider some $A>0$ to be fixed large enough later, and 
 introduce the following cut-off function localized near the soliton $\kappa^*(d(s), \nu(s),y)$ and defined by
\begin{equation}\label{defcit}
\chi_{i,\theta}(y,s) = \chi\left(\frac{\theta \xi_i-\zeta^*(s)}A\right),
\end{equation}
where $\xi$ in introduced in \eqref{defxi}, $\zeta^*(s) = -\arg\tanh d^*(s)$, $d^*(s) = \frac{d(s)}{1+\nu(s)}$, and  
the increasing function $\chi\in C^\infty(\m R)$ satisfies 
\[
\forall \xi< 2,\;\;\chi(\xi) =0\mbox{ and }\forall \xi>-1,\;\;\chi(\xi)=1. 
\]
Introducing the localizing cut-off outside the solitons
\begin{equation}\label{defc0}
\chi_0=\sqrt{1-\sum_\ind \chi_{i,\theta}^2},
\end{equation}
we write
\begin{align}
\bar \varphi(q, q) & = P-S-U+\sum_\ind P_{i,\theta}+Q_{i,\theta}-S_{i,\theta}-U_{i,\theta},\label{phichi}\\
\|q\|_{\q H}^2 &=\|q \chi_0\|_{\q H}^2-S-U + \sum_\ind \|q \chi_{i,\theta}\|_{\q H}^2-S_{i,\theta}-U_{i,\theta},\label{nchi}
\end{align}
where
\begin{align*}
P_0&= \bar \varphi\left(q\chi_0, q\chi_0\right),\;\;
P_{i,\theta} = \varphi_{i,\theta}\left(q\chi_{i,\theta},  q\chi_{i,\theta}\right),\\
Q_{i,\theta}&=p\iint \chi_{i,\theta}^2q_1^2\left(\kappa(\theta d(s) e_i,\nu)^{p-1}-|K^*_1(y,s)|^{p-1}\right)\rho dy,\\
S_0&=\frac 12 \iint q_1^2\left(|\nabla \chi_0|^2-(y\cdot \nabla \chi_0)^2\right)\rho dy,\\
S_{i,\theta}&=\frac 12 \iint q_1^2\left(|\nabla \chi_{i,\theta}|^2-(y\cdot \nabla \chi_{i,\theta})^2\right)\rho dy\\
U_0&=\iint q_1 \chi_0\left(\nabla q_1 \cdot \nabla \chi_0 - (y\cdot \nabla q_1)(y \cdot \nabla \chi_0)\right)\rho dy,\\
U_{i,\theta}&=\iint q_1\chi_{i,\theta}\left(\nabla q_1 \cdot \nabla \chi_{i,\theta} - (y\cdot \nabla q_1)(y \cdot \nabla \chi_{i,\theta})\right)\rho dy.
\end{align*}
Let us first estimate the main terms, namely $P_{i,\theta}$ localized near the ``center'' of the soliton $\kappa^*(\theta d,\nu)$, and $P_0$, localized in the middle, where no soliton is important. Then, we will estimate the perturbation term $Q_{i,\theta}$ and finally the cut-off terms supported where the truncation is not zero, namely, $S_0$, $U_0$, $S_{i,\theta}$ and $U_{i,\theta}$.

\medskip

- {\it Estimate of $P_0$}:\\
In the middle, no soliton is important, and the quadratic form is equivalent the the square of the norm of $\q H$. More precisely,
 using the definition \eqref{defk*1} of $K^*_1(y,s)$ and Lemma \ref{lemkd} for $s_0$ large enough, we see that for $\max(|\xi_1|, |\xi_2|) \le \zeta^*(s) -A$ where $(\xi_1,\xi_2)$ is given in \eqref{defxi}, we have
\begin{align*}
(1-|y|^2)|K^*_1(y,s)|^{p-1}&\le 4^p \sum_\ind (1-|y_i|^2)\kappa(\theta d^*(s) e_i,y)^{p-1}\\
&= 4^p \kappa_0 \sum_\ind\cosh^{-2}(\theta\xi_i-\zeta^*(s))\le 4^{p+1}\cosh^{-2}A\le C e^{-2A}.
\end{align*}
Therefore, from \eqref{defc0} and the Hardy-Sobolev estimate of Lemma \ref{lemhs}, we have
\[
\int \chi_0(y)^2 |K^*_1(y,s)|^{p-1} \rho dy \le C e^{-2A} \int q_1^2 \frac \rho{1-|y|^2} dy \le C e^{-2A} \|q\|_{\q H}^2.
\]
Using the definition \eqref{defphib} of $\bar\varphi$, it follows that
\[
P_0
\ge \frac{\|\chi_0q\|_{\q H}^2}{C_1} -C e^{-2A} \|q\|_{\q H}^2,
\]
for some $C_1>0$.

\medskip

- {\it Estimate of $P_{i,\theta}$}:

We use the one-soliton estimate given in \eqref{coer1c}. Let us first estimate the projections involved in that estimate. If $l=2$, then the projection is zero from the symmetries of $q$. If $l=0,1$, we know from the modulation technique \eqref{mod} that $\pp_l(\theta d^*(s) e_i,q(s))=0$. Using Lemma \ref{lemkd} and \eqref{conseq}, 
we see that
\begin{align*}
&|\pp_l(\theta d^*(s) e_i,q(s)\chi_{i,\theta})|=
|\pp_l(\theta d^*(s) e_i,q(s) (1-\chi_{i,\theta}))|\\
\le& C\|q(s)\|_{\q H} \left(\int_{-1}^{\tanh (\zeta^*(s)-A)}\kappa(d^*(s), z)^2 (1-z^2)^{\frac 2{p-1}-1} dz\right)^{\frac 12}.
\end{align*}
Performing the change of variables $Z = \frac{z +d^*}{1+d^*z}$, we see that the integral is equal to $\ds\int_{-1}^{-\tanh A} (1-Z^2)^{\frac 2{p-1}-1}dZ\le Ce^{-2A}$.
Therefore, from the one-soliton estimate given in \eqref{coer1c}, we see that
\[
P_{i,\theta}\ge \frac{\|q\chi_{i,\theta}\|_{\q H}^2}{C_0} -
C e^{-2A}\|q\|_{\q H}^2.
\]

- {\it Estimate of $Q_{i,\theta}$}: From symmetry, we take $(i,\theta)=(1,1)$. Since the support of $\chi_{1,1}$ is included in $Q_{1,1}$ \eqref{defqit}, where the soliton $\kappa^*(d(s) e_1)$ is dominant, making a Taylor expansion and using Lemma \ref{lemkd}, 
we see that
\begin{align*}
|Q_{1,1}|\le & C\sum_{(j,\eta)\neq(1,1)}
\int_{\xi_1 \ge \zeta^*(s) -2A}
q_1^2\kappa(d^*(s)e_1,y)^{p-2}\kappa(\eta d^*(s) e_j,y) \rho dy\\
\le &C(1-|d^*|)
\iint
q_1^2 \frac \rho{(1+ d^* y_1)^{\frac {2(p-2)}{p-1}}} dy\\
\le & C(1-|d^*|)\iint q_1^2 \frac \rho{1-|y|^2}(1+d^*y_1)^{\frac{3-p}{p-1}}dy,
\end{align*}
because we have $1-|y|^2 \le 2(1+d^*y_1)$. Since $(1+d^* y_1)^{\frac{3-p}{p-1}}\le C\max (1,(1-|d^*|)^{\frac{3-p}{p-1}})$, 
using the Hardy-Sobolev estimate of Lemma \ref{lemhs}, we see from \eqref{conseq} that
\[
|Q_{i,\theta}|\le C(1-|d^*|)^{\min (\frac 2{p-1}, 1)}\|q\|_{\q H}^2\le C e^{-\min (\frac 4{p-1}, 2)\zeta(s)}\|q\|_{\q H}^2.
\]

- {\it Estimate of the cut-off terms $S_0$ and $S_{i,\theta}$}:

Since we have by definitions
\eqref{defcit} and 
\eqref{defc0},
\[
0\le |\nabla \chi_{i,\theta}|^2 -(y\cdot \nabla \chi_{i,\theta})^2 \le \frac C{A^2}
\mbox{ and }
0\le |\nabla \chi_0|^2 -(y\cdot \nabla \chi_0)^2 \le \frac C{A^2},
\]
it follows that
\[
|S_{i,\theta}|+|S_0|\le  \frac C{A^2}\iint q_1^2 \rho dy \le  \frac C{A^2}\|q\|_{\q H}^2.
\]
 
- {\it Estimate of the cut-off terms $U_0$ and $U_{i,\theta}$}:

Using integration by parts, we write from the definition \eqref{defro} of the operator $\q L$:
\[
U_{i,\theta}= \frac 14 \iint \left(\nabla q_1^2\cdot \nabla \chi_{i,\theta}^2-(y\cdot\nabla q_1^2)(y\cdot \nabla \chi_{i,\theta}^2)\right)\rho dy
=-\frac 14 \iint q_1^2 \q L \chi_{i,\theta}^2 \rho dy
\]
and 
\[
U_0 = -\frac 14 \iint q_1^2 \q L \chi_0^2 \rho dy.
\]
Expanding the expression \eqref{defro} as follows:
\[
\q L v = \Delta v - \sum_{i,j}y_iy_j \partial^2_{y_i,y_j}v - \frac{2(p+1)}{p-1} y\cdot \nabla v,
\]
we see that
\[
|\q L \chi_{i,\theta}^2|+|\q L\chi_0^2|\le \frac C{A(1-|y|^2)}.
\]
Therefore, using the Hardy-Sobolev estimate of Lemma \ref{lemhs}, we see that
\[
|U_{i,\theta}|+|U_0|\le \frac CA \iint q_1^2\frac \rho{1-|y|^2}dy. 
\]
 - {\it Conclusion of the proof of \eqref{eq2}}: From the expansions \eqref{phichi} and \eqref{nchi}, we see from the above estimates that
\begin{align*}
\bar \varphi(q,q) &\ge \frac 1{C_2}\left(\|q\chi_0\|_{\q H}^2 + \sum_\ind \|q\chi_{i,\theta}\|_{\q H}^2\right)-\|q\|_{\q H}^2\left(\frac CA+Ce^{-\min(\frac 4{p-1}, 2)\zeta(s)}\right),\\
\|q\|_{\q H}^2 &\le \|q\chi_0\|_{\q H}^2 + \sum_\ind \|q\chi_{i,\theta}\|_{\q H}^2
+\|q\|_{\q H}^2\left(\frac CA+Ce^{-\min(\frac 4{p-1}, 2)\zeta(s)}\right),
\end{align*}
for some constant $C_2>0$. Fixing $A>0$ large enough, then assuming that $s_0$ is large enough, we see from \eqref{conmod} that \eqref{eq2}. This concludes the proof of Lemma \ref{lemquad}.
\end{proof}

\bigskip

{\bf Step 2: A differential inequality satisfied by $h_2(s)$}.

This step is dedicated to the proof of the second part of \eqref{lemlyap}.
The proof follows the proof of Claim 4.8 page 2898 in \cite{MZdmj12} (for the multi-soliton aspect) and the proof of Proposition 3.6 in \cite{MZtams14} (for the multi-dimensional aspect). For that reason, we will recall estimates from those papers, and only stress the novelties.
 We claim that estimate \eqref{lemlyap} follows from the following statement, together with Lemma \ref{lemquad}:
\begin{lem}\label{lemproj*} There exists $s_6\ge 0$ such that if $s_0\ge s_6$, 
then for all $s\in [s_0, \bar s)$, we have:
\begin{align}
\frac 12 \frac d{ds}\bar\varphi(q,q)\le &- \alpha\iint q_{2}^2 \frac \rho{1-|y|^2} dy+\iint q_2f(q_1) \rho dy
+\frac C{\eta}e^{-\frac{4\bar p\zeta}{p-1}}\nonumber\\
&+\bar \varphi(q,q) \left(\frac{\eta}{10}+Cs_0^{-\frac 14}\right),\label{eq1}\\
- \frac d{ds} \iint \q F(q_1)\rho dy\le&-\iint q_2f(q_1) \rho dy 
+C{s_0}^{-\frac 14}\bar \varphi(q,q),\label{eq0}\\
\frac d{ds}\iint q_1q_2 \rho \le&
-\frac 7{10}\bar\varphi(q,q) +C \iint q_{2}^2 \frac \rho{1-|y|^2}dy+ Ce^{-\frac{4\bar p\zeta}{p-1}},\label{eq3}\\
-\frac d{ds} e^{-\frac{4\bar p \zeta}{p-1}} \le& C s_0^{-\frac 14}e^{-\frac{4\bar p \zeta}{p-1}},\label{eqexp}
\end{align}
where $\alpha>0$ is defined in \eqref{defro}.
\end{lem}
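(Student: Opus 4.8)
The statement to prove is Lemma~\ref{lemproj*}, the collection of four differential inequalities \eqref{eq1}--\eqref{eqexp} which, combined with the norm-equivalence of Lemma~\ref{lemquad}, yield the Lyapunov estimate \eqref{lemlyap} and hence \eqref{est:q}. The plan is to differentiate in time each of the four building blocks of $h_2(s)$ --- namely $\tfrac 12\bar\varphi(q,q)$, $-\iint \q F(q_1)\rho\,dy$, $\eta\iint q_1q_2\rho\,dy$ and $-2\eta^{-2}e^{-\frac{4\bar p\zeta}{p-1}}$ --- using the evolution equation \eqref{eqq*} for $q$ together with the modulation orthogonality \eqref{mod} and the a priori bounds \eqref{conmod}. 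The overall architecture is identical to that of Claim~4.8 in \cite{MZdmj12} (multi-soliton structure) and Proposition~3.6 in \cite{MZtams14} (higher-dimensional setting), so I would recall those computations and only carry out the steps where the two-dimensional multi-soliton interaction genuinely changes something.

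First I would establish \eqref{eq1}. Multiplying the $q_1$-equation extracted from \eqref{eqq*} by $\partial_s q_1\,\rho$ and integrating over $\{|y|<1\}$, the quadratic form $\bar\varphi(q,q)$ appears naturally, the nonlinearity produces the term $\iint q_2 f(q_1)\rho\,dy$, and the damping term $-\tfrac{p+3}{p-1}\partial_s q_1-2y\cdot\nabla\partial_s q_1$ produces, after an integration by parts exploiting $\rho$ and $\alpha=\tfrac{5-p}{2(p-1)}$, the dissipative term $-\alpha\iint q_2^2\tfrac{\rho}{1-|y|^2}\,dy$ (this is the classical Antonini--Merle computation of \cite{AMimrn01}). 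The remaining contributions are: (a) the time-dependence of the potential $\psi(y,s)$ hidden in $\bar\varphi$, which is controlled by $|\zeta'|+\tfrac{|\nu'-\nu|}{1-|d|^2}+\tfrac{|\nu|}{1-|d|^2}$ and hence, via Lemma~\ref{propdyn} (Part~1, already proved) and \eqref{conmod}, by $Cs_0^{-1/4}\bar\varphi(q,q)$; (b) the modulation source terms $(\nu'-\nu)\partial_\nu\kappa^*$ and $d'\partial_d\kappa^*$ paired against $\partial_s q_1$, estimated by Cauchy--Schwarz after observing that $\partial_s q_1$ is controlled in $L^2_{\rho/(1-|y|^2)}$ by $\bar\varphi(q,q)^{1/2}$ and that the modulation speeds are $O(s_0^{-1/4})$ times the norm plus the interaction; (c) the interaction term $R(y,s)$ from \eqref{defk*1}, paired against $\partial_s q_1$, which after Cauchy--Schwarz and the bound $\|R\|_{L^2_{\rho(1-|y|^2)}}\le Ce^{-\frac{2\bar p\zeta}{p-1}}$ (obtained by decomposing the unit disc into the four quarters $Q_{i,\theta}$ of \eqref{defqit} and using the integral tables of Lemma~\ref{cltech0}) gives the $\frac C\eta e^{-\frac{4\bar p\zeta}{p-1}}$ term after a Young inequality absorbing part into $\eta\,\bar\varphi(q,q)/10$. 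This last point is where one must pay attention to having $\eta$ in the denominator, exactly as in \cite{MZdmj12}.

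Next, \eqref{eq0} is obtained by differentiating $-\iint\q F(q_1)\rho\,dy$ with $\q F$ as in \eqref{defF}: the main term is $-\iint f(q_1)\partial_s q_1\rho\,dy=-\iint q_2f(q_1)\rho\,dy-\iint f(q_1)(\partial_s q_1-q_2)\rho\,dy$, and the correction, together with the time derivative hitting the explicit $K^*_1$-dependent terms in $\q F$, is bounded by $Cs_0^{-1/4}\bar\varphi(q,q)$ using \eqref{boundFq1}, Lemma~\ref{lemkd}, the Hardy--Sobolev inequality Lemma~\ref{lemhs} and the smallness of $\|q\|_{\q H}$ and of the modulation speeds; this is a direct adaptation of \eqref{estnl} and \eqref{eq2a}. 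Then \eqref{eq3}: differentiating $\iint q_1q_2\rho\,dy$ and substituting the equation for $\partial_s q_2$ produces $\iint q_2^2\rho\,dy+\iint q_1(\q L q_1+\psi q_1)\rho\,dy$ plus lower-order and source/interaction terms; the term $\iint q_1(\q L q_1+\psi q_1)\rho\,dy$ is $-\bar\varphi(q,q)$ by \eqref{defphib}, and after absorbing the $\iint q_2^2\rho\,dy$ into $C\iint q_2^2\tfrac{\rho}{1-|y|^2}\,dy$ and bounding the cross terms $\iint q_1\cdot(\text{modulation})\rho$ and $\iint q_1 R\rho$ by $\frac 3{10}\bar\varphi(q,q)+Ce^{-\frac{4\bar p\zeta}{p-1}}$ via Cauchy--Schwarz and \eqref{conmod}, one gets the stated inequality with the coefficient $-7/10$. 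Finally \eqref{eqexp} is immediate from $\zeta'=\barc e^{-\frac4{p-1}\zeta}+O(\cdots)$ (estimate \eqref{est:zeta}) and \eqref{conmod}: $-\tfrac d{ds}e^{-\frac{4\bar p\zeta}{p-1}}=\tfrac{4\bar p}{p-1}\zeta' e^{-\frac{4\bar p\zeta}{p-1}}$ and $|\zeta'|\le Cs_0^{-1/4}$ on $[s_0,\bar s)$. To conclude the lemma I would then combine \eqref{eq1}--\eqref{eqexp}: $h_2'=\tfrac12\tfrac d{ds}\bar\varphi-\tfrac d{ds}\iint\q F+\eta\tfrac d{ds}\iint q_1q_2-2\eta^{-2}\tfrac d{ds}e^{-\frac{4\bar p\zeta}{p-1}}$, so the $\pm\iint q_2f(q_1)\rho$ terms cancel between \eqref{eq1} and \eqref{eq0}, the $-\tfrac{7\eta}{10}\bar\varphi$ from \eqref{eq3} beats the $+\tfrac\eta{10}\bar\varphi$ from \eqref{eq1}, the dissipation $-\alpha\iint q_2^2\tfrac{\rho}{1-|y|^2}$ absorbs the $C\eta\iint q_2^2\tfrac{\rho}{1-|y|^2}$ for $\eta$ small, and the constant $-2\eta^{-2}\cdot C s_0^{-1/4}e^{-\frac{4\bar p\zeta}{p-1}}$ plus $\frac C\eta e^{-\frac{4\bar p\zeta}{p-1}}$ becomes $\le -\delta(-2\eta^{-2}e^{-\frac{4\bar p\zeta}{p-1}})$ for $s_0$ large, exactly as in \cite{MZdmj12}; hence $h_2'\le-\delta h_2$. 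The norm-equivalence $\delta h_1\le\|q\|_{\q H}^2\le\delta^{-1}h_1$ is Lemma~\ref{lemquad} (with $\eta$ small so the cross term $\eta\iint q_1q_2$ is a small perturbation of $\tfrac12\bar\varphi-\iint\q F$), and since $h_1-h_2=2\eta^{-2}e^{-\frac{4\bar p\zeta}{p-1}}\ge0$, integrating $h_2'\le-\delta h_2$ and using $0\le e^{-\frac{4\bar p\zeta}{p-1}}\le Cs^{-\bar p}$ gives \eqref{est:q}.

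\textbf{Main obstacle.} The delicate point is not any single inequality but the control of the interaction term $R(y,s)$ and of the potential perturbation $\bar V(y,s)$ in \eqref{defbv}: in one space dimension the solitons sit at points of $\m R$ and the overlap integrals are governed by a single exponential $e^{-\frac{4\zeta}{p-1}}$, whereas here the four solitons are localized along the two axes and their pairwise overlaps concentrate near the bisectrices $\{y_1=\pm y_2\}$, where the ``sign-alternating'' combination $K^*_1$ vanishes. This is exactly the source of the sets $\bar D(\eta)=\{|y_1^2-y_2^2|\le\eta\}$ and of the two-dimensional integral computations of Lemma~\ref{cltech0}; one must verify carefully that all these overlap integrals are still $O(e^{-(\frac 4{p-1}+\epsilon_0)\zeta})$ --- i.e. strictly better than $e^{-\frac4{p-1}\zeta}$ --- so that they are genuinely lower order in \eqref{eq1}--\eqref{eq3} and do not interfere with the sign of $\zeta'$. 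Granting Lemma~\ref{cltech0} and the one-soliton two-dimensional estimates of \cite{MZtams14}, however, the rest is a bookkeeping exercise parallel to \cite{MZdmj12}; for that reason I would present only the genuinely two-dimensional estimates in detail and relegate the classical one-dimensional-type computations to the appendix, referring to \cite{MZajm12,MZdmj12,MZtams14}.
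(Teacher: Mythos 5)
Your proposal is correct and takes essentially the same route as the paper: differentiate each piece of the Lyapunov functional along the flow \eqref{eqq*}, recover the Antonini--Merle dissipation and the $\iint q_2 f(q_1)\rho\,dy$ term, control the modulation and interaction errors with the bounds already established ($\iint R^2\rho(1-|y|^2)dy\le Ce^{-\frac{4\bar p\zeta}{p-1}}$, \eqref{estnl}, \eqref{conmod}, Lemma \ref{lemquad}), and close with Young's inequality, which is precisely the scheme of Lemma C.2/Claim 4.8 of \cite{MZdmj12} and Proposition 3.6 of \cite{MZtams14} that the paper itself invokes. The only difference is bookkeeping: you fold the error estimates directly into the differentiation, whereas the paper first states the intermediate inequalities \eqref{eqa-}, \eqref{eqnl}, \eqref{q1q2} with the quantities $R_1,R_2,R_3$ and then bounds them.
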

\noindent Indeed,
 if $s_0$ is taken larger enough so that Lemmas \ref{lemquad} and \ref{lemproj*} hold, together with the two first estimates in Lemma \ref{propdyn}, then 
we see by definition \eqref{defh} of $h_1(s)$ and $h_2(s)$  
that for all $s\in [s_0, \bar s)$,
\begin{align*}
|2h_1(s) -  \bar \varphi(q,q)|\le &
C(s_0^{(1-\min(p,2))/2}+\eta)\|q\|_{\q H}^2
\le C(s_0^{(1-\min(p,2))/2}+\eta) \bar \varphi(q,q) ,\\
h_2'(s)\le& \left(-\alpha+C\eta\right)\iint q_2^2 \frac \rho{1-|y|^2} dy 
+\left(Cs_0^{-\frac 14}-\frac 6{10}\eta\right)\bar \varphi(q,q)\\
&+C(\eta^{-1}+s_0^{-\frac 14} \eta^{-2}) e^{-\frac{4\bar p\zeta}{p-1}}.
\end{align*}
If $\eta$ is small enough and $s_0$ large enough, then we may make the multiplying factor in front of $\iint q_2^2 \frac \rho{1-|y|^2} dy $ in the above inequality negative, and derive by definition \eqref{defh} of $h_2$:
\begin{align*}
3h_1(s) \ge& \bar \varphi(q,q) \ge h_1(s) = h_2(s) + \frac {\ds e^{-\frac{4\bar p \zeta(s)}{p-1}}}{\eta^2}, \\
h_2'(s)\le& \left(Cs_0^{-\frac 14}-\frac 6{10}\eta\right)h_2(s)
+C(\eta^{-1}+s_0^{-\frac 14} \eta^{-2}-2\eta^{-2}\left(\frac 6{10}\eta -Cs_0^{-\frac 14}\right)) e^{-\frac{4\bar p\zeta}{p-1}}.
\end{align*}
Since $\eta^{-1}-2\eta^{-2}\frac 6{10}\eta=-\frac 1{5\eta}$,
fixing first $\eta=\eta(p)>0$ small enough,
then
fixing $s_0$ large enough,
we see from \eqref{eq2} that \eqref{lemlyap} holds, and so does \eqref{est:q}
in Lemma \ref{propdyn}. It remains then to justify Lemma \ref{lemproj*} in order to conclude the proof of \eqref{lemlyap} and Lemma \ref{propdyn} too.

\bigskip

\begin{proof}[Proof of Lemma \ref{lemproj*}] 
  Following our techniques performed for the proof of Lemma C.2 page 2896 in \cite{MZdmj12}, 
using \eqref{conmod}, \eqref{conseq}, item (iii) in Lemma \ref{lemkd}, Lemma \ref{lemquad} and the Hardy-Sobolev estimate of Lemma \ref{lemhs}, we see that for all $s\in [s_0, \bar s)$, 
\begin{align}
\frac 12 \frac d{ds} \bar\varphi(q,q)\le& -\alpha \iint q_2^2 \frac \rho{1-|y|^2} dy
+
\iint q_2f(q_1) \rho dy
+C(R_1+R_2+R_3),\label{eqa-}\\
- \frac d{ds}\iint \q F(q_1) \rho dy \le& -\iint q_2f(q_1) \rho dy
+
 \frac{C|\nu|}{1-|d|^2}\iint \kappa(d^*e_1,y)|f(q_1)|\rho dy+CR_3,\label{eqnl}\\
\frac d{ds} \iint q_1 q_2 \rho dy \le& -\frac 9{10}\bar \varphi(q,q)+C\iint q_2^2 \frac \rho{1-|y|^2}+\iint q_1 f(q_1)\rho dy\nonumber\\
&+C(R_1+R_2),\label{q1q2}
\end{align}
where $\alpha>0$ is defined in \eqref{defro}, 
\begin{align}
R_1(s)&=\iint R(y,s)^2\rho(1-|y|^2) dy,\;\;
 R_2(s) =\frac{\|q\|_{\q H}}{1-|d|^2}\left(|\nu'-\nu|+ |d'|\right),\nonumber\\
R_3(s)&=\frac{\left(|\nu'|+ |d'|\right)}{1-|d|^2}\iint \kappa(d^* e_1,y)|K^*_1|^{p-2}q_1^2 \rho dy.\label{defR1}
\end{align}
In the following, we further estimate some terms appearing in the right-hand side of the previous equations, in order to derive Lemma \ref{lemproj*}.

\medskip

{\it - Estimate of $R_1=\iint R(y,s)^2\rho(1-|y|^2) dy$}: Using the defintion \eqref{defk*1} of $R(y,s)$ and proceeding as for \eqref{estR}, we obtain the following less refined estimate:
\[
|R(y,s)|\le C \sum_\ind (1-1_{Q_{i,\theta}})\kappa(\theta d^*(s)e_i,y)^p
+1_{Q_{i,\theta}}\kappa(\theta d^*(s) e_i,y)^{p-1}\sum_{(j,\eta)\neq (i,\theta)}\kappa(\eta d^*(s) e_j,y).
\]
Arguing as for the various integrals involved in \eqref{p0r}, using in particular the integral table given in item (i) of Lemma \ref{cltech0}, we see that
\begin{equation}\label{R1}
|R_1|=\iint R(y,s)^2\rho(1-|y|^2) dy\le C e^{-\frac{4\bar p}{p-1}\zeta(s)},
\end{equation}
where $\bar p$ is introduced in \eqref{defpb}.

\medskip

- {\it Estimate of $R_2$ and $R_3$}: Using the equations satisfied by $\zeta(s) = -\arg \tanh d(s)$ and $\nu(s)$ already proved in Lemma \ref{propdyn}, together with \eqref{estnl}, we see that
\begin{align}
|R_2|&\le C\|q\|_{\q H}\left(\|q\|_{\q H}^2 + e^{-\frac{4\zeta}{p-1}}+\frac{|\nu|}{1-|d|^2}\|q\|_{\q H}\right),\nonumber\\
|R_3|&\le \|q\|_{\q H}^2\left(\|q\|_{\q H}^2 + e^{-\frac{4\zeta}{p-1}}+\frac{|\nu|}{1-|d|^2}\right). \label{R3}
\end{align}

\medskip

- {\it Estimate of $\iint q_1 f(q_1)\rho dy$}: It happens that $q_1 f(q_1)$ satisfies the same bound as $\q F(q_1)$ in \eqref{boundFq1}. Therefore, by the same argument one uses for \eqref{eq2a} (which is in fact a simplification of the argument we used for \eqref{estnl}), we have the same bound as in \eqref{eq2a}, namely that 
\begin{equation}\label{q1fq1}
\left|\iint q_1 f(q_1)\rho dy\right|\le C\|q\|_{\q H}^{1+\min(p,2)}\le s_0^{(1-\min(p,2))/2}\|q\|_{\q H}^2.
\end{equation}
 
\medskip

- {\it Conclusion of the proof of Lemma \ref{lemproj*}}: 
Since $\|q\|_{\q H} e^{-\frac{4\zeta}{p-1}}\le \frac \eta{10} \bar \varphi(q,q) + \frac C\eta e^{-\frac{8\zeta}{p-1}}$ from Lemma \ref{lemquad}, 
using \eqref{conmod}, \eqref{R1} and \eqref{R3}, we see that \eqref{eq1} follows from \eqref{eqa-}. As for \eqref{eq0}, one needs in addition to use \eqref{estnl} to derive it from \eqref{eqnl}. Using \eqref{q1fq1} and Lemma \ref{lemquad}, we also get \eqref{eq3}. Finally, identity \eqref{eqexp} follows from the second estimate (already proved) in Lemma \ref{propdyn}. This concludes the proof of Lemma \ref{lemproj*}.
\end{proof}
Since we have explained right after the statement of Lemma \ref{lemproj*} how to derive \eqref{est:q}, which is the last estimate in Lemma \ref{propdyn}, this concludes the proof of Lemma \ref{propdyn} too.
\end{proof}

\section{Computation tool box}
In this section, we recall several estimates from our earlier work, and use them to prove some estimates related to the solitons $\kappa(d,y)$ \eqref{defkd} and $\kappa^*(d,\nu,y)$ \eqref{defk*}.

\medskip

We first recall the following Hardy-Sobolev inequality:
\begin{lem}[A Hardy-Sobolev inequality]\label{lemhs} For any $v\in \q H_0$ and $\gamma \ge 2$, we have
\begin{equation*}
\|v\|_{L^2_{\frac \rho{1-|y|^2}}}+\|v\|_{L^{p+1}_\rho}
+\|v\|_{L^\gamma_{ (1-|y|^2)^{\frac \gamma{p-1} - \frac 12}}}
\le C  \|v\|_{\q H_0}.
\end{equation*}
In one space dimension, we further have $\|v(1-|y|^2)^{\frac 1{p-1}}\|_{L^\infty}\le C  \|v\|_{\q H_0}.$
\end{lem}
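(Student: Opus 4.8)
\textbf{Proof plan for Lemma~\ref{lemhs} (Hardy--Sobolev inequality).}

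The statement collects three weighted embeddings for functions $v\in\q H_0$, where $\q H_0$ is the space defined in \eqref{defnh} with norm $\|v\|_{\q H_0}^2=\iint\big(v^2+|\nabla v|^2-|y\cdot\nabla v|^2\big)\rho\,dy$ and $\rho(y)=(1-|y|^2)^\alpha$, $\alpha=\frac{5-p}{2(p-1)}$. The plan is to treat all three norms by the same device: the degenerate bilinear form $|\nabla v|^2-|y\cdot\nabla v|^2$ is, near the boundary $|y|=1$, comparable to $|\nabla v|^2$ in the radial direction times $(1-|y|^2)$, so one is really looking at a one-dimensional Hardy inequality in the variable $r=|y|$ with an extra power of $(1-r^2)$ against it. First I would reduce matters to the radial Hardy estimate
\begin{equation*}
\int_0^1 \frac{g(r)^2}{1-r^2}(1-r^2)^{\alpha}\,r^{N-1}dr\le C\int_0^1 \big(g(r)^2+g'(r)^2(1-r^2)\big)(1-r^2)^{\alpha}r^{N-1}dr,
\end{equation*}
with $N=2$, which is proved by the standard trick of writing $g(r)^2=-\int_r^1 \frac{d}{d\sigma}g(\sigma)^2\,d\sigma$ (or $g(r)=g(0)+\int_0^r g'$), inserting it into the left side, and applying Cauchy--Schwarz together with the elementary computation of $\int (1-\sigma^2)^{\beta}\,d\sigma$; the admissible range of exponents is exactly where $\alpha>0$ helps. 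This gives the $L^2_{\rho/(1-|y|^2)}$ bound after integrating in the angular variable and using that $|\nabla v|^2-|y\cdot\nabla v|^2\ge c(1-|y|^2)|\partial_r v|^2$ is false in general but the correct lower bound $|\nabla v|^2-|y\cdot\nabla v|^2= |\nabla_\tau v|^2+(1-|y|^2)|\partial_r v|^2$ (tangential gradient plus weighted radial part) does hold and suffices.

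Next, for the $L^{p+1}_\rho$ and the $L^\gamma_{(1-|y|^2)^{\gamma/(p-1)-1/2}}$ bounds, I would use the change of variables that flattens the weight. In each angular slice write $v$ as a function of $\xi$ where $r=\tanh\xi$ (this is exactly the substitution \eqref{defvb}/\eqref{defxi} used throughout the paper): setting $\bar v(\xi)=(1-r^2)^{1/(p-1)}v(r)$, the $\q H_0$-norm controls $\int_{\m R}(\bar v^2+\bar v'^2)\,d\xi$ in one dimension, and a plain 1D Sobolev/Gagliardo--Nirenberg inequality gives $\|\bar v\|_{L^\infty(\m R)}\le C\|\bar v\|_{H^1(\m R)}$, which is precisely the last assertion of the lemma in dimension one. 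From the $L^\infty$ bound on $\bar v$ one recovers $|v(r)|\le C(1-r^2)^{-1/(p-1)}\|v\|_{\q H_0}$, and then $\int|v|^{p+1}\rho\,dr=\int|v|^{p+1}(1-r^2)^{\alpha}dr$ converges because $-\frac{p+1}{p-1}+\alpha=-\frac{p+1}{p-1}+\frac{5-p}{2(p-1)}=\frac{3-3p+ \dots}{2(p-1)}>-1$ — a direct check of exponents — and similarly for the $L^\gamma$ weight, where the weight $(1-r^2)^{\gamma/(p-1)-1/2}$ is designed exactly to make $-\frac{\gamma}{p-1}+\frac{\gamma}{p-1}-\frac12=-\frac12>-1$ after pulling out $\|\bar v\|_{L^\infty}^\gamma$. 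In dimension $N=2$ the same scheme works slicewise in $\theta$, with the $L^\infty$ control replaced by the appropriate trace/embedding and an integration in $\theta$; alternatively one cites that this is a routine consequence of the $N$-dimensional version already established in the authors' earlier work, e.g.\ \cite{MZtams14}, and only records the one-dimensional sharpening.

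The main obstacle — really the only non-bookkeeping point — is handling the tangential part of the degenerate gradient $|\nabla v|^2-|y\cdot\nabla v|^2$ in dimension two: unlike the purely radial one-dimensional case, one must check that the tangential derivative contributes a genuinely positive term (it does, with weight $1$ rather than $(1-|y|^2)$) so that the angular integration in the reduction above is legitimate and the constant $C$ is uniform. I expect to dispatch this by writing $\nabla v=\partial_r v\,\hat r+\frac1r\nabla_\theta v$, computing $|\nabla v|^2-|y\cdot\nabla v|^2=(1-r^2)(\partial_r v)^2+\frac1{r^2}|\nabla_\theta v|^2$ explicitly, and then noting that both terms are nonnegative so the radial Hardy estimate applies in each slice and the $L^{p+1}$/$L^\gamma$ bounds follow from the slicewise $L^\infty$ bound after integrating $\int_{S^1}$; away from $r=1$ everything is trivial since $\rho$ is bounded below there. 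Everything else is elementary calculus with Beta-function exponents, so I would state those verifications as "straightforward" and refer to \cite{MZtams14} for the multi-dimensional details while giving the short one-dimensional argument in full.
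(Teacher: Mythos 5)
Your radial decomposition $|\nabla v|^2-|y\cdot\nabla v|^2=(1-r^2)(\partial_r v)^2+\frac 1{r^2}(\partial_\theta v)^2$ is correct, and the slicewise Hardy argument is fine for the first term, since both sides of that estimate are quadratic in $v$ and can simply be integrated in $\theta$ (this reproduces what the paper cites from Appendix B of \cite{MZajm03}). Your one-dimensional $L^\infty$ argument via $\bar v\in H^1(\m R)\hookrightarrow L^\infty(\m R)$ is also the right idea for the last assertion. The genuine gap is the two-dimensional case of the $L^{p+1}_\rho$ and weighted $L^\gamma$ bounds, which is the actual content of the lemma here. There is no pointwise bound $|v(y)|\le C(1-|y|^2)^{-\frac 1{p-1}}\|v\|_{\q H_0}$ in dimension two (this is exactly why the sup-norm estimate is stated only in one dimension), and the slicewise route cannot be repaired by "integration in $\theta$": the slicewise bound produces $m(\theta)^{(p+1)/2}$ where $m(\theta)$ is the radial energy of the slice, and since $(p+1)/2>1$ you cannot control $\int m^{(p+1)/2}d\theta$ by $\big(\int m\,d\theta\big)^{(p+1)/2}=\|v\|_{\q H_0}^{p+1}$; trying to bound $\sup_\theta m(\theta)$ instead brings in mixed second derivatives that $\q H_0$ does not control. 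The paper's proof is genuinely two-dimensional at this point: it cites Lemma E.1 of \cite{MZtams14} for the $L^{p+1}_\rho$ term, and for the $L^\gamma$ term performs the radial change of variables $|y|=\psi(|z|)=1-\sqrt{1-|z|}$, which converts the degenerate quadratic form into a standard power-weighted Dirichlet energy with weight $(1-|y|^2)^a$, $a=2\alpha+1=\frac 4{p-1}$, and then invokes a weighted Sobolev (Caffarelli--Kohn--Nirenberg type) inequality, Lemma E.4 of \cite{MZtams14}. Your fallback "cite \cite{MZtams14}" is in effect what the paper does, but your own constructive scheme does not close in 2D.

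A secondary but real error is the exponent bookkeeping. With the two-dimensional $\alpha=\frac{5-p}{2(p-1)}$ one has $-\frac{p+1}{p-1}+\alpha=-\frac 32$, not a number larger than $-1$, and even after the Jacobian $dy=(1-y^2)d\xi$ the exponent is $-\frac 12$, i.e.\ a growing weight $\cosh\xi$ in the $\xi$ variable; with the one-dimensional weight $(1-y^2)^{\frac 2{p-1}}$ the exponent is exactly $-1$, which is still non-integrable. So even in one dimension you cannot conclude by pulling out $\|\bar v\|_{L^\infty}^{p+1}$ and integrating the remaining weight; you must interpolate, e.g.\ bound $\int|\bar v|^{p+1}d\xi\le \|\bar v\|_{L^\infty}^{p-1}\|\bar v\|_{L^2}^2$, after checking that the change of variables turns $\|v\|_{L^{p+1}_\rho}^{p+1}$ into $\int|\bar v|^{p+1}d\xi$. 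This is easily fixed in 1D, but as written the "direct check of exponents" is false.
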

\begin{proof} The first term is bounded by the left-hand side thanks to Appendix B in \cite {MZajm03}, and the second thanks to Lemma E.1 in \cite{MZtams14}. As for the third, it follows from an intermediate estimate in that lemma. More precisely,

making the following change of variables:
\[
h(y) = v(z) \mbox{ with }\frac y{|y|}= \frac z{|z|}\mbox{ and }|y| = \psi(|z|)=1-\sqrt{1-|z|},
\]
then, introducing $a=2\alpha+1=\frac 4{p-1}$ (note that $a\neq 1$ and $a\ge 0$
since $1<p<5$ and $\frac sC\le \psi(s) \le Cs$, for all $s\in (0,1)$), we reduce the aimed identity to the following
 \begin{equation}\label{inter}
\left(\iint |h|^\gamma(1-|y|^2)^{\gamma a/2} dy \right)^{2/\gamma}\le C \iint \left(|\nabla h|^2+ h^2\right) (1-|y|^2)^ady
\end{equation}
(see the proof of Lemma E.1 in \cite{MZtams14} for details in a similar computation). It happens that identity \eqref{inter} is true thanks to Lemma E.4 in that paper. This concludes the proof of Lemma \ref{lemhs}.
\end{proof}

In the following lemma, we recall some properties of the solitons $\kappa(\bs d,y)$ and $\kappa^*(\bs d, \nu,y)$ defined in \eqref{defkd} and \eqref{defk*}, in particular some properties related to the Lyapunov functional $E(w, \partial_s w)$ defined above in \eqref{defenergy}. This is the statement:
\begin{lem}[Properties of the solitons] \label{lemkd} For any $|\bs d|<1$, we have:\\
(i) $\frac 1C\le \|\kappa(\bs d)\|_{\q H_0}\le C$.\\
(ii) For $l=0,1$ and any $|y|<1$, $|W_{l,2}(\bs d,y)|\le C \kappa(\bs d,y)\le C(1-|y|^2)^{-\frac 1{p-1}}$ and $|\q L W_{l,1}(\bs d,y) - W_{l,1}(\bs d,y)|\le C\frac {\kappa(\bs d,y)}{1-|y|^2}$.\\
(iii) For all $\nu>-1+|\bs d|$ and $|y|<1$, we have
\begin{align*}
\kappa^*_1(\bs d, \nu, y)&= \lambda(|\bs d|, \nu) \kappa(\bs d^*,y),\\
\ds\left\|\kappa^*\left(\bd,\nu\right)\right\|_{\H} &\le C\lambda(|\bs d|, \nu)  +C1_{\{\nu<0\}}\frac{|\nu|}{\sqrt{1-|\bd|^2}}\lambda(|\bs d|, \nu)^{\frac{p+1}2},
\end{align*}
 where $\lambda(|\bs d|, \nu)$ is defined in \eqref{deflambda}
and $\bs d^*= \frac{\bs d}{1+\nu}$. In addition, if $\frac{|\nu|}{1-| d|}\le \frac 12$, then we have
$\left|\lambda( d, \nu) -1\right|\le \frac{C|\nu|}{1-|d|}$. Moreover, if $\mu(d,\nu)$ is introduced in \eqref{defmu}, then we see that
\begin{equation}\label{compare0}
\min\left(\mu,\mu^2\right)\le \lambda^{p-1}\le \max\left(\mu,\mu^2\right).
\end{equation}
(iv) {\bf (Continuity of $\kappa^*$ along a given direction)} If $\omega\in \m R^2$ with $|\omega|=1$, and $(d_1,\nu_1)$ and $(d_2, \nu_2)$ satisfy 
\begin{equation}\label{condA}
\left|\frac {\nu_1}{1-|d_1|}\right|\le 1\mbox{ and }\left|\frac {\nu_2}{1-{|d_2|}}\right|\le 2,
\end{equation}
then
\begin{multline}\label{defze}
\|\kappa^*(d_1\omega,\nu_1)-\kappa^*(d_2\omega, \nu_2)\|_{\H} \\
\le C\left(\left| \frac {\nu_1}{1-|d_1|} -\frac {\nu_2}{1-{|d_2|}}\right| +\left|\arg\tanh d_1 - \arg\tanh d_2\right|\right).
\end{multline}
(v) For any $|\bd|<1$ and $\nu>-1+|\bd|$, we have
\[ 
E(\kappa_0,0) \ge E(\kappa^*(\bd,\nu))=E(\kappa_0,0)\left(\frac{p+1}{p-1}\lambda^2-\frac 2{p-1} \lambda^{p+1}+\frac 2{(p-1)}\frac{\nu^2}{(1-|\bd|^2)}\lambda^{p+1}\right),
\]
where $\lambda = \lambda(|\bd|,\nu)$ is defined in \eqref{deflambda}.\\
(vi)  For all $B\ge 2$, if $|d|<1$ and $-1+\frac 1B \le \frac{\nu}{1-|d|}\le B$, then $\|\partial_\nu \kappa^*(d,\nu)\|_{\q H}+\|\nabla_d \kappa^*(d,\nu)\|_{\q H}\le \frac {C(B)}{1-|d|}$.
\end{lem}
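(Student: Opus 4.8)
The plan is to establish each item by direct computation, reducing every two-dimensional integral to a one-dimensional one via the norm identity \eqref{defnhz1} and the explicit form of the solitons, and exploiting the Lorentz structure of equation \eqref{equ}; several of the bounds are already recorded in \cite{MZtams14}, \cite{MZdmj12} and \cite{MZajm12}, so for those I would recall the relevant statement and supply only the extra bookkeeping. For (i), since $\kappa(\bs d)$ is the image of the constant soliton $\kappa_0$ under the Lorentz boost associated with $\bs d$, the norm $\|\kappa(\bs d)\|_{\q H_0}$ reduces, through an explicit change of variables whose Jacobian and weight distortion are bounded above and below uniformly in $\bs d$, to the fixed positive constant $\|\kappa_0\|_{\q H_0}$; alternatively one estimates $\iint(1+\bs d\cdot y)^{-\beta}\rho\,dy$ directly for the relevant exponents $\beta$. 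For (ii), the elementary inequalities $|y_i\pm d|\le 2$ and $1-|y|^2\le 2(1\pm dy_i)$ applied to \eqref{deffld}, \eqref{deffid}, \eqref{defWl2} give $|W_{l,2}(\pm de_i,y)|\le C\kappa(\pm de_i,y)$, after which the bound on $|\q L W_{l,1}-W_{l,1}|$ follows from \eqref{eqWl1}, whose right-hand side is a linear combination of $W_{l,2}$, $\nabla W_{l,2}$ and $W_{l,2}/(1-|y|^2)$, each dominated by $C\kappa/(1-|y|^2)$.

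For (iii), the identity $\kappa_1^*(\bs d,\nu,y)=\lambda(|\bs d|,\nu)\kappa(\bs d^*,y)$ is an algebraic manipulation based on $1+\bs d^*\cdot y=(1+\nu+\bs d\cdot y)/(1+\nu)$ and $1-|\bs d^*|^2=((1+\nu)^2-|\bs d|^2)/(1+\nu)^2$. The bound on $\|\kappa^*(\bs d,\nu)\|_{\q H}$ then comes from (i) for the first component, and from a one-dimensional evaluation of $\iint\kappa_2^*(\bs d,\nu,y)^2\rho\,dy$ for the second, after rewriting $\kappa_2^*$ as a constant times $\kappa(\bs d^*)^{(p+1)/2}$ (this is where the factor $1_{\{\nu<0\}}|\nu|/\sqrt{1-|\bs d|^2}$ enters). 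The expansion $|\lambda(d,\nu)-1|\le C|\nu|/(1-|d|)$ follows from $\lambda^{p-1}=1/\bigl(1+(2\nu+\nu^2)/(1-d^2)\bigr)$ together with $1-d^2\ge 1-|d|$, and \eqref{compare0} from the factorisation $\lambda^{p-1}=\frac{1-|d|}{1-|d|+\nu}\cdot\frac{1+|d|}{1+|d|+\nu}=\mu\cdot\frac{1+|d|}{1+|d|+\nu}$ combined with $\mu\le\frac{1+|d|}{1+|d|+\nu}\le 1$ for $\nu\ge0$ (and the reverse inequalities for $\nu\le0$).

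For (iv) and (vi) I would again invoke \eqref{defnhz1}. Item (vi) is obtained by differentiating the explicit formula for $\kappa^*$ once in $\nu$ or in a component of $d$: on the range $-1+\frac1B\le\nu/(1-|d|)\le B$ one has $\frac1B(1-|d|)\le 1+d\cdot y+\nu\le C(B)$ uniformly for $|y|<1$, so each such derivative costs at most a factor $C(B)/(1-|d|)$, while $\|\kappa^*(d,\nu)\|_{\q H}\le C(B)$ there by (iii); the resulting one-dimensional integrals are then computed directly. Item (iv) reduces to a one-dimensional Lipschitz estimate in the coordinates $(\arg\tanh d,\ \nu/(1-|d|))$, which are the natural parameters for this family; the uniform control of the $(d,\nu)$-derivatives needed for the mean-value argument is exactly item (vi).

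Item (v) has a soft part and a computational part. The explicit formula for $E(\kappa^*(\bs d,\nu))$ follows by inserting $\kappa_1^*=\lambda\kappa(\bs d^*)$ and the expression for $\kappa_2^*$ into \eqref{defenergy}, using the stationary identity $\iint\bigl(|\nabla\kappa(\bs d^*)|^2-(y\cdot\nabla\kappa(\bs d^*))^2\bigr)\rho+\frac{2(p+1)}{(p-1)^2}\iint\kappa(\bs d^*)^2\rho=\iint\kappa(\bs d^*)^{p+1}\rho$ and the normalisation $\kappa_0^{p-1}=\frac{2(p+1)}{(p-1)^2}$; note that the formula at $\nu=0$, $\lambda=1$ gives $E(\kappa(\bs d),0)=E(\kappa_0,0)$. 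The inequality $E(\kappa_0,0)\ge E(\kappa^*(\bs d,\nu))$ then follows either by a one-variable analysis of the right-hand side, or more conceptually from the Lyapunov structure: $s\mapsto\kappa^*(\bs d,\mu e^s)$ solves \eqref{eqw}, $E$ is non-increasing along it by \cite{AMimrn01}, its limits at $s\to\pm\infty$ are $(\kappa(\bs d),0)$ and either $0$ or a blow-up, and the admissible range of $\nu$ is covered as $\mu$ and $s$ vary. The main obstacle I anticipate is clerical rather than conceptual: carrying the powers of $1-|d|^2$, $\lambda$ and $\mu$ and the several exponents consistently through the energy computation in (v) and through (iii) and (vi), where a single misplaced exponent would break the stated bounds, and applying \eqref{defnhz1} uniformly across the items.
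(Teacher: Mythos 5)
Your route is correct in outline, but it is genuinely more self-contained than the paper's: the paper proves this lemma almost entirely by citation, reducing every item to one space dimension through the norm identity \eqref{defnhz1} and then invoking Claim A.2 (iv) and Claim F.2 of \cite{MZtams14} for (i) and (vi), Lemma A.2 and identity (B.10) of \cite{MZdmj12} for (iii), (iv) and (v), and giving details only for \eqref{compare0}, where it writes $\lambda^{-(p-1)}=\frac 1\mu\bigl(1+(\frac 1\mu-1)\frac{1-|d|}{1+|d|}\bigr)$ and uses monotonicity in $|d|$ — essentially the same observation as your factorisation $\lambda^{p-1}=\mu\cdot\frac{1+|d|}{1+|d|+\nu}$. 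What your approach buys is a proof readable without the earlier papers; what it costs is that you must redo the computations those citations encapsulate, and a few of your shortcuts need repair.

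Concretely: in (i), the Jacobian and weight distortion of the similarity-variables Lorentz map are \emph{not} uniformly bounded as $|\bs d|\to 1$ (the factor $1+dY_1$ degenerates); the two-sided bound holds because the amplitude factor $(1-|\bs d|^2)^{\frac 1{p-1}}$ in $\kappa(\bs d)$ compensates, and this compensation is precisely the content of the cited Claim A.2 (iv), so you must either carry it out or fall back on your direct-integral alternative (where the lower bound needs the gradient terms, via \eqref{defnhz1} and Lemma \ref{cltech0}). In (ii), for $l=1$ the bound $|y_i\pm d|\le 2$ is not enough (it leaves an extra factor $(1\pm dy_i)^{-1}$); you need $|y_i\pm d|\le 1\pm dy_i$, which follows from $(1\pm dy_i)^2-(y_i\pm d)^2=(1-d^2)(1-y_i^2)\ge 0$. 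In (iii), your computation gives $\|\kappa^*_2\|_{L^2_\rho}\le C\frac{|\nu|}{\sqrt{1-|\bs d|^2}}\lambda^{\frac{p+1}2}$ for all $\nu$; to obtain the stated indicator $1_{\{\nu<0\}}$ you must add the absorption $\frac{\nu}{\sqrt{(1+\nu)^2-|\bs d|^2}}\le 1$ for $\nu\ge 0$, so that this term is $\le C\lambda$ there. Finally, in (iv) the mean-value argument in the coordinates $\bigl(\arg\tanh d,\ \nu/(1-|d|)\bigr)$ only works if the connecting path keeps the second coordinate bounded away from $-1$ with a fixed $B$ in (vi) (the family $\kappa^*$ blows up as $\nu/(1-|d|)\to -1$); this is exactly the setting of the one-dimensional lemma the paper cites and of every application in the paper, where the ratios are close to $0$, but you should state the restriction explicitly rather than appeal to \eqref{condA} alone.
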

\begin{proof}$ $\\
(i) From item (iv) in Claim A.2 page 47 in \cite{MZtams14} and the definition of the similarity variables' version for the Lorentz transform in that paper, we see that $\frac 1C\|\kappa_0\|_{\q H_0}\le \|\kappa(\bs d)\|_{\q H_0}\le C\|\kappa_0\|_{\q H_0}$, and the result follows.\\
(ii) The first identity is straightforward from the definitions \eqref{defWl2} and \eqref{defkd}. The second identity follows from the one-dimensional case of the Hardy-Sobolev identity given in Lemma \ref{lemhs}.\\
(iii) For the beginning of the item, see item (i) in Lemma A.2 and identity (B.10) in \cite{MZdmj12}. For estimate \eqref{compare0}, first write by definitions \eqref{deflambda} and \eqref{defmu} of $\lambda$ and $\mu$ 
\begin{align*}
\lambda^{-(p-1)}= \frac{(1+\nu)^2 - |d|^2}{1-|d|^2}&=\left(1+\frac \nu{1-|d|}\right)\left(1+\frac \nu{1+|d|}\right)\\
&=\frac 1\mu\left(1+\left(\frac 1\mu-1\right) \frac{1-|d|}{1+|d|}\right).
\end{align*}
Then, according to the position of $\mu$ with respect to $1$, we may use the monotonicity of this expression with respect to $|d|$ in order to get \eqref{compare0}.\\
(iv) From the remark given around estimate \eqref{defnhz1}, this reduces to a one-dimensional estimate. Hence, Lemma A.2 in \cite{MZdmj12} applies and gives the result.\\
(v) By definitions \eqref{defk*} and \eqref{defenergy} of $\kappa^*(\bd, \nu)$ and $E(w,\partial_s w)$, this reduces to the one-dimensional case, already treated in item (i) in Lemma A.2 page 2878 in \cite{MZdmj12}.\\
(vi) See Claim F.2 page 81 in \cite{MZtams14}.
\end{proof}

We also recall the following computation table for integrals from \cite{MZjfa07} and \cite{MZajm12}:
\begin{lem}[Integral computation table]\label{cltech0}$ $\\
(i) Consider for some $\gamma>-1$ and $\beta\in \m R$ the following integral
\[
I(\delta)= \int_{-1}^1 \frac{(1-\xi^2)^\gamma}{(1+\delta \xi)^\beta}dy\mbox{ where }\delta\in(-1,1).
\]
Then, there exists $K(\gamma, \beta,N)>0$ such that the following limits hold as $|\delta|\to 1$:\\
- if $\gamma+1-\beta>0$, then $I(\delta)\to K$,\\
- if $\gamma+1-\beta=0$, then $I(\delta)|\log(1-|\delta|)|\to K$,\\
- if $\gamma+1-\beta<0$, then $I(\delta)(1-|\delta|)^{- (\gamma+1)+\beta}\to K$.\\
(ii) For all $d\in(-1,-\frac 12]$, we have 
$|e^{\frac 4{p-1}\zeta}\bar R_{1,1}(\delta)+2^{-\frac 2{p-1}}\barcc|
+|e^{\frac 4{p-1}\zeta}\bar R_{2,\pm 1}(\delta)+\barcc|
\le e^{-\bar \epsilon \zeta}$ for some $\barcc(p)>0$ and $\bar \epsilon>0$, where $\bar R_{i,\theta}$ is defined in \eqref{deftrit} and $\zeta=-\arg\tanh d$.\\
(iii) If $p<2$ and $\gamma>\frac 2{p-1}$, then for $|\delta+1|$ and $\eta$ small enough,  we have
$\bar B(\gamma, \eta, \delta)\le C$, where $\bar B$ is defined in \eqref{defb3}.
\end{lem}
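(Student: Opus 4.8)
The final statement is Lemma B.4 (the "Integral computation table", \texttt{cltech0}), which collects three elementary integral estimates used throughout the dynamics analysis. My plan is to treat the three items separately, in increasing order of difficulty.

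\textbf{Item (i): asymptotics of $I(\delta)$ as $|\delta|\to 1$.} By symmetry I may assume $\delta\to 1^-$; the case $\delta\to -1^+$ follows by replacing $\xi$ with $-\xi$. The only singularity of the integrand comes from the endpoint $\xi=-1$ where $1+\delta\xi\to 1-\delta\to 0$, so I would split $I(\delta)=\int_{-1}^0+\int_0^1$. On $[0,1]$ the integrand is bounded uniformly in $\delta$ near $1$ and converges pointwise, so that piece tends to a finite constant by dominated convergence. On $[-1,0]$ I substitute $\xi=-1+(1-\delta)^{-1}u$ or, more cleanly, $1+\delta\xi=\epsilon t$ with $\epsilon=1-\delta$, turning the integrand into $\epsilon^{\gamma+1-\beta}$ times a function that converges (again by dominated convergence, using $\gamma>-1$ to ensure integrability of the $(1-\xi^2)^\gamma\sim (\text{const})\,(1+\xi)^\gamma$ factor near the endpoint) to $\int_0^{\,?} t^{-\beta}(2t)^\gamma\,(\text{Jacobian})\,dt$. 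The three regimes ($\gamma+1-\beta$ positive, zero, negative) correspond exactly to whether this rescaled integral converges at $t=0$, diverges logarithmically, or diverges like a power — whence the stated trichotomy with an explicit (though unimportant) constant $K$. This is the standard Laplace-type boundary-layer computation and I would present it compactly rather than chase the exact value of $K$.

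\textbf{Item (ii): the precise constants in $\bar R_{1,1}$ and $\bar R_{2,\pm1}$.} Recall from \eqref{deftrit} that $\bar R_{i,\theta}(\delta)=\iint\kappa(\delta e_1,y)^p\kappa(\theta\delta e_i,y)\rho(y)\,dy$; plugging in the explicit form \eqref{defkd} of $\kappa$, this is $\kappa_0^{p+1}(1-\delta^2)^{\frac{p+1}{p-1}}$ times a two-dimensional integral of $(1+\delta y_1)^{-\frac{2p}{p-1}}(1\pm\delta y_i)^{-\frac2{p-1}}$ against $\rho=(1-|y|^2)^\alpha$. As $\delta\to -1$ (i.e. $\zeta=-\arg\tanh d\to\infty$, $1-\delta^2\sim 4e^{-2\zeta}$) the mass of the integral concentrates near $y=(1,0)$ for the factor $(1+\delta y_1)$. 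For $\bar R_{2,\pm1}$ the second factor $(1\pm\delta y_2)^{-\frac2{p-1}}\to 1$ on that region, and a one-dimensional reduction in $y_1$ (freezing $y_2$, using item (i) with $\gamma=\alpha$ and $\beta=\frac{2p}{p-1}$, noting $\gamma+1-\beta=\alpha+1-\frac{2p}{p-1}=-\frac2{p-1}<0$) gives the leading term $\sim e^{-\frac4{p-1}\zeta}$ times a constant $\barcc(p)$, with a power-saving error $e^{-\bar\epsilon\zeta}$ coming from the next order in the expansion of the integrand near the concentration point. For $\bar R_{1,1}$ the extra factor is $(1+\delta y_1)^{-\frac2{p-1}}$ which shares the same singularity, so the exponent in item (i) becomes $\beta=\frac{2(p+1)}{p-1}$, $\gamma+1-\beta=-\frac4{p-1}$, and the concentration is "tighter", producing the same rate $e^{-\frac4{p-1}\zeta}$ but with constant $2^{-\frac2{p-1}}\barcc$ (the $2^{-\frac2{p-1}}$ being the value of the extra factor, after rescaling, at the effective center). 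I would carry this out by the change of variables $Y=\frac{y_1+\delta}{1+\delta y_1}$ that exactly trivializes these Lorentz-type integrals (as used for Lemma \ref{lemkd}(iii)), reducing everything to fixed integrals plus controlled remainders; this matches verbatim the computation already done in \cite{MZjfa07} and \cite{MZajm12}, so I would cite those and only indicate the two-dimensional bookkeeping.

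\textbf{Item (iii): the bound $\bar B(\gamma,\eta,\delta)\le C$.} Here, from \eqref{defb3}, $\bar B$ is an integral over the thin neighborhood $\bar D(\eta)=\{|y_1^2-y_2^2|\le\eta\}$ of the bisectrices of $|\bar K(\delta y)|^{-\frac{\gamma(2-p)}{\gamma-2}}(1-\delta y_1)^{-\frac2{p-1}}(1-|y|^2)^{-1/2}$, where $\bar K(y)=\sum_{i,\theta}(-1)^{i+1}(1+\theta y_i)^{-\frac2{p-1}}$ vanishes precisely on $\{y_1=\pm y_2\}$. The danger is twofold: the negative power of $\bar K$ near the bisectrices (a codimension-one zero set), and the $(1-|y|^2)^{-1/2}$ near the boundary circle. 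For the first, since $p<2$ makes $2-p>0$ and the exponent $\frac{\gamma(2-p)}{\gamma-2}$ is a fixed positive number, I need $\bar K$ to vanish only to first order transversally to $\{y_1=\pm y_2\}$ and the exponent to be $<1$ so that the resulting $\int (\text{dist})^{-\text{exp}}$ over the transverse direction is integrable; the condition $\gamma>\frac2{p-1}$ is exactly what forces $\frac{\gamma(2-p)}{\gamma-2}<1$ (check: $\gamma(2-p)<\gamma-2 \iff \gamma(1-p)<-2 \iff \gamma>\frac{2}{p-1}$). I would make the transversality quantitative: on $\bar D(\eta)$ with $\eta,|1+\delta|$ small, $|\bar K(\delta y)|\gtrsim |y_1^2-y_2^2|$ (Taylor-expanding near a point of the bisectrix where $\bar K$ and its two relevant terms are comparable and the gradient is bounded below), reducing the transverse integral to $\int_0^\eta t^{-\frac{\gamma(2-p)}{\gamma-2}}\,dt<\infty$. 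For the boundary factor $(1-|y|^2)^{-1/2}$, integrability over the two-dimensional region is immediate since $1/2<1$ and the $(1-\delta y_1)^{-\frac2{p-1}}$ factor is bounded away from the point $y\approx(1,0)$ which does not lie on the locus causing trouble (or is handled by the same endpoint analysis). Gathering, $\bar B\le C$ uniformly for $\eta$ and $|1+\delta|$ small.

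\textbf{Main obstacle.} Items (i) and (ii) are routine boundary-layer asymptotics that essentially reduce to one-dimensional integrals already computed in the cited papers. The genuinely new (two-dimensional) point, and the one I expect to be delicate, is the lower bound $|\bar K(\delta y)|\gtrsim|y_1^2-y_2^2|$ on $\bar D(\eta)$ in item (iii): one must check that the two competing solitons $\kappa(\delta e_1)$ and $\kappa(-\delta e_2)$ (say) have comparable size there, that their difference does not have spurious higher-order vanishing anywhere on the bisectrix within the region, and that the implied constants survive the limit $\delta\to-1$ — in other words that the zero set of $\bar K$ stays a clean first-order codimension-one set uniformly. Once that transversal non-degeneracy is established, everything collapses to the elementary integrability thresholds $\frac{\gamma(2-p)}{\gamma-2}<1$ and $\frac12<1$, and the proof is complete.
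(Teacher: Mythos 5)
Your items (i) and (iii) are essentially the paper's route. For (i) the paper simply cites Claim 4.3 of \cite{MZjfa07}, and your boundary-layer rescaling is the standard proof of that statement. For (iii) you have isolated exactly the right ingredients: the exponent check $\frac{\gamma(2-p)}{\gamma-2}<1\Leftrightarrow\gamma>\frac 2{p-1}$, the transversal lower bound $|\bar K(\delta y)|\ge \beta_0|y_1^2-y_2^2|$ on $\bar D(\eta)$, the rotation $y\mapsto\bigl(\frac{y_1+y_2}{\sqrt 2},\frac{y_1-y_2}{\sqrt 2}\bigr)$, and the observation that the factors $(1-\delta y_1)^{-\frac 2{p-1}}$ and $(1-|y|^2)^{-\frac 12}$ are harmless because $\bar D(\eta)\subset\{\max(|y_1|,|y_2|)\le \frac 9{10}\}$. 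The one step you defer as the ``main obstacle'' is exactly what the paper proves, by a short explicit computation: writing $\bar K(y)=K^*(y_1^2,y_2^2)$ with $K^*(z_1,z_2)=g(z_2)h(z_1)-g(z_1)h(z_2)$ smooth and antisymmetric, applying the mean value theorem along the direction $(-1,1)$, and checking by hand that $\partial_{(-1,1)}K^*(z,z)\ge 2\beta_0>0$ on $[0,\frac{99}{100}]$, which uses $p<2$ (i.e. $\frac 2{p-1}>2$). So your (iii) is correct in outline, but that uniform non-degeneracy computation is the actual content you would still need to supply.

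Item (ii) is where your argument has a genuine gap. Taking the formula in \eqref{deftrit} literally with $(i,\theta)=(1,1)$, as you do, gives $\bar R_{1,1}=\iint\kappa(\delta e_1,y)^{p+1}\rho\,dy$, which by Lemma \ref{lemlorint} is bounded away from zero uniformly in $\delta$; it cannot satisfy the stated $e^{-\frac 4{p-1}\zeta}$ decay, and indeed with your extra factor $(1+\delta y_1)^{-\frac 2{p-1}}$ the concentration analysis produces an order-one quantity (note also that the limits in the statement are negative, which a positive integrand cannot give — a sign that the literal reading is not the intended one; compare the explicit formula derived in the appendix after the Lorentz change of variables, which involves $(1+\delta^2-2\delta Y_1)^{-\frac 2{p-1}}$, i.e. the interaction with the \emph{left} soliton $\kappa(-\delta e_1)$). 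For that interaction term the extra factor is $(1-\delta y_1)^{-\frac 2{p-1}}\approx 2^{-\frac 2{p-1}}$ at the concentration point $y\approx(1,0)$ — this is the constant you wanted, but attached to the opposite factor from the one you wrote; your intermediate exponents are also off (e.g. $\alpha+1-\frac{2p}{p-1}=-\frac 32$, not $-\frac 2{p-1}$), even though the final rate $e^{-\frac 4{p-1}\zeta}$ for the cross terms is correct. Finally, the paper's treatment of $\bar R_{2,\pm1}$ is different from your ``freeze the transverse factor'' heuristic and worth knowing: it quotes the one-dimensional asymptotics of \cite{MZajm12} for $\bar R_{1,1}$, and then, after the Lorentz change of variables, performs a rotation in the $Y$-coordinates identifying $\bar R_{2,\pm1}(\delta)$ with an explicit multiple of $\bar R_{1,1}(\bar\delta)$ for a shifted parameter with $1+\bar\delta=2(1+\delta)+O\bigl((1+\delta)^2\bigr)$; since $e^{-2\zeta}\sim\frac{1+\delta}2$, this converts the known constant for $\bar R_{1,1}$ into the stated one for $\bar R_{2,\pm1}$ with no further asymptotic analysis. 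Your concentration heuristic could likely be made rigorous with tail estimates, but as written the bookkeeping in (ii) does not establish the statement.
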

\begin{proof}$ $\\
(i) See Claim 4.3 page 84 in \cite{MZjfa07}.\\
(ii) The estimate for $\bar R_{1,1}(\delta)$ follows with very minor changes from item (iii) of Lemma E.1 page 644 in \cite{MZajm12}. As for $\bar R_{2,\pm 1}(\delta)$,
using the change of variables 
\[
Y_1= \frac{\delta+y_1}{1+\delta y_1},\;\;Y_2 = \frac{\sqrt{1-\delta^2}}{1+\delta y_1}y_2,
\]
we see after sraightforward computations that
\begin{align}
\bar R_{1,1}(\delta)&=(1-\delta^2)^{\frac 2{p-1}}\int_{|Y|<1}Y_1(1+\delta^2-2\delta Y_1)^{-\frac 2{p-1}}\rho(Y) dY
\nonumber\\
\bar R_{2,\pm 1}(\delta)&=(1-\delta^2)^{\frac 2{p-1}}\int_{|Y|<1}Y_1(1-\delta Y_1+\delta\sqrt{1-\delta^2}Y_2)^{-\frac 2{p-1}}\rho(Y) dY.\nonumber
\end{align}
 Introducing the rotation of coordinates 
\[
Z_1=\frac{Y_1}{\sqrt{2-\delta^2}}-\sqrt{\frac{1-\delta^2}{2-\delta^2}}Y_2,\;\;
Z_2=\sqrt{\frac{1-\delta^2}{2-\delta^2}}Y_1+\frac{Y_2}{\sqrt{2-\delta^2}},
\]
we see that
\begin{align*}
\bar R_{2,\pm 1}(\delta)&=\frac {(1-\delta^2)^{\frac 2{p-1}}}{\sqrt{2-\delta^2}}\int_{|Z|<1}\frac{(Z_1+\sqrt{1-\delta^2}Z_2)\rho(Z)}{(1-\delta \sqrt{2-\delta^2} Z_1)^{\frac 2{p-1}}} dZ\\
&=\frac{(1+{\bar \delta}^2)^{\frac 2{p-1}}}{\sqrt{2-\delta^2}}\left(\frac{1-\delta^2}{1-\bar \delta^2}\right)^{\frac 2{p-1}}\bar R_{1,1}(\bar \delta),
\end{align*}
where $\bar \delta>-1$ is such that $\frac {2\bar \delta}{1+{\bar \delta}^2} = \delta \sqrt{2-\delta^2}$.
This gives $1+\bar \delta =2(1+\delta) +O(1+\delta)^2$ as $\delta \to -1$. Since $e^{-2\zeta} = \frac{1+\delta}2 + O(1+\delta)^2$ and the same holds between $\bar \delta$ and $\bar \zeta = -\arg \tanh \bar \delta$, the estimate for $\bar R_{2,\pm 1}$ follows from the estimate on $\bar R_{1,1}$.\\
(iii) From the definitions \eqref{defd3} and \eqref{defb3} of $\bar D$, $\bar B$ and $\bar K(y)$, it is easy to see that $\bar D \subset \{\max(|y_1|, |y_2|)\le \frac 9{10}\}$, hence, using the common denominator for $\bar K(y)$, we write
\begin{equation}\label{boundbb}
\bar B(\gamma, \eta, \delta) \le C \int_{\bar D(\eta)}\frac{dy}{|K(\delta y)|^{\frac{\gamma(2-p)}{\gamma-2}}\sqrt{1-|y|^2}},
\end{equation}
where 
\[
K(y) = \bar g(y_2)h(y_1^2) - \bar g(y_1)h(y_2^2),\;
\bar g(\xi) = (1+\xi)^{\frac 2{p-1}}+ (1-\xi)^{\frac 2{p-1}}\mbox{ and }
h(\xi) = (1-\xi)^{\frac 2{p-1}}.
\]
Expanding $\bar g$ in powers of $\xi$, it is easy to see that
\[
\bar g(\xi) = g(\xi^2),
\]
where $g$ is a convergent power series on the unit disc. Therefore, 
\begin{equation}\label{defke}
K(y)= K^*(y_1^2, y_2^2),\mbox{ where }K^*(z_1,z_2)= g(z_2)h(z_1) - g(z_1)h(z_2)
\end{equation}
is $C^\infty$. Given $z_1$ and $z_2$ in $(0,1)$, noting that $K^*(\frac{z_1+z_2}2,\frac{z_1+z_2}2)=0$ and applying the mean value theorem to the function 
$\theta \mapsto K^*\left((1-\theta)\left(\frac{z_1+z_2}2,\frac{z_1+z_2}2\right)
+\theta(z_1,z_2)\right)$, 
we see that
\begin{equation}\label{minor}
|K^*(z_1,z_2)| =|z_1-z_2|\left|\partial_{ e^*}K^*\left((1-\theta^*)\left(\frac{z_1+z_2}2,\frac{z_1+z_2}2\right)+\theta^*(z_1,z_2)\right)\right|
\end{equation}
for some $\theta^*\in [0,1]$, where $e^*=(-1,1)$. Since $p<2$, hence $\frac 2{p-1}>2$, and for all $z\in [0,\frac{99}{100}]$,
\begin{align*}
\partial_{e^*}K(z,z) &=-2 g(z) h'(z)+2g'(z)h(z)
=-2 \bar g(\sqrt z)h'(z) +\frac{\bar g'(\sqrt z)}{\sqrt z} h(z)\\
&=\frac 4{p-1}\left((1+\sqrt z)^{\frac 2{p-1}}+(1-\sqrt z)^{\frac 2{p-1}}\right)(1-z)^{\frac 2{p-1}-1}\\
&+\frac 2{(p-1)\sqrt z}\left((1+\sqrt z)^{\frac 2{p-1}-1}-(1-\sqrt z)^{\frac 2{p-1}-1}\right)(1-z)^{\frac 2{p-1}}\\
&\ge 2\beta_0
\end{align*}
for some $\beta_0>0$, we see from \eqref{minor}, \eqref{defke} and \eqref{defd3} that for $|\delta+1|$ and $\eta>0$ small enough, we have for all $y\in \bar D(\eta)$, 
\[
|K(\delta y)|\ge \beta_0|y_1^2-y_2^2|.
\]
Since $p<2$ and $\gamma> \frac 2{p-1}$, hence $\frac{\gamma(2-p)}{\gamma-2}<1$, using this bound and performing a change of variables $y\mapsto \bar y = \left(\frac{y_1+y_2}{\sqrt 2}, \frac{y_1-y_2}{\sqrt 2}\right)$, we see that the integral in \eqref{boundbb} is uniformly bounded for $|\delta+1|$ and $\eta>0$ small enough.
\end{proof}

\bigskip

Now, we give some calculations related to the solitons $\kappa(d,y)$ \eqref{defkd}. More precisely, given that $\|\kappa(d)\|_{L^{p+1}_\rho}\le C$, for all $|d|<1$, thanks to item (i) of Lemma \ref{lemkd} and the Hardy-Sobolev inequality of Lemma \ref{lemhs}, it is natural to look for some uniform lower bound. In the following lemma, we do better, and find an ellipse whose area shrinks to zero as $|d|\to1$, and where the $L^{p+1}_\rho$ of $\kappa(d)$ is constant. This is our statement (from rotation invariance, we state the result only when the soliton parameter is of the form $(d,0)$ with $d\in (-1,1)$):
\begin{lem}[A small domain supporting the $L^{p+1}_\rho$ norm of $\kappa(d)$]\label{lemlorint}
For all $d\in (-1,1)$, we have 
\begin{equation*}
\int_{\EE(d)}\kappa(de_1,Y)^{p+1} \rho(Y)dY=\int_{|y|<\frac 12} \kappa_0^{p+1}\rho(y)dy
\end{equation*}
where the integration domain $\EE(d)\subset B(0,1)$ is the ellipse centered at $(c(d),0)=(-\frac{3d}{4-d^2},0)$ whose horizontal axis is $2a(d)=\frac{4(1-d^2)}{4-d^2}$ and its vertical axis is $2b(d)=2\sqrt{\frac{1-d^2}{4-d^2}}$
\end{lem}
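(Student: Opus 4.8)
The plan is to exhibit $\mathcal E(d)$ as the image of the ball $\{|y|<\frac12\}$ under the similarity-variables' Lorentz transform that sends the constant soliton $\kappa_0$ onto $\kappa(de_1,\cdot)$, and to run this transform as a change of variables inside $\int_{|y|<1/2}\kappa_0^{p+1}\rho(y)\,dy$. Concretely, set
\[
\mathcal L_d(Y)=\left(\frac{Y_1+d}{1+dY_1},\ \frac{\sqrt{1-d^2}\,Y_2}{1+dY_1}\right),
\]
the analogue of the change of variables already used in the proof of Lemma \ref{cltech0}(ii). First I would record, for $y=\mathcal L_d(Y)$, the three elementary identities
\[
1-dy_1=\frac{1-d^2}{1+dY_1},\qquad 1-|y|^2=\frac{(1-d^2)(1-|Y|^2)}{(1+dY_1)^2},\qquad dy=\frac{(1-d^2)^{3/2}}{(1+dY_1)^3}\,dY,
\]
the second following from the factorisation $(1+dY_1)^2-(Y_1+d)^2=(1-d^2)(1-Y_1^2)$, the third from a direct (triangular) Jacobian computation. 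The second identity also shows that $\mathcal L_d$ is a diffeomorphism of $B(0,1)$ onto itself, with inverse $\mathcal L_{-d}$.

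Next I would substitute $y=\mathcal L_d(Y)$ in $\int_{|y|<1/2}\kappa_0^{p+1}\rho(y)\,dy$. Using the identities above,
\[
\kappa_0^{p+1}\rho(y)\,dy=\kappa_0^{p+1}\,(1-d^2)^{\alpha+\frac32}\,(1+dY_1)^{-2\alpha-3}\,(1-|Y|^2)^{\alpha}\,dY ,
\]
and the key algebraic observation is that, since $\alpha=\frac{5-p}{2(p-1)}$, one has $\alpha+\frac32=\frac{p+1}{p-1}$, hence $2\alpha+3=\frac{2(p+1)}{p-1}$; plugging these exponents in and comparing with \eqref{defkd} identifies the right-hand side with $\kappa(de_1,Y)^{p+1}\rho(Y)\,dY$. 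Therefore $\int_{|y|<1/2}\kappa_0^{p+1}\rho(y)\,dy=\int_{\mathcal E(d)}\kappa(de_1,Y)^{p+1}\rho(Y)\,dY$, where $\mathcal E(d):=\mathcal L_d^{-1}(B(0,1/2))=\{Y:\ |\mathcal L_d(Y)|^2<\frac14\}$; moreover $\mathcal E(d)\subset\mathcal L_d^{-1}(B(0,1))=B(0,1)$.

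Finally I would unravel the inequality defining $\mathcal E(d)$: clearing the denominator, $|\mathcal L_d(Y)|^2<\frac14$ reads $(Y_1+d)^2+(1-d^2)Y_2^2<\frac14(1+dY_1)^2$, i.e. $\frac{4-d^2}{4}Y_1^2+\frac{3d}{2}Y_1+(1-d^2)Y_2^2<\frac{1-4d^2}{4}$; completing the square in $Y_1$ (and using $(1-4d^2)(4-d^2)+9d^2=4(1-d^2)^2$) turns this into
\[
\left(Y_1+\frac{3d}{4-d^2}\right)^2+\frac{4(1-d^2)}{4-d^2}\,Y_2^2<\frac{4(1-d^2)^2}{(4-d^2)^2},
\]
which is exactly the ellipse centred at $(c(d),0)$ with horizontal semi-axis $a(d)$ and vertical semi-axis $b(d)$ claimed in the statement. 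None of this is conceptually deep; the only thing requiring genuine care — and where I expect the bulk of the (purely computational) effort to lie — is the exponent bookkeeping, which crucially uses the exact value of $\alpha$, together with the arithmetic of completing the square.
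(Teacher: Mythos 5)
Your proof is correct and follows exactly the paper's route: the paper's own (very terse) proof consists of "straightforward calculations" based on precisely the same similarity-variables Lorentz change of coordinates $y_1=\frac{Y_1+d}{1+dY_1}$, $y_2=\frac{\sqrt{1-d^2}\,Y_2}{1+dY_1}$, and your Jacobian, exponent bookkeeping with $\alpha=\frac{5-p}{2(p-1)}$, and completion of the square supply the details the authors omitted.
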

\begin{proof}The result comes from sraightforward calculations based on the following similarity variables' version of the Lorentz change of coordinates, already introduced in our earlier work \cite{MZtams14}:
\[
y_1=\frac{Y_1+d}{1+dY_1} \mbox{ and }y_2=Y_2\frac{\sqrt{1-d^2}}{1+dY_1}.
\]
\end{proof}

\section{Details for the soliton-loosing mechanism and a generalized trapping result}
\label{apptx}
Many of our arguments 
dedicated to the geometry of the blow-up set and the local behavior come from our earlier work performed in \cite{MZdmj12} in the one-dimensional case, or in the higher-dimensional case in \cite{MZtams14} (specifically, the trapping result). In order to be nice with the expert reader, we removed them from
the main part of the paper, 
and give them here in the Appendix, for the non-expert reader convenience. In fact, in order to keep the paper into reasonable limits, we will only give the key steps and refere the reader to our earlier papers for details.

\medskip

In particular, we give here the proof of Proposition \ref{prop422} and Lemma \ref{proptrap}.

\subsection{Estimates on $w_x$}
 In this section, we prove Proposition \ref{prop422}.

\begin{proof}[Proof of Proposition \ref{prop422}]
As we did in the one dimensional case treated in \cite{MZdmj12}, the proof is done in two steps:\\
- Using the transformation $\q T_x$ introduced in \eqref{defw**}, we bound the $\q H$ norm of $r$ when the unit ball is replaced by a smaller set; more precisely,
we show the following:
\begin{equation}\label{ident*}
\forall s\in [s_3, \sl(x,A)],\;\;\left\|r(x,\ts)\right\|_{\H(\ty \in B_-(\ts))} \le C \left\|R(\cs(x,s))\right\|_{\H}+\frac CS,
\end{equation}
where $\sl(x,A)$ is defined in \eqref{defslsd3} (depending on $A>0$ large enough),  $B_-(\ts)$ is some subset of $B(0,1)$, 
$R(Y,S)$ is defined by 
the following:
\begin{equation}\label{defg*}
R(Y,S)=\vc{W(\cy,\cs)}{\ps W(\cy,\cs)}-\sum_{dir \in \q D} \tdir\vc{\kappa(\bar d(\cs)\edir,\cy)}{0},
\end{equation}
 and $r(x,y,s)$ by \eqref{defh*};\\
- Then, using our fine knowledge of the the behavior of solutions to equation \eqref{eqw} near a decoupled sum of solitons (see \cite{MZdmj12}), we finish the proof.

\bigskip

{\bf Step 1: Algebraic identities for a partial result}

We justify estimate \eqref{ident*} here. Let us first derive from the transformation \eqref{defw**} some algebraic identities linking $r$, $R$ and their derivatives. Then, we will see that the set $B_-(s)$ will naturally emerge from that. More precisely, we claim the following: 
\begin{cl}[Transformation of the error terms]\label{clalgebre}
 It holds that
\begin{align}
r_1&=(1-T(\tx) e^{\ts})^{-\frac 2{p-1}} \chr_1,\label{faou1}\\
|\nabla_\ty r_1|^2-(y\cdot \nabla_y r_1)^2&=(1-T(\tx) e^{\ts})^{-\frac {2(p+1)}{p-1}}\left[|\nabla_\cy \chr_1|^2-(Y\cdot \nabla_Y R_1)^2
\right.\label{faou2}\\
&\left.
-\left(\frac{e^s(\tx+yT(\tx))}{1-T(\tx) e^{\ts}}\cdot\nabla_Y \chr_1\right)
\left(\frac{2y+e^s(\tx-yT(\tx))}{1-T(\tx) e^{\ts}}\cdot\nabla_Y \chr_1\right)
\right],\nonumber\\
r_2&= (1-T(\tx) e^{\ts})^{-\frac {p+1}{p-1}}\left[\chr_2+\frac {2T(\tx) e^{\ts}}{p-1}\chr_1\right.\label{faou3}\\
&\left.
+\frac{\tx e^{\ts}+yT(\tx)e^{\ts}}{1-T(\tx) e^{\ts}}\cdot\nabla_Y \chr_1\right]
+\frac{\bar d'(S)}{1-T(x)e^s}\sum_{dir \in \q D}F_{dir}
\end{align}
where $r=r(x,y,s)$ defined by \eqref{defh*}, $R=R(Y,S)$ defined by \eqref{defg**}, $(\cy,\cs)$ and $(\ty, \ts)$ are linked by \eqref{defw**}, and 
\begin{align}
F_{dir} = \tdir\edir\cdot
 &\left\{(\nabla_d\kappa^*_1+xe^s \partial_\nu \kappa^*_1)(\bar d(S) \edir, \ndir(x,s),y)\right.\label{defFdir}\\
&\left.-(1-T(x)e^s)^{-\frac 2{p-1}}\nabla_d\kappa(\bar d(S) \edir,Y)\right\}.
\end{align}
\end{cl}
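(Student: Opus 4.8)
The statement is purely algebraic: it records how the error term $r(x,y,s)$ and its various derivatives transform under the change of variables $\q T_x$ defined in \eqref{defw**}. The plan is to write $r$ and $R$ explicitly in terms of the solution and its soliton decomposition, and then to apply the chain rule to the defining relations $\cy = \frac{\ty + \tx e^{\ts}}{1-T(\tx)e^{\ts}}$, $\cs = \ts - \log(1-T(\tx)e^{\ts})$. First I would compute the Jacobian of the map $(\ty,\ts)\mapsto(\cy,\cs)$: from \eqref{defw**} one gets $\partial_{\ty_i}\cy_j = \frac{\delta_{ij}}{1-T(\tx)e^{\ts}}$, while $\partial_{\ts}\cy = \frac{\tx e^{\ts}}{1-T(\tx)e^{\ts}} + \frac{T(\tx)e^{\ts}}{1-T(\tx)e^{\ts}}\cy = \frac{(\tx + \ty T(\tx))e^{\ts}}{(1-T(\tx)e^{\ts})^2}$ after substituting the expression for $\cy$, and $\partial_{\ts}\cs = 1 + \frac{T(\tx)e^{\ts}}{1-T(\tx)e^{\ts}} = \frac{1}{1-T(\tx)e^{\ts}}$.

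For \eqref{faou1}, I would simply note that $\q T_x$ acts on the first component by multiplication by $(1-T(\tx)e^{\ts})^{-\frac{2}{p-1}}$ together with the change of variables, and that by \eqref{rach1} and \eqref{rach2} it maps $W=w_0$ to $w_{\tx}$ and each soliton $\kappa(\bar d(\cs)\edir,\cdot)$ to $\kdir(x,y,s)$; hence it maps $\chr_1$ (the first component of $R$) precisely to $r_1$ up to the prefactor. For \eqref{faou2}, I would differentiate \eqref{faou1} in $\ty$ using the chain rule, obtaining $\nabla_{\ty}r_1 = (1-T(\tx)e^{\ts})^{-\frac{2}{p-1}-1}\nabla_{\cy}\chr_1$, then form $|\nabla_{\ty}r_1|^2 - (\ty\cdot\nabla_{\ty}r_1)^2$ and substitute $\ty = \cy(1-T(\tx)e^{\ts}) - \tx e^{\ts}$ to re-express everything in $\cy$-variables; the cross terms $\left(\frac{e^s(\tx+\ty T(\tx))}{1-T(\tx)e^{\ts}}\cdot\nabla_Y \chr_1\right)\left(\frac{2y+e^s(\tx-\ty T(\tx))}{1-T(\tx)e^{\ts}}\cdot\nabla_Y \chr_1\right)$ arise exactly from expanding $(\ty\cdot\nabla_{\ty}r_1)^2 - (\cy\cdot\nabla_{\cy}\chr_1)^2$. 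For \eqref{faou3}, the second component $r_2$ is the ``$\partial_s$'' component, so I would differentiate \eqref{faou1} with respect to $\ts$, using $\partial_{\ts}R_1 = \chr_2$ (since $\chr_2 = \partial_S W - \ldots$ is the second component of $R$) plus the chain-rule contributions $\partial_{\ts}\cy\cdot\nabla_{\cy}\chr_1$ and the prefactor derivative $\frac{2T(\tx)e^{\ts}}{p-1}(1-T(\tx)e^{\ts})^{-\frac{p+1}{p-1}}\chr_1$. The extra term $\frac{\bar d'(S)}{1-T(x)e^s}\sum_{dir}F_{dir}$ with $F_{dir}$ as in \eqref{defFdir} comes from the fact that $R$ is defined with solitons $\kappa(\bar d(\cs)\edir,\cy)$ whose parameter $\bar d(\cs)$ depends on $\cs$, while $r$ is defined with the generalized solitons $\kdir$ having a fixed (in $s$) parameter structure given by \eqref{foursol}--\eqref{defs2}; differentiating the $\bar d(\cs)$-dependence produces $\bar d'(S)$ times the $d$-derivative of the soliton, transformed by $\q T_x$, and comparing with the $d$-derivative of $\kdir$ (which also involves $\partial_\nu$ through $\ndir(x,s) = [\cd(x,s)x\cdot\edir - T(x)]e^{\ts}$ with $\cd = \bar d(\cs)$) yields precisely $F_{dir}$.

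The main obstacle I anticipate is bookkeeping rather than conceptual: carefully tracking the $S$-dependence inside $\ndir(x,s)$ and $\cd(x,s)$ — since by \eqref{defs2} both depend on $\cs$, and $\cs$ depends on $\ts$ — so that when one differentiates $\kdir$ in $\ts$ and compares to the transform of $\partial_S\big(\kappa(\bar d(\cs)\edir,\cy)\big)$, all the chain-rule pieces match up and only the $F_{dir}$ remainder survives. I would organize this by first establishing the clean identity $\q T_x\big(\partial_S W\big) = (1-T(\tx)e^{\ts})^{-\frac{p+1}{p-1}}\big[\partial_{\ts}(\text{transform of }W) + \text{prefactor and chain terms}\big]$ at the level of the full solution (where no soliton decomposition is involved), and only afterwards subtract the soliton parts, isolating the discrepancy between ``$S$-dependent $\bar d$'' and ``fixed generalized-soliton structure'' as the source of $F_{dir}$. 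Once the transformation rules of Claim \ref{clalgebre} are in hand, deducing \eqref{ident*} is then a matter of choosing $B_-(\ts)$ to be the image under $\q T_x^{-1}$ of the full ball $\{|Y|<1\}$ intersected with $\{|\ty|<1\}$, bounding the weight $\rho$ and the coefficients appearing in \eqref{faou2}--\eqref{faou3} uniformly on $[s_3,\sl(x,A)]$ using \eqref{pyramid3} and \eqref{devdbar}, and controlling $\sum_{dir}\|F_{dir}\|$ by $C/S$ via item (vi) of Lemma \ref{lemkd} together with the ODE $\frac{1}{\barc}\frac{d\bar\zeta}{ds} = e^{-\frac{4}{p-1}\bar\zeta}$ which gives $|\bar d'(S)| \le CS^{-1}(1-|\bar d(S)|)$.
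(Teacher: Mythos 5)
Your proposal is correct and follows essentially the same route as the paper: \eqref{faou1} by linearity from \eqref{rach1}--\eqref{rach2}, \eqref{faou2} by chain-rule differentiation of \eqref{faou1}, and \eqref{faou3} by differentiating in time and tracking the defect caused by the $S$-dependence of $\bar d$ on both the $R$-side (giving $\bar d'(S)\sum\tdir\,\edir\cdot\nabla_d\kappa$, transformed by the factor $(1-T(x)e^s)^{-\frac 2{p-1}}$) and the $r$-side (giving $\nabla_d\kappa^*_1+xe^s\partial_\nu\kappa^*_1$ through the $\hat d$-dependence of $\ndir$), whose difference is exactly $F_{dir}$. The only caution is that your intermediate identity ``$\partial_S R_1=R_2$'' holds only modulo the $\bar d'(S)$ correction just mentioned, which your subsequent accounting of $F_{dir}$ (and the paper's computation) does incorporate.
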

\begin{proof}
See below.
\end{proof}
Let us first use this claim to derive estimate \eqref{ident*}.

\medskip

\begin{proof}[Proof of estimate \eqref{ident*}, assuming that Claim \ref{clalgebre} holds]
From Claim \ref{clalgebre}, we see that a good definition for the set $B_-(s)$ needed in \eqref{ident*} would be the following:
\begin{equation}\label{defb-s}
B_-(s) = \{|y|<1\}\cap \{1-|y|^2\le 2(1-|Y|^2)\}\cap \{x_1e^s \le 2(1-|Y|^2)\},
\end{equation}
where $Y$, $y$ and $s$ are linked by \eqref{defw**}.\\
Before proving that, let us just remark that by definition \eqref{defslsd3} of $\sl(x,A)$ and the transformation \eqref{defw**}, it is easy to see that $Y\sim y$ and $x_1e^s\to 0$ for $s\le \sl(x,A)$ and $|x|$ small enough, hence $B_-(s)$ is a non-empty set which is ``close'' to the unit ball. 

\medskip

Consider $x$ small enough satisfying \eqref{portion3} and \eqref{pyramid3} with $\epsilon = \frac 12$ (see Proposition \ref{prop1}). 
If we consider  $s\le \sl(x,A)$ defined in \eqref{defslsd3}, then we see that
 $e^s\le e^{\sl(x,A)}\le 1/(|T(x)|(\frac{l^{\frac{p-1}2}}{C_0A}-1)\le CA l^{-\frac{p-1}2}/x_1$.
 Using \eqref{pyramid3}, we see that
\begin{equation}\label{txes}
|xe^s|+|T(x) e^s|\le CA l^{-\frac{p-1}2}.
\end{equation}
 Taking $x$ small enough, we see by definition \eqref{defb-s} of $B_-(s)$ that for all $s\in [s_3, \sl(x,A)]$,
\[
\left\|r(x,\ts)\right\|_{\H(\ty \in B_-(\ts))}^2 \le C \left\|R(\cs(x,s))\right\|_{\H}^2+C|\bar d'(S)|^2\sum_{dir \in \q D}\|F_{dir}\|_{\q H(y\in B_-(s))}^2,
\]
where $F_{dir}$ is defined in \eqref{defFdir}. 
Using Lemma \ref{lemchrono}, we see that we can use \eqref{txes} and apply item (vi) of Lemma \ref{lemkd} to derive that for any direction $dir$, we have
\[
\|F_{dir}\|_{\q H(y\in B_-(s))} \le \|F_{dir}\|_{\q H}\le \frac{C(A)}{1-|\bar d(S)|}
\]
Since 
\[
\frac{|\bar d'(S)|}{1-|\bar d(S)|}\le \frac CS
\]
from \eqref{devdbar}, this concludes the proof of estimate \eqref{ident*}.
\end{proof}

Now, it remains to prove Claim \ref{clalgebre}.

\medskip

\begin{proof}[Proof of Claim \ref{clalgebre}] The proof is the same as in the one-dimensional case given in Claim B.1 page 2884 in \cite{MZdmj12}. However, as we mistakenly assumed that the parameter $\bar D(S)$ was constant in the proof of that claim in that paper, we take the opportunity to correct that mistake here, giving all the details of the proof here.\\  
- {\it Proof of \eqref{faou1}}:  If one uses the definitions \eqref{defw**}, \eqref{defg*} and \eqref{defh*} of ${\q T}_x$, $R(Y,S)$ and $r(x,y,s)$, this simply follows by linearity from the identities \eqref{rach1} and \eqref{rach2}.\\
- {\it Proof of \eqref{faou2}}: It follows from the differentiation of \eqref{faou1}.\\
- {\it Proof of \eqref{faou3}}: If $\bar d(S)$ was independent from $S$ (as we mistakenly assumed in \cite{MZdmj12}), then one would see by definitions \eqref{defg**} and \eqref{defh**} of $R_2$ and $r_2$, linearity together with \eqref{rach1}, \eqref{rach2}, \eqref{foursol} and 
\eqref{faou1}
that $R_2=\partial_S R_1$ and $r_2 = \partial_s r_1$. Since this is not true, there will be a defect related to $\bar d'(S)$, which happens to be small, fortunately. More precisely, using the above-mentioned identities and recalling that $\kappa^*_2(d,\nu,y) = \nu \partial_\nu \kappa^*_1(d,\nu,y)$ by definition \eqref{defk*}, we see that 
\begin{align*}
R_2(Y,S)&=\partial_S R_1(Y,S) +\bar d'(S)\sum_{dir \in \q D}\tdir\edir\cdot\nabla_d\kappa(\bar d(S) \edir,Y),\\
r_2(y,s) =& \partial_s r_1(y,s)\\
& +\frac{\bar d'(S)}{1-T(x) e^s}
\sum_{dir \in \q D}\tdir 
\edir\cdot(\nabla_d\kappa^*_1+xe^s \partial_\nu \kappa^*_1)(\bar d(S) \edir, \ndir(x,s),y).
\end{align*}
Differentiating identity \eqref{faou1}, we see that
\begin{align*}
\partial_s r_1(y,s) =&
(1-T(\tx) e^{\ts})^{-\frac {p+1}{p-1}}\left[\partial_S \chr_1(\cy,\cs)+\frac {2T(\tx) e^{\ts}}{p-1}\chr_1(\cy,\cs)
\right.\nonumber\\
&\left.
+\frac{\tx e^{\ts}+yT(\tx)e^{\ts}}{1-T(\tx) e^{\ts}}\nabla_Y \chr_1(\cy,\cs)\right].
\end{align*}
Using these three identities, we see obtain the result.

This concludes the proof of  Claim \ref{clalgebre} and the proof of estimate \eqref{ident*} too.
\end{proof}

\bigskip

{\bf Step 2: Analytic procedure for the full estimate}

This analytical procedure follows in all points what we did with multi-solitons in one space dimension. See Lemma 4.6 page 2864 in \cite{MZdmj12} for details.\\
This concludes the proof of Proposition \ref{prop422}.
\end{proof}

\subsection{A generalized trapping result}
In this section, we prove Lemma \ref{proptrap}. 

\begin{proof}[Proof of Lemma \ref{proptrap}]
We will handle separately the two cases $\nu^*=0$ and $\nu^*\neq 0$ below.

\medskip

{\bf Case $\nu^*=0$}. This is exactly the case treated in \cite{MZtams14} and \cite{MZcmp14}, if we remove the first term from estimate \eqref{proxi0}. Fortunately, the estimate on that term comes from the proof in \cite{MZtams14}, as we explain below. Indeed, using the parameter $A$ visible in \eqref{bornesA}, we make the following 2 observations from the proofs and not just from the statements:\\
1- From the modulation technique in Appendix D of \cite{MZtams14}, we define a modulation parameter $d(s)$ for $s\ge s^*$ such that
\[
\frac{|l_{d^*}(d(s^*)-d^*)|}{1-|d^*|}+
\frac{|d(s^*)-d^*|}{\sqrt{1-|d^*|}}
\le C(A) \bar \epsilon \mbox{ and }
\frac 1{C(A)}\le \frac{1-|d(s^*)|}{1-|d^*|}\le C(A).
\]
We claim that
\begin{equation}\label{clmod}
\frac{|l_{d(s^*)}(d(s^*)-d^*)|}{1-|d(s^*)|}\le C(A) \bar \epsilon.
\end{equation}
If $|d^*|\le \frac 12$ and $\bar \epsilon$ is small, then this is clear, since we are far from $1$.\\
If $\frac 12 < |d^*|<1$ and $\bar e_1(d) = \frac d{|d|}$, then we write 
\[
|\bar e_1(d(s^*))-\bar e_1(d^*)|\le C|d(s^*)-d^*|\le C \bar \epsilon \sqrt{1-|d^*}.
\]
Therefore, 
\begin{align*}
&|\bar e_1(d(s^*))\cdot (d(s^*)-s^*)|\le |\bar e_1(d^*)\cdot(d(s^*)-d^*)|+
|(\bar e_1(d(s^*))-\bar e_1(d^*))\cdot  (d(s^*)-s^*)|\\
\le &C(A) \bar \epsilon(1-|d^*|)+C(A) \bar \epsilon^2 (1-|d^*|),
\end{align*}
and \eqref{clmod} follows.\\
2- Using the proof of the trapping result in Section 3.2 of \cite{MZtams14}, we see that 
\[
\frac{|l_{d_{\infty}}(d(s^*)-d_\infty)|}{1-|d_{\infty}|} + \frac{|d_{\infty}-d(s^*)|}{\sqrt{1-|d_{\infty}|}}\le C(A) \bar \epsilon \mbox{ and }
\frac 1{C(A)}\le \frac{1-|d(s^*)|}{1-|d_{\infty}|}\le C(A).
\]
Arguing as for \eqref{clmod}, we see that
\begin{equation}\label{cldyn}
\frac{|l_{d(s^*)}(d(s^*)-d_{\infty})|}{1-|d(s^*)|}\le C(A) \bar \epsilon.
\end{equation}
Using \eqref{clmod} and \eqref{cldyn}, we see that 
\begin{equation*}
\frac{|l_{d(s^*)}(d^*-d_{\infty})|}{1-|d(s^*)|}\le C(A) \bar \epsilon,
\end{equation*}
which yields by the same trick as before, 
\begin{equation*}
\frac{|l_{d_{\infty}}(d^*-d_{\infty})|}{1-|d_{\infty}|}\le C(A) \bar \epsilon,
\end{equation*}
and this is precisely the missing estimate in \eqref{proxi0}.

\medskip

{\bf Case $\nu^*\neq 0$}. Simply use the similarity variables' transformation \eqref{defw} back and forth in order to reduce to the case $\nu^*=0$.  
For the proof of \eqref{iii}, simply note from the monotonicity of the Lyapunov functional that $\lambda(d^*,\nu^*)$ defined in \eqref{deflambda} is close to $1$, 
 otherwise, the energy at time $s^*$ will be less than $E(\kappa_0)$. This is in fact the one-dimensional argument of \cite{MZdmj12}. Using the link between the quantity $\lambda(d^*, \su^*)$ and the size variable given in \eqref{compare}, we see that $\mu(d^*, \nu^*)\sim 1$, which yields that $\frac{\nu^*}{1-|d^*|}$ is close to $0$. Since the error is propotional to $\ee$ in all our estimates, this leads exactly to the estimate in item (iii).
\end{proof}

\def\cprime{$'$} \def\cprime{$'$}

\noindent{\bf Address}:\\
Universit\'e de Cergy Pontoise, D\'epartement de math\'ematiques, 
2 avenue Adolphe Chauvin, BP 222, 95302 Cergy Pontoise cedex, France.\\
\vspace{-7mm}
\begin{verbatim}
e-mail: merle@math.u-cergy.fr
\end{verbatim}
Universit\'e Paris 13, Institut Galil\'ee, 
Laboratoire Analyse, G\'eom\'etrie et Applications, CNRS UMR 7539,
99 avenue J.B. Cl\'ement, 93430 Villetaneuse, France.\\
\vspace{-7mm}
\begin{verbatim}
e-mail: Hatem.Zaag@univ-paris13.fr
\end{verbatim}
\end{document}